\numberwithin{equation}{subsection}
\theoremstyle{plain}
\newtheorem{thm}[subsection]{Theorem}
\newtheorem*{thm*}{Theorem}
\newtheorem*{exthm*}{Expected Theorem}
\newtheorem{lem}[subsection]{Lemma}
\newtheorem*{lem*}{Lemma}
\newtheorem{prop}[subsection]{Proposition}
\newtheorem*{prop*}{Proposition}
\newtheorem{cor}[subsection]{Corollary}
\newtheorem*{cor*}{Corollary}
\newtheorem*{claim*}{Claim}
\newtheorem{conj}[subsection]{Conjecture}
\newtheorem*{conj*}{Conjecture}
\theoremstyle{definition}
\newtheorem{defn}[subsection]{Definition}
\newtheorem*{defn*}{Definition}
\newtheorem{notation}[subsection]{Notation and Conventions}
\newtheorem*{exa*}{Example}
\theoremstyle{remark}
\newtheorem{rmk}[subsection]{Remark}
\newtheorem*{rmk*}{Remark}
\newtheorem*{ass*}{Assumption}
\numberwithin{figure}{subsection}
\numberwithin{table}{subsection}
\newenvironment{bulletlist}
   {
      \begin{list}
         {$\bullet$}
         {
            \setlength{\itemsep}{.5ex}
            \setlength{\parsep}{0ex}
            \setlength{\parskip}{0ex}
            \setlength{\topsep}{.5ex}
         }
   }
   {
      \end{list}
   }
\newcounter{listnum}
\newcounter{asslistcounter}
\newenvironment{assertionlist}{
 \begin{list}
  {\upshape (\alph{asslistcounter})}
  {\setlength{\leftmargin}{18pt}
   \setlength{\rightmargin}{0pt}
   \setlength{\itemindent}{0pt}
   \setlength{\labelsep}{5pt}
   \setlength{\labelwidth}{13pt}
   \setlength{\listparindent}{\parindent}
   \setlength{\parsep}{0pt}
   \setlength{\itemsep}{0pt}
   \setlength{\topsep}{-.5\parskip}
   \usecounter{asslistcounter}}}
  {\end{list}}
\newcounter{subenvcounter}
\newenvironment{subenv}{%
 \begin{list}
  {\em (\arabic{subenvcounter})}
  {\setlength{\leftmargin}{20pt}
   \setlength{\rightmargin}{0pt}
   \setlength{\itemindent}{0pt}
   \setlength{\labelsep}{5pt}
   \setlength{\labelwidth}{13pt}
   \setlength{\listparindent}{\parindent}
   \setlength{\parsep}{0pt}
   \setlength{\itemsep}{0pt}
   \setlength{\topsep}{-\parskip}
   \usecounter{subenvcounter}}}
  {\end{list}}
\DeclareMathOperator{\Aut}{Aut}
\DeclareMathOperator{\cha}{char}
\DeclareMathOperator{\Div}{Div}
\DeclareMathOperator{\Fr}{Fr}
\DeclareMathOperator{\Frob}{Frob}
\DeclareMathOperator{\id}{id}
\DeclareMathOperator{\image}{im}
\DeclareMathOperator{\Image}{Im}
\DeclareMathOperator{\Int}{Int}
\DeclareMathOperator{\inv}{inv}
\DeclareMathOperator{\Gal}{Gal}
\DeclareMathOperator{\GL}{GL}
\DeclareMathOperator{\Hom}{Hom}
\DeclareMathOperator{\Isom}{Isom}
\DeclareMathOperator{\Lie}{Lie}
\DeclareMathOperator{\Ig}{Ig}
\DeclareMathOperator{\Norm}{Norm}
\DeclareMathOperator{\pr}{pr}
\newcommand{\Sh}{\mathsf{Sh}}
\DeclareMathOperator{\SL}{SL}
\DeclareMathOperator{\Spec}{Spec}
\DeclareMathOperator{\Spf}{Spf}
\DeclareMathOperator{\supp}{supp}
\DeclareMathOperator{\Res}{Res}
\DeclareMathOperator{\rank}{rk}
\DeclareMathOperator{\tr}{tr}
\DeclareMathOperator{\vol}{vol}
\def \AA {\mathbb{A}}
\def \CC {\mathbb{C}}
\def \DD {\mathbb{D}}
\def \FF {\mathbb{F}}
\def \GG {\mathbb{G}}
\def \JJ {\mathbb{J}}
\def \NN {\mathbb{N}}
\def \OO {\mathbb{O}}
\def \QQ {\mathbb{Q}}
\def \XX {\mathbb{X}}
\def \ZZ {\mathbb{Z}}
\def \Ccal {\mathcal{C}}
\def \Ecal {\mathcal{E}}
\def \Gcal {\mathcal{G}}
\def \Hcal {\mathcal{H}}
\def \Lcal {\mathcal{L}}
\def \Tcal {\mathcal{T}}
\def \Vcal {\mathcal{V}}
\def \Wcal {\mathcal{W}}
\def \Bfr {\mathfrak{B}}
\def \Gfr {\mathfrak{G}}
\def \Kfr {\mathfrak{K}}
\def \Ascr {\mathscr{A}}
\def \Dscr {\mathscr{D}}
\def \Escr {\mathscr{E}}
\def \Fscr {\mathscr{F}}
\def \Gscr {\mathscr{G}}
\def \Hscr {\mathscr{H}}
\def \Lscr {\mathscr{L}}
\def \Oscr {\mathscr{O}}
\def \Sscr {\mathscr{S}}
\def \Tscr {\mathscr{T}}
\def \Vscr {\mathscr{V}}
\def \Wscr {\mathscr{W}}
\def \Xscr {\mathscr{X}}
\def \Zscr {\mathscr{Z}}
\def \Dhat {\hat{D}}
\def \Rhat {\hat{R}}
\def \Fbar {\bar{F}}
\def \Tbar {\bar{T}}
\def \hbar {\bar{h}}
\def \kbar {\bar{k}}
\def \sbar {\bar{s}}
\def \ftilde {\tilde{f}}
\def \ztilde {\tilde{z}}
\def \Gsf {\mathbf{G}}
\def \Ibf {\mathbf{I}}
\def \bbf {\mathbf{b}}
\def \gbf {\mathbf{g}}
\def \xbf {\mathbf{x}}
\def \Bsf  {\mathsf{B}}
\def \Csf  {\mathsf{C}}
\def \Dsf  {\mathsf{D}}
\def \Esf  {\mathsf{E}}
\def \Fsf  {\mathsf{F}}
\def \Gsf  {\mathsf{G}}
\def \Hsf  {\mathsf{H}}
\def \Jsf  {\mathsf{J}}
\def \Ksf  {\mathsf{K}}
\def \Msf  {\mathsf{M}}
\def \Tsf  {\mathsf{T}}
\def \Xsf  {\mathsf{X}}
\def \Zsf  {\mathsf{Z}}
\newcommand{\blambda}{\boldsymbol{\lambda}}
\newcommand{\bpi}{\boldsymbol{\pi}}
\def \BT    {{Barsotti-Tate group}}
\def \mono  {\hookrightarrow}
\def \epi   {\twoheadrightarrow}
\def \isom  {\stackrel{\sim}{\rightarrow}}
\def \bij   {\stackrel{1:1}{\rightarrow}}
\newcommand{\pot}[1]{ [\hspace{-0,17em}[ {#1} ]\hspace{-0,17em}] }
\newcommand{\rpot}[1]{ (\hspace{-0,23em}( {#1} )\hspace{-0,23em}) }
\newcommand{\restr}[2]{{#1}\raise-.5ex\hbox{\ensuremath|}_{#2}}
\newcommand{\der}{\mathrm{der}}
\def \Grass {{\rm Gr}}
\def \perf {{\rm perf}}
\newcommand{\loc}[2]{{#1}[{#2}^\infty]} 
\newcommand{\cl}[1]{\mkern 1.5mu\overline{\mkern-1.5mu#1\mkern-1.5mu}\mkern 1.5mu}
\newcommand{\scl}[1]{{#1}^s} 
\def \ad {\mathrm{ad}}
\newcommand{\red}{\mathrm{red}}
\newcommand{\coh}[1]{\mathrm{H}^{#1}}
\newcommand{\Rrm}{\mathrm{R}}
\newcommand{\iv}{^{-1}}
\newcommand{\riso}{\xrightarrow{\sim}}
\newcommand{\el}{\mathrm{el}}
\newcommand{\ael}{\mathrm{a,el}}
\newcommand{\reg}{\mathrm{reg}}
\newcommand{\rss}{\mathrm{rss}}
\DeclareMathOperator{\EP}{EP}
\DeclareMathOperator{\FP}{FP}
\DeclareMathOperator{\KS}{KI}
\DeclareMathOperator{\Bun}{Bun}
\DeclareMathOperator{\Hecke}{Hecke}
\DeclareMathOperator{\Gr}{Gr}
\author[P.~Hamacher, W.~Kim]{Paul Hamacher and Wansu Kim}
\def \Mund {{\underline{M}}}
\def \Nund {{\underline{N}}}
\def \k {{\bar{\mathbb{F}}}_p}
\def \Jsf {{\mathsf{J}}}
\def \gsf {{\mathsf{g}}}
\def \B   {{\mathrm{B}}}
\def \unif {{\varpi}}
\def \ggamma  {{\boldsymbol\gamma}}
\def \ddelta  {{\boldsymbol\delta}}
\def \tor {\mathrm{tor}}
\def \ur {\mathrm{ur}} 
\newcommand{\ab}{\mathrm{ab}}
\DeclareMathOperator{\coker}{coker}
\DeclareMathOperator{\ord}{ord}
\DeclareMathOperator{\Acc}{Acc}
\DeclareMathOperator{\Fix}{Fix}
\DeclareMathOperator{\res}{res}
\DeclareMathOperator{\Groth}{Groth}
\DeclareMathOperator{\Jac}{Jac}
\DeclareMathOperator{\LJ}{LJ}
\DeclareMathOperator{\JL}{JL}
\DeclareMathOperator{\Red}{Red}
\newcommand{\bnu}{\boldsymbol\nu}
\address{Paul Hamacher\\
Technische Universit\"at M\"unchen\\
Zentrum Mathematik - M 11\
Boltzmannstra{\ss}e 3\\
85748 Garching\\
Deutschland\\ 
\newline
Wansu Kim\\%
Department of Mathematical Sciences\\%
KAIST\\%
291 Daehak-ro, Yuseong-gu\\%
Daejeon, 34141\\%
South Korea}
\email{paul.hamacher@tum.de, wansu.math@kaist.ac.kr}
\title{Point Counting on Igusa Varieties for Function Fields}
\def \f {{\FF_q}}
\def \k {{\cl\FF_q}}
\def \Fbreve {{\breve{F}}}
\newcommand{\revise}[1]{#1}
\newcommand{\reviselong}[1]{#1}
\newif\ifrevision
  \newtcolorbox{revisebox}{%
    breakable,
    enhanced,
    parbox=false,           
    boxrule=0pt,            
    colback=green!15,       
    colframe=green!15,      
    sharp corners,          
    left=0pt, right=0pt,
    top=0pt, bottom=0pt,
    before skip=0pt, after skip=0pt,
    before upper=\indent,   
  }
  \newenvironment{revisebox}{}{}
\begin{document}

\begin{abstract}
 Igusa varieties over the special fibre of Shimura varieties have demonstrated many applications to the Langlands program via Mantovan's formula and Shin's point counting method. In this paper we study Igusa varieties over the moduli stack of global $\Gscr$-shtukas and (under certain conditions) calculate the Hecke action on its cohomology. As part of their construction we prove novel results about local $G$-shtukas in both equal and unequal characteristic and also discuss application of these results to Barsotti-Tate groups and Shimura varieties.
\end{abstract}

\maketitle

\tableofcontents

\section{Introduction}\label{sect-intro}

Over number fields, recent decades have witnessed a lot of progress in associating Galois representations to automorphic representations, and most of such constructions use \emph{Shimura varieties} as intermediary. Over global function fields, one can work with the \emph{moduli stacks of shtukas} in place of Shimura varieties to realise the automorphic-to-Galois direction of Langlands correspondence.

Over number fields, one can relate the cohomology of certain Shimura varieties to two ``simpler'' spaces -- the Rapoport-Zink spaces and the Igusa varieties -- which is one of the main ingredients for the construction  of local and global Langlands reciprocity (\emph{cf.} \cite{HarrisTaylor:TheBook},  \cite{ShinSW:GlobalLanglands}  and \cite{ShinSW:CohRZ}). Furthermore, Igusa varieties play an important role in the study of perfectoid Shimura varieties, as the generic fibres are precisely the fibres of the Hodge--Tate period morphism (\cite{CaraianiScholze:ShVar}).
Over function fields, Hartl and Viehmann \cite{HartlViehmann:Newton} first introduced and studied the analogue of Rapoport-Zink spaces for local $G$-shtukas when $G$ is a reductive group defined over a finite field (i.e., a constant reductive group), which was later extended to  more general groups by Arasteh~Rad and Hartl \cite {ArastehRad-Hartl:LocGlShtuka}. Built upon these works, the foliation theory for the moduli stacks of $G$-shtukas has been established by Neupert for constant reductive groups (\emph{cf.} \cite{neupert:thesis}), and simultaneously by A.~Wei\ss{} in the ``parahoric'' case (\emph{cf.} \cite{Weiss:thesis}). 

In this paper, we construct the function-field analogue of Igusa varieties in full generality (\emph{cf.} Thm.~\ref{th-main-ig-var}) and obtain a point-counting result analogous to Shin's result (\emph{cf.} Prop.~\ref{prop-main-pt-counting}). From this we deduce a formula for their cohomology in the case of division algebras (\emph{cf.} Thm.~\ref{th-main-div-alg}), and conjecture a formula for general Igusa varieties (\emph{cf.} Conj.~\ref{conj-trace-formula}). While the situation over global and local function fields is different from its classical counterpart in that we have many other approaches to the Langlands reciprocity, we cautiously expect that the study of function-field Igusa varieties constructed in this paper points to some interesting direction of research (beyond the case of division algebras) via analogy with their classical counterpart. More specifically, recent results on nearby cycles cohomology of Satake sheaves \cite{Xue:Smoothness,Salmon:UnipNearby-Shtukas,EteveXue:Nearby} suggest a cohomological implication of the foliation theory, at least for parahoric level structures. It would be interesting to explore the relationship between function-field Igusa varieties and various results on moduli stacks of global and local $G$-shtukas \cite{LafforgueZhu:elliptic,ArastehRad-Hartl:LR,ArastehRad:LocMod-RZ}, as well as the test function conjecture \cite{HainesRicharz:TestFtnParahoric}.
%

Upon completion of the first version of the paper, we learned that Jack Sempliner independently constructed Igusa varieties and obtained the almost product structure in case where $\Gsf$ is tamely ramified at $x$.

\subsection*{Results on local $G$-shtukas} 

In the first part of the paper, we prove some general results about local $G$-shtukas, which hold in greater generality than the rest of the paper. In the following let $F$ denote a local field (possibly of characteristic $0$) with residue field $\f$ and let $G$ be a reductive group over $F$. A $G$-isoshtuka over an $\f$-scheme $S$ is the local analogue of the generic point of a $\Gscr$-shtuka and if $S$ is perfect corresponds to an $F$-isocrystal with $G$-structure over $S$. The central part of our results on local $G$-shtukas is the following constancy result.

\begin{thm}[\emph{cf.}~Thm.~\ref{thm-constancy-of-G-isoshtukas}]
 Let $\Hcal$ be a $G$-isoshtuka over a perfect normal $\FF_q$-stack $S$ such that it is isomorphic to $(LG,b\sigma)$ for some $b \in G(\Fbreve)$ over every geometric point of $S$. Then there exists a profinite \'etale cover $S' \to S$ such that $\Hcal_{S'} \cong (LG,b\sigma)_{S'}$.
\end{thm}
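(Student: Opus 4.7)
The strategy is to realise the sheaf of trivialisations
\[
\Ical \;=\; \underline{\Isom}_S\bigl(\Hcal,\,(LG,b\sigma)\bigr)
\]
as the fibre over the identity of an \'etale morphism of loop group schemes over $S$, and then apply the standard fact that a pointwise non-empty \'etale sheaf over a perfect scheme has pro-\'etale sections.

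First, I would pass to a pro-\'etale cover of $S$ trivialising the underlying $LG$-torsor of $\Hcal$, using that $LG$-torsors over a perfect $\f$-scheme are pro-\'etale locally trivial. This reduces to the case $\Hcal = (LG,c\sigma)_S$ for some $c \in LG(S)$, so that
\[
\Ical \;=\; \{\,g \in LG_S \,:\, g\cdot c = b\cdot\sigma(g)\,\} \;=\; \phi^{-1}(1),
\]
where $\phi\colon LG_S \to LG_S$ denotes the Lang-type morphism $\phi(g) = \sigma(g)^{-1}\,b^{-1}\,g\,c$. The hypothesis translates into pointwise non-emptiness of $\Ical$.

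The key point is then that $\phi$ is \'etale. Indeed, perturbing $g = g_0(1+\epsilon X)$ by a first-order tangent vector $X$ over a perfect base of characteristic $p$, the Frobenius contribution vanishes because $\sigma(\epsilon) = \epsilon^p = 0$; hence $\sigma(g_0(1+\epsilon X)) = \sigma(g_0)$, and a direct computation yields
\[
\phi\bigl(g_0(1+\epsilon X)\bigr) \;=\; \phi(g_0)\cdot\bigl(1 + \epsilon\,\Ad(c^{-1})X\bigr).
\]
Thus $d\phi_{g_0} = \Ad(c^{-1})$ is an automorphism of $\Lie(LG)$, so $\phi$ is \'etale and therefore $\Ical = \phi^{-1}(1) \to S$ is \'etale as well. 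Together with the pointwise non-emptiness this yields pro-\'etale local sections of $\Ical$, producing the sought pro-\'etale cover $S' \to S$ on which $\Hcal_{S'} \cong (LG,b\sigma)_{S'}$.

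The main obstacle I anticipate is making the \'etaleness statement precise for $LG$, which is not of finite type. One has to write $LG$ as a filtered colimit of bounded sub-ind-schemes and verify the Lang-style differential calculation levelwise in the perfect category, as well as justify the pro-\'etale local triviality of $LG$-torsors over perfect $\f$-schemes used in the initial reduction. Once these foundational points are secured, descending the resulting statement from perfect affine schemes to a perfect $\FF_q$-stack $S$ is routine via a pro-\'etale presentation.
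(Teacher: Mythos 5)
Your proposal rests on two steps that, as stated, do not go through, and together they skip the entire substance of the paper's proof.

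\textbf{The preliminary trivialisation of the $LG$-torsor is not available.} You want to first replace $S$ by a pro-\'etale cover over which the underlying $LG$-torsor of $\Hcal$ becomes trivial. But pro-\'etale local triviality of $LG$-torsors over a perfect base is \emph{not} a standard fact for general $G$; indeed it is, in effect, part of what the theorem asserts (the conclusion $\Hcal_{S'}\cong (LG,b\sigma)_{S'}$ trivialises the torsor). In the paper's conventions, $LG$-torsors in characteristic $p$ are fpqc torsors. For $\GL_n$ they are Zariski-locally trivial (locally free $D_R^\ast$-modules that are locally free on $\Spec R$), but for a general $G$ the Tannakian reduction via Broshi's theorem only tells you that the associated pair $(\Vcal,\Lcal\subset t(\Vcal))$ is \'etale-locally isomorphic to $(V,L)$ -- and the residual obstruction to promoting a trivialisation of $\Vcal$ to a trivialisation of the $LG$-torsor is a $G$-torsor over $\Spec D_R^\ast$ that has no reason to die on a pro-\'etale cover of $\Spec R$. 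The paper never makes this reduction; instead it passes to the $\GL_n$-isoshtukas $\underline\Lcal,\underline\Vcal$ and proves a constancy statement for \emph{those} first (Prop.~2.5.4), then descends the inclusion $\underline\Lcal\hookrightarrow t(\underline\Vcal)$ back to $\cl\kappa_F$.

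\textbf{Formal \'etaleness of the Lang map does not give pro-\'etale local sections.} Your differential computation $d\phi_{g_0}=\Ad(c^{-1})$ is correct and does show $\Ical\to S$ is formally \'etale (lifting uniquely against square-zero thickenings). But this is far from enough. First, $\Ical\to S$ is not locally of finite presentation -- its geometric fibres are the locally profinite group $J_b(F)$ -- so ``formally \'etale $\Rightarrow$ \'etale'' is unavailable. Second, and more pointedly, between perfect rings the cotangent complex always vanishes, so formal \'etaleness carries no content when you want to lift a section from the closed point of a w-contractible perfect strictly local ring $R$ to all of $R$: the extension $R\to R/\mfr$ is not a square-zero thickening, and your computation says nothing about it. To get pro-\'etale local sections you need to show $\Ical\to S$ is (at least) faithfully flat and weakly \'etale, and to control it inside the ind-scheme $LG_S$ you need a boundedness input. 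That input is precisely what the paper obtains through the slope decomposition $\Nund=\Nund^{\rm et}\oplus\Nund^l$ (Lemma~2.4.1), Zink's lemma producing an effective local shtuka inside the isoshtuka after a perfectly proper cover (Lemma~2.4.2), $v$-descent to remove the proper cover, and the w-contractibility-plus-normality-of-perfect-Henselian-domains trick for non-normal $R$ (Prop.~2.5.4). Your argument bypasses all of this and, as written, has no mechanism to produce a section of $\Ical$ over a genuine (non-Artinian) local perfect ring.

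In short: the Lang-map picture correctly explains \emph{why} one expects the Igusa cover to be (ind-)\'etale and uniquely lifting, and it is the right heuristic. But it is a heuristic. The proof in the paper reduces to $\GL_n$, does a slope filtration argument, and handles the non-normal case via $v$-descent and w-contractible covers -- none of which is captured by your outline. To salvage your approach, you would at minimum need to (i) justify the initial pro-\'etale trivialisation (likely by importing the constancy result for $\GL_n$ you were trying to avoid), and (ii) prove flatness and a boundedness statement for $\phi^{-1}(1)\subset LG_S$, at which point you would likely be reproving the paper's lemmas anyway.
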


  From this statement we derive short and direct proofs of Tate's isogeny theorem for local $G$-isoshtukas with constant Newton point and the purity theorem for local $G$-isoshtukas, without further conditions on $G$ or the base scheme.

\begin{prop}[Tate's isogeny theorem; \emph{cf.}~Prop.~\ref{prop-tates-thm}]
 Let $\underline\Hcal_1,\underline\Hcal_2$ be two $G$-isoshtukas with constant Newton point over an integral normal $\kappa_F$-scheme $S$ with generic point $\eta$. Then the canonical restriction map
 \[
  \Isom(\underline\Hcal_1,\underline\Hcal_2) \to \Isom(\underline\Hcal_{1,\eta},\underline\Hcal_{2,\eta})
 \]
 is an isomorphism.
\end{prop}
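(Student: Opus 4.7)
The strategy is to invoke the constancy theorem from the preceding subsection to trivialise both $G$-isoshtukas pro-\'etale locally, at which point the Isom sheaf becomes a torsor under the $\sigma$-centraliser sheaf of $b$, whose sections over connected perfect normal bases are governed by their value at a single geometric point.

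First I would replace $S$ by its perfection $S^\perf$: morphisms of $G$-isoshtukas are invariant under the absolute Frobenius, so pullback induces a bijection on both sides of the claimed isomorphism, and $S^\perf$ remains integral with generic point (the perfection of) $\eta$. Injectivity of the restriction map then follows quickly, since the difference of two isomorphisms agreeing on $\eta$ is an automorphism of $\underline\Hcal_1$ that is the identity at $\eta$; because the automorphism sheaf sits inside the separated loop group $LG$, the equality locus is closed, and being dense it is all of $S$.

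For surjectivity, fix $\phi_\eta\in\Isom(\underline\Hcal_{1,\eta},\underline\Hcal_{2,\eta})$. By Thm.~\ref{thm-constancy-of-G-isoshtukas} there is a pro-\'etale cover $S'\to S^\perf$ and $b_i\in G(\Fbreve)$ with $\underline\Hcal_{i,S'}\cong(LG,b_i\sigma)_{S'}$. The existence of $\phi_\eta$ forces $b_1$ and $b_2$ to be $\sigma$-conjugate over any geometric point above $\eta$, so after refining $S'$ and adjusting one trivialisation I may assume $b_1=b_2=:b$. Under these identifications, $\underline\Isom(\underline\Hcal_1,\underline\Hcal_2)|_{S'}$ becomes the $\sigma$-centraliser sheaf $\underline{J}_b=\{g\in LG \mid g\iv b\sigma(g)=b\}$, and $\phi_\eta$ corresponds to a section of $\underline{J}_b$ over the generic point $\eta'$ of $S'$.

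The heart of the argument is to extend this generic section to a global section of $\underline{J}_b$ over $S'$, and then descend along $S'\to S^\perf$ using the already-established injectivity to verify the cocycle condition. \emph{I expect the extension step to be the main obstacle:} one must exploit the fact that, restricted to connected perfect normal bases with a prescribed geometric point, $\underline{J}_b$ behaves like a constant sheaf with value $J_b(F)$, so that sections from the generic point are already globally defined. This is morally a special case of the constancy theorem applied to the $G$-isoshtuka $(LG,b\sigma)$ itself, and normality of $S$ is essential here, as it substitutes for the valuative criterion needed to rule out rational sections of $\underline{J}_b$ that fail to extend across codimension-one points.
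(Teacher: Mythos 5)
Your overall plan — trivialise both isoshtukas pro-\'etale locally via Theorem~\ref{thm-constancy-of-G-isoshtukas}, extend the generic section of $\underline J_b$, then descend — is a genuinely different route from the paper's, although it rests on the same key input. The paper instead deduces from the constancy theorem that $\underline\Isom(\Lcal\underline\Gcal_1,\Lcal\underline\Gcal_2)$ is ind-affine and \emph{universally specialising}, hence ind-integral, over $S$; it then takes the scheme-theoretic closure of the generic $K$-point, which is a birational integral $S$-scheme and therefore equal to $S$ by normality. This avoids both the explicit pro-\'etale trivialisation and the descent step, and makes the role of normality (integral closure of $\Oscr_S$ in $K$) completely transparent.

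The substantive gap in your write-up is the explanation of why normality is needed. You invoke a Hartogs/purity heuristic (``rule out rational sections of $\underline J_b$ that fail to extend across codimension-one points''), but that is not the mechanism here. Since $\underline J_b^{\red}$ is the locally constant group scheme with fibre the locally profinite group $J_b(F)$ (§\ref{ssect-aut-group}), extending a section from a single point is not a codimension-one problem: a value in $J_b(F)$ at one point of a connected component propagates over the whole component. The real danger is that a pro-\'etale cover of a \emph{non-normal} integral scheme can have a connected component with several irreducible components (e.g.\ the connected degree-two \'etale cover of a nodal cubic is a union of two copies of $\PP^1$), so the pullback of $\phi_\eta$ may prescribe \emph{different} elements of $J_b(F)$ at the several generic points of a single connected component of $S'$, and your ``extension'' step is then ill-defined. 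Normality is exactly what guarantees that connected components of (pro-)\'etale covers of $S$ are irreducible, equivalently that $\Gal(K^{\mathrm{sep}}/K)\to\pi_1^{\proet}(S)$ is surjective, so that each component of $S'$ carries a unique generic value. Once this is fixed, the extension is well defined and the descent condition over $S''=S'\times_S S'$ follows from the closed-equality-locus argument on the dense generic fibre, so the approach does go through; but as written, the justification for the crucial use of normality is not the right one.
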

 Previous results on Tate's isogeny theorem have some additional restrictions on the base scheme or $G$; see \cite[Rmk.~4.2.7]{CaraianiScholze:ShVar} for Barsotti-Tate groups, and \cite[Prop.~2.7.6]{neupert:thesis} for equal characteristic local $G$-isoshtukas when the base is noetherian and $G$ is a constant reductive group.

\begin{prop}[Purity theorem; \emph{cf.}~Prop.~\ref{prop-purity}]
 Let $\underline\Hcal$ be an isoshtuka over a $\kappa_F$-scheme $S$. Denote by $S^{[b]_i} \subset S$ the locally closed subscheme determined by the geometric points where the isomorphism class of $\underline\Hcal$ is less or equal to $[b]$ and whose Newton point contains the break point of $[b]$ at $i$. (See \S\ref{ssect-applications} for details.) Then the embedding $S^{[b]_i} \mono S$ is an affine morphism of schemes.
\end{prop}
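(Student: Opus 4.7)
The plan is to reduce the statement to the classical purity theorem for Newton stratifications on $F$-isocrystals, leveraging the constancy theorem just proved as the key rigidifying tool. Since affineness is local on the target, we may assume $S = \Spec A$ is affine. A faithful representation $\rho \colon G \hookrightarrow \GL_n$ sends $G$-isoshtukas to $\GL_n$-isoshtukas compatibly with Newton stratifications: the Newton point of $\rho_*\underline\Hcal$ is the image of that of $\underline\Hcal$ under the induced map of rational cocharacter lattices. A break point of $[b]$ at $i$ on the $G$-side then corresponds under $\rho$ to a finite collection of break point conditions on the $\GL_n$-side, and since finite intersections of affine immersions remain affine, one may reduce to the case $G = \GL_n$.

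For $G = \GL_n$, a break point of $[b]$ at abscissa $i$ is a corner of the Newton polygon, and by Harder--Narasimhan theory forces the existence of a canonical rank-$i$ sub-isoshtuka of specified total slope on every geometric fibre in $S^{[b]_i}$. Applying the constancy theorem on the (dense) Newton stratum inside $\overline{S^{[b]_i}}$, and then descending via normality together with the canonicity of the HN filtration, one obtains a canonical rank-$i$ sub-$F$-lattice of $\underline\Hcal$ defined over $S^{[b]_i}$, which extends to a sub-lattice over the closure $\overline{S^{[b]_i}}$ (where the slope condition may degenerate). Thus $S^{[b]_i}$ is characterised inside $\overline{S^{[b]_i}}$ as the locus where this canonical sub-lattice has the predicted slope invariants.

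The final step, which I expect to be the main obstacle, is to show that the open immersion $S^{[b]_i} \hookrightarrow \overline{S^{[b]_i}}$ is cut out by a principal Cartier divisor; affineness then follows in conjunction with the automatically affine closed immersion $\overline{S^{[b]_i}} \hookrightarrow S$. This principality reflects that crossing the break point at $i$ is a codimension-one wall in the space of Newton polygons. Concretely, the failure of the canonical sub-lattice to attain the predicted slope in a deformation is measured by the non-vanishing of an explicit minor of the Frobenius matrix in a basis adapted to the HN filtration, and following the strategy of Rapoport--Richartz in the mixed-characteristic setting and Hartl--Viehmann in the equal-characteristic setting, this minor is a non-zero-divisor locally on $\overline{S^{[b]_i}}$. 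The constancy theorem plays a decisive role here, as it produces, after pro-\'etale descent, precisely the adapted basis in which the minor calculation becomes tractable.
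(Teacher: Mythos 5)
Your proof takes a genuinely different route from the paper, and the final step does not hold up. The paper's proof (Prop.~\ref{prop-purity}) reduces to $\GL_n$ (via \cite[Prop.~1]{Hamacher:DeforSpProdStr}) and then, following Viehmann, to the case where the break point is $(1,0)$. The key move is then entirely different from yours: it exhibits $S^{[b]_1}$ as the image of the affine $S$-scheme
\[
  S' = \{ (f,g) \in \underline\Hom_S((D_R,\sigma),\Mund) \times \underline\Hom_S(\Mund,(D_R,\sigma)) \mid f \circ g = \id \},
\]
and shows $S' \to S^{[b]_1}$ is integral by checking the valuative criterion via Tate's isogeny theorem (Lemma~\ref{lem-Tates-thm}, case (\ref{lem-Tates-thm-purity})). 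Affineness of $S^{[b]_1}$ then follows from Chevalley's theorem, since an affine scheme maps integrally and surjectively onto it. No Cartier-divisor structure enters.

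The gap in your proposal is the claim that $S^{[b]_i}\hookrightarrow\overline{S^{[b]_i}}$ is the complement of a \emph{principal} Cartier divisor. This is strictly stronger than affineness and is not what any of the purity results assert: the de Jong--Oort/Yang version gives that the boundary has pure codimension one, and Vasiu's and Viehmann's versions give affineness, but neither produces a single global defining equation, and there is no reason the Picard group of $\overline{S^{[b]_i]}}$ is trivial. Your proposed ``minor of the Frobenius matrix'' is defined only after choosing an adapted basis on a pro-\'etale cover; the cover is non-canonical, the minor does not descend to a function on $\overline{S^{[b]_i}}$, and nothing forces it to be a non-zero-divisor even locally. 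Moreover, the intermediate descent-by-normality step you invoke does not apply: $S$ (and hence $\overline{S^{[b]_i}}$) is an arbitrary $\kappa_F$-scheme, and normality/integrality enter only through the valuative criterion, where the test objects are automatically normal. To repair your argument you would essentially have to replace the Cartier-divisor mechanism with the integral-surjection-from-an-affine argument the paper actually uses.
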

 The original purity theorem for $F$-crystals of de Jong-Oort \cite[Thm.~4.1]{deJongOort:Purity} has been strengthened by subsequent work of Vasiu \cite[Main~Theorem~B]{Vasiu:CrystallineBoundedness} and Yang \cite{Yang:Purity}. In the equal characteristic setting, the purity result was shown by Viehmann \cite{Viehmann:NewtonStratLoopGp} for split $G$ and integral Noetherian $S$. 

Moreover, we give another proof of the result announced by of Shen and Zhang \cite[(5.4.5)]{ShenZhang:ShVarStrat} that ``canonical'' central leaves in Shimura varieties of abelian type are closed inside their respective Newton stratum and that their connected components are connected components of adjoint central leaves (\emph{cf}.~Prop.~\ref{prop-Shen-Zhang}).

\subsection*{General results on Igusa varieties for function fields} We fix a global function field $\Fsf$ associated to a smooth projective geometrically connected curve $C/\FF_q$, and set $\breve\Fsf \coloneqq \Fsf \otimes_{\FF_q} \cl\FF_q$. For $x\in|C|$ (with $|C|$ denoting the set of closed points) let $\Fsf_x$ be the completion of $F$ at $x$ with valuation ring $O_x$, and $\breve \Fsf_x$ the completion of the maximal unramified extension of $\Fsf_x$. Let $\Gsf$ be a connected reductive group over $\Fsf$, and $\Gscr$ a smooth affine model over $C$ with geometrically connected fibres. 

We fix a tuple of geometric points $\overline\xbf = (\overline x_i)\in C(\cl\FF_q)^n$, and denote by $\Xscr_{\Gscr,\Ibf,\overline\xbf}$ the ind-Deligne--Mumford moduli stack of unbounded global $\Gscr$-shtukas with legs $\overline\xbf$. Let $\xbf=(x_i)\in|C|^n$ denote the tuple of closed points underlying $\overline\xbf$, and we assume that $x_i$'s are pairwise distinct. (See \S\ref{ssect-infinite-level-igusa} for the precise prerequisite, which is slightly more general.) When $\Gscr$ is a constant reductive group or more generally a parahoric group scheme, Igusa varieties on the moduli stack of $\Gscr$-shtukas have been defined in the theses of Neupert (\cite{neupert:thesis}) and Wei\ss{} (\cite{Weiss:thesis}), respectively. We give a construction for general $\Gscr$ as follows.

\begin{thm}[Thm.~\ref{th-infinite-level-igusa-var}, Def.~\ref{def-infinite-level-igusa-variety}, \S\ref{ssect-global-J-orbits}]\label{th-main-ig-var}
  We fix $\bbf = (b_i) \in \prod_{i=1}^n \Gsf(\breve\Fsf_{x_i})$.
\begin{subenv}
\item\label{th-main-ig-var:cen-leaf} There exists a Deligne--Mumford stack $C^\bbf_{\Gscr,\xbf}$ locally of finite type and smooth over $\cl\FF_q$, which admits a locally closed immersion into $\Xscr_{\Gscr,\Ibf,\overline\xbf}$ whose image is characterised as follows: a global $\Gscr$-shtuka $\underline\Vscr_\bullet\in \Xscr_{\Gscr,\Ibf,\overline\xbf}(\cl\kappa)$ over an algebraically closed extension $\cl\kappa$ of $\cl\FF_q$ is in the image of $C^\bbf_{\Gscr,\xbf}$ if and only if the localisation $\underline\Vscr_0[x_i^\infty]$ at each $x_i\in\xbf$ \eqref{eq-loc-j} is isomorphic to the one defined by $b_i$ in the sense of Rmk.~\ref{rmk-gl-loc} (especially, \eqref{eq-gl-loc-b}).
\item\label{th-main-ig-var:igusa} The functor simultaneously trivialising the localisation of the universal $\Gscr$-shtuka over $C^\bbf_{\Gscr,\xbf}$ at each place $x \in |C|$ is represented by a scheme $\Ig^{\bbf}_{\Gsf,\xbf}$, which only depends on $\Gsf$ and $\xbf$ (but not on $\Gscr$ and $\overline\xbf$) up to isomorphism. Furthermore, the perfection of $\Ig^{\bbf}_{\Gsf,\xbf}$ is a pro-\'etale Galois cover of $C^{\bbf,\perf}_{\Gscr,\xbf}$ and is equipped with a natural action of $\JJ_\bbf\coloneqq \Gsf(\AA^{\xbf})\times \prod_{i=1}^nJ_{b_i}(\Fsf_{x_i})$. \revise{(See \S\ref{ssect-aut-group} for the definition~of~$J_{b_i}(\Fsf_{x_i})$.)}
\item Its closed geometric points can be described in group-theoretical terms as follows: 
\[ \Ig_{\Gsf,\xbf}^{\bbf}(\cl\FF_p) = \bigsqcup_{[b]_\sigma \in \B(\Fsf,\Gsf)_\bbf} \iota_{\ztilde}(\Jsf_b(\Fsf)) \backslash \JJ_\bbf.\]
Here $\B(\Fsf,\Gsf)_\bbf$ denotes the set of $\sigma$-conjugacy classes in $\Gsf(\breve\Fsf)$ localising to $[b_i]_\sigma$ at $x_i$ and to $[1]_\sigma$ at places away from $\xbf$, and the embedding $\iota_{\ztilde}\colon\Jsf_b(\Fsf) \mono \JJ_\bbf$ is defined in \S\ref{ssect-global-J-orbits}. 
\end{subenv}
\end{thm}
 
Moreover, we prove the following analogue of part (\ref{th-main-ig-var:igusa}); namely, the moduli stacks of global $\Gscr$-shtukas with infinite level structure at a place $v \in |C|$ are independent (up to canonical isomorphism) of the choice of $\Gscr_{O_v}$. As a consequence, we can express certain (finite) level structures at $v$ by modifying $\Gscr$ at $v$ and thereby extend the integral model to $v$ (\emph{cf}.~Prop.~\ref{prop-infinite-level}, Rmk.~\ref{rmk-infinite-level}).

Just as in the classical case, we can group-theoretically describe the set of fixed points by a Hecke correspondence on Igusa varieties after a suitable iterated Fro\-be\-nius-twist. Unfortunately, Igusa varieties may \emph{not} be quasi-compact in general, so we should count fixed points in some quasi-compact open subscheme (classically de\-fi\-ned via Harder--Narasimhan stratification); see Prop.~\ref{prop-preliminary-point-counting} for more details. But in the favourable case when the Igusa variety is quasi-compact, one can use the Lefschetz trace formula to describe the Hecke action on the cohomology of the Igusa variety in terms of the fixed points. The following is an analogue of the \emph{first form of the point-counting formula} for Igusa varieties of PEL and Hodge type (\emph{cf.} \cite[Lem.~7.4]{ShinSW:IgusaPointCounting}, \cite[Prop.~4.2.4]{Mack-Crane:2022wo}) in the quasi-compact case.

\begin{prop}[{\emph{cf.}~Prop.~\ref{prop-preliminary-point-counting}, \ref{prop-preliminary-trace-formula}}]\label{prop-main-pt-counting}
 Let $\Xi \subset \Zsf_\Gsf(\AA)$ denote a discrete subgroup in the center $\Zsf_\Gsf$ of $\Gsf$ with $\Xi\cap\Zsf_\Gsf(\Fsf) = \{1\}$ such that $\Zsf_\Gsf(\Fsf)\backslash\Zsf_\Gsf(\AA)/\Xi$ is compact. We assume that $\Ig^\bbf_{\Gsf,\xbf,\Xi} \coloneqq \Ig^\bbf_{\Gsf,\xbf}\sslash \Xi$ is quasi-compact. We fix a prime $\ell$ different from the characteristic of $\FF_q$. Then for any $\varphi \in C^\infty_c(\JJ_\bbf/\Xi;\cl\QQ_\ell)$ the trace of the $s$-th iterated Frobenius twist $\varphi^{(s)}$ for $s \gg 0$ on the cohomology of $\Ig^\bbf_{\Gsf,\xbf,\Xi}$ is given by
 \[
  \sum_i(-1)^i\tr\big(\varphi\mid \coh i_c(\Ig^{\bbf}_{\Gsf,\xbf,\Xi},\cl\QQ_\ell)\big) = \sum \vol\left (\iota_{\tilde z}(\Zsf_{\Jsf_{b}(\Fsf)}(a)) \backslash \Zsf_{\JJ_\bbf}(\iota_{\tilde z}(a))/\Xi\right )\cdot O^{\JJ_{\bbf}}_{\iota_{\tilde z}(a)}(\varphi^{(s)}),\]
 where the sum runs over all $[b]_\sigma \in \B(\Fsf,\Gsf)_\bbf$ and conjugacy classes $[a] \subset \Jsf_b(\Fsf)$. Here, $O^{\JJ_{\bbf}}_{\iota_{\tilde z}(a)}(\varphi^{(s)})$ denotes the orbital integral $\int _{\Zsf_{\JJ_\bbf}(\iota_{\tilde z}(a))\backslash\JJ_\bbf}  \varphi^{(s)} (y^{-1}\iota_{\tilde z}(a) y) d\bar y$.
\end{prop}

Even when the Igusa variety is \emph{not} quasi-compact, we expect that there should be a \emph{simple trace formula} for Igusa varieties, involving only the ``elliptic part'' of local terms; \emph{cf.} Conj.~\ref{conj-trace-formula}. In \S\ref{sect-conjectures}, we explain the motivation behind this conjecture.

Classically, the preliminary point-counting formula needs to be rewritten so that the summation is over some analogue of Kottwitz triples (which we call \emph{Kottwitz--Igusa triples}; \emph{cf.} Def.~\ref{df-Kottwitz-Igusa}). Note however that in the function field case there may be a priori \emph{no} place forcing the Kottwitz--Igusa triple to be elliptic while in the classical case Kottwitz--Igusa triples are forced to be elliptic at the archimedean place. Nonetheless, we still expect that for suitable test functions only the elliptic Kottwitz--Igusa triples contribute to the trace formula (\emph{cf}.~ Conj.~\ref{conj-trace-formula}).

Thus, we focus on the elliptic part of local terms, and express them in terms of Kottwitz--Igusa triples. (See Thm.~\ref{th-point-counting-elliptic} for the precise result.)

\subsection*{Case of division algebras}
 Using our results, we calculate the trace when $\Gsf = \Dsf^\times$, where $\Dsf$ is a central division algebra over $\Fsf$ which splits at $x_i$'s.  
Choose $\Xi = \langle\xi\rangle\subset \AA^{\times}$ where $\xi$ is an id\`ele with positive degree, and view it as a subgroup of $\Dsf_\AA^\times\coloneqq (\Dsf\otimes_\Fsf\AA)^\times$. Let $\Ig^{\bbf}_{\Dsf^\times,\xbf,\Xi}$ denote the Igusa variety thus obtained, which is known to be quasi-compact.

\begin{thm}[\emph{cf.}~{Thm.~\ref{th-second-basic-id}}]\label{th-main-div-alg}
In the above setting, we additionally assume that $\dim_{\Fsf}\Dsf=n^2$ with $n<p$.
Then for any $\varphi\in C^{\infty}_c(\Dsf^{\times}\backslash \Dsf_\AA^{\times}/\Xi)$, we have 
\[
\sum_i(-1)^i\tr\big(\varphi\mid \coh i_c(\Ig^{\bbf}_{\Dsf^\times,\xbf,\Xi},\cl\QQ_\ell)\big) = \revise{(-1)^\delta\cdot}\sum_{\pi\subset C^\infty_c(\Dsf^\times\backslash\Dsf^\times_\AA/\Xi)} \tr \big(\varphi\ | \Red^\bbf(\pi)\big),
\]
where  the right hand side is the sum over all automorphic representations $\pi$ of $\Dsf^\times_\AA$ with central character trivial on $\Xi$, with only finitely many nonzero summands. Here,  $\Red^{\bbf}$ is a representation-theoretic operation defined in \eqref{eq-Red}, and the sign  \revise{$(-1)^\delta$} is explicitly determined.
\end{thm}

This theorem is an analogue of the \emph{second basic identity}  relating the cohomology of certain unitary Shimura varieties and corresponding Igusa varieties (\emph{cf.} \cite[Thm.~V.5.4]{HarrisTaylor:TheBook}, \cite[Thm.~1.6]{ShinSW:CohRZ}), provided that the discrete part of the inter\-section cohomology of the moduli stacks of $\Dscr^{\times}$-shtukas has a description similar to the case of Shimura varieties, where $\Dscr$ is a maximal order of $\Dsf$). Such a result is known under a properness assumption and the ``base change fundamental lemma'' for $\GL_n$ in \cite[Thm.~9.3.3]{Lau:Thesis} or can be deduced in full generality from \cite[Cor.~3.1]{LafforgueZhu:elliptic} assuming suitable parity vanishing of the intersection cohomology. See Cor.~\ref{cor-second-basic-id-Kottwitz} and the subsequent discussions for more details.

\subsection*{Acknowledgements}
We are grateful to Sug Woo Shin for his many helpful discussions and his careful reading of the draft. We also thank Eva Viehmann and Pol van Hoften for valuable discussions. We are sincerely grateful to the anonymous referee for the numerous insightful comments, which played a significant role in improving the paper. The first named author was partially supported by ERC Consolidator Grant 770936: NewtonStrat. The second named author was supported by the National Research Foundation of Korea(NRF) grant funded by the Korea government(MSIT) (No.~RS-2023-00208018).

\section{Igusa covers}\label{sect-Igusa-Covers}
\subsection{Setup} In this section, we  allow a more general setup than  the rest of the paper. We fix a local field $F$ of residue characteristic $p$ with ring of integer $O_F$, uniformiser $\unif$ and residue field $\kappa_F$. We denote by $\Fbreve$ the completion of the maximal unramified extension of $F$. If $\cha F = 0$ we will require any $\kappa_F$-algebra, $\kappa_F$-scheme and algebraic stack over $\kappa_F$ to be perfect without further mention. To ensure that we do not leave this category accidentally, the setup for $\cha F = 0$ is less general then for $\cha F = p$. If $\cha F = p$, we will write ``torsor'' instead of ``torsor for the fpqc-topology'' and ``stack'' instead of ``Artin stack''. If $\cha F = 0$, we will write ``torsor'' instead of ``torsor for the pro-\'etale-topology'' and ``stack'' instead of ``Deligne-Mumford stack''. We fix a smooth affine group scheme $G$ over $O_F$ with connected fibres. 
 
  Depending on $F$, we define for any $\kappa_F$-algebra $R$
 \begin{align*}
   D_R &\coloneqq \begin{cases}
 				R\pot{\unif} & \textnormal{ if } \cha F = p \\
 				W_{O_F}(R) & \textnormal{ if } \cha F = 0
 			   \end{cases} \\
 D_R^\ast &\coloneqq \begin{cases}
                   R\rpot{\unif} & \textnormal{ if } \cha F = p \\
 				   W_{O_F}(R)[\unif^{-1}] & \textnormal{ if } \cha F = 0.			   
 				  \end{cases} 
 \end{align*}
 We note that $R \mapsto D_R$ and $R \mapsto D_R^\ast$ are sheaves for the big Zariski site over $\kappa_F$. We say that a $D_R$- or $D_R^\ast$-module is locally free for the Zariski topology on $\Spec R$ if the corresponding module sheaf over $\Spec R$ is. It is known that finitely generated $D_R$-modules are locally free if and only if they are locally free for the Zariski topology on $\Spec R$. (See e.g.~\cite[Lem.~8.9]{Kreidl:padicGr}.)

  The positive loop group and the loop group are defined as the group-valued functors on (perfect) $\kappa_F$-algebras given by
\begin{align*}
 L^+G(R) &\coloneqq G(D_R),\ \text{and} \\
 L G(R) &\coloneqq G(D_R^\ast).
\end{align*}
The functors $L^+G$ and $LG$ are representable by an affine scheme and an ind-scheme, respectively. If $\cha F = 0$ they are perfect by definition as we only allowed perfect test objects. For any $L^+G$-torsor $\Gcal$ over a $\kappa_F$-scheme $S$, we let $\Lcal\Gcal\coloneqq LG \times^{L^+G}\Gcal$ denote the pushout; in other words, $\Lcal\Gcal$ is the image of $\Gcal$ under the natural morphism $[L^+ G\backslash *](S) \to [LG\backslash *](S)$ of the classifying stacks. (This construction extends naturally when $\Gcal$ is an $L^+G$-torsor over a $\kappa_F$-stack $S$.)
 
 \begin{defn} \label{defn-local-shtuka}
  Let $S$ be a $\kappa_F$-stack.
  \begin{subenv}
   \item A \emph{local $G$-isoshtuka} over $S$ is a pair $(\Hcal,\varphi)$ where $\Hcal$ is an $LG$-torsor over $S$ and $\varphi\colon \sigma^\ast\Hcal \to \Hcal$ an isomorphism. A \emph{local $G$-shtuka} over $S$ is a pair $(\Gcal,\varphi)$, where $\Gcal$ is an $L^+G$-torsor over $S$ and $\varphi$ is an isomorphism $\varphi\colon \sigma^\ast \Lcal\Gcal \to \Lcal\Gcal$. 
   \item A local $G$-shtuka $(\Gcal,\varphi)$ is called \emph{\'etale} if $\varphi$ restricts to an isomorphism $\sigma^\ast\Gcal \isom \Gcal$. 
    \end{subenv}
 \end{defn}

 If $S = \Spec C$ for an algebraically closed field $C$, any $LG$-torsor is trivial and hence every local $G$-isoshtuka is isomorphic to $(LG,b\sigma^\ast)$ for some $b \in G(D_{C}^\ast)$. This induces a bijection between the isomorphism classes of local $G$-isoshtukas over $C$ and $\sigma$-conjugacy classes $B(F,G)(C)$ in $G(D_{C}^\ast)$. By \cite[Thm.~1.1]{RapoportRichartz:Gisoc}, this set does not depend on $C$ and will simply be denoted by $B(F,G)$. Given a local $G$-isoshtuka $\underline\Hcal$ over a $\kappa_F$-stack $S$ and $b \in G(\breve{F})$, it is a classical result that the geometric points $\sbar$ of $S$ such that $\Hcal_{\sbar} \cong (LG,b\sigma^\ast)$ form a locally closed substack $S^{[b]}$ (\emph{cf.} \cite[Thm.~3.6]{RapoportRichartz:Gisoc}, and see also Prop.~\ref{prop-purity} below for a stronger result). If $S = S^{[b]}$ for some $b$, we say that $\underline\Hcal$ (or $\underline\Gcal$ if $\Hcal = \Lcal\underline\Gcal$) has constant isogeny class.

 \subsection{The Igusa cover for \'etale local $G$-shtukas} Let $S$ be a $\kappa_F$-stack and let $\underline{\Gcal} = (\Gcal,\varphi)$ be an \'etale local $G$-shtuka over $S$. We denote by $\Gcal^\varphi$ the $\varphi$-invariants of $\Gcal$, i.e.\ the difference kernel of $\varphi$ and the canonical projection $\sigma^\ast \Gcal \to \Gcal$. In the literature $\Gcal^\varphi$ is usually called the \emph{(dual) Tate module} of $\underline\Gcal$. 
 
 An equivalent definition (which generalises better in the case of arbitrary local $G$-shtukas) is as the moduli stack of isomorphisms
 \[
  \Ig_{\underline\Gcal}^e \coloneqq \underline\Isom((L^+G_S,\sigma),\underline{\Gcal}) \cong \Gcal^\varphi,
 \]
 which we call the Igusa cover of $\underline\Gcal$. We briefly summarise its properties.
 
 \begin{prop} \label{prop-Igusa-etale}
The Igusa cover  $\Ig_{\underline\Gcal}^e$ is a pro-\'etale $G(O_F)$-torsor over $S$, where we regard $G(O_F)$ as locally constant profinite $\kappa_F$-group scheme. More precisely, the construction of the Igusa cover yields an equivalence of categories between the category of \'etale local $G$-shtukas and the category of $G(O_F)$-torsors.
  \end{prop}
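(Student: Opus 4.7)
The plan is to prove the assertion by working étale-locally on $S$, reducing to finite-level truncations of $L^+G$, and applying a Lang-style argument at each level; the quasi-inverse functor will then be written down directly. The key structural input will be the vanishing of the differential of $\sigma$ on any smooth $\kappa_F$-scheme, which will make the relevant Lang morphisms étale.

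To start, every $L^+G$-torsor is étale-locally trivial, so I first pass to an étale cover $S'\to S$ on which $\underline\Gcal$ becomes isomorphic to $(L^+G_{S'},b\sigma)$ for some $b\in L^+G(S')$; by descent it suffices to establish the torsor property over $S'$. In this trivialisation one has
\[
  \Ig^e_{\underline\Gcal}(T)=\{g\in L^+G(T):g=b\,\sigma(g)\},
\]
and writing $L^+G=\varprojlim_n L_nG$ with truncation $b_n$ of $b$, the finite-level Igusa cover $\Ig^e_{\underline\Gcal,n}(T)=\{g\in L_nG(T):g=b_n\sigma(g)\}$ is realised as the fibre over the identity of the Lang-type morphism $\psi_n\colon L_nG_{S'}\to L_nG_{S'}$, $g\mapsto g^{-1}b_n\sigma(g)$.

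The next step will be to check that each $\psi_n$ is étale. Since $\sigma$ is the absolute Frobenius of $L_nG$ as a $\kappa_F$-scheme (or of its perfection if $\cha F=0$), its differential vanishes; a direct computation then shows that the differential of $\psi_n$ at $g_0$ is $v\mapsto -g_0^{-1}v\cdot\psi_n(g_0)$, which is an isomorphism of tangent spaces. The natural right-translation action of the constant $\kappa_F$-group scheme $G(O_F/\varpi^n)=L_nG(\kappa_F)$ stabilises $\Ig^e_{\underline\Gcal,n}$, since $\sigma$ fixes $\kappa_F$-valued points, and the classical Lang argument for smooth affine groups combined with the étaleness of $\psi_n$ shows that it is simply transitive on geometric points. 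Hence $\Ig^e_{\underline\Gcal,n}$ is a finite étale $G(O_F/\varpi^n)$-torsor over $S'$; taking the limit realises $\Ig^e_{\underline\Gcal}$ as a pro-étale $G(O_F)$-torsor, and descending along $S'\to S$ completes the torsor claim.

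For the equivalence of categories, the quasi-inverse functor will send a pro-étale $G(O_F)$-torsor $T$ to the contracted product $\Gcal\coloneqq L^+G\times^{G(O_F)}T$ equipped with the Frobenius $\varphi$ induced from the $\sigma$-action on $L^+G$ and the canonical identification $\sigma^\ast T\cong T$ (valid because étale morphisms commute with Frobenius). Checking that the two compositions are naturally isomorphic to the identity reduces, after passing to a local trivialisation where $T$ becomes the constant torsor, to tautological statements about the Tate module of $(L^+G\times^{G(O_F)}T,\varphi)$ and about the reconstruction $\underline\Gcal\cong L^+G\times^{G(O_F)}\Ig^e_{\underline\Gcal}$ coming from the universal property of the Igusa cover. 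The main technical obstacle I anticipate is making the above argument work uniformly in both equal and mixed characteristic: in the latter case the standing convention forces us to remain in the perfect world, where $L_nG$ is not smooth in the classical sense and the phrase ``$d\sigma=0$'' must be unpacked carefully, either by applying the Lang argument to an auxiliary smooth deperfection of $L_nG$ before perfecting back, or by recasting the étaleness statement directly in terms of pro-étale morphisms of perfect schemes.
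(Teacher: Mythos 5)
Your approach is genuinely different from the paper's. The paper reduces to the $\GL_n$ case via the Tannakian formalism for the dual Tate functor, identifying $\Ig^e_{\underline\Gcal}$ with $\underline\Isom^\otimes(\omega^\circ,\check\Tcal_{\underline\Gcal})$, and then cites Arasteh Rad--Hartl and Neupert for the resulting statement about fibre functors. You instead run a Lang-type argument directly on the finite-level truncations: trivialise $\Gcal$ \'etale-locally, identify the level-$n$ Igusa cover with the fibre over the identity section of the twisted Lang map $\psi_n(g)=g^{-1}b_n\sigma(g)$, and use $d\sigma=0$ to show $\psi_n$ is \'etale. This is more self-contained and avoids the appeal to Broshi's embedding theorem behind the Tannakian step, which is a real virtue. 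Two points you should make explicit: (i) Lang's theorem requires $L_nG$ to be connected, which is where the standing hypothesis that $G$ has geometrically connected fibres enters — without it the finite \'etale map $\psi_n^{-1}(1)\to S'$ need not be surjective; (ii) \'etale-local triviality of $L^+G$-torsors needs a word (it follows from pro-smoothness of $L^+G$ together with vanishing of $H^1$ of vector groups over an affine base, but it is not tautological).

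The genuine soft spot is the one you flag yourself: in the paper's convention for $\cha F=0$ everything lives in the category of perfect schemes, where ``$d\sigma=0$'' is meaningless (Frobenius on a perfect scheme is an automorphism, and differentials carry no information). Your proposed fix — deperfect $L_nG$ to the Greenberg scheme $\mathrm{Gr}_n(G)$, run the Lang argument there, then perfect back — does work, but you would need to actually verify that the endomorphism $\sigma$ (which comes from $W_n(\Frob)$) descends to $\mathrm{Gr}_n(G)$, that it still has vanishing differential there (it does: in Witt coordinates $W_n(\Frob)$ raises each component to the $q$-th power), and that perfection preserves the relevant \'etaleness. The paper's Tannakian reduction handles both characteristics uniformly and avoids touching the Greenberg deperfection, at the cost of being citation-heavy. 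As written, your argument is complete only in equal characteristic; the mixed characteristic case is an outline.
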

 \begin{proof}
  Let $\check\Tcal_{\underline\Gcal}$ denote the \emph{(dual) Tate functor} in the sense of  \cite[Def.~3.5]{ArastehRad-Hartl:LocGlShtuka}, which is a (covariant) tensor functor from the category of algebraic $\Gscr$-representations over $O_F$ to the category of finite free $O_F$-modules. (The definition in \emph{loc.~cit.} obviously extends to the case when $\cha F = 0$.) Note that $\check\Tcal_{(L^+G_S,\sigma)}$ is naturally isomorphic to the forgetful functor (i.e., the fibre functor), which we denote by $\omega^\circ$. 
  
  By \cite[Prop.~3.6]{ArastehRad-Hartl:LocGlShtuka}, we get a natural isomorphism $\Ig^e_{\underline\Gcal}\cong\Isom^\otimes(\omega^\circ,\check\Tcal_{\underline\Gcal})$. (The proof of \emph{loc.~cit.} holds verbatim when $\cha F = 0$.) Now, one can repeat the proof of \cite[Lem.~6.2]{ArastehRad-Hartl:UnifStack} in our simpler local setting where we consider an \'etale local $G$-shtuka $\underline\Gscr$ in place of a global $\Gfr$-shtuka, and $O_F$ in place of $\OO^{\underline\nu}$. This argument is worked out in details in \cite[\S2.6]{neupert:thesis} when $\cha F = p$ and that $G$ is defined over $\kappa_F$, but the proof does not make use of this assumption.
 \end{proof}
 
  In order to generalise this proposition to more general local $G$-shtukas, we separate its two assertions - representability of the automorphism group of a local $G$-shtuka and its trivilisation over a profinite \'etale cover.

 \subsection{The automorphism group of a local $G$-shtuka} \label{ssect-aut-group}
 Let $S$ be a scheme over $\cl{\kappa}_F $, $\underline\Gcal$ a local $G$-shtuka over $S$ and $b \in G(D_{\cl{\kappa}_F }^\ast)$. We denote by $\GG_b \coloneqq (L^+G,b\sigma)$ the corresponding local $G$-shtuka over $\cl{\kappa}_F $, and let
  \[
   \Gamma_b \coloneqq \Aut(\GG_b) = \{g \in G(O_{\breve F}) \mid g^{-1}b\sigma(g) = b \}
  \]
  be the group of automorphisms over $\cl{\kappa}_F$, equipped with the $\varpi$-adic topology. As a $\cl{\kappa}_F$-scheme, we have $\Gamma_b = \underline\Aut(\GG_b)^\red$. To see this, we first reduce to $\GL_n$ by choosing a faithful representation $\rho\colon G \to \GL_n$ and associate to $\GG_b$ the local shtuka $(D_{\kappa}^n,\rho(b)\sigma_F)$. Now the claim for $F = \QQ_p$ is proven in \cite[Prop~A.1]{HamacherKim:Mantovan}, one easily sees that the proof generalises verbatim to $\cha F = 0$. If $\cha F = p$ we see that the same proof works as long as the underlying scheme is reduced. (This is required by a nilpotency argument in the proof of \cite[Lem.~3.9]{RapoportRichartz:Gisoc}.) By the same argument one sees that the reduced subscheme automorphism group of the associated isoshtuka is represented by $J_b(F) \coloneqq \{g \in G(\Fbreve) \mid g^{-1}b\sigma(g) = b \}$.

 \subsection{Constancy properties of local shtukas}

 We first consider the case $G = \GL_n$. Using the usual equivalence between $\GL_n$-torsors and rank-$n$ vector bundles, one checks that the definition of a local $\GL_n$-shtuka is equivalent to a rank-$n$ local shtuka, which is defined below. (See e.g.\ \cite[\S4]{HartlViehmann:Newton}.) The advantage of local shtukas is that one can use Zink's lemma to subsequently  reduce considerations to the case of effective local shtukas and then to the \'etale case.
 
  \begin{defn}
  Let $R$ be a $\kappa_F$-algebra.
  \begin{subenv}
   \item A local isoshtuka over $R$ is a pair $(N,\varphi)$ where $N$ is a finitely generated $D_R^\ast$-module which is locally free for the Zariski topology on $\Spec R$ and an isomorphism
   $
    \varphi\colon \sigma^\ast N \isom N.
   $
   Similarly, a local shtuka over $R$ is a pair $(M,\varphi)$, where $M$ is a projective $D_R$-module and an isomorphism
   $
    \varphi\colon \sigma^\ast M[\unif^{-1}] \isom M[\unif^{-1}]
   $
   \item A local shtuka $(M,\varphi)$ is called effective if  $\varphi$ restricts to a morphism $\sigma^\ast(M) \to M$. It is called \'etale, if $\varphi$ restricts to an isomorphism $\sigma^\ast(M) \isom M$
   \item A morphism of local shtukas $(M,\varphi) \to (M',\varphi')$ is a $D_R$-linear morphism $f\colon M \to M'$ such that $f \circ \varphi = \varphi' \circ \sigma^\ast f$.  
   \end{subenv} 
 \end{defn}  
 
 The following two useful lemmas are adapted from Zink's article \cite{Zink:SlopeFil}. We briefly explain how to transfer their respective proofs.
 
 \begin{lem}[{cf.~\cite[Lem.~3 \& 4]{Zink:SlopeFil}}] \label{lem-zink}
  Let $\Mund = (M,\varphi)$ be an effective local shtuka over $R$.
  \begin{subenv}
   \item The functor $\Ccal_{\Mund} \coloneqq \underline\Hom((D_R,\sigma^\ast), \Mund)$ is represented by an affine pro-\'etale scheme over $R$.
   \item If the slope-$0$ part of the Newton polygon of $\Mund$ has constant length over $\Spec R$, $\Ccal_{\Mund}$ is profinite \'etale over $R$. If $R$ is moreover perfect, then $M = M^{et} \oplus M^l$, where
   \begin{align*}
    &M^{et} \coloneqq \Ccal_{\Mund} \otimes_{O_F} D_R,\ \text{and} \\
    &M^l \coloneqq \{v \in M \mid \varphi^n(v) \to 0 \textnormal{ as } n \to \infty \textnormal{ for the }\varpi\textnormal{-adic topology}\}.
   \end{align*}   
   In particular, $\Mund^{et} \coloneqq (M^{et},\restr{\varphi}{M^{et}})$ is an \'etale local shtuka and $\Mund^l \coloneqq (M^l,\restr{\varphi}{M^l}) $ has only positive Newton slopes. 
  \end{subenv}  
 \end{lem}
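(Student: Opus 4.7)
I follow Zink's original arguments, adapted to the present setup that handles both equal and unequal characteristic cases simultaneously.

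For (i), present $\Ccal_{\Mund}$ as the inverse limit of the truncations
\[
 \Ccal_{\Mund,n}(R') \coloneqq \{v \in (M/\unif^n M) \otimes_R R' \mid \varphi(1\otimes v) = v\},
\]
each of which is realised as the equaliser of the identity and the $\sigma$-semilinear endomorphism $v \mapsto \varphi(1\otimes v)$ on the affine $R$-scheme $V(M/\unif^n M)$. Since $\sigma$ reduces to the $q$-power Frobenius on $R$ modulo $\unif$ (whose derivative vanishes in characteristic $p$), this equaliser is, Zariski-locally where $M/\unif^n M$ is $R$-free, cut out by polynomial equations whose Jacobian in the linear coordinates is the identity matrix. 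Hence each $\Ccal_{\Mund,n}$ is affine \'etale over $R$, and the inverse limit is an affine pro-\'etale $R$-scheme.

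For (ii), the key input is the construction, \'etale-locally on $R$, of a splitting $\Mund = \Mund^{et} \oplus \Mund^l$ with $\Mund^{et}$ \'etale of rank $r_0$ (the constant slope-$0$ rank) and $\Mund^l$ slope-positive. Such a splitting arises from lifting the slope decomposition at geometric points: the iterated images of the $\sigma$-semilinear map $\bar\varphi\colon M/\unif M \to M/\unif M$ stabilise after finitely many steps to a rank-$r_0$ subspace (the \'etale part mod $\unif$), which lifts to a $\varphi$-stable direct summand of $M$ by successive Hensel-type approximation. Since $\Ccal_{\Mund^l}$ vanishes by topological $\varphi$-nilpotency on slope-positive summands, we obtain $\Ccal_{\Mund} = \Ccal_{\Mund^{et}}$, which by Proposition~\ref{prop-Igusa-etale} applied to $\GL_{r_0}$ is profinite \'etale of rank $q^{n r_0}$ at level $n$. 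Profinite \'etaleness then descends to $R$, giving the first assertion.

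For the decomposition over perfect $R$, set $M^l \coloneqq \{v \in M \mid \varphi^n(v) \to 0 \text{ in the } \unif\text{-adic topology}\}$ and let $M^{et}$ be the $D_R$-linear image of the evaluation map $\Ccal_{\Mund} \otimes_{O_F} D_R \to M$. Then $M^{et} \cap M^l = 0$: for $v = \sum a_i e_i$ with $\{e_i\}$ an $O_F$-basis of $\Ccal_{\Mund}$ and $a_i \in D_R$, one computes $\varphi^n(v) = \sum \sigma^n(a_i) e_i$, which tends to zero only if all $a_i = 0$, since perfectness of $R$ ensures that $\sigma$ is a bijection on $D_R$ preserving the $\unif$-adic valuation. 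The equality $M = M^{et} + M^l$ is obtained by identifying $M^{et}$ and $M^l$ with $\Mund^{et}$ and $\Mund^l$ respectively over the \'etale cover constructed above (via the canonical characterisations of each summand), followed by fpqc descent. The main obstacle is the \'etale-local construction of $\Mund^{et}$ as a $\varphi$-stable direct summand: lifting the slope decomposition from a geometric point to an \'etale neighbourhood requires careful Hensel-type approximation for the iterated $\varphi$-images, together with a corresponding approximation procedure producing the complementary slope-positive summand, which is the technical core of Zink's argument.
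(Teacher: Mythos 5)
Part (i) of your proposal is fine and matches the paper's approach: both realise $\Ccal_{\Mund}$ as the inverse limit of the truncations, and the \'etaleness of each truncation comes from the fact that the fixed-point equation $\varphi(\sigma^\ast v)=v$ is, in linear coordinates, a system whose Jacobian is the identity because the Frobenius has vanishing differential.

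For part (ii) you take a genuinely different route, and that route has a gap. You propose to construct, \'etale-locally on $R$, a $\varphi$-stable splitting $\Mund = \Mund^{et}\oplus\Mund^l$ and then read off both the profinite \'etaleness of $\Ccal_{\Mund}$ (via $\Ccal_{\Mund}=\Ccal_{\Mund^{et}}$) and the decomposition by descent. The problem is that the first assertion of (ii) --- profinite \'etaleness --- carries \emph{no} perfectness hypothesis, while a $\varphi$-stable \emph{complement} to $M^{et}$ simply does not exist \'etale-locally when $R$ is imperfect (and an \'etale cover of an imperfect $\FF_q$-algebra is still imperfect, so passing to a cover cannot help). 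Concretely, take $\cha F = p$, $R=\FF_q[t]$, $M=D_R^2$ and $\varphi = \left(\begin{smallmatrix}1 & t\\0&\unif\end{smallmatrix}\right)\sigma$: the slope-$0$ rank is constantly $1$ and $M^{et}=D_R\cdot(1,0)$. A $\varphi$-stable rank-one complement would be spanned by some $(a,1)$ satisfying $\sigma(a)=\unif a - t$; comparing $\unif^0$-coefficients forces $a_0^q=-t$, which is insoluble in any \'etale $R$-algebra. Yet $\Ccal_{\Mund}$ is the constant sheaf $O_F$ here, so the profinite \'etaleness does hold --- it just cannot be reached via a splitting. The paper avoids this by following Zink directly: \cite[p.~84]{Zink:SlopeFil} shows that the truncations $\Ccal_{\Mund/\unif^n}$ are \emph{finite} over $\Spec R$ (no splitting, no perfectness), which combined with (i) gives finite \'etale at each level and hence profinite \'etale in the limit, and the \emph{same} argument produces the bijection $M^l/(M^l\cap\unif^n M)\isom M/(M^{et}+\unif^n M)$ from which the decomposition falls out once $R$ is perfect.

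Even if you restrict the splitting claim to the second assertion (where perfectness is assumed), note that the step you label ``Hensel-type approximation'' --- lifting the stabilised iterated $\bar\varphi$-image to a $\varphi$-stable locally free direct summand with slope-positive quotient --- is precisely the content of Zink's Lemmas 3--5 that the paper cites; stating it as a black box is not simpler than citing Zink, and you would still owe a proof that the iterated images of $\bar\varphi$ stabilise to a \emph{locally free} rank-$r_0$ submodule over a general base, which is not automatic from constancy of the generic rank. Your argument for $M^{et}\cap M^l=0$ is fine modulo the fact that one must first pass to a pro-\'etale cover to choose an $O_F$-basis of $\Ccal_{\Mund}$, which is harmless at that point.
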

 \begin{proof}
  Consider the functor
  \[
  \Ccal_{\Mund/\unif^n}(A) = \{v \in M \otimes_{D_R} D_{A,n} \mid \varphi(v) = v \},
  \]
  where we also allow $A$ to be non-perfect in the $\cha F = 0$ case. By the same proof as in the equal characteristic case (\emph{cf.} \cite[Prop.~3]{Zink:SlopeFil}), one shows that $\Ccal_{\Mund/\unif^n}$ is represented by an affine \'etale $R$-scheme and hence $\Ccal_\Mund = \varprojlim \Ccal_{\Mund/\unif^n}$ is represented by an affine pro-\'etale $R$-scheme, proving the first part.
  
  Now assume that  the slope-$0$ part of the Newton polygon of $\Mund$ has constant length over $\Spec R$. By the same proof as in the equal characteristic case (\emph{cf.} \cite[p.~84]{Zink:SlopeFil}), we see that for any $n$ the $\Ccal_{\Mund/\unif^n}$ is finite over $\Spec R$ and that the canonical projection $M^{l}/(M^l \cap \unif^n M) \to M/(M^{et}+\unif^n M)$ is a bijection. Taking the limit, we see that $\Ccal_\Mund$ is profinite \'etale over $\Spec R$ and obtain a splitting $M/M^{et} \isom M^l \subset M$, proving $M = M^{et} \oplus M^l$.
 \end{proof}
 
 This lemma can be used to separate the smallest slope part of a given isocrystal and to it over a profinite \'etale cover. It remains to reduce to the situation where we can find an effective local shtuka within a given isoshtukas.
  
 \begin{lem} \label{lem-effective-shtuka}
  Let $\Nund = (N,\varphi)$ be an isoshtuka over $R$ such that all its Newton slopes are non-negative.
  \begin{subenv}
   \item There exists a surjective (perfectly) proper morphism $S \to \Spec R$, and an effective local shtuka $\Mund$ over $S$ such that $\Mund[\unif\iv] \cong \Nund_S$.
   \item If $R$ is absolutely flat (e.g.\ a field) or if $R$ is normal and $N$ is isoclinic of slope $0$, $\Mund$ can already be defined over $R$.
  \end{subenv}
 \end{lem}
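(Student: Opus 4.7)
The plan is to construct the desired proper cover by cutting out bounded families of effective $\varphi$-stable lattices inside the affine Grassmannian. The non-negativity of Newton slopes will supply pointwise existence of such lattices, and constructibility of the Newton stratification together with quasi-compactness will yield a uniform bound on their Hodge polygon, so that these pointwise solutions are parametrized by a single proper $R$-scheme.

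For (1), working Zariski-locally on $\Spec R$, I may assume $N \cong D_R^{\ast,n}$ and fix the tautological $D_R$-lattice $M_0 = D_R^n$, which satisfies $M_0[\unif^{-1}] = N$. Then $\varphi(\sigma^\ast M_0)$ is another $D_R$-lattice in $N$, and their relative position is a morphism $\Spec R \to L\GL_n / L^+\GL_n$. For each dominant cocharacter $\mu$ of $\GL_n$ I consider the functor
\[
 S_\mu(T) = \{\, M_0|_T \subset M \subset \unif^{-N} M_0|_T \,:\, \varphi(\sigma^\ast M) \subset M,\ \inv(M_0|_T, M) \le \mu \,\},
\]
which is representable by a closed subscheme of a Schubert variety in the $\GL_n$-affine Grassmannian over $R$, and hence is projective (in particular, proper). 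The key point is then to exhibit a $\mu$ such that $S_\mu \to \Spec R$ is surjective. Pointwise, at each geometric point $\bar s$ of $\Spec R$, the non-negativity of the Newton slopes of $\Nund_{\bar s}$ forces the saturation $M_{\bar s} \coloneqq \sum_{n \ge 0} \varphi^n(\sigma^{n\ast} M_0|_{\bar s})$ to stabilize after finitely many steps, producing an effective lattice whose Hodge polygon is bounded in terms of the relative position of $M_0$ and $\varphi(\sigma^\ast M_0)$ together with the Newton polygon. Since both of these data are constructible on $\Spec R$, a single $\mu$ works simultaneously on all geometric points by quasi-compactness (after reducing to the Noetherian case by a standard limit argument if needed). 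Taking $S = S_\mu$ with its tautological effective lattice $\Mund$ then proves (1).

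For (2), I treat the two special cases separately. If $R$ is absolutely flat, then $R$ is reduced of Krull dimension $0$, so every finitely generated $D_R$-submodule of a flat $D_R$-module is again flat; combined with the fact that Zariski-locally on $\Spec R$ the ring is a field, the pointwise saturations $M_{\bar s}$ from (1) glue into a genuine effective lattice over $R$, which is tantamount to saying that the proper surjection $S_\mu \to \Spec R$ admits a section Zariski-locally. If instead $R$ is normal and $N$ is isoclinic of slope $0$, then $\varphi$ is essentially \'etale: pro-\'etale-locally on $R$ one can trivialize $(N, \varphi)$ so that the tautological lattice $D_R^n$ is automatically $\varphi$-stable, and normality of $R$ ensures that this lattice descends uniquely to $R$ via \'etale descent for locally free $D_R$-modules (the descent datum being encoded by the continuous $\pi_1(\Spec R)$-action on the Tate module).

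The main obstacle will be the uniform bounding of the Hodge polygon over the entire family. While pointwise existence of effective lattices is immediate from non-negativity of the slopes, extracting a coherent \emph{family} of such lattices requires controlling their size uniformly, and this is precisely where a proper (rather than merely pro-\'etale) cover is unavoidable in the general case; the additional hypotheses in (2) are exactly what force the pointwise constructions to glue without enlarging the base.
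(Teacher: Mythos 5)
Your argument for part~(1) is essentially the same as the paper's: cut out a closed subscheme $S$ of a proper moduli of bounded lattices $M_0 \subset M \subset \unif^{-c}M_0$ by the $\varphi$-stability condition, and prove surjectivity of $S \to \Spec R$ pointwise. The paper invokes Zink's Lemma~9 directly to produce a uniform constant $c$ depending only on the Newton polygon; your saturation argument amounts to re-deriving that bound and is a bit fuzzier about why a single $\mu$ works globally, but the strategy is the same.

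Part~(2) has a genuine gap in the normal, isoclinic slope-$0$ case. You assert ``pro-\'etale-locally on $R$ one can trivialize $(N,\varphi)$ so that the tautological lattice $D_R^n$ is automatically $\varphi$-stable.'' But pro-\'etale local triviality of the \emph{isoshtuka} $(N,\varphi)$ is precisely the content of Proposition~\ref{prop-constancy-of-isoshtuka}, which is proved \emph{after} and \emph{using} this very lemma (its proof invokes Lemma~\ref{lem-effective-shtuka}~(1) to pass to a perfectly proper cover, and Lemma~\ref{lem-effective-shtuka}~(2) for the normal case). Even granting triviality of the underlying $D_R^\ast$-module, the tautological lattice is not automatically $\varphi$-stable for an arbitrary $\sigma$-semilinear automorphism $\varphi$; finding a $\varphi$-stable lattice is exactly what needs to be shown. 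The follow-up descent claim (``normality of $R$ ensures that this lattice descends uniquely'') is also unjustified: a $\varphi$-stable lattice over a pro-\'etale cover need not be canonical, so there is no obvious descent datum, and normality of $R$ plays no visible role in your descent argument. The paper's route is different and non-circular: when $N$ is isoclinic of slope $0$, effectivity of a lattice is equivalent to being \'etale, which bounds the geometric fibres of $S \to \Spec R$ (Oort--Zink finiteness of \'etale lattices between fixed bounds), so $S \to \Spec R$ is finite by Zariski's main theorem; then a section exists over the total ring of fractions $K$ (absolutely flat), and extends to $R$ by normality because $S \to \Spec R$ is integral. You should replace the circular step by an argument along these lines.

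One further minor inaccuracy: for an absolutely flat ring $R$, localisations at primes are fields, but it is \emph{not} true that $\Spec R$ is Zariski-locally $\Spec$ of a field (consider an infinite product of fields). The paper's argument instead notes that Zink's construction commutes with localisation, and uses this to conclude local freeness at every prime; your assertion ``every finitely generated $D_R$-submodule of a flat $D_R$-module is again flat'' also deserves justification, as $D_R$ is not itself absolutely flat.
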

 \begin{proof}
  Assume first that $R=K$ is a field and choose a full $D_R$-lattice $M_0 \subset N$. By  Zink's lemma \cite[Lem.~9]{Zink:SlopeFil}, there exists an explicit constant $c \in \NN$ depending only on the Newton polygon of $\Nund$ and a lattice $M_0 \subset M \subset \unif^{-c} M_0$ such that $(M,\varphi)$ is effective. Zink's construction can be applied for any base ring $R$, but the resulting $D_R$-module $M \subset N$ might not be locally free (or finitely presented even). However since it commutes with localisation, we see that it still yields an effective local shtuka if $R$ is an absolutely flat ring. 

  Denote by $\Grass_{N,c}$ the moduli space of locally free submodules $M_0 \subset M \subset \unif^{-c}M_0$. It is represented by a (perfectly) proper $R$-scheme by \cite[Thm.~2.3]{BeauvilleLaszlo:ThetaFunctions} ($\cha F = p$) and \cite[Thm.~8.3]{BhattScholze:AffGr} ($\cha F = 0$), respectively. The condition $\varphi(\sigma^\ast M) \subset M$ is a cut out a closed subscheme $S$, which maps surjectively to $\Spec R$ by above considerations.
  
  If $N$ is isoclinic of slope $0$, then the condition of $M$ being effective is equivalent of $M$ being \'etale; so the geometric fibres of $S \to \Spec R$ must be finite by the argument in \cite[Lem.~2.5]{OortZink:pdiv}. (To give more details, we may replace $R$ with an algebraically closed field extension $\kappa$ of $\FF_q$, which we do. Then $N$ admits a basis in $N^\varphi$, and there exists a full $D_R$-sublattice $M_0'\subset M_0$ spanned by a basis of $N^\varphi$. Now choosing $c'$ so that we have $\varpi^{-c}M_0\subset \varpi^{-c'}M_0'$, it follows that $|S|$ is bounded by the number of $O_F$-lattices  $\Lambda\subset N^\varphi$ satisfying $M_0'^\varphi \subset \Lambda\subset \varpi^{-c'}M_0'^\varphi$, which is finite.) Hence the map $S \to \Spec R$ is (perfectly) finite by Zariski's main theorem. If $R$ is normal, its total ring of fractions $K$ is absolutely flat, hence we get a section $s \in S(K)$. Since $S \to \Spec R$ is integral, this section extends (uniquely) to $R$ by the universal property of  normalisation.
 \end{proof}

 From the above lemmas we now deduce the main constancy result. It was previously deduced in the case of Barsotti-Tate groups over strictly Henselian rings by Oort and Zink \cite[Prop.~3.3]{OortZink:pdiv} and Caraiani and Scholze \cite[Lem.~4.3.15]{CaraianiScholze:ShVar}. We use the same general strategy as their respective proofs.
 
 \begin{prop} \label{prop-constancy-of-isoshtuka}
  Assume that $R$ is perfect and normal, and $\Nund = (N,\varphi)$ is an isoshtuka with constant Newton polygon. Then there exists a faithfully flat profinite \'etale $R$-algebra $R'$ such that $\Nund_{R'}$ is defined over some finite extension $\kappa$ of $\kappa_F$.
 \end{prop}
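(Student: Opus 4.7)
The strategy is to combine the slope filtration of $\underline N$ with Proposition~\ref{prop-Igusa-etale}: construct the filtration via iterated splittings off the minimum Newton slope, and handle each graded piece by twisting and extending scalars to reduce to the étale case. Proceed by induction on the number of distinct Newton slopes of $\underline N$. After replacing $\varphi$ by $\varpi^{-c}\varphi$ for sufficiently large $c$ and base-changing along a finite étale extension of $R$ (coming from an unramified extension of $F$ that clears the denominator of the minimum slope $\lambda_1$), we may assume all Newton slopes of $\underline N$ are non-negative integers.

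For the inductive step, apply Lemma~\ref{lem-effective-shtuka}(i) to obtain a surjective proper cover $S\to\Spec R$ equipped with an effective model $\underline M$ of $\underline N_S$. Twisting $\underline M$ by $\varpi^{-\lambda_1}$ renders the minimum slope equal to $0$; since the Newton polygon is constant on $\Spec R$, the slope-$0$ part has constant length on $S$. Lemma~\ref{lem-zink}(b), applied to the perfection of $S$, yields a canonical decomposition $M = M^{et}\oplus M^l$, where $\underline M^{et}$ is étale and $\underline M^l$ has strictly greater minimum slope. Proposition~\ref{prop-Igusa-etale} makes $\underline M^{et}$ constant over a further pro-étale cover, the induction hypothesis handles $\underline M^l[\varpi^{-1}]$, and the remaining extension between the two now-constant graded pieces is classified by a pro-étale $\Ext^1$-sheaf whose associated torsor trivialises over a further pro-étale cover. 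Untwisting the power of $\varpi$ recovers $\underline N$ itself as constant, defined over a finite extension $\kappa/\kappa_F$.

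The main obstacle is that Lemma~\ref{lem-effective-shtuka}(i) produces only a surjective proper cover $S\to\Spec R$, whereas the conclusion demands a profinite étale refinement of $\Spec R$. This is resolved by exploiting the canonicity of the decomposition in Lemma~\ref{lem-zink}(b): the sub-isoshtuka $\underline M^{et}[\varpi^{-1}]\subset\underline N_S$ is characterised intrinsically as the unique maximal étale sub-isoshtuka of the twisted $\underline N_S$, so it is independent of the choice of $\underline M$ and therefore descends to a sub-isoshtuka of $\underline N$ over $\Spec R$. Consequently the profinite étale cover $\Ccal_{\underline M}$ furnished by Lemma~\ref{lem-zink}(a) is intrinsic to $\underline N$ and defines a pro-étale cover of $\Spec R$ over which each inductive step can be carried out; iterating and trivialising the successive extensions along further pro-étale covers produces the desired $R'$.
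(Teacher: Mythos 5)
Your proposal takes a genuinely different route from the paper, and several of its key steps are left as hand-waving in places where the paper has to work hard.

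The paper does not induct on the number of slopes. Instead it splits off the minimal-slope piece in one stroke: setting $\Phi = \unif^{-r}\varphi^s$ for the minimal slope $r/s$, it defines $N^{et}$ and $N^l$ as subsheaves of $N$ (regarded as a $v$-sheaf on $\Spec R$ via \cite[Thm.~4.1]{BhattScholze:AffGr}), shows that $N^{et}\oplus N^l \to N$ is an isomorphism after pullback to the surjective perfectly proper cover $S$ furnished by Lemma~\ref{lem-effective-shtuka}(1), and then invokes $v$-descent to conclude that the map was already an isomorphism over $\Spec R$. This immediately reduces to the isoclinic case and eliminates any extension problem: the slope pieces are genuine direct summands of $N$ over $\Spec R$, not merely a filtration with graded pieces. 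Your plan instead produces a filtration over $\Spec R$ and then tries to split it via a ``pro-\'etale $\Ext^1$-sheaf'' trivialising over a further cover; this step is not justified. Whether the relevant $\Ext^1$-sheaf (extensions of two constant isoshtukas of distinct slopes over a perfect ring $R$) is a torsor under a pro-\'etale group, let alone one that trivialises pro-\'etale-locally, is precisely the sort of claim that requires an argument, and in fact the paper's $v$-descent of the direct-sum map is the clean way to avoid facing it at all.

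Your descent claim also glosses over which descent theory is in play. You argue that $\underline{M}^{et}[\unif^{-1}]\subset\underline{N}_S$ is intrinsically characterised and hence ``descends'' along the proper surjection $S\to\Spec R$. Intrinsic characterisation gives you a descent datum, but effectivity of that descent datum for the fppf or $v$-topology is exactly what needs to be invoked and verified. The paper does this explicitly by treating $N$ as a $v$-sheaf and applying Bhatt--Scholze's $v$-descent theorem; you cite neither.

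The most serious gap is that you never confront the case where $R$ is not normal. Even after reducing to the isoclinic slope-$0$ case, your base case says ``apply Proposition~\ref{prop-Igusa-etale}'' — but that proposition is about \'etale local $G$-shtukas, i.e.\ $L^+G$-torsors with Frobenius. To pass from the isoshtuka $\underline{N}$ (an $LG$-torsor) to such a shtuka, you need an effective integral model over $\Spec R$ itself, and Lemma~\ref{lem-effective-shtuka}(2) only produces one when $R$ is absolutely flat or normal. The paper's proof devotes an entire second paragraph to the non-normal case: it replaces $R$ by a faithfully flat profinite \'etale w-contractible algebra over $R\otimes_{\kappa_F}\kappa$, exploits that every connected component of such a ring is strictly Henselian local (hence normal by Koll\'ar), builds the trivialisation $j$ over a $v$-cover $R'$ assembled from the quotients $R/\pfr$, and then checks $j_1 = j_2$ on $R''=R'\otimes_R R'$ using that the agreement locus is a clopen union of connected components meeting $\Spec R''/\mfr R''$. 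None of this is present in your proposal, and without it the argument fails for a general perfect $R$ — this is precisely why the earlier results of Oort--Zink and Caraiani--Scholze were stated only over strictly Henselian rings.
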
 
 \begin{proof}
  Let $\frac{r}{s}$ denote the smallest Newton slope of $N$ and let $\Phi = \unif^{-r}\varphi^s$. By \cite[Thm.~4.1]{BhattScholze:AffGr} $N$ can be considered as sheaf for the $v$-topology on $\Spec R$. We define the subsheaves
  \begin{align*}
   & N^{et} \coloneqq \Ccal_{(N,\Phi)} \otimes_F D_R^\ast, \ \text{and} \\
   & N^l \coloneqq \{v \in N \mid \Phi^n(v) \to 0 \textnormal{ for } n \to \infty\}.
  \end{align*}
  The canonical morphism $N^{et}\oplus N^l \to N$ pulls back to an isomorphism via a surjective perfectly proper base change by Lem.~\ref{lem-effective-shtuka}~(1) and Lem.~\ref{lem-zink}, so it is an isomorphism by $v$-descent (\emph{cf}.~\cite[Thm.~4.1]{BhattScholze:AffGr}). Thus, we may assume without loss of generality that $N$ is of constant slope $r/s$. 
  
 Since $R$ is normal, there exists an effective local shtuka $\Mund$ such that $\Mund[\unif\iv] \cong (N,\Phi)$ by Lem.~\ref{lem-effective-shtuka}~(2). Let $R \to R'$ be a faithfully flat profinite \'etale morphism simultaneously refining the pro-\'etale coverings $\Ccal_{\Mund}$ and $R \otimes_{\kappa_F} \kappa$, where $\kappa/\kappa_F$ is an extension of degree $s$. Then by Lem.~\ref{lem-zink}, we have 
  \[
   \Nund_{R'} \cong (N^{\Phi = 1},\restr{\varphi}{N^{\Phi = 1}}) \otimes_\kappa R',
  \]
  which proves the proposition.
  %
 \end{proof}

 As a consequence, we obtain the following result for $G$-isoshtukas.
 
 \begin{thm} \label{thm-constancy-of-G-isoshtukas}
  Let $\underline{\Hcal}$ be a $G$-isoshtuka over a perfect normal $\kappa_F$-stack $S$ with constant isogeny class $[b]$. Then there exists a profinite \'etale cover $S' \to S$ such that $\underline\Hcal_{S'} \cong (LG,b\sigma)_{S'}$. In other words the functor
  \[
   X^b_{\underline\Hcal}\colon T \to \Isom((LG,b\sigma)_{T}, \underline\Hcal_{T})
  \]
  on perfect $S$-stacks is represented by a pro-\'etale $J_b(F)$-torsor.
 \end{thm}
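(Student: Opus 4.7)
The plan is to reduce to the case $G = \GL_n$ via a faithful representation and then transfer the constancy result of Proposition~\ref{prop-constancy-of-isoshtuka} through the embedding. Since $G$ is smooth affine of finite type over $O_F$, fix a closed embedding $\rho\colon G \hookrightarrow \GL_n$ over $O_F$; this induces a closed immersion of loop groups $L\rho\colon LG \hookrightarrow L\GL_n$ and a closed embedding of locally profinite groups $J_b(F) \hookrightarrow J_{\rho(b)}(F)$. The pushforward $\rho_\ast\underline\Hcal \coloneqq \underline\Hcal \times^{LG} L\GL_n$ is a $\GL_n$-isoshtuka of constant isogeny class $[\rho(b)]$. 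Via the standard equivalence between $\GL_n$-isoshtukas and rank-$n$ local isoshtukas, Proposition~\ref{prop-constancy-of-isoshtuka} supplies a profinite \'etale cover $\tilde S \to S$ together with an isomorphism $\rho_\ast\underline\Hcal_{\tilde S} \cong (L\GL_n, \rho(b)\sigma)_{\tilde S}$; equivalently, $X^{\rho(b)}_{\rho_\ast\underline\Hcal}$ is representable by a pro-\'etale $J_{\rho(b)}(F)$-torsor over $S$.

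Next, I would identify $X^b_{\underline\Hcal}$ with a closed subscheme of $X^{\rho(b)}_{\rho_\ast\underline\Hcal}$ via the pushforward $\rho_\ast$. The point is that an isomorphism $(LG, b\sigma)_T \isom \underline\Hcal_T$ corresponds to a Frobenius-compatible section of $\Hcal$, and the inclusion $\Hcal \hookrightarrow \rho_\ast\Hcal = \Hcal \times^{LG} L\GL_n$ is a closed immersion because $LG \subset L\GL_n$ is; the Frobenius-compatibility for the $G$-iso is then automatic given that of the induced $\GL_n$-iso, since $L\rho$ is a monomorphism. Thus $X^b_{\underline\Hcal}$ is represented by a closed subscheme of the pro-\'etale scheme $X^{\rho(b)}_{\rho_\ast\underline\Hcal}$.

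The main step is to upgrade $X^b_{\underline\Hcal}$ from a pseudo-torsor to a genuine pro-\'etale $J_b(F)$-torsor. The right $J_b(F)$-action by precomposition with $\Aut(LG, b\sigma) = J_b(F)$ (\emph{cf.}~\S\ref{ssect-aut-group}) is free, and it is transitive on each geometric fibre by the constancy of the isogeny class, so $X^b_{\underline\Hcal}$ is at least a pseudo-torsor. To obtain pro-\'etale local trivialisation, I refine $\tilde S$ to a pro-\'etale cover with w-contractible connected components; a section of $X^{\rho(b)}_{\rho_\ast\underline\Hcal}$ identifies it with the constant $J_{\rho(b)}(F)$-torsor, and under this identification $X^b_{\underline\Hcal}$ becomes a $J_b(F)$-equivariant closed subset whose geometric fibres are single cosets of $J_b(F)$ in $J_{\rho(b)}(F)$. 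The hard part will be arguing that the resulting coset assignment is locally constant on $\tilde S$, i.e.\ corresponds to a continuous map into the locally profinite quotient $J_{\rho(b)}(F)/J_b(F)$; this should follow from w-contractibility combined with the total disconnectedness of the coset space. Once this is established, $\tilde S$ decomposes into clopens on each of which $X^b_{\underline\Hcal}$ is a trivial $J_b(F)$-torsor, and descent along $\tilde S \to S$ yields the required pro-\'etale $J_b(F)$-torsor structure, giving in particular a profinite \'etale trivialisation $\underline\Hcal_{S'} \cong (LG, b\sigma)_{S'}$.
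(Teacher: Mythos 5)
Your reduction to $\GL_n$ via a faithful representation is a reasonable first move, and the observation that $X^b_{\underline\Hcal}$ sits as a pseudo-torsor inside the pro-\'etale $J_{\rho(b)}(F)$-torsor $X^{\rho(b)}_{\rho_*\underline\Hcal}$ is correct in spirit. But the final step --- the claim that the fibrewise coset assignment $\tilde S \to J_{\rho(b)}(F)/J_b(F)$ is locally constant, which you acknowledge as ``the hard part'' and attribute to w-contractibility plus total disconnectedness --- is precisely where the theorem's content lies, and your sketch does not amount to a proof. The issue is that there is no a priori target space that your ``coset map'' is a morphism towards: $X^b_{\underline\Hcal}$ is some closed subscheme of $J_{\rho(b)}(F)_{\tilde S}$, and extracting from it a \emph{continuous} (equivalently locally constant) assignment of cosets requires a concrete handle on how the $G$-reduction of $\rho_*\underline\Hcal$ varies, which a bare faithful representation does not supply.

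The paper gets around exactly this by using Broshi's theorem to realise $G$ as the stabiliser of a line $\Lcal \subset t(\Vcal)$ in a tensorial construction, so that the $LG$-torsor is equivalent to a pair of isoshtukas $i\colon \underline\Lcal \hookrightarrow t(\underline\Vcal)$. After Proposition~\ref{prop-constancy-of-isoshtuka} makes $\underline\Lcal$ and $\underline\Vcal$ constant, the morphism $i$ gives a map $f\colon S' \to H\setminus\{0\}$ into the \emph{finite-dimensional} $F$-vector space $H = \underline\Hom(\underline\Lcal_0, t(\underline\Vcal_0))$ (this is where \cite[Prop.~A.1]{HamacherKim:Mantovan} via \S\ref{ssect-aut-group} enters). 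Constancy of the isogeny class confines $f(S')$ to a single $\Aut(\underline\Vcal_0)$-orbit, and the Zariski-local splitting of the orbit map $\GL(H) \to H$ lets one lift $f$ to a map into $\Aut(\underline\Vcal_0)$. Composing with the inverse of this automorphism makes $i$ constant. That chain of reductions is what substitutes for your unproven local constancy, and without an analogue of the finite-dimensional Hom-space $H$ there is no evident way to run your ``clopen decomposition'' argument. So there is a genuine gap, and the route you are on would need an additional idea of roughly the same weight as Broshi's theorem plus the Hom-finiteness result to close it.

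Two smaller points also deserve attention. First, the assertion that $X^b_{\underline\Hcal} \hookrightarrow X^{\rho(b)}_{\rho_*\underline\Hcal}$ is a closed immersion because $LG \subset L\GL_n$ is needs justification: the condition that an isomorphism of $L\GL_n$-torsors carry one $LG$-reduction to another is closed only once one knows the relevant quotient $L\GL_n/LG$ is separated, which is plausible but not automatic. Second, Proposition~\ref{prop-constancy-of-isoshtuka} produces an isomorphism with a model over a finite extension $\kappa/\kappa_F$, not directly with $(L\GL_n, \rho(b)\sigma)$; a further (harmless) refinement of the cover is needed to trivialise the resulting $J_{\rho(b)}(F)$-torsor.
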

 \begin{proof}

 Since we can prove the statement locally we may assume that $S = \Spec R$ is affine. We choose a closed embedding $\rho\colon G \to \GL(V)$ which identifies $G$ with the stabiliser of a line $L \subset t(V)$ inside some tensorial construction. (See \cite[Thm.~1.1]{Broshi:Gtorsors} with a minor correction in \cite[Rmk.~A.19]{ImaiKatoYoucis:Tannaka}.) This induces an equivalence of categories between $LG$-torsors and pairs of locally free $D_R^\ast$-modules $\Vcal, \Lcal \subset t(\Vcal)$ which are \'etale locally isomorphic to $V, L \subset t(V)$. Hence we obtain a pair of isoshtukas $i\colon \underline\Lcal \mono t(\underline\Vcal)$ associated to $\underline\Hcal$. 
 
 By the previous proposition, we may assume that $\underline\Lcal$ and $\underline\Vcal$ have models $\underline\Lcal_0,\underline\Vcal_0$ over $\cl{\kappa}_F$ after replacing $S$ by a profinite \'etale cover. It remains to show that $i$ can also be defined over $\cl{\kappa}_F$, possibly after composition with an automorphism of $\underline\Vcal$. By \cite[Prop.~A.1]{HamacherKim:Mantovan} (\emph{cf}.\ comment in \S\ref{ssect-aut-group} for $\cha F = p$), $\underline\Hom(\underline\Lcal_0,t(\underline\Vcal_0))$ is represented by a finite-dimensional $F$-vector space $H$. Thus $i$ induces a continuous map $f\colon S' \to H \setminus \{0\}$. We note that $\Aut(\underline\Vcal_0)$ acts linearly on  $H$ via post-composition. Since $\underline\Hcal$ has constant isogeny class, $f(S')$ lies inside a single orbit of $\Aut(\underline\Vcal_0)$.  As every action map $\GL(H) \to H, g \mapsto g.h$ splits Zariski-locally, we may factor $f$ as
 \begin{center}
  \begin{tikzcd}
   & \Aut(\underline\Vcal_0) \arrow{d}{j \mapsto t(j) \circ h} \\
   S' \arrow{ru}{f'} \arrow{r}{f} & H \setminus \{0\}
  \end{tikzcd} 
 \end{center}
 for some $h \in H \setminus \{0\}$ after replacing $S'$ by a Zariski cover. Thus $i = h \circ t(j)$, where $j \in \Aut(\underline\Vcal) = C^0(S',\Aut(\underline\Vcal_0))$ corresponds to $f'$. Thus after composition with $t(j)\iv$, the morphism $i=h$ is defined over $\cl{\kappa}_F$.
 \end{proof}
 
As a corollary, we obtain Tate's isogeny theorem for local $G$-shtukas with constant Newton point. This result was originally formulated for $p$-divisible groups, but one can immediately formulate its analogue for $F$-crystals and local shtukas.  Caraiani and Scholze \cite[Rmk.~4.2.17]{CaraianiScholze:ShVar} proved it for $F$-crystals over any integral normal $\FF_p$-scheme, removing the Noetherian hypothesis from the earlier result of Berthelot \cite{Berthelot:Dieudonne}. For equal characteristic local $G$-shtukas for a constant reductive group $G$, Neupert \cite[Thm.~2.7.6]{neupert:thesis} showed Tate's isogeny theorem over Noetherian normal base schemes.

  \begin{prop}[Tate's isogeny theorem for local $G$-shtukas] \label{prop-tates-thm}
  Let $S$ be an integral normal scheme over $\kappa_F$ with generic point $\eta = \Spec K$. Let $\underline\Gcal_1, \underline\Gcal_2$ be two local $G$-shtukas with constant Newton point. Then the restriction map
  \[
   \Isom(\Lcal\underline\Gcal_1,\Lcal\underline\Gcal_2) \to \Isom(\Lcal\underline\Gcal_{1,\eta},\Lcal\underline\Gcal_{2,\eta})
  \]
  is a bijection, which identifies $\Isom(\underline\Gcal_1,\underline\Gcal_2)$ with $\Isom(\underline\Gcal_{1,\eta},\underline\Gcal_{2,\eta})$.
 \end{prop}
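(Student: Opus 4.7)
The strategy is to use Theorem~\ref{thm-constancy-of-G-isoshtukas} to represent $\Isom(\Lcal\underline\Gcal_1, \Lcal\underline\Gcal_2)$ by a pro-\'etale scheme over the perfection of $S$, and then exploit the standard fact that pro-finite \'etale covers of an integral normal base are determined by their generic fibre.

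First I treat the isoshtuka statement. Since $\Lcal\underline\Gcal_1$ and $\Lcal\underline\Gcal_2$ share the same constant isogeny class $[b]\in B(F,G)$, Theorem~\ref{thm-constancy-of-G-isoshtukas} identifies each $X^b_{\Lcal\underline\Gcal_i}$ with a $J_b(F)$-torsor over $S^\perf$. Consequently $\Ical \coloneqq \Isom(\Lcal\underline\Gcal_1, \Lcal\underline\Gcal_2)$, viewed as the contracted product $X^b_{\Lcal\underline\Gcal_2} \wedge^{J_b(F)} (X^b_{\Lcal\underline\Gcal_1})\iv$, is itself represented by a $J_b(F)$-torsor over $S^\perf$. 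Choosing a compact open subgroup $K \subset J_b(F)$ together with a coset decomposition $J_b(F) = \bigsqcup_i g_i K$, one sees that $\Ical$ decomposes as a disjoint union of pro-finite \'etale $K$-torsors sitting over finite \'etale covers of $S^\perf$; in particular every connected component of $\Ical$ is pro-finite \'etale over $S^\perf$.

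The heart of the argument is then the assertion that for any pro-finite \'etale morphism $T \to S^\perf$, the restriction $T(S^\perf) \to T(\eta^\perf)$ is a bijection. Writing $T = \varprojlim T_j$ with $T_j \to S^\perf$ finite \'etale, injectivity is automatic, since the equaliser of two sections of $T_j$ agreeing at $\eta^\perf$ is open-and-closed in $S^\perf$ and contains $\eta^\perf$, hence equals $S^\perf$ by connectedness. For surjectivity, the scheme-theoretic closure of any section $s_\eta \in T_j(\eta^\perf)$ lies in a single connected component of $T_j$, which is itself normal and integral (as it is \'etale over the normal integral $S^\perf$); this component is finite \'etale over $S^\perf$ and birational to $S^\perf$, hence of degree one, hence isomorphic to $S^\perf$, which produces the extension. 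Taking inverse limits handles the pro-finite case, and applied to each connected component of $\Ical$ this yields the isoshtuka assertion over $S^\perf$.

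It remains to descend from $S^\perf$ to $S$ in characteristic $p$ and to deduce the shtuka statement. For the descent, one uses that $LG(R) \hookrightarrow LG(R^\perf)$ is injective for reduced $R$, together with the fact that, by normality of $R$, an element of $G(R^\perf\rpot{\unif})$ whose restriction to the generic point already lies in $G(K\rpot{\unif})$ must itself lie in $G(R\rpot{\unif})$; this lets us descend the extension built pro-\'etale locally over $S^\perf$ to one over $S$. For the shtuka assertion, the condition that an isoshtuka isomorphism $f$ preserves the $L^+G$-torsor structures (both for $f$ and for $f\iv$) is a closed condition inside $\Ical$, and can therefore be tested at the generic point by normality of $S$. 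I expect the main obstacle to lie in this final step---in particular in the precise bookkeeping needed to pass from pro-\'etale sections over $S^\perf$ to honest sections over the possibly non-perfect base $S$---rather than in any new conceptual input.
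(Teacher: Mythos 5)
Your proof is correct but takes a different technical route from the paper's, though both ultimately hinge on Theorem~\ref{thm-constancy-of-G-isoshtukas} and on normality of $S$. The paper never leaves $S$: it observes that $\underline\Isom(\Lcal\underline\Gcal_1,\Lcal\underline\Gcal_2)$ is ind-affine over $S$ (as a closed subfunctor of an $LG$-torsor) and that the constancy theorem makes it universally specialising over $S$ --- a topological property, hence insensitive to the universal homeomorphism $S^\perf\to S$, so no perfection descent is needed; the closure $T$ of a generic isomorphism $\varphi$ is then integral and birational over $S$, and normality forces $T\isom S$. Your version instead passes immediately to $S^\perf$, uses the $J_b(F)$-torsor structure of $\Ical$, decomposes its connected components into pro-finite \'etale covers, and extends sections via the standard fact for finite \'etale covers of a normal integral scheme. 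This works, but requires two small pieces of bookkeeping the paper avoids: (i) that each connected component of $\Ical/K$ is genuinely \emph{finite} \'etale over $S^\perf$ --- which holds because a profinite group (here $\pi_1^{\mathrm{\acute et}}(S^\perf)$) acting continuously on the discrete set $J_b(F)/K$ has finite orbits; and (ii) the descent from $S^\perf$ to $S$, which you correctly flag and which does close: the equality $R = R^\perf\cap\Frac(R)$ for a normal domain $R$ passes coefficient-wise to Laurent series, giving $R\rpot\unif = R^\perf\rpot\unif\cap\Frac(R)\rpot\unif$, even though $R^\perf\rpot\unif$ is strictly larger than $\bigcup_n R^{1/p^n}\rpot\unif$. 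The paper's argument is streamlined precisely by sidestepping both of these. (One also implicitly reduces to the case that $\underline\Gcal_1$ and $\underline\Gcal_2$ share the same class $[b]$; this is harmless since $S$ is connected and the $\Isom$ sets are all empty otherwise.)
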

 \begin{proof}
  Since $L^+G$ is closed in $LG$, the property of an isomorphism of $LG$-torsors $\Lcal\Gcal_1 \isom \Lcal\Gcal_2$ restricting to an isomorphism $\Gcal_1 \isom \Gcal_2$ is closed; hence it suffices to prove the first assertion.   

 We first claim that $\underline\Isom(\Lcal\Gcal_1,\Lcal\Gcal_2)$ can be represented by an ind-scheme that is ind-integral over $S$. Indeed, the ind-representability and ind-affineness over $S$ follows since $\underline\Isom(\Lcal\underline\Gcal_1,\Lcal\underline\Gcal_2)$ is a closed subfunctor of the $LG$-torsor $\underline\Isom(\Lcal\Gcal_1,\Lcal\Gcal_2)$ where $\Lcal\Gcal_i$ is the underlying $LG$-torsor of $\Lcal\underline\Gcal_i$. It now remains to show that $\underline\Isom(\Lcal\underline\Gcal_1,\Lcal\underline\Gcal_2)$ is universally specialising over $S$, which follows from Thm.~\ref{thm-constancy-of-G-isoshtukas}.

Now fix an isomorphism $\varphi\colon \Lcal\underline\Gcal_{1,\eta} \to \Lcal\underline\Gcal_{2,\eta}$, which gives rise to a $K$-point in $\underline\Isom(\Lcal\underline\Gcal_1,\Lcal\underline\Gcal_2)$. By ind-integrality, the closure $T$ of $\varphi$ in $\underline\Isom(\Lcal\underline\Gcal_1,\Lcal\underline\Gcal_2)$ is birational and integral over $S$. By normality of $S$, there exists a section $S \to T$, which corresponds to the isomorphism $\Lcal\underline\Gcal_1 \to \Lcal\underline\Gcal_2$ extending $\varphi$. 
 \end{proof}

We can strengthen the result above to obtain a slightly more general statement for local isoshtukas.
 \begin{lem} \label{lem-Tates-thm}
 Let $S$ be an integral normal scheme over $\kappa_F$ with generic point $\eta = \Spec K$. Let $\Nund_1$ and $\Nund_2$ be local shtukas (with possibly different ranks), and let $\lambda_{1}$ and $\lambda_{2}$ respectively denote the largest slopes of the Newton polygon of $\Nund_1$ and $\Nund_2$.  Suppose that one of the following conditions holds:
 \begin{enumerate}
     \item\label{lem-Tates-thm-constant} The Newton polygons of $\Nund_1$ and $\Nund_2$ are both constant.
     \item\label{lem-Tates-thm-purity} The base scheme $S$ is perfect, and the slope $\leq\lambda_2$ part of the Newton polygon of $\Nund_1$ and the slope $\leq\lambda_1$ part of the Newton polygon of $\Nund_2$ are constant.
 \end{enumerate}
 Then the restriction map
\[\Hom(\Nund_1,\Nund_2) \to \Hom(\Nund_{1,\eta},\Nund_{2,\eta})\]
is a bijection.
 \end{lem}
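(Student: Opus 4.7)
The plan is to adapt the strategy of Proposition~\ref{prop-tates-thm} by realising $\Hom(\Nund_1, \Nund_2)$ as the $S$-sections of a profinite \'etale cover of $S$, after which the conclusion follows from normality of $S$. Injectivity of the restriction map is immediate from the integrality of $S$ and the $D_R$-flatness of $M_1$ and $M_2$.

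For surjectivity, the first step is to introduce the internal Hom $\Hcal \coloneqq \underline\Hom(\Nund_1, \Nund_2)$, a local shtuka over $S$ whose underlying $D_R$-module is $\Hom_{D_R}(M_1, M_2)$ and whose Frobenius (defined after inverting $\varpi$) is $\varphi_\Hcal(f) = \varphi_2 \circ \sigma^\ast f \circ \varphi_1^{-1}$. Then $\Hom(\Nund_1, \Nund_2) = \Hcal^{\varphi_\Hcal = 1}$, and since the Newton slopes of $\Hcal$ at each point of $S$ are the differences $\mu - \lambda$ with $\mu$ a slope of $\Nund_2$ and $\lambda$ a slope of $\Nund_1$, any Frobenius-fixed element must lie in the slope-$0$ sub-isoshtuka $\Hcal^{=0}$. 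Under hypothesis~(1), the Newton polygon of $\Hcal$ is constant over $S$, so $\Hcal^{=0}$ is a canonical sub-isoshtuka of constant rank. Under hypothesis~(2), Frobenius-invariance forces $\mu = \lambda \leq \min(\lambda_1, \lambda_2)$, and since $\lambda \leq \lambda_2$ and $\mu \leq \lambda_1$, these contributions arise from the constant sub-polygons of $\Nund_1$ and $\Nund_2$; over the perfect base $S$, the slope filtration on these constant parts isolates $\Hcal^{=0}$ canonically with constant rank.

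Next, applying Lemma~\ref{lem-effective-shtuka}(2) to the isoclinic slope-$0$ isoshtuka $\Hcal^{=0}$ over the normal $S$, one obtains an \'etale effective local shtuka $\Mund$ with $\Mund[\varpi^{-1}] \cong \Hcal^{=0}$; Lemma~\ref{lem-zink}(2) then implies that $\Ccal_\Mund$ is profinite \'etale over $S$ and represents $\Hcal^{\varphi=1}$. A generic homomorphism $f_\eta \in \Hom(\Nund_{1,\eta}, \Nund_{2,\eta})$ corresponds to an $\eta$-point of $\Ccal_\Mund$, which extends uniquely to an $S$-point by the normality of $S$; that the extended $f$ preserves the original lattice structure, i.e.\ $f(M_1) \subset M_2$, is then verified codimension-by-codimension using the identity $D^\ast_{R_\pfr} \cap D_K = D_{R_\pfr}$ and the reflexivity $M_2 = \bigcap_\pfr M_{2, R_\pfr}$ for normal $R$. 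The hard part will be the canonical isolation of $\Hcal^{=0}$ in case~(2), where the full Newton polygon of $\Hcal$ may vary and one must exploit the perfectness of $S$ in order to invoke the slope filtration on the relevant constant sub-polygons of $\Nund_1$ and $\Nund_2$.
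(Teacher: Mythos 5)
Your overall architecture — interpret $\Hom(\Nund_1,\Nund_2)$ as $\varphi$-fixed points of the internal Hom shtuka, observe these must land in the slope-$0$ part, represent them by a profinite \'etale cover, and extend a generic section using normality of $S$ — is sound in spirit, but both the route and the details diverge from the paper, and there are two points that need repair.

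First, the claim that $\Ccal_\Mund$ represents $\Hcal^{\varphi_\Hcal=1}$ is not correct as stated. By definition $\Ccal_\Mund$ parametrises the $\varphi$-fixed elements of the chosen lattice $\Mund \subset \Hcal^{=0}[\varpi^{-1}]$ produced by Lemma~\ref{lem-effective-shtuka}(2), and there is no reason for this lattice to coincide with (or even contain) $\Hcal \cap \Hcal^{=0}[\varpi^{-1}]$. In particular a generic morphism $f_\eta$ need not lie in $\Mund_\eta$, so it need not give an $\eta$-point of $\Ccal_\Mund$. To make this work you would have to either rescale $\Mund$ to $\varpi^{-n}\Mund$ for $n\gg 0$, or pass to the ind-scheme $\varinjlim_n \Ccal_{\varpi^{-n}\Mund}$ representing isoshtuka-homomorphisms (which is essentially the ``locally profinite group scheme'' route the paper references via \S2.3 when it says the proof of Proposition~\ref{prop-tates-thm} adapts). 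Your final lattice-verification step addresses the passage from isoshtuka-hom back to shtuka-hom, but not this intermediate mismatch.

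Second, in case~(2) you assert that ``the slope filtration on these constant parts isolates $\Hcal^{=0}$ canonically with constant rank,'' but this is precisely where more is needed. The paper instead decomposes the source and target \emph{first}: using the first paragraph of the proof of Proposition~\ref{prop-constancy-of-isoshtuka} repeatedly, one writes $\Nund_1 = \Nund_1^{\leqslant\lambda_2}\oplus\Nund_1^{>\lambda_2}$ and $\Nund_2 = \Nund_2^{\leqslant\lambda_1}\oplus\Nund_2^{>\lambda_1}$ over the perfect base, then observes that any homomorphism kills $\Nund_1^{>\lambda_2}$ and lands in $\Nund_2^{\leqslant\lambda_1}$, so $\Hom(\Nund_1,\Nund_2)\cong\Hom(\Nund_1^{\leqslant\lambda_2},\Nund_2^{\leqslant\lambda_1})$ and case~(2) reduces at once to case~(1). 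This is cleaner than manipulating $\Hcal$ directly, because the Newton polygon of $\Hcal$ itself is \emph{not} constant under hypothesis~(2) (its negative slopes vary), so you cannot simply split off $\Hcal^{=0}$ without first passing to the constant-polygon pieces of $\Nund_1$ and $\Nund_2$ — i.e., the decomposition you are gesturing at is exactly the paper's explicit first step.
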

\begin{proof}
    If the Newton polygons of $\Nund_1$ and $\Nund_2$ are constant, then the  proof of Prop.~\ref{prop-tates-thm} can be adapted to show the lemma in case~(\ref{lem-Tates-thm-constant}) since $\underline\Hom(\Nund_1,\Nund_2)^{\rm red}$ is represented by a locally profinite group by the same argument as in \S~\ref{ssect-aut-group}.

    In case~(\ref{lem-Tates-thm-purity}), we claim that we have
    \[\Nund_1 = \Nund_1^{\leqslant \lambda_2}\oplus \Nund_1^{>\lambda_2} \quad\textnormal{and}\quad \Nund_2 = \Nund_2^{\leqslant \lambda_1}\oplus\Nund_2^{>\lambda_1},\]
    where $\Nund_i^{\leqslant\lambda_j}$ and $\Nund_{i}^{>\lambda_j}$ are respectively the slope $\leqslant\lambda_j$ part and the slope $>\lambda_j$ part of $\Nund_i$. (In fact, the first paragraph of the proof of Prop.~\ref{prop-constancy-of-isoshtuka} actually shows that if $\Nund$ is a local isoshtuka over a perfect scheme such that the first break point of the Newton polygon is constant, then the smallest slope part of $\Nund$ is a direct summand of $\Nund$. We repeatedly apply this to $\Nund_1$ for all slopes $\leq \lambda_2$, and similarly for $\Nund_2$.)
    
    Now, for any homomorphism $\Nund_1\to\Nund_2$ of local shtukas, its kernel contains $\Nund_1^{>\lambda_2}$ and its image is contained in $\Nund_2^{\leqslant\lambda_1}$; i.e., we have a natural bijection 
    \[\Hom(\Nund_1,\Nund_2) \riso \Hom(\Nund_1^{\leqslant\lambda_2},\Nund_2^{\leqslant\lambda_1}).\]
    The lemma in case~(\ref{lem-Tates-thm-purity}) now follows from case~(\ref{lem-Tates-thm-constant}) as $\Nund_1^{\leqslant\lambda_2}$ and $\Nund_2^{\leqslant\lambda_1}$ have constant Newton polygons by assumption.
\end{proof}
 %

 \subsection{The Igusa cover for local $G$-shtukas} \label{ssect-igusa-cover}
 Let $\kappa/\kappa_F$ be an algebraic field extension. For a fixed element $b \in G(D_\kappa^\ast)$ we denote by $\GG_b = (L^+G_{\kappa},b\sigma)$ the associated local $G$-shtuka over $\kappa$. Now let $S$ be a $\kappa$-stack and let $\underline\Gcal \coloneqq (\Gcal, \varphi)$ be a local $G$-shtuka over $S$. We define the \emph{central leaf} of $\underline\Gcal$ as the set
   \[
     C^{b}_{\underline\Gcal} \coloneqq \{s \in S \mid \underline{\Gcal}_{\overline{\kappa(s)}} \cong \GG_{b,\overline{\kappa(s)}}\}.
   \]
 We will later see that it is closed inside the Newton stratum of $b$ and can thus be equipped with the structure of a reduced substack. We define the \emph{Igusa cover} as  the $\kappa$-stack solving the moduli problem
  \[
   \Ig_{\underline\Gcal}^b(T) \coloneqq \{(P,j) \mid P \in S(T), j\colon \GG_{b,T} \isom \underline{\Gcal}_T \}.
  \]

 \begin{prop} \label{prop-igusa-cover}
  In the situation of above definition the following holds.
  \begin{subenv}
   \item\label{prop-igusa-cover-affine} The canonical projection $p\colon \Ig_{\underline\Gcal}^b \to S$ is an affine morphism.
   \item\label{prop-igusa-cover-Newton} The morphism $p$ factors through the Newton stratum $S^{[b]}$ of $S$ associated to the $\sigma$-conjugacy class of $b$. The induced morphism $\Ig_{\underline\Gcal}^b \to S^{[b]}$ is integral.
   \item\label{prop-igusa-cover-cen-leaf} The image of $p$ equals $C_{\underline{\Gcal}}^b$. In particular, $C_{\underline{\Gcal}}^b$ is closed in $S^{[b]}$.
  \end{subenv}  
 \end{prop}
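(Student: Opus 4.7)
For (a), my plan is to realise $\Ig^b_{\underline\Gcal}$ as a closed subfunctor of the Isom sheaf $\underline\Isom_{L^+G}(L^+G_S,\Gcal)$ of the underlying $L^+G$-torsors, cut out by the $\varphi$-equivariance of the iso. Since $L^+G = \varprojlim L^+_nG$ is an affine group scheme, any $L^+G$-torsor is representable and affine over $S$; and the condition ``$\varphi\circ\sigma^*\Lcal j = \Lcal j\circ b\sigma$'' is closed (checked fibrewise/Tannakianly, or via an explicit equalizer between two sections of a separated affine $S$-scheme). This delivers (a).

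For (b), the factoring of $p$ through $S^{[b]}$ is immediate from the Newton stratum's definition, since a $T$-point of $\Ig^b$ yields $\underline\Gcal_T\cong\GG_{b,T}$. The integrality is the core technical point, and my plan is to exploit the natural morphism
\[
  \Ig^b_{\underline\Gcal}\;\hookrightarrow\;X^b_{\underline{\Lcal\Gcal}},\qquad (P,j)\mapsto(P,\Lcal j),
\]
which is a closed immersion because $L^+G\hookrightarrow LG$ is closed, so the locus in $X^b$ where the iso of $LG$-torsors preserves the sub-$L^+G$-torsor $\Gcal\subset\Lcal\Gcal$ is cut out by a closed condition. By Theorem~\ref{thm-constancy-of-G-isoshtukas}, $X^b\to S^{[b],\perf}$ is a $J_b(F)$-torsor and therefore trivialises over a pro-\'etale cover $U\to S^{[b]}$ (first passing to the perfection if $\cha F = p$; this is harmless since $S^{[b],\perf}\to S^{[b]}$ is a universal homeomorphism, affineness from (a) is preserved, and universal closedness---hence integrality---descends back). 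Under such a trivialisation we obtain a set-theoretic decomposition
\[
  \Ig^b\big|_U \;=\; \bigsqcup_{[g]\in J_b(F)/\Gamma_b}\;Z_{[g]}\times\Gamma_b,
\]
where $Z_{[g]}\subseteq U$ is the closed preimage of the point $gL^+G\in\Gr_G = LG/L^+G$ under the morphism $u\mapsto\Gcal_u$. The affineness of $\Ig^b|_U$ (from (a)) forces this disjoint union to be locally finite over $U$, and each non-empty summand is a $\Gamma_b$-torsor (pro-finite \'etale, hence integral) over a closed subscheme of $U$. Since integrality is fpqc-local on the base, we conclude that $\Ig^b\to S^{[b]}$ is integral.

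For (c), the set-theoretic image of $p$ equals $C^b_{\underline\Gcal}$ tautologically: a geometric point $\bar s$ of $S$ lies in the image iff the fibre is non-empty iff $\underline\Gcal_{\bar s}\cong\GG_{b,\bar s}$. The closedness of $C^b$ inside $S^{[b]}$ is then immediate from the universal closedness of the integral morphism produced in (b).

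The main obstacle I anticipate is verifying the closed-immersion structure of $\Ig^b\hookrightarrow X^b$ carefully enough to extract the local decomposition displayed above---especially in equal characteristic, where passage to perfection is required to invoke Theorem~\ref{thm-constancy-of-G-isoshtukas}, and one must ensure that integrality descends through this universal homeomorphism (the slogan being: affineness comes from (a), and universal closedness is a topological condition preserved by perfection).
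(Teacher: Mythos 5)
Your outlines of (a) and (c) coincide with the paper's: for (a) both realise $\Ig^b_{\underline\Gcal}$ as the equalizer of $j^{\rm univ}\circ b\sigma$ and $\varphi\circ j^{\rm univ}$, a closed subscheme of the affine $L^+G$-torsor $\underline\Isom(L^+G,\Gcal)$; for (c) both read off the set-theoretic image from the integrality established in (b).

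Your route to (b) is genuinely different, and it works. The paper proves universal closedness by the valuative criterion, extending an isomorphism of local $G$-shtukas over a valuation ring from its generic fibre via Tate's isogeny theorem (Prop.~\ref{prop-tates-thm}), itself a corollary of the constancy theorem. You use the constancy theorem (Thm.~\ref{thm-constancy-of-G-isoshtukas}) directly: trivialise the $J_b(F)$-torsor $X^b$ over a pro-\'etale cover $U$ of $S^{[b],\perf}$, note that the closed embedding $\Ig^b\hookrightarrow X^b$ becomes, fibrewise, the locus where the $LG$-isomorphism preserves the $L^+G$-structure, and decompose $\Ig^b|_U \cong \bigsqcup_{[g]\in J_b(F)/\Gamma_b} g\Gamma_b\times Z_{[g]}$ with $Z_{[g]}\subset U$ closed. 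The valuative criterion gives a shorter proof; your decomposition is more explicit and, as a byproduct, exposes the local $J_b(F)/\Gamma_b$-indexed stratification of the Igusa cover.

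Two places where you should tighten the argument. First, the local finiteness of the decomposition is not a consequence of affineness alone: you need that the pieces $g\Gamma_b\times Z_{[g]}$ are \emph{clopen} in $\Ig^b|_U$. This holds because $J_b(F)_U\cong\bigsqcup_{[g]} g\Gamma_b\times U$ is a clopen decomposition and you intersect with the closed subscheme $\Ig^b|_U$; say so explicitly, since only then does quasi-compactness over an affine open $V\subset U$ force all but finitely many intersections $Z_{[g]}\cap V$ to vanish, and hence $\Ig^b|_V\to V$ to factor as (finite disjoint union of closed immersions)$\circ$(profinite \'etale), which is integral. Second, in (c) the equivalence ``geometric fibre nonempty $\Leftrightarrow$ $\underline\Gcal_{\bar s}\cong\GG_{b,\bar s}$'' is not quite tautological: nonemptiness of the fibre scheme over the algebraically closed field $\ol{\kappa(s)}$ does not a priori give a $\ol{\kappa(s)}$-rational point. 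The paper deduces this from the integrality of $\Ig^b\to S$ (so residue fields on the fibre are algebraic over $\ol{\kappa(s)}$, hence equal to it); alternatively one may invoke vanishing of $\coh1$ of the profinite group $\Gamma_b$ over an algebraically closed field. Either way, note that this step also uses part (b).
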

 \begin{proof}
  Note that it suffices to prove the above statements after passing to the perfection. Thus we assume that $S$ is perfect.
  
  Since $\underline\Isom (L^+G,\Gcal) \to S$ is an $L^+G$-torsor, it is an affine morphism. Since $\Ig_{\underline\Gcal}^b = \ker (j^{\rm univ} \circ b\sigma, \varphi \circ j^{\rm univ}) \subset \underline\Isom(L^+G,\Gcal)$ is closed, it follows that $\Ig_{\underline\Gcal}^b \to S$ is affine as well, proving (\ref{prop-igusa-cover-affine}).
  
  The first part of (\ref{prop-igusa-cover-Newton}) is obvious. Since we already know that $\Ig_{\underline\Gcal}^b  \to S^{[b]}$ is affine, it remains to show that it is universally closed. By \cite[\href{https://stacks.math.columbia.edu/tag/0CLW}{Lem.~0CLW}]{AlgStackProj}, we have to check that it satisfies the existence part of the valuative criterion. Thus we consider a commutative diagram
  \begin{center}
   \begin{tikzcd}
    \Spec K \arrow{r} \arrow{d} & \Ig_{\underline\Gcal}^b \arrow{d} \\
    \Spec B \arrow{r} & S^{[b]}
   \end{tikzcd}
  \end{center}
  where $B$ is a valuation ring and $K$ its fraction field. By Prop.~\ref{prop-tates-thm} we can extend the isomorphism $j_K\colon \GG_{b,K} \to \underline{\Gcal}_K$ to an isomorphism $j\colon \GG_{b,B} \rightarrow \underline{\Gcal}_B$, defining a lift $\Spec B \to \Ig_{\underline\Gcal}^b$, proving (\ref{prop-igusa-cover-Newton}).
  
  To prove (\ref{prop-igusa-cover-cen-leaf}), denote by $C$ the set-theoretic image of $\Ig_{\underline\Gcal}^b$ in $S^{[b]}$. 
  Since $\Ig_{\underline\Gcal}^b$ is integral over $S^{[b]}$, a point $s\in S^{[b]}$ lies in $C$ if and only if we have $\underline{\Gcal}_{\overline{\kappa(s)}} \cong \GG_{b,\overline{\kappa(s)}}$. Hence, the central leaf $C_{\underline\Gcal}^b$ is the set-theoretic image of $\Ig_{\underline\Gcal}^b$, which is integral over $S^{[b]}$, and thus $C_{\underline{\Gcal}}^b$ is closed in $S^{[b]}$.
 \end{proof}

 \subsection{Finite-level Igusa covers}\label{ssect-finite-level-igusa-cover}
  To construct Igusa cover with finite or partial level structure, we start by considering the Tate module of an \'etale local $G$-shtuka. Assume that $\underline\Gcal$ is an \'etale local $G$-shtuka over a $\kappa_F$-stack $S$, then $\Ig^e_{\underline\Gcal}$ is a pro-\'etale $G(O_F)$-torsor by Prop.~\ref{prop-Igusa-etale} Hence for any closed subgroup $K \subset G(O_F)$, we define the Igusa cover $\Ig_{\underline\Gcal,K}^e$ of $K$-level as the associated $G(O_L)/K$-fibration over $S$. In particular, if the index $[G(O_F):K]$ is finite, then $\Ig_{\underline\Gcal,K}^e \to S$ is a finite morphism.
  
 For arbitrary $b \in G(\Fbreve)$, we had seen in \S\ref{ssect-aut-group} that $\Gamma_b = \Aut(\GG_b)^{\rm red}$, hence we get a canonical $\Gamma_b$-action on $\Ig_{\underline\Gcal}^b$. By construction, the canonical projection $r \colon \Ig_{\underline\Gcal}^b \to C^b_{\underline\Gcal}$ is $\Gamma_b$-invariant and $\Gamma_b$ acts simply transitively on its geometric fibres. To any closed subgroup $K \subset \Gamma_b$ we associate the finite level Igusa cover $\Ig^b_{\underline\Gcal,K}$ which we define as the GIT-type quotient $\Ig^b_{\underline\Gcal} \sslash K$ below. Since $r$ is affine, we may write $\Ig^b_{\underline\Gcal} = \underline\Spec \Ascr$ where $\Ascr$ is a quasi-coherent $\Oscr_{C^b_{\underline\Gcal}}$-algebra. We define
  \[
   \Ig^b_{\underline\Gcal,K} = \Ig^b_{\underline\Gcal}\sslash K \coloneqq \underline\Spec \Ascr^K.
  \]
 If $[\Gamma_b:K] < \infty$, or equivalently if $K$ is open, we also call $\Ig^b_{\underline\Gcal,K}$ a finite Igusa cover. For $K' \subset K$, the embedding $\Ascr^K \subset \Ascr^{K'}$ induces a morphism $r_{K,K'}\colon \Ig^b_{\underline\Gcal,K'} \to \Ig^b_{\underline\Gcal,K}$; thus the finite level Igusa covers $\{\Ig^b_{\underline\Gcal,K}\}_{K \subset \Gamma_b}$ form a projective system.
 
 \begin{lem} \label{lem-igusa-tower}
 Let $S$ be an  $\kappa_F$-stack, $\underline\Gcal$  a local $G$-shtuka over $S$ and $b \in G(\breve{F})$.
  \begin{subenv}
   \item\label{lem-igusa-tower-finite-level} If $K' \trianglelefteq K$ is an open normal subgroup, then $r_{K,K'}$ is a finite Galois cover with Galois group $K/K'$.
   \item\label{lem-igusa-tower-proet} The canonical morphism $\Ig^b_{\underline\Gcal} \to \varprojlim_{K\, \text{open}} \Ig^b_{\underline\Gcal,K}$ is an isomorphism, and the canonical projection $r_K \colon \Ig^b_{\underline\Gcal} \to \Ig^b_{\underline\Gcal,K}$ is a pro-\'etale $K$-torsor.
   \item The structure morphism $\Ig^b_{\underline\Gcal,\Gamma_b} \to C^b_{\underline\Gcal}$ is a universal homeomorphism.
  \end{subenv}
 \end{lem}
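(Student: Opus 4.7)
The unifying strategy is to use Theorem~\ref{thm-constancy-of-G-isoshtukas} to trivialise $\Ig^b_{\underline\Gcal}$ pro-\'etale locally on the perfection of $C^b_{\underline\Gcal}$, after which all three assertions become formal statements about $\Gamma_b$ acting on itself and then descend. Concretely, I would first establish the trivialisation: there is a pro-\'etale surjection $T \to C^{b,\perf}_{\underline\Gcal}$ together with an isomorphism $\underline\Gcal_T \cong \GG_{b,T}$. Theorem~\ref{thm-constancy-of-G-isoshtukas} directly yields this at the isoshtuka level, and the upgrade to the $L^+G$-level uses that the locus cutting out the $L^+G$-reduction inside the $J_b(F)$-torsor $X^b_{\Lcal\underline\Gcal}$ is precisely $\Ig^b_{\underline\Gcal}$, which surjects onto $C^b_{\underline\Gcal}$ by Proposition~\ref{prop-igusa-cover}\eqref{prop-igusa-cover-cen-leaf}; combining integrality from Proposition~\ref{prop-igusa-cover}\eqref{prop-igusa-cover-Newton} with the discreteness of $\Gamma_b$ inside $J_b(F)$ shows that this surjection is pro-\'etale. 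Under the resulting trivialisation and the identification $\Gamma_b = \Aut(\GG_b)^\red$ from \S\ref{ssect-aut-group}, one gets a canonical $\Gamma_b$-equivariant isomorphism $\Ig^b_{\underline\Gcal} \times_{C^b_{\underline\Gcal}} T \cong \Gamma_b \times T$.

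Assuming the trivialisation, all three parts are essentially formal. For \textbf{(1)}, the pullback of $r_{K,K'}$ to $T$ is a disjoint union indexed by $\Gamma_b/K$ of copies of the finite Galois cover $T \times (K/K') \to T$ with group $K/K'$; since taking $K$-invariants commutes with the flat pullback along $T \to C^b_{\underline\Gcal}$ on quasi-coherent modules, the GIT quotient commutes with base change to $T$, and fpqc descent of finite Galois covers concludes. For \textbf{(2)}, over $T$ the transition maps identify $\varprojlim_K \Ig^b_{\underline\Gcal,K} \times_{C^b_{\underline\Gcal}} T$ with $\varprojlim_K (\Gamma_b/K) \times T = \Gamma_b \times T$, the last equality holding because the open subgroups form a fundamental system of neighbourhoods of $1 \in \Gamma_b$ in the $\varpi$-adic topology; descent yields the first assertion, and the pro-\'etale $K$-torsor property of $r_K$ descends from the evident trivialisation $\Gamma_b \times T \to (\Gamma_b/K) \times T$. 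For \textbf{(3)}, taking $K = \Gamma_b$ shows that $\Ig^b_{\underline\Gcal,\Gamma_b} \to C^b_{\underline\Gcal}$ pulls back to the identity over $T$, hence is an isomorphism after perfection; being integral (as a quotient of an integral morphism, by Proposition~\ref{prop-igusa-cover}\eqref{prop-igusa-cover-Newton}) and universally bijective on points, it is a universal homeomorphism.

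The crux of the argument is the trivialisation step: the direct output of Theorem~\ref{thm-constancy-of-G-isoshtukas} provides only an $LG$-torsor trivialisation, whereas all three parts need an $L^+G$-torsor trivialisation. Verifying that the $L^+G$-reduction locus inside the pro-\'etale $J_b(F)$-torsor of isoshtuka-isomorphisms is itself pro-\'etale surjective onto $C^b_{\underline\Gcal}$ is the main technical obstacle. One also has to be careful in characteristic $p$ about the distinction between a scheme and its perfection, but each of the three conclusions (finite Galois, pro-\'etale torsor, universal homeomorphism) is well-behaved under perfection, so this does not cause additional difficulty.
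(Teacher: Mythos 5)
Your proposal takes a genuinely different route from the paper. The paper proves (1) directly by ``usual GIT'', using that $K/K'$ acts freely on $\Ig^b_{\underline\Gcal,K'}$ (this freeness comes from the simply-transitive $\Gamma_b$-action on geometric fibres established in \S\ref{ssect-finite-level-igusa-cover}); it proves the first half of (2) via the identity $\Ascr=\varinjlim_K\Ascr^K$, which holds because the $\Gamma_b$-action is continuous, and the second half by taking $\varprojlim$ in (1); and it proves (3) by combining universal bijectivity (again from the $\Gamma_b$-action) with integrality from Proposition~\ref{prop-igusa-cover}. No trivialisation is needed, and everything happens over the (possibly non-perfect) scheme itself.

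Your approach has two genuine gaps. First, the trivialisation step --- which you rightly call the crux --- is not established by what you write. You assert it follows from ``integrality from Proposition~\ref{prop-igusa-cover}\eqref{prop-igusa-cover-Newton} combined with the discreteness of $\Gamma_b$ inside $J_b(F)$'', but $\Gamma_b$ is an \emph{open compact} subgroup of $J_b(F)$, not a discrete one; the relevant topological input is rather that the quotient $J_b(F)/\Gamma_b$ is discrete. More importantly, integrality plus this topological fact does not by itself yield pro-\'etale local triviality of $\Ig^{b,\perf}\to C^{b,\perf}$. A real argument would go: after trivialising $X^b_{\Lcal\underline\Gcal}$ over a pro-\'etale $T\to C^{b,\perf}$, the closed subscheme $\Ig^b\times_{C^b}T\subset J_b(F)\times T$ is classified by a section of the \emph{discrete} bundle $J_b(F)/\Gamma_b$ over $T$, and one must show this section is locally constant; only then does one get the $\Gamma_b$-torsor structure. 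You do not give this, and without it the ``formal'' deductions of (1)--(3) rest on an unproved claim that essentially contains (2) and (3).

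Second, the descent from $T$ back to $C^b$ is not legitimate as stated when $\cha F=p$ and $C^b$ is not perfect. The map $T\to C^b$ factors through $C^{b,\perf}\to C^b$, which is a universal homeomorphism but not flat, so $T\to C^b$ is not fppf and ``fpqc descent'' does not apply. The closing claim that the three conclusions are ``well-behaved under perfection'' is false in the direction you need: a finite morphism whose perfection is \'etale need not itself be \'etale (e.g.\ $\Spec\kappa[t]/(t^p)\to\Spec\kappa$). Thus (1) and the torsor statement in (2) cannot be detected on perfections alone. The paper sidesteps all of this by arguing directly on $C^b$ via the $\Gamma_b$-action and continuity, without passing through a trivialisation or a perfection.
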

 \begin{proof}
  Since all assertions are local for the flat topology, we may assume that $S$ is a scheme. The first assertion now follows from usual GIT since $K/K'$ acts freely on $\Ig^b_{\underline\Gcal,K}$. The first part of the second assertion is equivalent to $\Ascr = \varinjlim \Ascr^K$, which is true since the action of $\Gamma_b$ is continuous. By (\ref{lem-igusa-tower-finite-level}) the morphism $r_K$ is a profinite \'etale $K = \varprojlim K/K'$-torsor, proving the second part. Finally, $\Ig^b_{\underline\Gcal,\Gamma_b} \to C^b_{\underline\Gcal}$ is universally bijective by our considerations on the $\Gamma_b$-action above and also integral by construction, hence universally closed. Thus it is a universal homeomorphism.
 \end{proof}
 
 In the general case where $\kappa$ is an algebraic field extension of $\kappa_F$, the scheme $\Aut(\GG_b)^\red$ is given by the profinite group $\Gamma_b$ equipped with the Galois descent datum induced by the Frobenius action on $G(O_{\breve F})$. If $K \subset \Gamma_b$ is a closed $\Gal(\kbar/k)$-stable subgroup, one checks that the Galois descent datum on $\Ig^b_{\underline\Gcal}$ induces a Galois descent datum on $ \Ig^b_{\underline\Gcal,K}$. Thus $\Ig^b_{\underline\Gcal,K}$ is defined over $\kappa$.
 
 From the next section on, we will work exclusively with the case $\cha F= p$. We finish this chapter with an outlook how the above constructions can be applied in the $p$-adic case. 
 
 \subsection{Application to Barsotti-Tate groups and Shimura varieties} \label{ssect-applications}
  Let $X$ be a {\BT} over an $\FF_p$-scheme $S$ of height $n$. By a result of Gabber, the Dieudonn\'e functor defines an equivalence of categories between the categories of \BT s over $S^\perf$  and local shtukas $(M,\varphi)$ satisfying $M \subset \varphi(M) \subset pM$. (See also \cite[Thm.~A.1]{Lau:DisplayCrystals}.) Thus if $\underline\Gscr$ denotes the corresponding local $\GL_n$-shtuka, then we have $\Ig^b_{\underline\Gcal} = \underline\Isom(\XX_b,X)^\perf$ where $\XX_b$ is the {\BT} over $\overline\FF_p$ with Dieudonn\'e-module $(\breve\ZZ_p^n,b\sigma)$. In particular, Prop.~\ref{prop-igusa-cover}~(3) recovers the classical statement of Oort that central leaves are closed inside their Newton stratum (\cite[Thm.~2.2]{Oort:Foliations}), removing the requirement that $S$ is excellent.

 Another application is to purity. In \cite[Main~Theorem~B]{Vasiu:CrystallineBoundedness} Vasiu proves that the Newton polygon stratification associated to an $F$-crystal over an $\FF_p$-scheme $S$ satisfies the purity property; i.e.\ the embedding of a (locally closed) Newton stratum is an affine morphism. More generally, one may consider the purity property for the union $S^{[b]_i}$ of all Newton strata where the Newton polygons are all on or below a given Newton polygon $\nu(b)$ and contain a fixed break point $(i,\nu(b)(i))$ of $\nu(b)$. 
  The purity property for a single Newton stratum $S^{[b]}$ can be recovered from the purity property for $S^{[b]_i}$ as $S^{[b]}$ is the intersection of $S^{[b]_i}$'s for all break points of $\nu(b)$. 
  Note that the notion of break point $i$ for $\nu(b)$ and $S^{[b]_i}$ has a group-theoretic generalisation for $F$-isocrystals with $G$-structure on $S$, where certain relative roots for $G$ serve as break points for $\nu(b)$; \emph{cf.} \cite[p.~503]{Viehmann:NewtonStratLoopGp}, \cite[Def.~2.3]{Hamacher:DeforSpProdStr}.
 
Given an $F$-isocrystal over a locally Noetherian $\FF_p$-scheme $S$, Yang  \cite{Yang:Purity} proved that the complement of $S^{[b]_i}$ in $S^{\leqslant[b]}$ is of pure codimension one (which is weaker than being affine) or empty. This statement has been generalised to $F$-isocystals with additional $G$-structure by the first named author in \cite{Hamacher:DeforSpProdStr}. By generalising Vasiu's approach, Viehmann proved in \cite{Viehmann:NewtonStratLoopGp} that for local $G$-shtuka over an integral local Noetherian scheme $S$, the subschemes $S^{[b]_i}$ are affine over $S$ if $G$ is a split reductive group. Using our results above, we can remove the restrains from above results.

\begin{prop} \label{prop-purity}
 Let $\underline\Hcal$ be a local $G$-isoshtuka over a $\kappa_F$-scheme $S$. Then  $S^{[b]_i}$ is a locally closed affine subscheme of $S$.
\end{prop}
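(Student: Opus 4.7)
The plan is to reduce the statement to Viehmann's purity theorem~\cite{Viehmann:NewtonStratLoopGp}, which gives the analogous result for split $G$ over an integral local Noetherian base, via a two-step reduction through a faithful representation and a Noetherian-approximation argument.

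First, since affineness of a morphism is Zariski-local on the target, I may assume $S = \Spec R$ is affine. To reduce to $G = \GL_n$, I choose a closed embedding $\rho\colon G \hookrightarrow \GL(V)$ realising $G$ as the stabiliser of a tensor (Broshi's theorem, as used in the proof of Theorem~\ref{thm-constancy-of-G-isoshtukas}). Pushforward along $\rho$ associates to $\underline\Hcal$ a rank-$n$ local isoshtuka $\underline\Nscr$. The Newton cocharacter of $\underline\Hcal$ maps under $\rho$ to the Newton polygon of $\underline\Nscr$, and each break of $\nu(b)$ at a relative root $i$ corresponds to a break of the $\GL_n$-Newton polygon at a specific abscissa. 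Under this correspondence $S^{[b]_i}$ for $\underline\Hcal$ coincides with the analogous stratum for $\underline\Nscr$, reducing the problem to the case $G = \GL_n$.

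Next, I write $R = \varinjlim R_\alpha$ as a filtered colimit of its finite-type $\FF_p$-subalgebras. Since a local isoshtuka is described by finitely presented data (a finitely generated projective $D_R$-module with a $\sigma$-linear isomorphism after inverting $\unif$), $\underline\Nscr$ descends to an isoshtuka $\underline\Nscr_\alpha$ over $\Spec R_\alpha$ for some $\alpha$. As the Newton polygon is computed fibrewise, $S^{[b]_i}$ equals the preimage of $(\Spec R_\alpha)^{[b]_i}$ under $\Spec R \to \Spec R_\alpha$; since affineness is preserved under base change, it suffices to prove the statement when $S$ is of finite type over $\FF_p$, in particular Noetherian.

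Finally, for a Noetherian $S$ and $G = \GL_n$, I reduce to Viehmann's setting by a standard descent argument: affineness is fpqc-local on the target, the Newton stratification depends only on the underlying reduced scheme, and at any closed point of a reduced Noetherian affine scheme lying on a single irreducible component the local ring is an integral local Noetherian domain for which Viehmann's theorem directly applies. The main obstacle is to handle closed points lying on the intersection of several irreducible components; this requires passing to an appropriate fpqc cover, e.g., by taking products of henselisations or completions along distinct branches, and descending affineness through these covers. This last descent step is fundamentally routine but the bookkeeping is somewhat delicate, in particular one has to verify that the break structure in $\nu(b)$ is preserved under the various localisation and completion operations; Proposition~\ref{prop-tates-thm} and Theorem~\ref{thm-constancy-of-G-isoshtukas} provide the technical input needed to ensure that the $S^{[b]_i}$-loci assemble correctly.
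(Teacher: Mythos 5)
Your approach is genuinely different from the paper's, and it contains a serious gap in the Noetherian approximation step.

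The paper does \emph{not} reduce to Viehmann's theorem. Instead, after reducing to $G = \GL_n$ (via \cite[Prop.~1]{Hamacher:DeforSpProdStr}, which plays the role of your Broshi-embedding argument) and to a break point at $(1,0)$, it constructs an explicit affine scheme
\[
 S' = \{ (f,g) \in \underline\Hom_S((D_R,\sigma),\Mund) \times \underline\Hom_S(\Mund,(D_R,\sigma)) \mid f \circ g = \id\}
\]
mapping to $S^{[b]_1}$, and proves that $S'\to S^{[b]_1}$ satisfies the valuative criterion via Lemma~\ref{lem-Tates-thm} (the strengthened Tate-type extension lemma developed in this paper). Affineness of $S'$ and integrality of $S'\to S^{[b]_1}$ then give the claim. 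The whole point of the argument — and indeed the selling point of this section of the paper, as the surrounding text emphasises — is that the new ingredients (Lemma~\ref{lem-Tates-thm}, Theorem~\ref{thm-constancy-of-G-isoshtukas}) make it possible to go directly to arbitrary $\kappa_F$-schemes and arbitrary $G$, removing precisely the hypotheses ``split'' and ``integral local Noetherian'' in Viehmann's result. So a reduction to Viehmann's theorem runs against the grain of what is being proved.

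Beyond the architectural mismatch, your Noetherian approximation step is not sound. You assert that a local isoshtuka is ``described by finitely presented data'' and hence descends to some finite-type $\FF_p$-subalgebra $R_\alpha\subset R$. This is false: an isoshtuka lives over $D_R^\ast = R\rpot{\unif}$ (or $W_{O_F}(R)[\unif^{-1}]$), and neither $D_R$ nor $D_R^\ast$ commutes with filtered colimits of rings — for $R = \varinjlim R_\alpha$ one has $\varinjlim R_\alpha\pot\unif \subsetneq R\pot\unif$ in general, since a power series over $R$ need not have all its coefficients in a single $R_\alpha$. Even after choosing an effective local shtuka $\Mund$ with $\Mund[\unif^{-1}]\cong \Nund$ (which itself requires Lemma~\ref{lem-effective-shtuka} and hence a proper cover of $S$), the Frobenius matrix has entries that are genuine power series, so descent to a finite-type subalgebra still fails. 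Your secondary reduction step — covering a reduced Noetherian affine $S$ by ``products of henselisations or completions along distinct branches'' to land in Viehmann's integral local Noetherian setting — is also under-specified: such a product is generally not an fpqc cover of $S$, and completions of non-excellent Noetherian local rings can acquire new irreducible components and nilpotents, so the Newton strata upstairs and downstairs need not match up in a way that lets you descend affineness. These are exactly the difficulties the paper's direct argument is designed to sidestep.
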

\begin{proof}
 We assume without loss of generality that $S = \Spec R$ is affine and perfect, and that $S = S^{\leqslant[b]}$. By \cite[Prop.~1]{Hamacher:DeforSpProdStr} it suffices to consider the case $G = \GL_n$; denote by $\underline{N}$ the local isoshtuka corresponding to $\underline\Hcal$, and choose a local shtuka $\Mund$ such that $\Mund[\unif\iv] = \Nund$. By the argument \cite[\S4.1]{Viehmann:NewtonStratLoopGp} we may further reduce to the case that the break point equals $(1,0)$. Here we use the notion of upper convex Newton polygons, i.e.\ $S^{[b]_{1}}$ contains all Newton strata \emph{above} the Newton polygon of $[b]$ which contain the break point $(1,0)$.
 
 Note that $S^{[b]_{1}}$ is the image of $\underline\Hom_S((D_R,\sigma),\Mund) \setminus \{0\} $ in $S$, or equivalently that of
 \[
  S' = \{ (f,g) \in \underline\Hom_S((D_R,\sigma),\Mund) \times \underline\Hom_S(\Mund,(D_R,\sigma)) \mid f \circ g = \id \}.
 \]
 Since $S'$ is affine and satisfies the valuative criterion over its image $S^{[b]_{1}}$ by case~(\ref{lem-Tates-thm-purity}) of Lem.~\ref{lem-Tates-thm}, it follows that the morphism $S' \to S^{[b]_{1}}$ is integral and $S^{[b]_{1}}$ is affine.
\end{proof}
  
  We now consider the case that $S$ is the special fibre of the integral model of a Hodge type Shimura variety $\Sh(\Gsf,\Xsf)_\Ksf$ as constructed by Kisin and Pappas in \cite{KisinPappas:ParahoricIntModel}. By construction, we obtain a principally polarised abelian scheme $(A,\lambda)$ over $S$, which is moreover equipped with a family $\Psi$-invariant tensors $(t_\alpha)$ in the display $P(A[p^\infty])$ by \cite[Prop.~1]{HamacherKim:Mantovan}. As above, the triple $(A[p^\infty], \lambda, (t_\alpha))$ corresponds to a local $\Gsf_{\ZZ_p}$-shtuka $\underline\Gcal$ over $S^\perf$. (See also proof of \cite[Cor.~4.12]{HamacherKim:Mantovan}.) Hence $\Ig_{\underline\Gcal}^b$ recovers the construction of the Igusa variety in \cite[\S6.1]{HamacherKim:Mantovan}, removing the hypothesis that $b$ is decent.  We may even extend the definition of the Igusa variety to Shimura varieties of abelian type with hyperspecial level structure at $p$ as the crystalline canonical model constructed by Lovering \cite[(3.4.8)]{Lovering:GCrys} defines a local $\Gsf_{\ZZ_p}^c$-shtuka over $S^\perf$. (To explain, the crystalline canonical model in \emph{loc.~cit.} is an \emph{$F$-crystal with $\Gsf^c_{\ZZ_p}$-structure} on the $p$-adically completed integral canonical model in the sense of  \cite[(2.4.7)]{Lovering:GCrys}, which integrally corresponds to the pro-\'etale $\Gsf^c_{\ZZ_p}(\ZZ_p)$ torsor of the tower of Shimura varieties; \emph{cf.} \cite[Thm.~3.5.1]{Lovering:GCrys}. It consists of a formal $\Gsf_{\ZZ_p}^c$-torsor with some extra structure, and the desired local $\Gsf_{\ZZ_p}^c$-shtuka can be obtained by pulling it back to a formal $\Gsf^c_{\ZZ_p}$-torsors over $\Spf W(S^\perf)$ and identifying it with a $L^+\Gsf^c_{\ZZ_p}$-torsor over $S^{\perf}$ by the same proof as \cite[Prop.~2.4]{ArastehRad-Hartl:LocGlShtuka}.) 
  
  As a consequence we give another proof of the following results announced by Shen and Zhang; \emph{cf.} \cite[(5.4.5)]{ShenZhang:ShVarStrat}.

 \begin{prop}\label{prop-Shen-Zhang}
  Central leaves  (called canonical central leaves in \cite{ShenZhang:ShVarStrat}) in Shimura varieties of abelian type with hyperspecial level structure are non-empty and closed inside the Newton stratum. Moreover, each central leaf is a union of connected components of the ``adjoint'' central leaf of the induced $\Gsf^{\mathrm{ad}}_{\ZZ_p}$-shtuka containing it.
 \end{prop}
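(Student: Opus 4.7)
The plan is to apply the machinery developed above for local $G$-shtukas to the local $\Gsf_{\ZZ_p}^c$-shtuka $\underline\Gcal$ on $S^{\perf}$ coming from Lovering's crystalline canonical model. Pushing $\underline\Gcal$ forward along $\Gsf_{\ZZ_p}^c \to \Gsf^{\mathrm{ad}}$ yields a local $\Gsf^{\mathrm{ad}}$-shtuka $\underline\Gcal^{\mathrm{ad}}$, and the central leaves in the statement are the loci $C^b_{\underline\Gcal}$ (respectively $C^{\bar b}_{\underline\Gcal^{\mathrm{ad}}}$). The closedness of $C^b_{\underline\Gcal}$ inside the Newton stratum $S^{[b]}$ then follows directly from Proposition~\ref{prop-igusa-cover}(\ref{prop-igusa-cover-cen-leaf}).

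For non-emptiness, I would reduce to the Hodge type case via Kisin's construction of abelian type integral models as \'etale quotients of (connected components of) Hodge type models. The local $\Gsf_{\ZZ_p}^c$-shtuka is compatible with this construction, so a central leaf on the abelian-type side is the image of the corresponding central leaf upstairs. For Hodge type, $C^b_{\underline\Gcal}$ is the image of the Igusa variety $\Ig^b_{\underline\Gcal}$ by Proposition~\ref{prop-igusa-cover}(\ref{prop-igusa-cover-cen-leaf}), whose non-emptiness for admissible $[b]$ follows from the almost product structure established in \cite{HamacherKim:Mantovan} together with the known non-emptiness of Newton strata.

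For the last assertion, fix $[\bar b]\in B(\Qp,\Gsf^{\mathrm{ad}})$ and consider the set-theoretic map
\[
 \phi\colon C^{\bar b}_{\underline\Gcal^{\mathrm{ad}}} \longrightarrow \{[b]\in B(\Qp,\Gsf_{\ZZ_p}^c) : [b]\mapsto [\bar b]\}
\]
sending a point to the $\Gsf_{\ZZ_p}^c$-isomorphism class of its local shtuka. The fibre $\phi\iv([b])$ is exactly $C^b_{\underline\Gcal}$, so the assertion reduces to showing that $\phi$ is locally constant. Since the target is a countable discrete set, it suffices to argue pro-\'etale locally on the adjoint Igusa cover $\Ig^{\bar b}_{\underline\Gcal^{\mathrm{ad}}} \to C^{\bar b}_{\underline\Gcal^{\mathrm{ad}}}$ (Lemma~\ref{lem-igusa-tower}(\ref{lem-igusa-tower-proet})), over which the adjoint shtuka acquires a canonical trivialisation as $\GG_{\bar b}$. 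The pullback of $\underline\Gcal$ then becomes a family of $\Gsf_{\ZZ_p}^c$-shtukas lifting this trivialised adjoint along the central extension $1\to Z\to \Gsf_{\ZZ_p}^c\to \Gsf^{\mathrm{ad}}\to 1$, where $Z$ denotes the central kernel. An application of Theorem~\ref{thm-constancy-of-G-isoshtukas} (together with Proposition~\ref{prop-tates-thm}) to this family, combined with the discreteness of the fibre of $B(\Qp,\Gsf_{\ZZ_p}^c)\to B(\Qp,\Gsf^{\mathrm{ad}})$ over $[\bar b]$, should force $\phi$ to be pro-\'etale locally constant, hence locally constant.

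The hard part will be making this last step rigorous. The issue is that the central kernel $Z$ may be disconnected, so one cannot naively treat $B(\Qp,Z)$-twists and lift constancy of adjoint isomorphism classes to constancy of $\Gsf_{\ZZ_p}^c$-isomorphism classes without care. Concretely, one must show that two $\Gsf_{\ZZ_p}^c$-shtukas over a connected perfect base with a common adjoint trivialisation and the same element of the discrete fibre of $B$ become $\Gsf_{\ZZ_p}^c$-isomorphic over a pro-\'etale cover; I expect this to follow by analysing the extension $1\to Z\to \Gsf_{\ZZ_p}^c\to \Gsf^{\mathrm{ad}}\to 1$ term by term, applying the constancy theorem to the connected component of $Z$ first and then handling $\pi_0(Z)$ by a separate locally constant sheaf argument.
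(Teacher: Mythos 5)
Your treatment of closedness is identical to the paper's (Proposition~\ref{prop-igusa-cover}(\ref{prop-igusa-cover-cen-leaf})), so that part matches.

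For non-emptiness the paper takes a different and more direct route: it invokes Kisin's description of $\Sscr_{K_p}(\Gsf,\Xsf)(\cl\FF_p)$ from \cite{Kisin:LanglandsRapoport} (the Langlands--Rapoport-type bijection with terms $I_\phi(\QQ)\backslash X_p(\phi)\times X^p(\phi)/\Ksf^p$), combined with non-emptiness of the unramified affine Deligne--Lusztig variety $X_p(\phi)$, and observes that the isomorphism class of the local $\Gsf^c_{\ZZ_p}$-shtuka at a point parametrised by $g_p\in X_p(\phi)$ is exactly $\GG_{g_p\iv b_\phi\sigma(g_p)}$ (by tracking \cite[Prop.~1.4.4]{Kisin:LanglandsRapoport} and the twisting operation in the abelian-type case). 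This produces every admissible isomorphism class directly at the level of $\cl\FF_p$-points, without any detour through the almost product structure or a reduction to Hodge type. Your proposed reduction is plausible but needs to address the fact that Kisin's abelian-type integral models are not literally \'etale quotients of Hodge-type models: the construction involves extracting connected components and twisting, so the compatibility of central leaves under this passage requires the same bookkeeping that the Langlands--Rapoport description already packages.

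For the last assertion your approach has a genuine gap, which you partly flag yourself, but the obstacle is actually sharper than an issue about $\pi_0(Z)$. Theorem~\ref{thm-constancy-of-G-isoshtukas} and Proposition~\ref{prop-tates-thm} are statements about \emph{isoshtukas}, i.e.\ $LG$-torsors. Pro-\'etale trivialisation of the $L\Gsf^c_{\ZZ_p}$-isoshtuka plus trivialisation of the $L^+\Gsf^{\ad}_{\ZZ_p}$-shtuka does not control the isomorphism class of the underlying $L^+\Gsf^c_{\ZZ_p}$-torsor with its Frobenius, which is what $\phi$ records: the relevant ambiguity lives in $\breve\Ksf^c_p$-$\sigma$-conjugacy classes, not in $B(\Qp,\Gsf^c)$. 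So the constancy theorem simply does not see the quantity whose local constancy you want to prove, and the proposed term-by-term analysis of the central extension would in effect amount to reproving the key finiteness. The paper avoids local constancy entirely: having already established that each $\Gsf^c_{\ZZ_p}$-central leaf is closed inside the Newton stratum (hence inside the adjoint central leaf), it suffices to show that each adjoint central leaf meets only finitely many $\Gsf^c_{\ZZ_p}$-central leaves, since finitely many closed subsets forming a partition are each open. The finiteness is the group-theoretic statement that a fibre of the map $(\breve\Ksf^c p^{-\mu^c}\breve\Ksf^c_p)/_{\breve\Ksf^c_p\text{-}\sigma\text{-conj}}\to\Gsf^{\ad}(\breve\QQ_p)/_{\breve\Ksf^{\ad}_p\text{-}\sigma\text{-conj}}$ is a principal homogeneous space under $\coker(\Lcal\colon\Zsf^c(\breve\ZZ_p)\to\Zsf^c(\breve\ZZ_p))$, which is finite because the Lang isogeny surjects onto the neutral component $\Zsf^{c,0}(\breve\ZZ_p)$ (following the proof of \cite[Lemma~4.2.1]{ShenZhang:ShVarStrat}). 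This is substantially simpler and sidesteps all the lifting subtleties you were worried about.
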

 \begin{proof}
  The non-emptiness is a consequence of \cite[Thm.~0.3]{Kisin:LanglandsRapoport} together with non-emptiness of unramified affine Deligne--Lusztig varieties. In \emph{loc.~cit.}, Kisin constructs a bijection 
  \[
   \Sscr_{K_p}(\Gsf,\Xsf)(\cl\FF_p) \bij \bigsqcup_\phi \varprojlim_\phi I_\phi(\QQ) \backslash X_p(\phi) \times X^p(\phi)/\Ksf^p.
  \]
  Here the left hand side denotes $\cl\FF_p$-valued points of the Shimura variety with infinite level away from $p$ (the level of away from $p$ is irrelevant to tour question) and $X_p(\phi)$ denotes an affine Deligne--Lusztig variety; i.e.,\ is of the form
  \[
   X_p(\phi) = \{g \in G(\breve\QQ_p)/\breve\Ksf_p \mid g\iv b_\phi \sigma(g) \in \breve\Ksf_p p^{-\mu} \breve\Ksf_p \}, 
  \]
 where $\breve\Ksf_p\coloneqq \Gsf_{\ZZ_p}(\breve\ZZ_p)$ and $\mu$ is a suitable cocharacter as in \cite[\S3.3]{Kisin:LanglandsRapoport}. Note that $X_p(\phi)$ is known to be non-empty. (See e.g.~\cite[Thm.~5.2]{Gashi:KRconj}.)
  Then we claim that the isomorphism class of the local shtuka over a point of the Shimura variety with corresponding element $g_p \in X_p(\phi)$ is isomorphic to  $\GG_{g_p\iv b_\phi \sigma(g_p)}$: For Shimura varieties of Hodge type this follows from the construction, the relevant step being \cite[Prop.~1.4.4]{Kisin:LanglandsRapoport}. To conclude the general case we note that Kisin and Lovering use the same twisting operation to obtain their result for abelian-type Shimura varieties from the Hodge type case in \cite[\S4.7]{Kisin:LanglandsRapoport}, \cite[\S3.4]{Lovering:GCrys}. Since we obtain all possible isomorphism classes this way, we conclude non-emptiness.

  The closedness was proven in Prop.~\ref{prop-igusa-cover}~(3). To prove the second assertion, it therefore remains to prove that every adjoint central leaf only contains finitely many central leaves; in other words, we want to show that \emph{any fibre of the map}
\begin{equation}\label{eq-shen-zhang}
    (\breve\Ksf^cp^{-\mu^c}\breve \Ksf^c_p)/_{\breve\Ksf^c_p\text{-}\sigma\text{ conj}} \to \Gsf^{\ad}(\breve\QQ_p)/_{\breve\Ksf_p^{\ad}\text{-}\sigma\text{ conj}}
\end{equation}
\emph{is finite}, where $\breve\Ksf^c_p\coloneqq\Gsf^c_{\ZZ_p}(\breve\ZZ_p)$, $\breve\Ksf^{\ad}_p\coloneqq\Gsf^{\ad}_{\ZZ_p}(\breve\ZZ_p)$ and $\mu^c$ is the $\Gsf^c$-valued cocharacter induced by $\mu$. Indeed, examining the proof of \cite[Lem.~4.2.1]{ShenZhang:ShVarStrat}, one can observe that any fibre of the map \eqref{eq-shen-zhang} is a principal homogeneous space under the action of $\coker(\Lcal\colon\Zsf^c(\breve\ZZ_p) \to \Zsf^c(\breve\ZZ_p))$ where $\Zsf^c$ is the centre of  $\Gsf^{c}$ and $\Lcal(t) \coloneqq t^{-1}\sigma(t)$ is the Lang isogeny. 
The proof of \emph{loc.~cit.} also shows that $\coker(\Lcal)$ is finite as $\Lcal$ is surjective on the neutral connected component $\Zsf^{c,0}(\breve\ZZ_p)$. This shows that any fibre of \eqref{eq-shen-zhang} is finite.
 \end{proof}

\section{Moduli stacks of global $\Gscr$-shtukas} \label{sect-moduli}
In this and the following sections, we will work with the same setup as in \S\ref{sect-intro}; in particular, $\Gscr$ is a smooth affine group scheme over $C$ with connected fibres and reductive generic fibre $\Gsf$. We start by recalling the definition of global $\Gscr$-shtukas and the related moduli stacks. The principal references for this section are the papers of Arasteh Rad and Hartl \cite{ArastehRad-Hartl:UnifStack} and Bieker \cite{Bieker:IntModels}.

\subsection{Modifications of $\Gscr$-bundles}
 We consider the stack of $\Gscr$-bundles $\Bun_\Gscr$ over the curve $C$. Let $S$ be an $\f$-scheme, $T \subset C_S$ a finite subscheme over $S$, and $\Vscr,\Vscr' \in \Bun_\Gscr(S)$ two $\Gscr$-bundles over $C_S$. By definition, a $T$-modification of $\Vscr$ in $\Vscr$ is an isomorphism
 \[
  \phi\colon \restr{\Vscr}{C_S \setminus T} \isom \restr{\Vscr}{C_S \setminus T}.
 \]
 We consider the case that $T$ has a nice parametrisation over $S$ in the sense that $T = \bigcup_{i=1}^n \Gamma_{x_i}$ is the union of graphs of morphisms $x_i\colon S \to C$.
 
\begin{defn}\label{def-unbounded-shtukas}
 Let $\Ibf = (I_1,\dotsc,I_k)$ be a partition of a finite set $I$. We denote by $\Hecke_{\Gscr,\Ibf}$ the stack over $\f$ whose $S$-valued points are tuples $((x_i)_{i\in I},(\Vscr_j)_{j=0}^k, (\phi_j)_{j=1}^k)$ where
 \begin{bulletlist}
  \item $x_i \in C(S)$ for all $i \in I$ called legs,
  \item $\Vscr_j \in \Bun_\Gscr(S)$ for all $j=0,\dotsc,k$ and
  \item $\phi_j\colon \restr{\Vscr_{j}}{C_S \setminus \bigcup_{i \in I_j} \Gamma_{x_i}} \isom \restr{\Vscr_{j-1}}{C_S \setminus \bigcup_{i \in I_j} \Gamma_{x_i}}$ for all $j=1,\dots,k$.
 \end{bulletlist}

 We also define the \emph{stack of unbounded global $\Gscr$-shtukas} by \[\Xscr_{\Gscr,\Ibf} \coloneqq \ker(\sigma\circ\pr_0, \pr_k \colon \Hecke_{\Gscr,\Ibf} \rightrightarrows \Bun_\Gscr),\] 
 and an object of $\Xscr_{\Gscr,\Ibf}(S)$ is called a \emph{global $\Gscr$-shtuka} over $S$. More concretely, a $\Gscr$-shtuka above $S$ is a tuple $\underline\Vscr_\bullet=((x_i)_{i\in I},(\Vscr_j)_{j=0}^k, (\phi_j)_{j=0}^k)$ where
 \begin{bulletlist}
  \item $((x_i)_{i\in I},(\Vscr_j)_{j=0}^k, (\phi_j)_{j=1}^k)\in\Hecke_{\Gscr,\Ibf}(S)$ and
  \item $\phi_0 \colon \sigma^\ast \Vscr_0 \isom \Vscr_k$.
 \end{bulletlist}
Given a $\Gscr$-shtuka as above, we denote by $\tau_j\colon \sigma^\ast\Vscr_j \to \Vscr_j$ the composition $\phi_{j+1} \circ \dotsb \circ \phi_k \circ \phi_0 \circ \sigma^\ast \phi_1 \circ \dotsb \circ \sigma^\ast \phi_{j}$ for $j=0,\dotsc,k-1$.
\end{defn} 
 
By \cite[Thm.~3.15]{ArastehRad-Hartl:UnifStack} $\Xscr_{\Gscr,\Ibf}$ is a locally ind-finite-type ind-Deligne--Mumford stack. So in order to apply the results in \S\ref{sect-Igusa-Covers}, we need a systematic way to extract from $\Xscr_{\Gscr,\Ibf}$ a family of closed Deligne--Mumford substacks (not just ind-Deligne--Mumford substacks), which are interesting and useful so that the Igusa varieties that we construct over them are also meaningful. 

This leads to the theory of \emph{(global) bounds}. A local version of the theory was introduced by Arasteh~Rad and Hartl \cite[Def.~4.5]{ArastehRad-Hartl:LocGlShtuka}, from which various versions of global theory have been developed, such as Arasteh~Rad--Habibi \cite[Def.~3.1.3]{ArastehRad-Habibi:LocMod}, Bieker \cite[Def.~2.2.1]{Bieker:IntModels} and Hartl--Xu \cite[Def.~2.6.3]{HartlXu:Unif}. 

Unfortunately, the definition of global bounds is quite involved, and a complete review would require lengthy technical digressions. 
We will avoid reviewing the general theory by exclusively working with two specific classes of bounds in the sense of \cite[Def.~2.2.1]{Bieker:IntModels}, which we introduce below. 

\begin{notation}\label{not-BD}
Recall that the \emph{Beilinson--Drinfeld grassmannian} $\Gr_{\Gscr,\Ibf}$ is the fibre of $\pr_0\colon\Hecke_{\Gscr,\Ibf}\to\Bun_\Gscr$ at the $\FF_q$-point corresponding to the trivial $\Gscr$-torsor (\emph{cf.} \cite[Def.~2.1.1]{Bieker:IntModels}). It is an ind-scheme ind-quasi-projective over $C^I$ that comes with the action of the iterated global positive loop group $L^+_{C^I}\Gscr$ (\emph{cf.} \cite[Def.~2.1.3]{Bieker:IntModels}). 

We will consider the following two classes of $L^+_{C^I}\Gscr$-stable quasi-projective closed subschemes $Z$ of $\Gr_{\Gscr,\Ibf}\times_{C^I} \widetilde C^I_Z$, where $\widetilde C^I_Z$ is a (generically \'etale) finite cover of $C^I$ depending on $Z$; namely,
\begin{enumerate}
    \item \emph{Beilinson--Drinfeld Schubert varieties} (\emph{cf.} \revise{\cite[Def.~3.1.3]{ArastehRad-Habibi:LocMod} and} 
    \cite[Def.~2.4.1]{Bieker:IntModels}), and
    \item \emph{``Arasteh~Rad--Hartl bounds''} (\emph{cf.} \cite[\S3]{ArastehRad-Hartl:UnifStack}) and \cite[Ex.~2.4.2(4)]{Bieker:IntModels}).
\end{enumerate}

To introduce Beilinson--Drinfeld Schubert varieties, we need some notations.
Fix a maximal $\scl{\Fsf}$-torus $\Tsf\subset \Gsf$, and a Borel subgroup $\Bsf$ containing $\Tsf$. Let $X_\ast(\Tsf)_+$ denote the monoid of dominant coweights corresponding to the choice of $\Bsf$. For each $\lambda\in X_\ast(\Tsf)_+$, let $\Fsf_\lambda$ denote the \emph{reflex field}; that is, the field of definition of the geometric conjugacy class of $\Gsf$ containing $\lambda$. Let $\widetilde C_\lambda$ be the finite generically \'etale cover of $C$ corresponding to $\Fsf_\lambda$. (Note that $\Fsf_\lambda$ is contained in a finite extension $\widetilde\Fsf/
\Fsf$ splitting $\Gsf$. In particular, if $\Gsf$ is split then we have $\Fsf_\lambda=\Fsf$ for any $\lambda\in X_\ast(\Tsf)_+$.)

To $\blambda = (\lambda_i)_{i\in I} \in (X_\ast(\Tsf)_+)^I$, we associate a closed quasi-projective subscheme
\begin{equation}
    Z(\blambda) \subset \Gr_{\Gscr,\Ibf}\times_{C^I} \widetilde C^I_{Z(\blambda)}\qquad\big(\text{where }\widetilde C^I_{Z(\blambda)}\coloneqq \prod_{i\in I}\widetilde C_{\lambda_i}\big),
    ,
\end{equation}
which is stable under the action of $L^+_{C^I}\Gscr\times_{C^I}\widetilde C^I_Z$. We call $Z(\blambda)$ the \emph{Beilinson--Drinfeld Schubert variety} for $\blambda$. (See \cite[Def.~2.4.1]{Bieker:IntModels} for more details.)

To define Arasteh~Rad--Hartl bounds, we choose a faithful representation $\rho\colon \Gscr\mono \SL(\Vscr)$ for some rank-$N$ vector bundle $\Vscr$ on $C$, such that $\SL(\Vscr)/\Gscr$ is quasi-affine. (The existence of such a  $\rho$  follows from \cite[Prop.~2.2(b)]{ArastehRad-Hartl:UnifStack}.) To any $I$-tuple $\blambda^{\SL_N}=(\lambda^{\SL_N}_i)_{i\in I}$ of dominant coweights for $\SL_N$, we associate a closed quasi-projective subscheme
\begin{equation}
    Z_{\blambda^{\SL_N}} \subset \Gr_{\Gscr,\Ibf},
\end{equation}
which is stable under the $L^+_{C^I}\Gscr$-action. (See \cite[Ex.~2.4.2(4)]{Bieker:IntModels} for more details, where $Z_{\blambda^{\SL_N}}$ is denoted as $\Gr^{\Ibf,\lesssim\blambda^{\SL_N}}_{\Gscr,C}$.) We call $Z_{\blambda^{\SL_N}}$ an \emph{Arasteh~Rad--Hartl bound}, as it was originally introduced in \cite[\S3]{ArastehRad-Hartl:UnifStack}. Although its definition involves the auxiliary choice of $\rho$, we have  $\Gr_{\Gscr,\Ibf} = \bigcup_{\blambda^{\SL_N}}Z_{\blambda^{\SL_N}}$ in general (\emph{cf.} \cite[proof of Thm.~3.15]{ArastehRad-Hartl:UnifStack}). Note that the sign conventions for $\blambda^{\SL_N}$ in \cite[Ex.~2.4.2(2)]{Bieker:IntModels} and \cite[Eq.~(3.2)]{ArastehRad-Hartl:UnifStack} are opposite, and we follow the former.

If $Z = Z(\blambda)$ is a Beilinson--Drinfeld Schubert variety, then we let $\bpi\coloneqq (\pi_i)_{i\in I}\colon \widetilde C^I_Z\to C^I$ denote the natural projection, with $\pi_i\colon \widetilde C_{\lambda_i}\to C$ denoting the $i$th component of $\bpi$. If $Z=Z_{\blambda^{\SL_N}}$ is an Arasteh~Rad--Hartl bound, then we set $\widetilde C^I_Z = C^I$ and let $\bpi = (\pi_i)_{i\in I}\colon \widetilde C^I_Z\to C^I$ denote the identity map. 

By repeating the same construction as \cite[Def.~3.1.3]{Bieker:IntModels} for $\Hecke_{\Gscr,\Ibf}$ in place of $\Xscr_{\Gscr,\Ibf}$, a choice of $ Z $ as above defines a closed algebraic substack
\begin{equation}\label{eq-Bieker-bound}
\Hecke_{\Gscr,\Ibf}^{ Z } \subset \Hecke_{\Gscr,\Ibf}\times_{C^I}\widetilde C^I_Z.
\end{equation}
The construction uses an alternative description of $\Gr_{\Gscr,\Ibf}$ in terms of the Beauville--Laszlo descent; see Rmk.~2.1.2 and Def.~3.1.3 in \cite{Bieker:IntModels} for more details. We review the construction in Rmk.~\ref{rmk:bounds-via-BL}.
\end{notation}

\begin{rmk}
    A (global) bound is actually defined to be a ``compatible'' family of $L^+_{C^I}\Gscr$-stable closed subschemes of the Beilinson--Drinfeld grassmannian defined over various finite covers of $C^I$ (\emph{cf.} \cite[Def.~2.2.1]{Bieker:IntModels}). However, for the bound defined by a closed subscheme $Z\subset \Gr_{\Gscr,\Ibf}\times_{C^I}\widetilde C^I_Z$ that we consider, $\widetilde C^I_Z$ and $Z$ are respectively the \emph{reflex scheme} and the representative over it in the sense of \cite[Def.~2.6.3]{HartlXu:Unif}; this is clear for Arasteh~Rad--Hartl bounds $Z= Z_{\blambda^{\SL_N}}$ as $\widetilde C^I_Z = C^I$, and for a Beilinson--Drinfeld Schubert variety $Z = Z(\blambda)$ this claim follows from \cite[Lem.~2.3.5]{Bieker:IntModels}. For our purpose (namely, to prove Thm.~\ref{th-infinite-level-igusa-var}), we may just work with the representative $Z$ over $\widetilde C^I_Z$.
\end{rmk}

\begin{defn}\label{def-bounded-shtukas}
Given $ Z $ as above, we write
\begin{equation}\label{eq-bounded-shtukas}
    \Xscr_{\Gscr,\Ibf}^ Z  \coloneqq \ker(\sigma \circ\pr_0,  \pr_k \colon \Hecke_{\Gscr,\Ibf}^{ Z } \rightrightarrows \Bun_\Gscr),
\end{equation}
which is known to be a closed Deligne--Mumford substack of $\Xscr_{\Gscr,\Ibf}\times_{C^I}\widetilde C^I_Z$. (This follows from the same proof as \cite[Thm.~3.1.7]{ArastehRad-Habibi:LocMod}.) 
We call an object in $\Xscr_{\Gscr,\Ibf}^ Z (S)$ a \emph{$\Gscr$-shtuka over $S$ bounded by $ Z $}, which can be concretely described as a tuple $(\underline\Vscr_\bullet, (\widetilde x_i)_{i\in I})$ where $(\widetilde x_i)_{i\in I}\in\widetilde C^I_Z(S)$ and $\underline\Vscr_\bullet\in\Xscr_{\Gscr,\Ibf}(S)$ with legs at $(\pi_i(\widetilde x_i))_{i\in I}$, such that forgetting $\phi_0$ we get an $S$-point of $\Hecke_{\Gscr,\Ibf}^{ Z }$.
\end{defn}

Although we chose to exclusively work with certain classes of bounds, our argument extends to a broader class of bounds or even an alternative theory of bounds (such as \cite[Def.~2.6.3]{HartlXu:Unif}), provided that certain mild conditions for the proof of Thm.~\ref{th-infinite-level-igusa-var} are satisfied. See Rmk.~\ref{rmk-igusa-independence-of-choice} for further discussions.

 \subsection{The localisations of a global $\Gscr$-shtuka} \label{ssect-gl-loc}
 We fix a point $x \in |C|$, and set $\Gscr_x \coloneqq \Gscr_{O_x}$ where $O_x \subset \Fsf_x$ is the ring of integers. Given a global $\Gscr$-shtuka $\underline\Vscr_\bullet = ((x_i)_{i\in I},(\Vscr_j)_{j=0}^{k}, (\phi_j)_{j=0}^{k})$ \reviselong{such that each $x_i$ is either disjoint from $x$ or factors through $\Spf O_x$,} we define the \emph{localisation} of $\underline\Vscr_\bullet$ at $x$ as 
 \begin{equation}\label{eq-loc-bullet}
     \loc{\underline\Vscr_\bullet}{x} \coloneqq \restr{(({\Vscr_j})_{j=0}^{k}, (\phi_j)_{j=0}^{k})}{S \hat\times \Spf O_x}.
 \end{equation}
Similarly, for any $j = 0,\dotsc,k$ we write 
\begin{equation}\label{eq-loc-j}
    \loc{\underline\Vscr_j}{x} = (\loc{\Vscr_j}{x},\tau_{j,x}) \coloneqq \restr{(\Vscr_j,\tau_j)}{S \hat\times \Spf O_x},
\end{equation}
where $\tau_j$'s are introduced in Def.~\ref{def-unbounded-shtukas}.

 Following \cite[\S3.2]{neupert:thesis}, we can view $\loc{\underline\Vscr_j}{x}$ as a local $\Res_{O_x/\f\pot{\varpi_x}} \Gscr_x$-shtuka as follows. By \cite[Prop.~3.2.4]{neupert:thesis}, we have an equivalence of categories
 \[
   (\Gscr_x-{\rm Tors}/S \times_{\FF_q} \Spf O_x) \cong (L^+\Gscr_x-{\rm Tors}/S_{\FF_x})
 \]
 such that $\tau_{j,x}$ induces an isomorphism of $L\Gscr_x$-torsors $F_{S_{\FF_x}/\FF_x}^\ast \Lcal \loc{\Vscr_j}{x} \cong \Lcal \loc{\Vscr_j}{x}$, where $F_{S_{\FF_x}/\FF_x} = \Frob_S \times \id_{\FF_x}$ denotes the relative Frobenius. In order to apply the results from the previous section to $\loc{\underline\Vscr_j}{x}$, we repeat the proof of \cite[Lem.~3.2.1]{neupert:thesis} to obtain the following compatible equivalences of categories
 \begin{align*}
   (L^+\Gscr_x-{\rm Tors}/S_{\FF_x}) &\cong (L^+\Res_{O_x/\FF_x\pot{\unif}} \Gscr_x-{\rm Tors}/S) \quad\textnormal{and} \\
   (L\Gscr_x-{\rm Tors}/S_{\FF_x})& \cong (L\Res_{O_x/\FF_x\pot{\unif}} \Gscr_x-{\rm Tors}/S).
  \end{align*}
 Thus we can view $\loc{\underline\Vscr_j}{x}$ as a local $\Res_{O_x/\f\pot{\varpi_x}} \Gscr_x$-shtuka.

 Note that $\phi_j$ induces an isogeny $\loc{\underline\Vscr_{j}}{x} \to  \loc{\underline\Vscr_{j-1}}{x}$ for any $j$; i.e., the localisation $\loc{\underline\Vscr_\bullet}{x}$ could be thought of as an \emph{isogeny chain} of $\loc{\underline\Vscr_{j}}{x}$'s. If $x$ does not lie in the image of any leg, then $\loc{\underline\Vscr_{j}}{x}$ is \'etale and $\phi_j$ is an isomorphism for any $j$. 
 Now suppose that there is a unique $I_{j_0}$ such that for any $i\notin I_{j_0}$ the corresponding leg $x_i$ of $\underline\Vscr_\bullet$ is disjoint from $x$; i.e., $\varphi_j$ is an isomorphism for any $j\ne j_0$. Then we have either $\loc{\underline\Vscr_{j}}{x} \cong \loc{\underline\Vscr_{0}}{x}$ or $\loc{\underline\Vscr_{j}}{x} \cong \sigma^\ast\loc{\underline\Vscr_{0}}{x}$, so $\loc{\underline\Vscr_\bullet}{x}$ can be reconstructed from $\loc{\underline\Vscr_{0}}{x}$. In this case, we also call $\loc{\underline\Vscr_0}{x}$ the \emph{localisation} of $\underline\Vscr_\bullet$ at $x$. 
 
 \begin{rmk}\label{rmk-gl-loc}
  Of course, it would be more aesthetic if $\loc{\underline\Vscr_j}{x}$ were a local $\Gscr_x$-shtuka. This is not possible in general unless the following extra conditions are satisfied:
  \begin{enumerate}
      \item $S$ is a $\FF_x$-scheme, and
      \item at most one of the legs $(x_i)$ meets $x$.
  \end{enumerate}
  Indeed, in the favourable situation when both of these conditions are satisfied, the construction given in \cite[Def.~5.4]{ArastehRad-Hartl:LocGlShtuka} associates a local $\Gscr_x$-shtuka over $S$ to $\loc{\underline\Vscr_j}{x}$. By \cite[Rmk.~5.5]{ArastehRad-Hartl:LocGlShtuka} there exists a fully faithful functor from category  of local $\Gscr_x$-shtukas to the category of local $\Res_{ O _x/\f\pot{\varpi_x}} \Gscr_x$-shtukas, which transfers Arasteh Rad and Hartl's construction to Neupert's. 

  Let us explain in more details the special case when $S=\Spec\cl\FF_q$ and $k=1$. Let $y_1,\cdots,y_d\in C(\cl\FF_q)$ be the geometric points above $x$ with $d\coloneqq \deg(x)$, and we order them so that $y_j=\sigma^{j-1}\circ y_1$ for any $j=1,\dotsc,d$.  Choose an isomorphism
  \begin{equation}\label{eq-gl-loc-WeilRes}
    (L^+\Res_{O_x/\FF_q\pot{\unif}}\Gscr_x)_{\cl\FF_q} \cong (L^+\Gscr_x)_{\cl\FF_q}^d,
  \end{equation}
  so that the $j$th factor on the right hand side corresponds to $y_j$. Now consider a global $\Gscr$-shtuka $\underline\Vscr_\bullet\coloneqq((\overline x_i),\Vscr_0,\phi_0)$ over $\cl\FF_q$, and suppose that each $y_j$ appears at most once as a leg. Then the underlying $L^+\Res_{O_x/\FF_q\pot{\unif}}\Gscr_x$-torsor of $\loc{\underline\Vscr_0}{x}$ is trivial, and we have
  \begin{equation}\label{eq-gl-loc-b-tuple}
      \loc{\underline\Vscr_0}{x} \cong \big((L^+\Gscr_x)_S^{d}, \underline b\sigma \big) \quad\textnormal{where }\underline b = (b_1,b_2,\dotsc,b_d).
  \end{equation}
  If $y_j$ is \emph{not} a leg then we may even choose a trivialisation so that $b_j=1$. In particular, if we started with a $\Gscr$-shtuka such that $y_1$ is the only leg over $x$ then we may choose a trivialisation so that we have
  \begin{equation}\label{eq-gl-loc-b}
      \underline b = (b_1,1,\dotsc ,1),
  \end{equation}
where $b_1$ is uniquely determined up to $\Gscr_x(\breve O_x)$-$\sigma^{d}$ conjugacy. It turns out that the local $\Gscr_x$-shtuka constructed in \cite[Def.~5.4]{ArastehRad-Hartl:LocGlShtuka} is isomorphic to $((L^+\Gscr_x)_S,b_1\sigma^{d})$.
 \end{rmk}
  
  \subsection{Tate modules and level structure}
  In this article we use the notion of adelic level structure as defined in \cite[\S3.4]{neupert:thesis}. This works similar as in the case of (infinite-level) moduli spaces of abelian varieties, where one defines level structures by trivialising Tate modules. Recall that we defined the Tate module of an \'etale $G$-shtuka $\underline{\Gcal} = (\Gcal,\varphi)$ over a $\kappa_F$-scheme $S$ by $\Ig_{\underline{\Gcal}}^e = \underline{\Isom}((L^+G,\sigma), \underline\Gcal)$, which defines an equivalence of categories between \'etale $G$-shtukas and $G(O_F)$-torsors for the pro-\'etale topology by Prop.~\ref{prop-Igusa-etale}. Given a global shtuka $\underline\Vscr_\bullet = ((x_i)_{i\in I},(\Vscr_j)_{j=0}^k, (\phi_j)_{j=0}^k)$ and a closed point $x \in C \setminus \bigcup \image x_j$  we call $\Ig_{\loc{\underline\Vscr_{0}}{x}}^e$ the \emph{$x$-adic Tate-module} of $\underline\Vscr$. Since $\loc{\underline\Vscr_{0}}{x}$ is a local $\Res_{O_x/\FF_q\pot{\varpi_x}}\Gscr_x$-shtuka, $\Ig_{\loc{\underline\Vscr_{0}}{x}}^e$ is a $\Gscr(O_x)$-torsor.

  We continue to use the notation introduced in \S\ref{not-BD}, and write $\Ig^e_{\loc{\underline\Vscr^{\rm univ}}{x}}$ for the $x$-adic Tate module of the universal global $\Gscr$-shtuka over $\restr{\Xscr^ Z _{\Gscr,\Ibf}}{\bpi^{-1}((C\setminus \{x\})^I)}$.
 
 \begin{defn} \label{def-level-str-finite}
  Let $T =\{t_1,\dotsc,t_n\} \subset C$ be a finite set of closed points.
  \begin{subenv}
   \item We define the moduli stack of global $\Gscr$-shtukas with infinite level structure at $T$ by $\Xscr_{\Gscr,\Ibf,T}^ Z  \coloneqq \Ig^e_{\underline\Vscr^{\rm univ}[t_1^\infty]}\times_{\Xscr_{\Gscr,\Ibf}^ Z } \dotsb  \times_{\Xscr_{\Gscr,\Ibf}^ Z } \Ig^e_{\underline\Vscr^{\rm univ}[t_n^\infty]}$. Its $S$-valued points are tuples $(\underline\Vscr_\bullet,(\eta_t)_{t \in T})$, where
  \begin{bulletlist}
   \item $\underline\Vscr_\bullet \in \Xscr_{\Gscr,\Ibf}^ Z (S)$ is such that the legs are in $\bpi^{-1}((C\setminus T)^I)$,
   \item $\eta_t\colon (L^+\Res_{ O _x/\f\pot{\varpi_t}} \Gscr,\sigma) \isom \loc{\underline\Vscr_0}{t}$ is an isomorphism for $t \in T$.
  \end{bulletlist}
  \item Let $\Ksf \subset \Ksf_0 \coloneqq \prod_{x \in |C|} \Gscr(O_x)$ be a closed subgroup of the form $\Ksf = \prod_{x \not\in T} \Gscr(O_x) \times K_T$. We define the moduli stack of global $\Gscr$-shtukas with $\Ksf$-level structure as
  \[
   \Xscr_{\Gscr,\Ibf,\Ksf}^ Z  \coloneqq \Xscr_{\Gscr,\Ibf,T}^ Z  \sslash K_T,
  \]
  using the same GIT-quotient construction as in \S\ref{ssect-finite-level-igusa-cover}. 
  If $[\Ksf_0:\Ksf] < \infty$, we say that the level structure is finite.
  \end{subenv}
 \end{defn}
  
  \begin{rmk}
   \begin{subenv}
    \item Note that $\Xscr_{\Gscr,\Ibf,T}^ Z $ is a $\prod_{t\in T} \Gscr(O_t)$-torsor for the pro-\'etale topology on $\Xscr_{\Gscr,\Ibf}^ Z  \times_{C^I} (C\setminus T)^I$ by Prop.~\ref{prop-Igusa-etale}. Thus, $\Xscr_{\Gscr,\Ibf,\Ksf}^ Z $ is a $\Ksf_0/\Ksf$-fibration over $\Xscr_{\Gscr,\Ibf}^ Z  \times_{C^I} (C\setminus T)^I$. In particular, it is finite if the level structure is finite. 
    
    \item In  \cite{neupert:thesis} a slightly different language is used to describe the above moduli problems. Fix a connected $\f$-scheme $S$, a geometric point $\sbar$ and a global $\Gscr$-shtuka $\underline{\Vscr}_\bullet \in  \Xscr_{\Gscr,\Ibf}^ Z (S)$ whose legs are disjoint from $T$. For $t \in T$, we may consider the $\Gscr(O_t)$-torsor $\loc{\Vscr_0}{t}^{\varphi}$ as a continuous morphism $\pi_1(S,s) \to\Gscr(O_t) = \Aut \loc{\underline\Vscr_{0}}{t} $. Then the moduli stack of global $\Gscr$-shtukas with infinite level structure at $x$ parametrises global $\Gscr$-shtukas $\underline{\Vscr}_\bullet$ together with a $\pi_1(S,s)$-invariant point in $\prod_{t \in T} \loc{\Vscr_0}{t}_{\sbar}^{\varphi}$. One checks that $\Xscr_{\Gscr,\Ibf,\Ksf}^ Z $ parametrises  $\underline{\Vscr}_\bullet$ together with a $\pi_1(S,s)$-invariant $K_T$-orbit in $\prod_{t \in T} \loc{\Vscr_0}{t}_{\sbar}^{\varphi}$.
   \end{subenv}   
  \end{rmk}
  
  Another commonly used way to express moduli structure is given by Varshavsky's construction. Let $D \subset C$ be a finite subscheme and $ \underline{\Vscr}_\bullet$ a global $\Gscr$-shtuka over an $\f$-scheme $S$, whose legs are disjoint \revise{from}
  $D$. Then a level structure on $S$ is a compatible tuple of trivialisations $\restr{\Vscr_j}{D} \cong \Gscr_D$.  
  
 \begin{prop}
  Let $D$ be as above and let $\Ksf_D \coloneqq \ker(\prod_{x \in |C|} \Gscr(O_x)\epi \Gscr(O_D))$. Then  $\Xscr_{\Gscr,\Ibf,\Ksf_D}^ Z $ parametrises global $\Gscr$-shtukas bounded by $ Z $ with $D$-structure.
 \end{prop}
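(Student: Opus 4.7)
Let $T = \mathrm{supp}(D) \subset |C|$ and write $D = \bigsqcup_{t \in T} D_t$ with $D_t = \Spec O_{D_t}$. Then the kernel decomposes as
\[
\Ksf_D = \prod_{x \notin T} \Gscr(O_x) \times K_T, \qquad K_T = \prod_{t \in T} \ker\!\bigl(\Gscr(O_t) \twoheadrightarrow \Gscr(O_{D_t})\bigr),
\]
so that $\Xscr_{\Gscr,\Ibf,\Ksf_D}^{Z}$ fits into the framework of Definition~\ref{def-level-str-finite}. My plan is to identify both moduli problems with trivialisations of the same pro-\'etale sheaf modulo $K_T$.

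First I would unwind the adelic side. Given $\underline\Vscr_\bullet \in \Xscr_{\Gscr,\Ibf}^{Z}(S)$ with legs disjoint from $T$, the torsor $\Ig^e_{\loc{\underline\Vscr_0}{t}} = \underline\Isom((L^+\Res_{O_t/\FF_q\pot{\varpi_t}}\Gscr,\sigma), \loc{\underline\Vscr_0}{t})$ is a pro-\'etale $\Gscr(O_t)$-torsor on $S$ by Proposition~\ref{prop-Igusa-etale} (applied to the Weil-restricted group). Taking the $K_T$-quotient contracts $\prod_{t \in T}\Ig^e_{\loc{\underline\Vscr_0}{t}}$ to a $\Gscr(O_D)$-torsor $\mathcal{T}_D(\underline\Vscr_\bullet)$; concretely, a section of $\mathcal{T}_D(\underline\Vscr_\bullet)$ over $S$ is a compatible system of isomorphisms $(L^+\Res_{O_t/\FF_q\pot{\varpi_t}}\Gscr, \sigma) \big|_{\mathrm{mod}\,\varpi_t^{\mathrm{mult}_t(D)}} \isom \loc{\underline\Vscr_0}{t}\big|_{\mathrm{mod}\,\varpi_t^{\mathrm{mult}_t(D)}}$.

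Next I would unwind the Varshavsky side. Because all legs are disjoint from $D$, every modification $\phi_j$ restricts to an honest isomorphism $\restr{\Vscr_{j-1}}{D} \isom \restr{\Vscr_j}{D}$, and $\phi_0$ restricts to an isomorphism $\sigma^\ast \restr{\Vscr_k}{D} \isom \restr{\Vscr_0}{D}$. Thus specifying a compatible tuple of trivialisations $(\restr{\Vscr_j}{D} \cong \Gscr_D)_{j}$ is the same as specifying a single trivialisation $\restr{\Vscr_0}{D} \cong \Gscr_D$ that is invariant under the composite $\tau_0\big|_D$; equivalently, a $\sigma$-equivariant section of the $\Gscr(O_D)$-torsor $\underline\Isom(\Gscr_D, \restr{\Vscr_0}{D})$ on $S$.

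Finally I would match the two $\Gscr(O_D)$-torsors. Under the equivalence of \cite[\S3.2]{neupert:thesis} recalled in \S\ref{ssect-gl-loc}, the $L^+\Gscr_t$-torsor corresponding to $\restr{\Vscr_0}{\Spf O_t \hat\times S}$ is identified with $\loc{\Vscr_0}{t}$; reducing modulo $\varpi_t^{\mathrm{mult}_t(D)}$ and passing to $\tau$-invariants (which coincides with $\sigma$-invariants because the $\tau_j$ are isomorphisms at $t$) yields a natural isomorphism
\[
\underline\Isom(\Gscr_D, \restr{\Vscr_0}{D}) \riso \mathcal{T}_D(\underline\Vscr_\bullet)
\]
of $\Gscr(O_D)$-torsors. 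The main technical point to check is precisely this compatibility: that Neupert's equivalence between \'etale local Weil-restricted shtukas and their $\Gscr(O_t)$-torsors is compatible with mod-$\varpi_t^{n}$ truncation and intertwines the two notions of $\sigma$-invariants. This is essentially tautological once one tracks through the equivalences and uses that the isogenies $\phi_j$ are isomorphisms at $t$, so no deeper input is required; the proposition then follows by comparing sections of the two torsors, and taking $\pi_1$-invariants gives the claim over a general connected $S$.
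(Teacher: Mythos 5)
The paper's own proof is a one-line citation to Theorem~6.5 of Arasteh~Rad--Hartl, so your direct argument takes a genuinely different route. Your outline is correct: you decompose $\Ksf_D$ and note that $\Xscr_{\Gscr,\Ibf,\Ksf_D}^Z = \Xscr_{\Gscr,\Ibf,T}^Z\sslash K_T$ fits Definition~\ref{def-level-str-finite}, then you match the two $\Gscr(O_D)$-torsors --- the $K_T$-quotient of the product of Tate-module Igusa covers on the adelic side, and the sheaf of Frobenius-compatible trivialisations $\restr{\Vscr_0}{D}\cong\Gscr_D$ on the Varshavsky side --- via Neupert's equivalence between formal $\Gscr$-torsors near $x$ and $L^+\Res_{O_x/\f\pot{\varpi_x}}\Gscr_x$-torsors, truncated mod $\varpi_t^{\mathrm{mult}_t(D)}$. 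You correctly identify the step that carries the weight: compatibility of that equivalence with truncation and with the identification of $\tau_0$-invariant sections with $\sigma$-invariant sections (which holds because the legs avoid $D$, so all $\phi_j$ restrict to isomorphisms over $D$). The advantage of your route is that it makes the mechanism visible and ties the statement directly into the framework of \S\ref{ssect-gl-loc} and Prop.~\ref{prop-Igusa-etale}; the advantage of the paper's citation is brevity and offloading the bookkeeping to a source where it has been checked carefully. One small point of hygiene: where you write ``a $\sigma$-equivariant section of the $\Gscr(O_D)$-torsor $\underline\Isom(\Gscr_D,\restr{\Vscr_0}{D})$'', it would be cleaner to say a section of the subsheaf of $\tau_0$-compatible isomorphisms, since that subsheaf (an \'etale $\Gscr(O_D)$-local system by Lang's theorem) is what actually matches $\mathcal{T}_D(\underline\Vscr_\bullet)$, and $\underline\Isom(\Gscr_D,\restr{\Vscr_0}{D})$ by itself does not carry a $\sigma$-action in the relevant sense.
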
  
 \begin{proof}
  This is proven in \cite[Thm.~6.5]{ArastehRad-Hartl:UnifStack}.
 \end{proof}

 \subsection{HN truncation}\label{ssect-HN} 
  In general, the moduli stack of $\Gscr$-bundles (and hence, moduli stacks of bounded $\Gscr$-shtukas) may not be quasi-compact. In order to be able to count points, we define a well-behaved filtration by open substacks which are quasi-compact modulo the action of the central torus. 
  
  Let $H$ be a split semisimple group over $\FF_p$, and we regard it as a constant group over $C$. Let $\Lambda^+$ denote the monoid of \emph{dominant} rational coweights of $H$, with the usual partial ordering given by positive coroots. Given any $\mu \in \Lambda^+$ we obtain an open algebraic substack $\Bun_H^{\leqslant \mu} \subset \Bun_H$ of finite type over $\FF_q$ which parametrises $H$-bundles whose HN-polygon is less than or equal to $\mu$ (\cite[Thm.~2.1]{Schieder:HN-str}). If $\Hscr$ is a reductive group over $C$ that is Zariski-locally isomorphic to $H$, then we have $\Bun_\Hscr\cong\Bun_H$. (The main example is when $H = \SL_N$ and $\Hscr = \SL(\Vscr)$ where $\Vscr$ is a rank-$N$ vector bundle on $C$.)

  Set $\Gscr^{\rm ad}\coloneqq \Gscr/\Zscr_\Gscr$ where $\Zscr_\Gscr$ is the flat closure of the centre $\Zsf_\Gsf$ of $\Gsf$. Note that $\Gscr^{\rm ad}$ is a smooth affine group scheme over $C$ with connected fibres and generic fibre $\Gsf^{\ad} = \Gsf/\Zsf_\Gsf$. By \cite[Prop.~2.2, Thm.~2.6]{ArastehRad-Hartl:UnifStack}, there exists a faithful representation $\rho\colon \Gscr^{\rm ad} \mono \Hscr$ for some semisimple group $\Hscr$ over $C$ that is Zariski-locally isomorphic to $H$, such that the induced morphism
\begin{equation}\label{eq:Functoriality-BunG}
    \Bun_{\Gscr^{\rm ad}} \xrightarrow{\rho_\ast} \Bun_{\Hscr} \cong \Bun_{H}
\end{equation}
   is representable, quasi-affine and of finite presentation. (Indeed, \emph{loc.~cit.} actually shows that we may take $H=\SL_N$ and $\rho\colon \Gscr^{\ad}\mono \SL(\Vscr)$ for some rank-$N$ vector bundle $\Vscr$ on $C$.) We fix any such an embedding $\rho$, and let $\Bun_{\Gscr^{\rm ad}}^{\leqslant\mu}$ denote the preimage of $\Bun_{H}^{\leqslant\mu}$ via \eqref{eq:Functoriality-BunG}. 
%
 
\begin{prop}[\emph{Cf.} {\cite{ArastehRad-Hartl:UnifStack}}]\label{prop:HN-qc}
The stack $\Bun_{\Gscr^{\ad}}^{\leqslant\mu}$ is quasi-compact.
Furthermore, for any bound $ Z^{\rm ad} $ for $\Gscr^{\ad}$ as in \S\ref{not-BD}, the stack $\Xscr^{ Z^{\rm ad} ,\leqslant\mu}_{\Gscr^{\ad},\Ibf}$ is quasi-compact.
\end{prop}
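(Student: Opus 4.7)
The plan is to deduce both claims from the fact that suitable base stacks are already known to be quasi-compact, bounding the remaining fibres by the ingredients at hand.

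For the first claim, I would start from the finite-type result of Schieder cited above: $\Bun_H^{\leqslant\mu}$ is of finite type over $\FF_q$, hence quasi-compact. The substack $\Bun_{\Gscr^{\ad}}^{\leqslant\mu}$ is by construction the preimage of $\Bun_H^{\leqslant\mu}$ under the representable, quasi-affine morphism $p\colon \Bun_{\Gscr^{\ad}} \to \Bun_H$ arising from the chosen embedding $\Gscr^{\ad} \mono H$. A quasi-affine morphism is in particular quasi-compact, so the preimage of a quasi-compact open substack is quasi-compact, settling the first claim.

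For the second claim, I would consider the morphism
\[
 q\colon \Xscr^{ Z ,\leqslant\mu}_{\Gscr^{\ad},\Ibf} \lto \Bun_{\Gscr^{\ad}}^{\leqslant\mu}\times_{\FF_q}\widetilde C^I,\qquad \underline\Vscr_\bullet\mapsto (\Vscr_k,(\widetilde x_i)),
\]
whose target is quasi-compact by the first claim and the properness of $\widetilde C$. It then suffices to show that $q$ is quasi-compact. Over a geometric point $(\Vscr_k,(\widetilde x_i))$ of the target, the fibre of $q$ parametrises a chain of modifications $\Vscr_k\to\cdots\to\Vscr_0$ satisfying the $ Z $-bound together with an isomorphism $\phi_0\colon\sigma^\ast\Vscr_k\isom\Vscr_0$. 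Locally trivialising $\Vscr_k$ around the legs identifies the first datum with an $S$-point of the quasi-compact closed subscheme $ Z \subset\Gr_{\Gscr,C^I,\Ibf}\times_{C^I}\widetilde C^I$ from \eqref{eq-Bieker-bound}; once the chain is fixed, $\Vscr_0$ is determined, and the choice of $\phi_0$ forms an $\Aut(\Vscr_0)$-torsor, which is affine and in particular quasi-compact over the Hecke fibre.

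The main obstacle I anticipate is making this fibre-analysis rigorous in the stacky setting over non-algebraically-closed test objects: one needs to pass to an fppf trivialisation of $\Vscr_k$ around the legs, apply the definition \eqref{eq-Bieker-bound} of $\Hecke^{ Z }_{\Gscr,\Ibf}$ in terms of $\Lcal^+_{C^I}\Gscr$-stable closed subschemes of the Beilinson--Drinfeld grassmannian, and check that quasi-compactness descends. This is essentially the argument indicated in \cite{ArastehRad-Hartl:UnifStack}, and no new geometric input beyond quasi-compactness of the bound is required.
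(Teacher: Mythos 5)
Your proof of the first claim is exactly the argument in the paper: $\Bun_H^{\leqslant\mu}$ is quasi-compact by Schieder, and $\Bun_{\Gscr^{\ad}}\to\Bun_H$ is quasi-affine, hence quasi-compact.

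For the second claim the paper is substantially more economical than you are: it simply invokes \cite[Theorem~3.15]{ArastehRad-Hartl:UnifStack} for the assertion that the forgetful map $\Xscr^{Z,\leqslant\mu}_{\Gscr^{\ad},\Ibf}\to\Bun_{\Gscr^{\ad}}^{\leqslant\mu}$ is of finite type and concludes at once. You instead attempt to reprove that input by analysing the fibres of $q$, and the gap you anticipate is more serious than you suggest. Quasi-compactness of a morphism of stacks is not a pointwise condition: knowing that the fibre of $q$ over each geometric point of $\Bun_{\Gscr^{\ad}}^{\leqslant\mu}\times\widetilde C^I$ is quasi-compact does not yield quasi-compactness of $q$. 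One must show that for a quasi-compact base $S$ mapping to $\Bun_{\Gscr^{\ad}}^{\leqslant\mu}\times\widetilde C^I$ the pullback is quasi-compact, which requires trivialising the universal $\Vscr_k$ \'etale-locally on $S$ in a formal neighbourhood of the legs, identifying the resulting relative moduli problem with the quasi-compact bound $Z$ inside the Beilinson--Drinfeld grassmannian, and handling descent and the choice of $\phi_0$ uniformly over the base. That is precisely the content of \cite[Theorem~3.15]{ArastehRad-Hartl:UnifStack}, which you should cite rather than re-derive; as written, your sketch does not actually close the argument.
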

\begin{proof}
Note that the forgetful map $\Xscr^{ Z^{\rm ad} ,\leqslant\mu}_{\Gscr^{\ad},\Ibf} \to \Bun_{\Gscr^{\ad}}^{\leqslant\mu}$ is of finite type by \cite[Theorem~3.15]{ArastehRad-Hartl:UnifStack}, so it suffices to show that $\Bun_{\Gscr^{\ad}}^{\leqslant\mu}$ is quasi-compact. This is true as both the map $\Bun_{\Gscr^{\ad}}\to \Bun_H$ and $\Bun_H^{\leqslant\mu}$ are quasi-compact.
\end{proof}

 Let $\Bun_{\Gscr}^{\leqslant\mu}$ denote the preimage of $\Bun_{\Gscr^{\rm ad}}^{\leqslant\mu}$, which is an open substack of $\Bun_{\Gscr}$.  For any $Z\subset \Gr_{\Gscr,\Ibf}\times_{C^I}\widetilde C^I_Z$ as in \S\ref{not-BD}, we similarly define open substacks 
\[
\Xscr^{ Z ,\leqslant\mu}_{\Gscr,\Ibf}\subset\Xscr^{ Z }_{\Gscr,\Ibf}
\]
 by requiring $\Vscr_0\in\Bun _{\Gscr}^{\leqslant\mu}$. We call these open substacks the \emph{Harder--Narasimhan truncation} or the \emph{HN truncation}.

\begin{rmk}
%
Note that $\Bun_{\Gscr}^{\leqslant\mu}$ and $\Xscr^{ Z ,\leqslant\mu}_{\Gscr,\Ibf}$ may not be quasi-compact in general, unless the centre $\Zsf_\Gsf$ of $\Gsf$ is totally anisotropic, since the natural map $\Bun_{\Gscr}^{\leqslant\mu}\to\Bun_{\Gscr^{\rm ad}}^{\leqslant\mu}$ may not be quasi-compact. Indeed, $\Bun_{\Gscr}^{\leqslant\mu}$ and $\Xscr^{ Z }_{\Gscr,\Ibf}$ may have infinitely many connected components in general. We define the HN truncations via $\Bun_{\Gscr^{\rm ad}}$ so that the resulting open substacks are stable under the action of $\Zsf_\Gsf(\Fsf)\backslash\Zsf_\Gsf(\AA)$ described in \S\ref{ssect-Xi}. We will also show in \S\ref{ssect-Xi} that the quotient of $\Xscr^{ Z ,\leqslant\mu}_{\Gscr,\Ibf}$ by some suitable discrete subgroup of $\Zsf_\Gsf(\Fsf)\backslash\Zsf_\Gsf(\AA)$ is quasi-compact.
\end{rmk}

 \begin{rmk}
     If $\Gscr$ is a split reductive group, then we may set $H = \Gscr^{\rm ad}$ and our definition of HN truncation coincides with \cite[\S2.1]{Varshavsky:shtuka}. However, for general $\Gscr$ there may not be a canonical or preferred choice of $\rho$ in \eqref{eq:Functoriality-BunG}. 
\end{rmk}
 
\begin{prop}\label{prop-schematic}
    Given any $\mu\in\Lambda^+$, the moduli stack $\Xscr^{ Z ,\leqslant\mu}_{\Gscr,\Ibf,\Ksf_D}$ can be represented by a scheme if $|D|$ is sufficiently large compared to $\mu$.
    In particular, $\Xscr^ Z _{\Gscr,\Ibf,T}$ can be represented by a scheme for any non-empty finite subset $T\subset |C|$. 
\end{prop}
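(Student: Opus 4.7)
\emph{Plan.} I would reduce the first assertion to the following uniform rigidity claim: for $|D|$ large enough depending on $\mu$, the automorphism group of every geometric object of $\Xscr^{Z,\leqslant\mu}_{\Gscr,\Ibf,\Ksf_D}$ is trivial. Granted this, the stack becomes an algebraic space, and schematicity follows from an embedding into a $Z$-bounded moduli of $\Gscr$-bundle modifications with $\Ksf_D$-structure, which is a scheme of finite type by the construction used in the proof of \cite[Theorem~6.5]{ArastehRad-Hartl:UnifStack} (once one observes that $Z$-boundedness cuts out a quasi-compact closed piece of the Beilinson--Drinfeld Grassmannian carrying a natural $\Ksf_D$-structure). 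Throughout one restricts to the open substack where the legs avoid $\pi\iv(\supp D)$.

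The first structural input is that the forgetful morphism $\Xscr^{Z,\leqslant\mu}_{\Gscr,\Ibf,\Ksf_D}\to\Xscr^{Z,\leqslant\mu}_{\Gscr,\Ibf}$ is finite \'etale on the above open locus, being a torsor under the finite constant group $\Ksf_0/\Ksf_D \cong \Gscr(O_D)$ by Proposition~\ref{prop-Igusa-etale} combined with the localisation equivalences of \S\ref{ssect-gl-loc} (see also the remark following Definition~\ref{def-level-str-finite}). Consequently the source is a locally finite-type DM stack, and only triviality of isotropy remains to be established.

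The main obstacle is thus the rigidity claim itself. An automorphism $\alpha$ of $(\underline\Vscr_\bullet,\eta_D)$ amounts to an automorphism of $\Vscr_0$ commuting with $\tau_0$ and restricting to the identity along $D$. The key point is that, as $\underline\Vscr_\bullet$ varies over a quasi-compact open of $\Xscr^{Z,\leqslant\mu}_{\Gscr,\Ibf}$, the totality of such $\alpha$ (viewed as elements of $\Ksf_0$ via a generic trivialisation and the local comparisons at each place) lies in a bounded subset $B\subset\Ksf_0$ depending only on $\mu$ and $Z$: the HN-bound $\mu$ controls $\alpha$ away from the legs while the $Z$-bound controls the local contribution at each leg. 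Since $\{\Ksf_D\}$ forms a neighbourhood basis of $1$ in $\Ksf_0$ as $|D|\to\infty$, we have $B\cap\Ksf_D=\{1\}$ once $|D|$ is sufficiently large, forcing $\alpha=\id$; the infinitely many connected components potentially arising from central twists pose no obstacle since central twisting preserves the sets $B$.

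The ``in particular'' statement follows formally. Writing $\Xscr^Z_{\Gscr,\Ibf,T}$ as the ascending union of its HN-truncations $\Xscr^{Z,\leqslant\mu}_{\Gscr,\Ibf,T}$, and each such HN-truncation as the inverse limit of the now-schematic $\Xscr^{Z,\leqslant\mu}_{\Gscr,\Ibf,\Ksf_{nT}}$ (for $n$ large depending on $\mu$) along the finite \'etale transition maps, we conclude: an affine inverse limit of schemes is a scheme, and an ascending union of open subschemes is again a scheme.
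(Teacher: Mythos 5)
You take a genuinely different route from the paper. The paper reduces the general case to the split case, which is \cite[Prop.~2.16~a)]{Varshavsky:shtuka}: it fixes the embedding $\Gscr^{\ad}\hookrightarrow H$ from \S\ref{ssect-HN}, chooses a compatible bound $Z'$ for $H$, and observes that the natural map
\[
\Xscr^{Z,\leqslant\mu}_{\Gscr,\Ibf,\Ksf_D}\longrightarrow \Xscr^{Z',\leqslant\mu}_{H,\Ibf,\Ksf'_D}\times_{\Xscr^{Z',\leqslant\mu}_{H,\Ibf}}\Xscr^{Z,\leqslant\mu}_{\Gscr,\Ibf}
\]
is representable by a \emph{finite} morphism of schemes (push-forward of a $\Gscr(O_D)$-torsor to an $H(O_D)$-torsor), so schematicity follows once $\Xscr^{Z',\leqslant\mu}_{H,\Ibf,\Ksf'_D}$ is a scheme. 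By contrast, you attempt to re-derive the underlying rigidity lemma directly for general $\Gscr$: automorphisms of bounded $\Gscr$-shtukas sit in a bounded subset $B\subset\Ksf_0$, and $\Ksf_D\cap B=\{1\}$ for $|D|\gg0$.

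This is where the proposal has a genuine gap. The boundedness claim for $B$ is precisely the substance of Varshavsky's result for split groups, and you assert it with only a one-line heuristic (``the HN-bound $\mu$ controls $\alpha$ away from the legs while the $Z$-bound controls the local contribution at each leg''). It is not clear from what you wrote how to produce such a $B$ uniformly, nor what it means precisely that ``central twisting preserves the sets $B$'' — this last point is essential since $\Xscr^{Z,\leqslant\mu}_{\Gscr,\Ibf}$ need \emph{not} be quasi-compact (the remark after the HN-truncation proposition explicitly warns of this). The paper circumvents the non-quasi-compactness precisely by passing to semisimple $H$, where $\Xscr^{Z',\leqslant\mu}_{H,\Ibf}$ is quasi-compact. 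In addition, your schematicity step (algebraic space $\Rightarrow$ scheme) is vague: embedding into ``a $Z$-bounded moduli of $\Gscr$-bundle modifications with $\Ksf_D$-structure'' does not land in a scheme, since the Hecke stack with level structure still maps to the non-schematic $\Bun_\Gscr$; one needs a genuinely schematic ambient object (which Varshavsky's proof constructs, and which the paper simply inherits via the finite morphism above). Your treatment of the ``in particular'' clause — ascending union of HN-truncations, each an affine inverse limit of finite-level schemes — is correct and consistent with the paper.
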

\begin{proof}
The second claim easily follows from the first, so let us prove the first claim.

If $\Gscr=H$ is a split reductive group, then the proposition is proved in \cite[Prop.~2.16 a)]{Varshavsky:shtuka}.
For general $\Gscr$, let us choose an embedding $\rho\colon\Gscr^{\ad}\mono \Hscr$ as before (so we also get $\rho\colon\Gsf^{\ad}\mono H_{\Fsf}$), and an Arasteh~Rad--Hartl bound $ Z '$ for $H$ (in the sense of \S\ref{not-BD}) so that the natural map $\Bun_\Gscr\to\Bun_H$ induces $\Xscr^{ Z }_{\Gscr,\Ibf}\to \Xscr^{ Z '}_{H,\Ibf}$. Such $Z'$ exists by the ind-stack structure of $\Xscr_{H,\Ibf}$ obtained in \cite[Thm.~3.15]{ArastehRad-Hartl:UnifStack}.

For any finite non-empty subscheme $D\subset C$, we set $\Ksf'_D\coloneqq\ker(\prod_{x \in |C|} H(O_x)\to H(O_D))$. Then the natural map 
\[
\Xscr^{ Z ,\leqslant\mu}_{\Gscr,\Ibf,\Ksf_D} \to  \Xscr^{ Z ',\leqslant\mu}_{H,\Ibf,\Ksf'_D}\times_{\Xscr^{ Z '\leqslant\mu}_{H,\Ibf}}\Xscr^{ Z ,\leqslant\mu}_{\Gscr,\Ibf}
\]
is representable by a \emph{finite} morphism of schemes as it corresponds to pushing forward a $\Gscr(O_D)$-torsor to a $H(O_D)$-torsor. Therefore, $\Xscr^{ Z ,\leqslant\mu}_{\Gscr,\Ibf,\Ksf_D}$ is representable by a scheme if $\Xscr^{ Z ',\leqslant\mu}_{H,\Ibf,\Ksf'_D}$ is representable by a scheme, which follows if $|D|$ is large enough compared to $\mu$.
\end{proof}
 
 \section{Global $\Gscr$-shtukas with full level structure} \label{sect-full-level}
 
  In the previous section, we defined the moduli space of global $\Gscr$-shtukas over $C\setminus T$ with infinite level structure at $T$ when $T$ is finite. As a next step, we extend this definition to infinite sets $T$, especially when $|C|\setminus T$ is finite. We then construct the \emph{infinite-level Igusa varieties} and establish basic properties; \emph{cf.} Thm.~\ref{th-infinite-level-igusa-var}.
  
 \subsection{Level structure at infinitely many places} \label{ssect-infinite-level-structure}
 We fix a \emph{non-empty} subset $T \subseteq |C|$  such that the locally ringed space $(C\setminus T, \restr{\Oscr_C}{C\setminus T})$ is a scheme.  (This is automatically satisfied when $T$ is finite or cofinite.) We denote by $\Tbar$ the complement of $T$  in $|C|$. We are mostly interested in the case where $\Tbar$ is a fixed set of legs in the moduli problem of global $\Gscr$-shtukas.

 \begin{defn}\label{df-infinite-level-structure}
   We define the moduli space of global $\Gscr$-shtukas with infinite level structure at $T$ as the infinite fibre product over ${\Xscr_{\Gscr,\Ibf}^ Z }$
   \[
    \Xscr_{\Gscr,\Ibf,T}^ Z  \coloneqq \prod_{t \in T} \Ig^e_{\loc{\underline\Vscr_0^{\rm univ}}{t}},
   \]
   which solves a moduli problem analogous to Def.~\ref{def-level-str-finite}. 
 \end{defn}
 We claim that $ \Xscr_{\Gscr,\Ibf,T}^ Z$ can be represented by a scheme and the natural projection $ \Xscr_{\Gscr,\Ibf,T}^ Z \to  \Xscr_{\Gscr,\Ibf}^ Z$ can be represented by an affine morphism. In fact, choosing $t_0\in T$, the definition of $ \Xscr_{\Gscr,\Ibf,T}^ Z$ can be rewritten as the following infinite fibre product of \emph{schemes} over $\Ig^e_{\loc{\underline\Vscr_0^{\rm univ}}{t_0}}$
 \[
    \prod_{t \in T\setminus\{t_0\}} (\Ig^e_{\loc{\underline\Vscr_0^{\rm univ}}{t}}\times_{\Xscr_{\Gscr,\Ibf}^ Z } \Ig^e_{\loc{\underline\Vscr_0^{\rm univ}}{t_0}}),
 \]
 where each direct factor is \emph{affine} over $\Ig^e_{\loc{\underline\Vscr_0^{\rm univ}}{t_0}}$. (Recall that Igusa covers are schemes that are affine over ${\Xscr_{\Gscr,\Ibf}^ Z }$.) This shows that $ \Xscr_{\Gscr,\Ibf,T}^ Z$ is a scheme that is affine over $\Ig^e_{\loc{\underline\Vscr_0^{\rm univ}}{t_0}}$ by \cite[\href{https://stacks.math.columbia.edu/tag/0CNH}{Tag 0CNH}]{AlgStackProj}, and thus it is affine over $\Xscr^Z_{\Gscr,\Ibf}$ as well. 
 
 Next we aim to show that the infinite level structure negates every choice we had to make at the places in $T$. This requires some preparation; namely, an adelic version of the Beauville--Laszlo descent for $\Bun_\Gscr$ (\emph{cf.} Prop.~\ref{prop-adelic-BL}).
 
 \subsection{Classical Beauville-Laszlo descent} 
We begin with  the classical Beauville--Laszlo descent for $\Bun_\Gscr$ (Cor.~\ref{cor-lemma-5.1}) as a preparation for its adelic version. Although there are similar results in the literature such as \cite[Thm.~2.3.4]{BeilinsonDrinfeld:Hitchin} and \cite[Lem.~5.1]{ArastehRad-Hartl:LocGlShtuka}, we are not aware of the result that we need for the proof of Prop.~\ref{prop-adelic-BL}. (See Rmk.~\ref{rmk-lemma-5.1} for more details.) So we provide full details although they must be well known to experts.

 We consider the following general setup. Let $X$ be an affine scheme and let $D \subset X$ be an effective Cartier divisor.   We denote by $\Dhat = (D,\hat\Oscr_{X,D})$ the completion of $D$ in $X$. It is a famous result of Beauville and Laszlo that the category of vector bundles over $X$ is equivalent to the category of triples $(\Escr',\Ecal,\varphi)$, where $\Escr'$ and $\Ecal$ are vector bundles over $X \setminus D$ and $\Spec\hat\Oscr_{X,D}$, respectively, and $\varphi\colon \restr{\Escr'}{\Spec\hat\Oscr_{X,D} \setminus D} \isom \restr{\Ecal}{\Spec\hat\Oscr_{X,D} \setminus D}$ is an isomorphism. In order to relate a global shtuka to its localisations, we have to ``sheafify the result over $\Dhat$'' so that the resulting sheaves naturally admit an interpretation in terms of torsors for some loop group and positive loop group; \emph{cf.} \eqref{eq-formal-and-loop-torsor}.

Now let $X$ be a scheme. We consider the set $\Bfr$ of all affine open $U = \Spec R \subset X$ such that $D \cap U = V(\unif)$ for a regular element $\unif \in R$ and set $\Bfr_D \coloneqq \{ U \cap D \mid U \in \Bfr\}$. Note that $\Bfr$ and hence $\Bfr_D$ is a basis of topology, hence the restriction defines an equivalence of categories between sheaves on $D$ and sheaves on $\Bfr_D$. (See e.g.~\cite[\href{https://stacks.math.columbia.edu/tag/009O}{Lem.~009O}]{AlgStackProj}.) We denote $\hat\Oscr_{X,D}^\circ$
the sheaf on $D$ such that for $V = \Spec R \cap D \in \Bfr_D$ we have
\[
 \hat\Oscr_{X,D}^\circ(V) \coloneqq \hat\Oscr_{X,D}(V)[\unif\iv] = \hat{R}[\unif\iv],
\]
 where $\hat R$ is the $\unif$-adic completion of $R$. Since localisation is exact this construction indeed defines a sheaf on $\Bfr_D$, and hence on $D$.

\begin{defn} \label{def-analytification}

 The topologically ringed space $\Dhat^\circ\coloneqq (D,\hat\Oscr_{X,D}^\circ)$ is called the \emph{punctured formal neighbourhood} of $D$ in $X$.  Moreover, for any vector bundle $\Fscr$ on $ X \setminus D$ we define  its \emph{analytification} $\Fscr^{\rm an}$ as the sheaf on $\Dhat^\circ$ obtained by extending the following sheaf on $\Bfr_D$:
 \begin{equation} \label{eq-vb}
 (\Spec R) \cap D \mapsto \Fscr(\hat{R}[\unif\iv]), \quad \text{for any }\Spec R\in \Bfr.
 \end{equation}
\end{defn}

 Alternatively, we can define $\Fscr^{\rm an}$ as follows. Since the analytification is local with respect to Zariski open covers of $X$ in $\Bfr$, we restrict to the case that $X = \Spec R$ is affine and that $D=V(\unif)$ for a regular element $\unif \in R$. Thus $\Dhat = \Spf \Rhat$, where $\Rhat$ denotes the $\unif$-adic completion of $R$ and $X \setminus D = \Spec R[\unif^{-1}]$.  We denote by $M$ the finite projective $R[\unif^{-1}]$-module corresponding to $\Fscr$. Then
  \begin{equation}\label{eq-vb-local}
   \Fscr^{\rm an} = M \hat\otimes_{R[\unif\iv]} \hat\Oscr_{X,D}^\circ.
  \end{equation}
  Note that the tensor product of analytified vector bundles can be taken in the category of presheaves (i.e.\ without sheafifying) since the tensor product preserves the sheaf property of $\hat\Oscr_{X,D}^\circ$ by flatness of $M$.

 The following lemma allows us to replace the glueing data over $\Spec \Rhat[\unif\iv]$ in \cite{Beauville-Laszlo:DescentLemma} by glueing data of $\hat\Oscr^{\circ}_{X,D}$-modules. The advantage is that the latter globalises with respect to the Zariski topology on $D$.
 
 \begin{lem}\label{lem-46}
  Let $X=\Spec R$ be an affine scheme, and set $D = V(\unif) \subset X$ for a regular element $\varpi \in R$. Then $M \mapsto \widetilde{M}^{\rm an} \coloneqq M \hat\otimes_{\Rhat[\unif\iv]} \hat\Oscr_{X,D}^\circ$ is a fully faithful functor from the category of finite projective modules over $\Rhat[\unif\iv]$ to the category of flat $\hat\Oscr_{X,D}^\circ$-modules.
 \end{lem}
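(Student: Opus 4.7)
The approach I would take is to exhibit the functor $M \mapsto \widetilde{M}^{\mathrm{an}}$ as the left adjoint of the global sections functor $\Gamma(D,-)$ from sheaves of $\hat\Oscr^\circ_{X,D}$-modules to $\Rhat[\unif\iv]$-modules, and then to read off full faithfulness by computing global sections of $\widetilde{N}^{\mathrm{an}}$.

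First I would observe that $\hat\Oscr^\circ_{X,D}(D) = \Rhat[\unif\iv]$, so $\Gamma(D,-)$ naturally lands in $\Rhat[\unif\iv]$-modules. By the universal property of sheafification combined with the universal property of the tensor product, the construction $M \mapsto \widetilde{M}^{\mathrm{an}}$ — i.e.\ the sheafification of the presheaf $V \mapsto M \otimes_{\Rhat[\unif\iv]} \hat\Oscr^\circ_{X,D}(V)$ — is left adjoint to $\Gamma(D,-)$. This yields a natural isomorphism
\[
  \Hom_{\hat\Oscr^\circ_{X,D}}(\widetilde{M}^{\mathrm{an}}, \Fscr) \;\cong\; \Hom_{\Rhat[\unif\iv]}(M, \Gamma(D,\Fscr))
\]
for every $\hat\Oscr^\circ_{X,D}$-module $\Fscr$. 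Applying this to $\Fscr = \widetilde{N}^{\mathrm{an}}$, full faithfulness of $(\widetilde{\phantom{M}})^{\mathrm{an}}$ on finite projective modules is reduced to the identity $\Gamma(D,\widetilde{N}^{\mathrm{an}}) = N$.

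Next I would verify this identity using flatness, as hinted immediately after Definition~\ref{def-analytification}. The sheaf condition on $\hat\Oscr^\circ_{X,D}$ over any Zariski cover is an exact sequence of $\Rhat[\unif\iv]$-modules, and since $N$ is flat over $\Rhat[\unif\iv]$, tensoring with $N$ preserves exactness. Hence the presheaf $V \mapsto N \otimes_{\Rhat[\unif\iv]} \hat\Oscr^\circ_{X,D}(V)$ is already a sheaf, no sheafification is needed, and taking global sections gives $\Gamma(D,\widetilde{N}^{\mathrm{an}}) = N \otimes_{\Rhat[\unif\iv]} \Rhat[\unif\iv] = N$, completing the proof of full faithfulness.

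For flatness of $\widetilde{M}^{\mathrm{an}}$, I would use that a finite projective $\Rhat[\unif\iv]$-module $M$ is a direct summand of a free module of finite rank; applying the additive functor $(\widetilde{\phantom{M}})^{\mathrm{an}}$ realises $\widetilde{M}^{\mathrm{an}}$ as a direct summand of a free $\hat\Oscr^\circ_{X,D}$-module, which is stalkwise flat. The main obstacle I expect is the delicate verification that $M \otimes_{\Rhat[\unif\iv]} \hat\Oscr^\circ_{X,D}(V)$ really is a sheaf in $V \in \Bfr_D$: one has to check not merely the left-exactness guaranteed by flatness, but that the tensor product commutes with the infinite products and kernels appearing in the sheaf axiom on a basis, which for finite projective $M$ reduces to the free case and then follows formally by additivity.
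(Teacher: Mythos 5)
Your proof is correct, and it takes a genuinely different (though closely related) route from the paper's. The paper argues directly: injectivity of $\Hom(M,N) \to \Hom(\widetilde{M}^{\rm an},\widetilde{N}^{\rm an})$ is immediate from global sections, and surjectivity follows because $\widetilde{M}^{\rm an}(U) = M \otimes_{\Rhat[\unif\iv]} \hat\Oscr^\circ_{X,D}(U)$ is generated as an $\hat\Oscr^\circ_{X,D}(U)$-module by the image of $M = \widetilde{M}^{\rm an}(D)$, so a sheaf morphism is determined by its effect on global sections. You instead package this as the tensor--global-sections adjunction and reduce full faithfulness to the unit $M \to \Gamma(D,\widetilde{M}^{\rm an})$ being an isomorphism. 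These are the same idea in different clothing; the shared content is the identification $\Gamma(D,\widetilde{M}^{\rm an}) = M$, which in both proofs rests on the observation that the presheaf $V \mapsto M \otimes_{\Rhat[\unif\iv]} \hat\Oscr^\circ_{X,D}(V)$ is already a sheaf, so no sheafification changes the global sections. Your version buys a cleaner logical skeleton (unit-iso implies fully faithful is a one-line categorical fact) and, incidentally, makes it transparent that the argument works verbatim for any $M$ with $\eta_M$ an isomorphism, not only finite projective ones. You also explicitly address two points the paper leaves implicit: that $\widetilde{M}^{\rm an}$ is a flat $\hat\Oscr^\circ_{X,D}$-module (by realising it as a summand of a finite free sheaf), and --- more importantly --- that flatness of $M$ alone is not quite enough to conclude the presheaf tensor is a sheaf, since the sheaf axiom involves possibly infinite products; one additionally needs that $- \otimes_{\Rhat[\unif\iv]} M$ commutes with such products, which holds because $M$ is finitely generated projective (reduce to the finite free case by additivity). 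This is a genuine refinement of the reasoning the paper gives after Definition~\ref{def-analytification}, which attributes the sheaf property only to flatness. (An alternative way to dispose of the same worry is to note that $D$ is affine, so $\Bfr_D$ is a basis of quasi-compact opens and the sheaf axiom can be checked on finite covers.)
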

 \begin{proof}
  Let $M,N$ be two finite projective $\Rhat[\unif\iv]$-modules. We want to show that the natural morphism   $   \Hom(M,N) \to \Hom(\widetilde{M}^{\rm an},\widetilde{N}^{\rm an})  $ is bijective. Injectivity is clear from considering global sections. To show surjectivity, note that the restriction of global sections of $\widetilde M^{\rm an}$ to an open $U \subset D$ is given by 
  \[
   M = \widetilde{M}^{\rm an}(D) \to \widetilde{M}^{\rm an}(U) = M \otimes_{\Rhat[\unif\iv]} \hat\Oscr_{X,D}^\circ(U);
  \]
  \emph{cf.}\ \eqref{eq-vb-local}.
   Hence $\widetilde{M}^{\rm an}(U)$ is generated by global sections, and each morphism $\widetilde{M}^{\rm an} \to \widetilde{N}^{\rm an}$ is uniquely determined by its effect on the global sections. Therefore the natural map $\Hom(M,N) \to \Hom(\widetilde{M}^{\rm an},\widetilde{N}^{\rm an})$ is an isomorphism.
 \end{proof}
 
  Now let $\Escr$ be a vector bundle over $X$. We can associate  two sheaves of $\hat\Oscr_{X,D}^\circ$-modules to it; namely, $(\restr{\Escr}{\Dhat})[\unif^{-1}]\coloneqq\restr{\Escr}{\Dhat}\otimes_{\hat\Oscr_{X,D}}\hat\Oscr_{X,D}^{\circ}$ and $(\restr{\Escr}{X \setminus D})^{\rm an}$. We also get a natural isomorphism $(\restr{\Escr}{X \setminus D})^{\rm an} \isom (\restr{\Escr}{\Dhat})[\unif^{-1}]$ using the description in \eqref{eq-vb-local}. 
 Hence, we can associate to each vector bundle $\Escr$ over $X$ the triple $(\restr{\Escr}{X \setminus D},\restr{\Escr}{\Dhat},\varphi_\Escr)$, where $\varphi_\Escr$ is the canonical isomorphism above. Altogether, we obtain the following variant of the Beauville--Laszlo descent lemma.

 \begin{prop}[Beauville--Laszlo] \label{prop-BL}
  The functor $\Escr\mapsto (\restr{\Escr}{X \setminus D},\restr{\Escr}{\Dhat},\varphi_\Escr)$ defines an equivalence of categories between the category of vector bundles on $X$ and the category of triples $ (\Escr',\Ecal,\varphi)$ where $\Escr',\Ecal$ are vector bundles over $X \setminus D$ and $\Dhat$, respectively, and $\varphi\colon\Escr'^{\rm an}\to \Ecal[\unif^{-1}] $ is an isomorphism.
 \end{prop}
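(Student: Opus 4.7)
The plan is to reduce to the local affine case via Zariski descent and then combine the classical Beauville--Laszlo descent lemma with Lemma~\ref{lem-46} to pass between the sheaf-theoretic glueing datum $\varphi$ and its module-theoretic avatar.

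The affine opens $U = \Spec R \in \Bfr$ on which $D \cap U = V(\unif)$ for a regular element $\unif \in R$ form a basis of the Zariski topology on $X$, so I would first verify that both sides of the claimed equivalence satisfy Zariski descent with respect to this basis. Vector bundles on $X$ form a Zariski stack, and so do the triples $(\Escr', \Ecal, \varphi)$: vector bundles on $X \setminus D$ and on $\Dhat$ satisfy Zariski descent by standard arguments, while isomorphisms $\varphi\colon \Escr'^{\rm an} \isom \Ecal[\unif^{-1}]$ glue because $\hat\Oscr^\circ_{X,D}$ is by construction a genuine sheaf on $D$ (Definition~\ref{def-analytification}). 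The formation of the canonical isomorphism $\varphi_\Escr$ is manifestly compatible with restriction to elements of $\Bfr$. Hence it suffices to treat the case where $X = \Spec R$ is affine and $D = V(\unif)$.

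In this local setting, the classical Beauville--Laszlo lemma~\cite{Beauville-Laszlo:DescentLemma} already provides an equivalence between finite projective $R$-modules and triples $(M', M, \psi)$, where $M'$ is finite projective over $R[\unif^{-1}]$, $M$ is finite projective over $\hat R$, and $\psi \colon M' \otimes_{R[\unif^{-1}]} \hat R[\unif^{-1}] \isom M[\unif^{-1}]$ is an $\hat R[\unif^{-1}]$-module isomorphism. By~\eqref{eq-vb-local}, the analytification $\Escr'^{\rm an}$ coincides with $\widetilde{M' \otimes_{R[\unif^{-1}]} \hat R[\unif^{-1}]}^{\rm an}$ in the sense of Lemma~\ref{lem-46}, and similarly $\Ecal[\unif^{-1}] = \widetilde{M[\unif^{-1}]}^{\rm an}$. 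Since both of these completed modules are finite projective over $\hat R[\unif^{-1}]$, the full faithfulness asserted in Lemma~\ref{lem-46} identifies $\hat R[\unif^{-1}]$-module isomorphisms $\psi$ bijectively with isomorphisms $\varphi\colon \Escr'^{\rm an} \isom \Ecal[\unif^{-1}]$. The same application of Lemma~\ref{lem-46} to morphism sets identifies morphisms of triples on the module side with morphisms on the sheaf side, so that combining this dictionary with the classical lemma yields the asserted equivalence of categories.

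The one point that requires genuine care is verifying that, under the dictionary above, the canonical isomorphism $\varphi_\Escr$ attached to a vector bundle $\Escr$ on $X$ matches the natural glueing isomorphism supplied by the classical lemma. This reduces, for $\Escr$ corresponding to a finite projective $R$-module $\mathcal{E}$, to checking that on global sections both maps are the identity on $\mathcal{E} \otimes_R \hat R[\unif^{-1}]$; this is a direct unwinding of Definition~\ref{def-analytification}. I expect this to be the main bookkeeping step of the proof, but not a substantive obstacle, since the core content is already packaged in the classical Beauville--Laszlo lemma and in the fully faithful analytification functor of Lemma~\ref{lem-46}.
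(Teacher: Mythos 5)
Your proposal is correct and follows essentially the same route as the paper: reduce to the affine case by compatibility of the functor with restriction to opens in $\Bfr$, then invoke the classical Beauville--Laszlo lemma together with Lemma~\ref{lem-46} to translate between the sheaf-theoretic glueing datum $\varphi$ and the $\hat R[\unif^{-1}]$-module isomorphism it induces on global sections. The only point the paper makes explicit that you pass over silently is that a finite $\hat R$-module is Zariski-locally free on $\Spf \hat R$ if and only if it is on $\Spec \hat R$, which is needed to match the notion of ``vector bundle over $\Dhat$'' in the statement with the finite projective modules of the classical lemma; this is harmless, but worth flagging since $\Dhat$ is a locally ringed space rather than a scheme.
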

 \begin{proof}
  Since the construction of the functor commutes with the restriction morphisms for opens in $\Bfr$, we may assume that $X = \Spec R$ is affine and $D = V(\unif)$ for a regular element $\unif \in R$. Since for a finite $\Rhat$-module being locally free with respect to the Zariski topology on $\Spf \Rhat$ is the same as being locally free with respect the Zariski topology on $\Spec \Rhat$ and $\varphi$ corresponds to an isomorphism of $\Rhat[\unif\iv]$-modules by Lem.~\ref{lem-46} above, this assertion is identical to \cite[\S3,~Th\'eor\`eme]{Beauville-Laszlo:DescentLemma}.
 \end{proof}
 
 We now apply Prop.~\ref{prop-BL} to obtain a local description of the stack of $\Gscr$-bundles (Cor.~\ref{cor-BL-for-torsors}). Set $X = C_S$ and $D = \{x\} \times S \cong S_{\FF_x}$ for $x \in |X|$ and some $\FF_q$-scheme $S$. In particular, we get $\hat\Oscr_{X,D}(U) = (\Oscr_D(U))\pot{\unif_x}$ for every open subset $U \subset D$, and hence we have $\hat\Oscr_{X,D}^\circ(U) = (\Oscr_D(U))\rpot{\unif_x}$ whenever $U$ is quasi-compact. We define the analytification of a given $\Gscr$-torsor $\Vscr'$ on $X \setminus D$ as a sheaf on the \'etale site of $D$ given by 
 \[
  \Vscr'^{\rm an}\colon \Spec R \mapsto \Vscr'(R\rpot{\unif_x}).
 \]
 To check that this is indeed a sheaf, recall that by \cite[Thm.~1.1]{Broshi:Gtorsors} there exists a vector bundle $V$ over $C$ and a line bundle $L \subset V^\otimes$ such that $\Gscr = \underline \Aut(V,L)$. (See \cite[Rmk.~A.19]{ImaiKatoYoucis:Tannaka} for a minor correction.) Thus we get an equivalence between the stack of $\Gscr$-torsors and the stack of $(V,L)$-twists by \cite[Cor.~1.4]{Broshi:Gtorsors}. Here a $(V,L)$-twist means a vector bundle $\Escr$ together with a line bundle $\Lscr\subset\Escr^\otimes$ such that $(\Escr,\Lscr)$ is \'etale-locally isomorphic to $(V,L)$. Now let $(\Escr',\Lscr')$ be the $(V,L)$-twist associated to a $\Gscr$-torsor $\Vscr'$ over $X\setminus D$, i.e.\ $\Vscr' = \underline\Isom((V,L),(\Escr',\Lscr'))$. Then the presheaf
 \[
  \Vscr'^{\rm an} = \underline\Isom((V^{\rm an},L^{\rm an}),(\Escr'^{\rm an},\Lscr'^{\rm an})),
 \]
  is a sheaf, which can be seen from the description of the analytification of vector bundles; see \eqref{eq-vb-local} and the subsequent discussion.

 \begin{cor} \label{cor-BL-for-torsors}
  In the situation above, the groupoid $\Bun_\Gscr(S)$ of $\Gscr$-torsors over $X$ is equivalent to the groupoid of triples $(\Vscr',\Vcal,\varphi)$, where $\Vscr'$ and $\Vcal$ are $\Gscr$-torsors over $X \setminus D$ and $\Dhat$, respectively, and $\varphi\colon \Vscr'^{\rm an} \to (\Gscr^{\rm an} \times^{{\Gscr}|_{\Dhat}} \Vcal)$ is an isomorphism.
 \end{cor}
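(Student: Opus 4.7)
The plan is to reduce the corollary to Proposition~\ref{prop-BL} using the Tannakian-type description of $\Gscr$-torsors via Broshi's theorem already invoked just before the corollary. By \cite[Thm.~1.1]{Broshi:G-Torsors} (with the correction in \cite[Remark~A.19]{ImaiKatoYoucis:Prism}), fix a vector bundle $V$ over $C$ and a line sub-bundle $L \subset V^{\otimes}$ with $\Gscr = \underline\Aut(V,L)$. Then by \cite[Cor.~1.4]{Broshi:G-Torsors} the stack $\Bun_\Gscr$ is equivalent to the stack of $(V,L)$-twists, and this equivalence commutes with restriction to $X \setminus D$ and to $\Dhat$, and with analytification along $D$ (since the latter is defined via the associated $(V,L)$-twist). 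So it suffices to prove the analogous glueing statement for $(V,L)$-twists; that is, the groupoid of $(V,L)$-twists on $X$ is equivalent to the groupoid of triples consisting of a $(V,L)$-twist $(\Escr',\Lscr')$ on $X \setminus D$, a $(V,L)$-twist $(\Ecal,\Mcal)$ on $\Dhat$, and an isomorphism of analytifications identifying $\Lscr'^{\rm an}$ with a sub-object of $\Ecal^{\otimes}[\unif^{-1}]$ in the expected way.

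The forward direction is essentially formal. Given a $(V,L)$-twist $(\Escr,\Lscr)$ on $X$, Proposition~\ref{prop-BL} applied separately to $\Escr$ and $\Lscr$ yields the two restrictions together with canonical isomorphisms of analytifications. The compatibility of these isomorphisms with the inclusion $\Lscr \subset \Escr^{\otimes}$ follows from the fact that the analytification functor $\Fscr \mapsto \Fscr^{\rm an}$ is a tensor functor: indeed, by the local formula~\eqref{eq-vb-local} and flatness of the relevant modules, the tensor product of analytified bundles already agrees with the analytification of the tensor product at the level of presheaves, so no sheafification is required. Hence the gluing isomorphism $\varphi_{\Lscr}$ is canonically the restriction of $\varphi_{\Escr}^{\otimes}$ to the line sub-bundle.

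For the reverse direction, start with a triple $((\Escr',\Lscr'), (\Ecal,\Mcal), \varphi)$ as above. Apply Proposition~\ref{prop-BL} to the pair $(\Escr', \Ecal, \varphi_\Escr)$ and separately to $(\Lscr', \Mcal, \varphi_\Lscr)$, obtaining a vector bundle $\Escr$ on $X$ and a line bundle $\Lscr$ on $X$. The full faithfulness of the descent functor (applied to the morphism $\Lscr' \hookrightarrow \Escr'^{\otimes}$ on $X \setminus D$ and its analogue $\Mcal \hookrightarrow \Ecal^{\otimes}$ on $\Dhat$, whose analytifications match by hypothesis, using again that analytification is tensorial) produces a morphism $\Lscr \to \Escr^{\otimes}$ on $X$ which is an inclusion because this can be checked on the open cover $\{X \setminus D, \Dhat\}$ (more precisely, it is a nowhere vanishing morphism of line bundles into a vector bundle, a condition local on $X$). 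Finally, the property of being a $(V,L)$-twist is étale-local on $X$ and can be checked after restriction to $X \setminus D$ and $\Dhat$, which holds by assumption.

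The main technical obstacle is ensuring that the ``tensor functoriality'' of analytification is stated and used with the correct bookkeeping, in particular when $V^{\otimes}$ involves duals (so that analytification must be compatible with $\Homfunc$ as well, not only $\otimes$). This is handled by the same flatness argument as in~\eqref{eq-vb-local}, combined with Lemma~\ref{lem-46} to identify morphisms of analytified sheaves with their underlying module-theoretic morphisms over $\Rhat[\unif\iv]$. Everything else is a routine unwinding of Proposition~\ref{prop-BL} together with the $\Gscr = \underline\Aut(V,L)$ description.
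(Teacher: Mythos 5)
The Broshi reduction, the separate application of Proposition~\ref{prop-BL} to $\Escr$ and $\Lscr$, and the reconstruction of the inclusion $\Lscr \hra \Escr^{\otimes}$ by full faithfulness and tensor-compatibility of analytification are all in line with the paper's argument. The gap is in your final sentence. You assert that the property of being a $(V,L)$-twist is \'etale-local on $X$ and ``can be checked after restriction to $X \setminus D$ and $\Dhat$, which holds by assumption.'' This is precisely where the work happens, and the assertion as stated is unjustified: $\Dhat$ is the formal completion $\Spf \hat\Oscr_{X,D}$, and $\{X\setminus D,\ \Dhat\}$ is not a cover of $X$ in any of the standard topologies (\'etale, fppf, fpqc) for which twist-ness descends. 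You cannot simply invoke descent of properties along restriction to a formal completion.

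What the paper actually does at this point: it must show that $\Vscr \coloneqq \underline\Isom\bigl((V,L),(\Escr,\Lscr)\bigr)$ is an fppf cover of $X$. After a limit argument reducing to noetherian $S$, it shows $\Vscr$ is of finite presentation over $X$ (as a closed subscheme of $\underline\Isom(V,\Escr)$), and then deduces flatness at the points of $D$ by the \emph{local flatness criterion} (Matsumura, Thm.~22.3): flatness of $\restr{\Vscr}{\Dhat}$ together with finite presentation implies flatness of $\restr{\Vscr}{\Spec\Oscr_{C,x}\times S}$. Your proposal omits both the noetherian reduction and the flatness argument, and replaces them with a descent claim that does not apply to a formal completion. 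You should replace the final step with this flatness argument (or an equivalent one) to close the gap; the rest of the proposal is sound and follows the paper's strategy.
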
 
 \begin{proof}
   Given a triple $(\Vscr',\Vcal,\varphi)$ as above we may glue the associated $(V,L)$-twists to a (unique) pair $(\Escr,\Lscr)$ over $X$ by Prop.~\ref{prop-BL}. It remains to show that this is again a $(V,L)$-twist, or equivalently that
   \[
    \Vscr \coloneqq \underline\Isom((V,L),(\Escr,\Lscr))
   \]
   is an fppf cover of $X$. There exists a finitely presented $\FF_q$-scheme $S_0$ over $S$ such that $\Escr$ and $\Lscr$ are defined over $S_0$. Replacing $S$ by $S_0$, we may assume that $S$ is noetherian. Since $\Vscr \subset \underline\Isom(V,\Escr)$ is a closed subscheme, $\Vscr$ is of finite presentation over $X$. Moreover, since $\restr{\Vscr}{X\setminus D}$ is flat over $X$ it remains to show that $\restr{\Vscr}{\Spec \Oscr_{C,x} \times S}$ is flat. Since $\restr{\Vscr}{\Dhat}$ is flat, this follows by the local flatness criterion. (See eg.\ \cite[Thm.~22.3]{Matsumura:crt}.)
 \end{proof}
 
 In the setup of Cor.~\ref{cor-BL-for-torsors} above, note that 
 $\restr{\Gscr}{\Dhat} \cong (L^+\Gscr_x)_{S_{\FF_x}}$ and $\Gscr^{\rm an} \cong (L\Gscr_x)_{S_{\FF_x}}$. We apply \cite[3.2.1,~3.2.4]{neupert:thesis} to obtain equivalences of categories
  \begin{align}\label{eq-formal-and-loop-torsor}
   (\Gscr-{\rm Tors}/\Dhat) \cong (L^+\Res_{O_x/\f\pot{\unif_x}} \Gscr_x-{\rm Tors}/S),\ \text{and}  \\
   (\Gscr^{\rm an}-{\rm Tors}/\Dhat^{\circ}) \cong (L\Res_{\Fsf_x/\f\rpot{\unif_x}} \Gscr_x-{\rm Tors}/S).\notag
  \end{align}

  For any $\FF_q$-scheme $S$, let $\Bun_{{\Gscr}|_{C \setminus \{x\}}}(S)$ denote the groupoid of $\Gscr$-torsors over $(C\setminus\{x\})_S$.
  Given $\Vscr' \in\Bun_{{\Gscr}|_{C \setminus \{x\}}}(S) $, the above equivalences identify $\Vscr'^{\rm an}$ with the sheaf $\Vscr_x'^{\rm an} \colon \Spec R \mapsto \Vscr'(R\rpot{\unif_x})$, and $\Gscr^{\rm an} \times_{{\Gscr}_{\Dhat}} \Vcal$ with $\Lcal\Vcal$, essentially by tracing through the definitions. Altogether, we obtain
 
 \begin{cor} \label{cor-lemma-5.1}
  Let $S$ be an $\FF_q$-scheme and $x \in |C|$. The groupoid $\Bun_\Gscr(S)$ is equivalent to the groupoid of triples $(\Vscr',\Vcal,\varphi)$, where $\Vscr' \in\Bun_{{\Gscr}|_{C \setminus \{x\}}}(S) $, $\Vcal$ is a $L^+\Res_{O_x/\f\pot{\unif_x}}\Gscr_x$- torsor over $S$  and $\varphi\colon \Vscr_x'^{\rm an} \to \Lcal\Vcal$ is an isomorphism.
 \end{cor}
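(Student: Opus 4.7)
My plan is to combine the Beauville--Laszlo descent of Corollary~\ref{cor-BL-for-torsors} with the pair of equivalences \eqref{eq-formal-and-loop-torsor} translating torsors on $\Dhat$ (respectively $\Dhat^\circ$) into positive loop (respectively loop) group torsors on $S$. Specialising Corollary~\ref{cor-BL-for-torsors} to $X = C \times S$ and $D = \{x\} \times S \cong S_{\FF_x}$ already gives an equivalence of $\Bun_\Gscr(S)$ with the groupoid of triples $(\Vscr',\Vcal_0,\varphi_0)$, where $\Vscr' \in \Bun_{\Gscr|_{C\setminus\{x\}}}(S)$, $\Vcal_0$ is a $\Gscr$-torsor on $\Dhat$, and $\varphi_0\colon \Vscr'^{\rm an}\isom\Gscr^{\rm an}\times_{\Gscr|_{\Dhat}}\Vcal_0$ is an isomorphism of $\Gscr^{\rm an}$-torsors over $\Dhat^\circ$.

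Next I would transport each ingredient of the triple $(\Vscr',\Vcal_0,\varphi_0)$ through the two equivalences in \eqref{eq-formal-and-loop-torsor}. The first one identifies $\Gscr$-torsors on $\Dhat$ with $L^+\Res_{O_x/\f\pot{\unif_x}}\Gscr_x$-torsors on $S$, and sends $\Vcal_0$ to the torsor $\Vcal$ appearing in the statement. The second one identifies $\Gscr^{\rm an}$-torsors on $\Dhat^\circ$ with $L\Res_{\Fsf_x/\f\rpot{\unif_x}}\Gscr_x$-torsors on $S$; under it, $\Vscr'^{\rm an}$ corresponds to the sheaf $\Vscr_x'^{\rm an}\colon \Spec R \mapsto \Vscr'(R\rpot{\unif_x})$, and the pushout $\Gscr^{\rm an}\times_{\Gscr|_{\Dhat}}\Vcal_0$ corresponds to the associated loop group torsor $\Lcal\Vcal$. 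Both of these identifications are already asserted in the paragraph preceding the statement; the first reduces to unwinding \eqref{eq-vb-local} section-by-section over $\Spec R \subset S$, while the second is a direct bookkeeping of the scalar-extension $L^+\Res_{O_x/\f\pot{\unif_x}}\Gscr_x \rightsquigarrow L\Res_{\Fsf_x/\f\rpot{\unif_x}}\Gscr_x$ defining $\Lcal(-)$.

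Putting everything together, the isomorphism $\varphi_0$ becomes exactly an isomorphism $\varphi\colon \Vscr_x'^{\rm an}\isom\Lcal\Vcal$, which is the last piece of data in the statement. The main piece of bookkeeping is verifying that the two translations in \eqref{eq-formal-and-loop-torsor} fit together, in the sense that the square of functors obtained by composing $(-)^{\rm an}$ on the formal side with $\Lcal(-)$ on the loop-group side commutes up to natural isomorphism, and that this commutation is functorial in both torsors. However this causes no trouble, since both horizontal equivalences in \eqref{eq-formal-and-loop-torsor} are given by the same recipe of reading off sections over rings of the form $R\pot{\unif_x}$, respectively $R\rpot{\unif_x}$; the required compatibility is tautological once both equivalences are spelled out as in \cite[3.2.1,~3.2.4]{neupert:thesis}. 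I therefore do not anticipate any substantive obstacle beyond cleanly assembling these pieces.
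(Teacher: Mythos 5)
Your proposal is correct and is essentially the paper's own argument: the paper obtains Corollary~\ref{cor-lemma-5.1} by applying Corollary~\ref{cor-BL-for-torsors} to $X = C \times S$, $D = \{x\}\times S$, and then transporting the triple through the equivalences \eqref{eq-formal-and-loop-torsor}, identifying $\Vscr'^{\rm an}$ with $\Vscr_x'^{\rm an}$ and the pushout $\Gscr^{\rm an}\times_{\Gscr|_{\Dhat}}\Vcal$ with $\Lcal\Vcal$ by tracing through the definitions. The only difference is that you make explicit the compatibility check (that the square of translations commutes functorially), which the paper dispatches with the phrase ``essentially by tracing through the definitions.''
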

 
 \begin{rmk}\label{rmk-lemma-5.1}
  Our result is a slight improvement of \cite[Lem.~5.1]{ArastehRad-Hartl:LocGlShtuka}, where they assumed a priori that $\Vscr'$ extends to $X$. Our construction of $\Vscr_x'^{\rm an}$ is different from \emph{loc.~cit}, but yields the same result. Note that in the proof of Prop.~\ref{prop-adelic-BL} we apply Cor.~\ref{cor-lemma-5.1} to a triple  $(\Vscr',\Vcal,\varphi)$ where $\Vscr'$ is not a priori known to extend to $C_S$.
 \end{rmk}

 \begin{rmk}\label{rmk:bounds-via-BL}
     Let $X = C_S$ for an $\FF_q$-scheme $S$, and consider the divisor $D=\bigcup_{i\in I}\Gamma_{x_i}$ where $x_i\in C(S)$ for each $i\in I$. Then, one can similarly define the analytification $(-)^{\rm an}$ for $\Gscr$-torsors over $C_S\setminus D$, and the statement of Cor.~\ref{cor-BL-for-torsors} also holds for $\Bun_\Gscr(S)$ with respect to this choice of $D$ by the identical proof. (Note that we have $\restr{\Gscr}{\Dhat}\cong L^+_{C^I}\Gscr\times_{C^I}S$ and $\Gscr^{\rm an}\cong L_{C^I}\Gscr\times_{C^I}S$, where $L^+_{C^I}\Gscr$ and $L_{C^I}\Gscr$ are defined in \cite[Def.~2.1.3]{Bieker:IntModels}.)

     As a consequence, $\Gr_{\Gscr,\Ibf}(S)$ is equivalent to the category of following tuples
     \begin{equation}\label{eq:BD-Gr-via-BL}
       \underline\Vcal_\bullet \coloneqq \big( (x_i)_{i\in I}, (\Vcal_j)_{j=0}^k, (\varphi_j)_{j=1}^k, \hat\epsilon \big) ,
     \end{equation}
    where $x_i\in C(S)$, $\Vcal_j$ is a $\Gscr$-torsor over $\hat D$ with $D = \bigcup_{i\in I}\Gamma_{x_i}$, $\varphi_j\colon \Gscr^{\rm an} \isom \Gscr^{\rm an}\times^{\Gscr|_{\hat D}}\Vcal_j$ is a trivialisation of the  $\Gscr^{\rm an}$-torsor induced by $\Vcal_j$ (for $j\ne 0$), and $\hat\epsilon \colon \Vcal_0 \isom \restr{\Gscr}{\Dhat}$ is a trivialisation of $\Vcal_0$. In fact, given $(\underline\Vscr_\bullet,\epsilon)\in\Gr_{\Gscr,\Ibf}(S)$ where
 \[
\underline{\Vscr}_\bullet\coloneqq ((x_i)_{i\in I},(\Vscr_j)_{j=0}^k, (\phi_j)_{j=1}^k) \in \Hecke_{\Gscr,\Ibf}(S)
\] 
and $\epsilon\colon \Vscr_0\isom \Gscr_{C_S}$, we obtain an object $\underline\Vcal_\bullet$ as in \eqref{eq:BD-Gr-via-BL} by the ``restriction to $\Dhat$''; namely, we set $\Vcal_j\coloneqq \restr{\Vscr_j}{\hat D}$, $\varphi_j \coloneqq (\phi_j)^{\rm an}$ and $\hat\epsilon\coloneqq \restr{\epsilon}{\Dhat}$. Its quasi-inverse is given by the Beauville--Laszlo descent for $\Bun_\Gscr(S)$ by setting $\Vscr_j' = \Gscr_{C_S\setminus D}$ for each $j$. (Note that the Beauville--Laszlo descent obtained in \cite[Lem.~5.1]{ArastehRad-Hartl:LocGlShtuka} suffices for this purpose, as $\Gscr_{C_S\setminus D}$ is clearly extendable to $C_S$.)

Now, let us fix a bound $Z\subset \Gr_{\Gscr,\Ibf}\times_{C^I}\widetilde C^I_Z$ as in \S\ref{not-BD}, and consider $(\underline\Vscr_\bullet,(\widetilde x_i)_{i\in I})\in (\Hecke_{\Gscr,\Ibf}\times_{C^I}\widetilde C^I_Z)(S)$; i.e. $(\widetilde x_i)_{i\in I}$ is an $S$-point of $ \widetilde C^I_Z$ that projects onto the legs of $\underline\Vscr_\bullet$. Then, whether $(\underline\Vscr_\bullet,(\widetilde x_i)_{i\in I})$ lies in $\Hecke^Z_{\Gscr,\Ibf}(S)$ is determined by its ``restriction to $\Dhat$''. In fact, the restriction of $\underline\Vscr_\bullet$ to $\Dhat$ almost defines a tuple $\underline\Vcal_\bullet$ as in \eqref{eq:BD-Gr-via-BL} except $\hat\epsilon$. By replacing $S$ by some \'etale covering $S'$ however, one can trivialise $\restr{\Vscr_0}{\Dhat}$, and the choices of $\hat\epsilon$ form a principal homogeneous spaces for $L^+_{C^I}\Gscr(S')$; \emph{cf.} \cite[Lem.~3.4]{HainesRicharz:TestFtnRes}. Thus we get an $S'$-point in $\Gr_{\Gscr,\Ibf}\times_{C^I}\widetilde C^I_Z$ up to $L^+_{C^I}\Gscr$-action, and if this $L^+_{C^I}\Gscr$-orbit lies in $Z$ then we say that $(\underline\Vscr_\bullet,(\widetilde x_i)_{i\in I})\in\Hecke^Z_{\Gscr,\Ibf}(S)$.
 \end{rmk}

 \subsection{Adelic loop groups}
  
  With $T$ as in \S\ref{ssect-infinite-level-structure}, we define the ad\`eles (away from $\Tbar$) as the restricted product
 \[
  \AA^{\Tbar} \coloneqq  \sideset{}{'}\prod_{x \in T} (\Fsf_x,O_x),
 \]  
 which contains $\hat O_T\coloneqq \prod_{x\in T}O_x$ as an open compact subring. For any $\FF_q$-algebra $R$, we set
 \[\AA^{\Tbar}(R)\coloneqq (R\hat\otimes_{\FF_q}\hat O_T )\otimes_{\hat O_T}\AA^{\Tbar}.\]

  We define the adelic loop group as fpqc-sheaf on $\FF_q$-schemes given by
 \begin{align*}
  L_{\AA^{\Tbar}}\Gsf(R) &\coloneqq \sideset{}{'}\prod_{x \in T} (L\Res_{O_x/\f\pot{\unif_x}} \Gscr_x(R),L^+\Res_{O_x/\f\pot{\unif_x}} \Gscr_x(R)).
  \shortintertext{As the notation suggests, the sheaf $ L_{\AA^{\Tbar}}\Gsf$ depends only on $\Gsf$ as}
   L_{\AA^{\Tbar}}\Gsf(R) &= \sideset{}{'}\prod_{x \in T} (\Gscr(R \hat\otimes_\f \Fsf_x), \Gscr(R \hat\otimes_\f O_x)) \\
   &= \Gscr(\sideset{}{'}\prod_{x \in T}(R \hat\otimes_\f \Fsf_x, R \hat\otimes_\f O_x)) \\
   &= \Gsf(\AA^{\Tbar}(R)),
   \shortintertext{where $R \hat\otimes_\f \Fsf_x$ is a short-hand notation for $(R \hat\otimes_\f O_x)\otimes_{O_x}\Fsf_x$. Moreover, $L_{\AA^{\Tbar}}\Gsf$ is represented by an ind-scheme as it can be written as}
    L_{\AA^{\Tbar}}\Gsf &= \varinjlim_{T_0 \subset T \textnormal{ finite}} (\prod_{x \in T_0} L\Res_{O_x/\f\pot{\unif_x}} \Gscr_x \times \prod_{x \in T \setminus T_0} L^+\Res_{O_x/\f\pot{\unif_x}} \Gscr_x).
 \end{align*}
  
  We now generalise the notion of Beauville-Laszlo descent data as follows. Fix an $\f$-scheme $S$. For a $\Gsf$-torsor $\Vscr'$ over $(C\setminus T)_S$ we define a pseudo-$L_{\AA^{\Tbar}}\Gsf$-torsor $\Vscr_{\AA^{\Tbar}}'^{\rm an}$ on $S$ by 
 \[
  \Vscr_{\AA^{\Tbar}}'^{\rm an}\colon \Spec R \mapsto \Vscr'(\AA^{\Tbar}(R)).
 \]
 And given a family $(\Vcal_t)_{t \in T}$ of $L^+\Gscr_t$-torsors over $S$ we define a $L_{\AA^{\Tbar}}\Gsf$-torsor 
 \[\Lcal_{\AA^{\Tbar}}((\Vcal_t)_{t\in T}) \coloneqq L_{\AA^{\Tbar}}\Gsf \times^{\prod_{t\in T} L^+\Gscr_t} \prod_{t\in T} \Vcal_t.\] 
Let $\Bun_{\Gscr,T}(S)$ be the groupoid of triples $(\Vscr',(\Vcal_t)_{t \in T},\varphi)$ where $\Vscr' \in \Bun_{{\Gscr}|_{C \setminus T}} (S)$, each $\Vcal_t$ is an $L^+\Gscr_t$-torsor and $\varphi\colon \Vscr_{\AA^{\Tbar}}'^{\rm an} \isom \Lcal_{\AA^{\Tbar}}((\Vcal_t)_{t\in T})$ is an isomorphism.  
 One easily checks that if $\Vscr'$ and $(\Vcal_t)_{t\in T}$ are the restrictions of a common $\Gscr$-bundle $\Vscr \in \Bun_\Gscr(S)$, then there exists a canonical isomorphism $\varphi_\Vscr\colon \Vscr_{\AA^{\Tbar}}'^{\rm an} \isom \Lcal_{\AA^{\Tbar}}((\Vcal_t)_{t\in T})$, thus we get a natural functor
 \[\res_T\colon \Bun_\Gscr(S) \to \Bun_{\Gscr,T}(S).\]

 When $T$ is finite, then we have $\AA^{\bar T}=\prod_{t\in T}\Fsf_t$ and $L_{\AA^{\bar T}\Gsf} = \prod_{t\in T}L\Gsf_t$ with $\Gsf_t \coloneqq \Gsf_{\Fsf_t}$. Therefore, given $(\Vscr',(\Vcal_t)_{t \in T},\varphi)\in\Bun_{\Gscr,T}(S)$ we have 
\begin{equation}\label{eq-fin-BL-descent-datum}
    \Vscr'^{\rm an}_{\AA^{\bar T}} = \prod_{t\in T}\Vscr'^{\rm an}_{t}\quad\textnormal{and}\quad\varphi = (\varphi_t)_{t\in T}\colon \prod_{t\in T}\Vscr'^{\rm an}_{t}\to \prod_{t\in T}\Lcal\Vcal_t,
\end{equation}
where $\Vscr'^{\rm an}_{t}$ is the $L\Gsf_t$-torsor restricting $\Vscr'$ and  $\varphi_t\colon \Vscr'^{\rm an}_{t} \cong \Lcal\Vcal_t$ is an isomorphism of $L\Gsf_t$-torsors. In particular, the objects in $\Bun_{\Gscr,T}(S)$ are classical Beauville--Laszlo descent data and it is not hard to deduce from Cor.~\ref{cor-lemma-5.1} that $\res_T$ is an equivalence of categories.

If $T$ is not finite, then the functor $\res_T$ is not necessarily an equivalence but it satisfies the following weaker property.
 
 \begin{prop} \label{prop-adelic-BL}
  Let $S$ be an $\FF_q$-scheme and let $T \subset |C|$ as in \S\ref{ssect-infinite-level-structure}. Then the functor $\res_T\colon \Bun_\Gscr(S) \to \Bun_{\Gscr,T}(S)$ is fully faithful. If $(\Vscr',(\Vcal_t)_{t\in T},\varphi) \in \Bun_{\Gscr,T}(S)$ is such that $\Vcal_t$ is trivial for all $t\in T$, then there exists a $\Vscr \in \Bun_{\Gscr}(S)$ such that $\res_T(\Vscr) = (\Vscr',(\Vcal_t)_{t\in T},\varphi)$.
 \end{prop}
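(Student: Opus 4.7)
My plan is to reduce the infinite-$T$ statement to iterated applications of the single-point Beauville--Laszlo descent. The key geometric observation is that for each $t \in T$, the set $X_t \coloneqq (C \setminus T) \cup \{t\}$ is a scheme (a semi-local Dedekind scheme together with its generic point), and $\{t\}$ is still an effective Cartier divisor in $X_t$; hence Corollary~\ref{cor-BL-for-torsors} (of which Corollary~\ref{cor-lemma-5.1} is a special case) applies to $X_t \times S$ with divisor $\{t\} \times S$. This lets me handle one point of $T$ at a time while remaining in the category of schemes.

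For \emph{faithfulness} of $\res_T$, I will use that $(C \setminus T) \times S$ is schematically dense in $C \times S$—because $C \setminus T$ contains the generic point of $C$—and that $\underline{\Isom}(\Vscr_1, \Vscr_2) \to C \times S$ is affine, hence separated. Two sections of a separated $C \times S$-scheme that agree on a schematically dense subset must coincide globally, giving the faithfulness of the restriction of isomorphisms.

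For \emph{fullness}, given an isomorphism $\tilde f = (f', (f_t)_{t \in T})$ between $\res_T(\Vscr_1)$ and $\res_T(\Vscr_2)$, I will, for each $t \in T$, feed the pair $(f', f_t)$ as Beauville--Laszlo descent data on $X_t \times S$ into Corollary~\ref{cor-BL-for-torsors}, producing an isomorphism $f_{X_t} \colon \Vscr_1|_{X_t \times S} \isom \Vscr_2|_{X_t \times S}$. By the faithfulness established above, applied on each pairwise overlap $X_t \cap X_{t'} = C \setminus T$, the family $\{f_{X_t}\}_{t \in T}$ is compatible. A flat descent argument along the surjective (on points), flat morphism $\coprod_{t \in T} X_t \to C$ then glues them into a global isomorphism $f \colon \Vscr_1 \isom \Vscr_2$. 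The \emph{existence} statement when all $\Vcal_t$ are trivial proceeds identically: the triviality hypothesis decouples the restricted-product isomorphism $\varphi$ into individual local data $(\Vscr', \Vcal_t, \varphi_t)$ at each $t$, which is exactly the input needed for Corollary~\ref{cor-BL-for-torsors} on $X_t \times S$. The resulting bundles $\Vscr_{X_t}$ restrict to $\Vscr'$ on the common overlap and glue to a $\Vscr \in \Bun_\Gscr(S)$ by the same descent argument, with $\res_T(\Vscr)$ recovering the prescribed data by construction.

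The \emph{main obstacle} will be the descent/glueing step: the collection $\{X_t\}_{t \in T}$ is not a Zariski open cover of $C$ when $T$ is infinite, since the complement $T \setminus \{t\}$ of $X_t$ need not be closed in $C$. Thus classical Zariski descent is not directly applicable, and one must invoke a flat descent for affine schemes along the flat, jointly surjective map $\coprod_t X_t \to C$. This can be carried out either by reducing to quasi-compact affine opens $U \subset C$ where the relevant portion of $T$ becomes manageable via the semi-local structure of $U \cap (C \setminus T)$, or by invoking $v$-descent in the sense of \cite{BhattScholze:AffGr}, which handles the affine sheaf $\underline{\Isom}$ cleanly.
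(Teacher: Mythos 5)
Your plan founders at the glueing step, which you flag as the "main obstacle" but then wave away too quickly. When $T$ is infinite, the family $\{X_t \to C\}_{t\in T}$ is not merely a non-Zariski cover: the morphism $\coprod_t X_t \to C$ is flat and surjective but \emph{not quasi-compact}, and it cannot be refined to a quasi-compact flat cover, since each $X_t = C \setminus (T\setminus\{t\})$ is a non-quasi-compact pro-open subscheme (a filtered limit of open immersions). Hence fpqc descent does not apply. The appeal to ``$v$-descent in the sense of \cite{BhattScholze:AffGr}'' is misdirected: that reference concerns $v$-descent of vector bundles on \emph{perfect} $\FF_p$-schemes in the Witt-vector affine Grassmannian setting, which has nothing to do with the non-perfect relative curve $C\times S$; and even granting some $v$-descent theorem for $G$-torsors here (which would at minimum require Noetherianness of $C\times S$, false for general $S$), you would still have to verify the cocycle condition and, crucially, \emph{effectivity} of the descent, none of which is addressed. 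As written, the assembly of a global $f$ (or $\Vscr$) from the local pieces $f_{X_t}$ (or $\Vscr_{X_t}$) is unsupported, and this is the heart of the statement when $T$ is infinite.

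The paper sidesteps all of this by exploiting finite presentation of $\Gscr$-torsors. Since $(C\setminus T)_S = \varprojlim_{T_0\subset T \text{ finite}} (C\setminus T_0)_S$, any isomorphism $f'$ over $(C\setminus T)_S$ automatically spreads out to an isomorphism $\tilde f$ over $(C\setminus T_0)_S$ for some \emph{finite} $T_0$, and similarly any $\Gscr$-torsor on $(C\setminus T)_S$ extends to $(C\setminus T_0)_S$ because $\Bun_\Gscr$ is locally of finite presentation. One then checks the forced compatibility with the local data at $t\notin T_0$ and invokes the already-established finite case (Corollary~\ref{cor-lemma-5.1}) finitely many times. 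This "spread out to a cofinite open" step is exactly what your plan lacks; it converts the problem to a \emph{finite} iterated Beauville--Laszlo descent rather than an infinitary glueing. Your idea of running single-point Beauville--Laszlo on $X_t$ for each $t$ is locally sound, but without the finite-presentation reduction there is no valid mechanism to reassemble infinitely many such pieces into a torsor on $C_S$.
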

 \begin{proof}
%
We first show that $\res_T$ is fully faithful. It is faithful since $\Vscr \mapsto \restr{\Vscr}{C \setminus T}$ is. To see that it is full, choose $\Vscr,\Wscr \in\Bun_\Gscr(S)$ and let $(f',(f_t)_{t\in T})\colon \res_T(\Vscr) \to \res_T(\Wscr)$ be an isomorphism.  We write $\res_T(\Vscr) = (\Vscr',(\Vcal_t)_{t\in T},\varphi_\Vcal)$ and $ \res_T(\Wscr) = (\Wscr',(\Wcal_t)_{t\in T},\varphi_\Wcal)$. Since $\Vscr$ and $\Wscr$ are of finite presentation, we have
  \begin{align*}
   \underline\Hom(\Vscr,\Wscr)(C\setminus T) &= \underline\Hom(\Vscr,\Wscr)(\varprojlim_{T_0 \subset T \textnormal{ finite}} C \setminus T_0) \\ 
   &= \varinjlim_{T_0 \subset T \textnormal{ finite}} \underline\Hom(\Vscr,\Wscr)(C \setminus T_0).
  \end{align*}
  In other words, there exists a finite set $T_0 \subset T$ such that $f'$ extends to an isomorphism $\tilde{f}\colon \restr{\Vscr}{(C\setminus T_0)_S} \isom \restr{\Wscr}{(C\setminus T_0)_S}$. In order to show that $\res_{T}(\tilde{f}) = (f',(f_t)_{t\in T\setminus T_0})$, 
  we consider the following commutative diagram of the components at $t\in T\setminus T_0$ of $\varphi_{\Vscr}$, $\varphi_{\Wscr}$ and $(f',(f_t)_{t\in T})$:
  \begin{center}
   \begin{tikzcd}
    \Vscr_{t}'^{\rm an} \ar{r}{\varphi_{\Vscr,t}} \ar{d}[swap]{f_{t}'^{\rm an}} &\Lcal\Vcal_t \ar{d}{\Lcal(f_t)} \\
    \Wscr_{t}'^{\rm an} \ar{r}{\varphi_{\Wscr,t}} &\Lcal\Wcal_t 
   \end{tikzcd}.
  \end{center}
  Hence $f_t$'s are already uniquely determined by $f'$. Since we have $\restr{\ftilde}{(C\setminus T)_S} = f'$ by construction, it thus follows that $\res_{T}(\tilde{f}) = (f',(f_t)_{t\in T\setminus T_0})$. Since the proposition is true for $T_0$ being a finite subset of $|C|$ by Cor.~\ref{cor-lemma-5.1}, there exists a morphism $f\colon \Vscr\to\Wscr$ such that  $\res_{T_0}(f) =(\ftilde,(f_t)_{t \in T_0})$ and hence $\res_T(f) = (f',(f_t)_{t \in T})$.

  It remains to show that if there exists a trivialisation $\epsilon_t\colon \Vcal_t \isom (L^+\Gscr_t)_S$ for each $t\in T$ then $(\Vscr',(\Vcal_t)_{t\in T},\varphi) \in \Bun_{\Gscr,T}(S)$ lies in the essential image of $\res_T$. For this we assume without loss of generality that $S= \Spec R$ is affine. First note that $\Vscr'$ extends to a $\Gscr$-torsor $\Wscr$ on $C_R$. Indeed, since the stack of $\Gscr$-torsors is locally finitely presented, it extends to a torsor $\Wscr'$ over an open subset $(C\setminus T_0)_R$ with $T_0\subset T$. By Cor.~\ref{cor-lemma-5.1}, there exists a $\Gscr$-torsor on $C_R$ \[\Wscr \coloneqq \res_{T_0}^{-1}(\Wscr',(\Vcal_t)_{t \in T_0}, (\varphi_t)_{t\in T_0})\]  extending $\Vscr'$ as desired. 
  
 Now, observe that we have the following $\Gsf(\AA^{\Tbar}(R))$-equivariant bijection
  \[\xymatrix@1{
	\Wscr(\AA^{\Tbar}(R)) \cong \Vscr'(\AA^{\Tbar}(R)) \ar[r]^-{\cong}_-{\varphi} & \Lcal_{\AA^{\Tbar}}((\Vcal_t)_{t\in T}) (R) \ar[r]^-{\cong}_-{(\epsilon_t)}& (L_{\AA^{\Tbar}}\Gsf)(R) = \Gsf(\AA^{\Tbar}(R))}.
  \]
  In particular, the set $\Wscr(\AA^{\Tbar}( R ))$ is nonempty and the above bijection induces an isomorphism $\varphi^{\rm alg}\colon \Wscr_{\AA^{\Tbar}(R)} \isom \Gsf_{\AA^{\Tbar}(R)}$. Since $\Wscr$ and $\Gsf$ are of finite presentation, there exists a finite subset $T'\subset T$ such that for any $t\in T\setminus T'$ the isomorphism $\Lcal\Wscr_t\isom (L\Gsf_t)_R$ induced by $\varphi^{\rm alg}$ restricts to $\Wscr_t\isom (L^+\Gsf_t)_R$, and thus we have $\res_{T\setminus T'}(\restr{\Wscr}{C\setminus T'}) \cong (\Vcal',(\Vcal_t)_{t\in T\setminus T'},\restr{\varphi}{\AA^{T' \cup\Tbar}})$. As $T'$ is finite, there exists $\Vscr \in \Bun_\Gscr(R)$ such that $\res_{T'}(\Vscr) \cong (\restr{\Wscr}{C\setminus T'},(\Vcal_t)_{t\in T'}, (\varphi_t)_{t \in T'})$. Then we have $\res_T(\Vscr) \cong (\Vscr',(\Vcal_t)_{t\in T},\varphi)$ as desired.
 \end{proof}

 \subsection{An adelic description of global $\Gscr$-shtukas} \label{ssect-local-global}
 We now apply the above considerations to global $\Gscr$-shtukas. By Cor.~\ref{cor-lemma-5.1}, the functor
 \[
   \underline{\Vscr}_\bullet \mapsto (\restr{\underline{\Vscr}_\bullet}{C\setminus\{x\}},\loc{\underline{\Vscr}_\bullet}{x},can.)
 \] 
 defines an equivalence from the groupoid of global $\Gscr$-shtukas over a scheme $S$ to the groupoid of triples ($\underline\Vscr'_\bullet,\Vcal_\bullet,\varphi)$ where $\underline\Vscr'_\bullet$ is a global $\Gscr$-shtuka with $C$ replaced by $C\setminus\{x\}$, $\Vcal_\bullet$ is an ``isogeny chain'' of local $ \Res_{O_x/\f\pot{\unif_x}} \Gscr_x$-shtukas defined analogously to \eqref{eq-loc-bullet} and $\varphi: \underline{\Vscr'}_{\bullet,x}^{\rm an} \isom \Lcal\underline{\Vcal}_\bullet$ is an isomorphism of a chain of isoshtukas. In particular, any compatible family of quasi-isogenies of local $\Res_{O_x/\f\pot{\unif_x}} \Gscr_x$-shtukas $\rho\colon \loc{\underline{\Vscr}_\bullet}{x} \to \underline\Vcal'_\bullet$ can be extended uniquely to a quasi-isogeny $\underline{\Vscr}_\bullet \to \underline\Vscr_\bullet'$ of global $\Gscr$-shtukas inducing identity away from $x$. (See also \cite[Prop.~5.7]{ArastehRad-Hartl:LocGlShtuka}.)
 
 Similarly, for any $T \subset |C|$ as in \S\ref{ssect-infinite-level-structure}, Prop.~\ref{prop-adelic-BL} yields a fully faithful functor 
  \[
   \underline{\Vscr}_\bullet \mapsto (\restr{\underline{\Vscr}_\bullet}{C\setminus T},(\loc{\underline{\Vscr}_\bullet}{x})_{x\in T},can.),
 \] 
 which induces an equivalence of categories if $T$ is finite. In general, its essential image contains those triples such that the  underlying $L\Res_{O_x/\f\pot{\unif_x}}\Gscr_x$-torsors of $\loc{\underline{\Vscr}_\bullet}{x}$ are trivial. As a  consequence, we obtain the following moduli description of $\Xscr_{\Gscr,\Ibf,T}$ introduced in Def.~\ref{df-infinite-level-structure}.
 
 \begin{prop} \label{prop-infinite-level}
Let $T \subset |C|$ be as in \S\ref{ssect-infinite-level-structure}. We define $\Xscr_{{\Gscr}|_{C\setminus T},\Ibf}^ Z $ the stack fibred in groupoids over $\f$ whose $S$-valued points are tuples $((\widetilde x_i)_{i\in I}, (\Vscr_j')_{j=0}^k, (\phi'_j)_{j=0}^k, \eta_0)$ given by
 \begin{bulletlist}
  \item $\widetilde x_i \in \pi_i^{-1}(C\setminus T) (S)$ for all $i \in I$, so $(\widetilde x_i)_{i\in I}\in (\widetilde C^I_Z\setminus\bpi^{-1}(T))(S)$;
  \item $\Vscr_j'$ is a $\restr{\Gscr}{C \setminus T}$-bundle on $(C \setminus T)_S$ for $j=0,\dotsc,k$;
  \item $\phi_j'\colon \restr{\Vscr'_{j}}{(C\setminus T)_S \setminus \bigcup_{i \in I_j} \Gamma_{\pi_i(\widetilde x_i)}} \isom \restr{\Vscr'_{j-1}}{(C \setminus T)_S \setminus \bigcup_{i \in I_j} \Gamma_{\pi_i(\widetilde x_i)}}$ for all $j=1,\dots,k$ such that  the restriction of $((\Vscr_j')_{j=0}^k, (\phi'_j)_{j=1}^k)$ to the formal completion along $\bigcup_{i\in I}\Gamma_{\pi_i(\widetilde x_i)}$ together with $(\widetilde x_i)_{i\in I}$ satisfies the same condition defining the closed substack $\Hecke^ Z _{\Gscr,\Ibf}\subset \Hecke^{ Z }_{\Gscr,\Ibf}\times_{C^I}\widetilde C^I_Z$, explained in Rmk.~\ref{rmk:bounds-via-BL};
  \item $\phi_0'\colon \sigma^\ast\Vscr'_0 \isom \Vscr'_k$;
  \item $\eta_{0}\colon (L_{\AA^{\Tbar}}\Gsf,\sigma) \isom ((\Vscr'_0)_{\AA^{\Tbar}}^{\rm an},(\tau'_0)_{\AA^{\Tbar}}^{\rm an})$, where $\tau'_0 = \phi_1'\circ\cdots\circ\phi_k'\circ\phi_0'$.
  \end{bulletlist}
  Then $\Xscr_{{\Gscr}|_{C\setminus T},\Ibf}^ Z $ and $\Xscr_{{\Gscr},\Ibf,T}^ Z $ are canonically isomorphic. In particular, $ \Xscr_{{\Gscr},\Ibf,T}^ Z $ does not depend on the choice of the integral model $\Gscr$ outside $C \setminus T$.
\end{prop}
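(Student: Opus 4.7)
The plan is to construct a pair of mutually quasi-inverse functors between $\Xscr^ Z _{\Gscr|_{C\setminus T},\Ibf}$ and $\Xscr^ Z _{\Gscr,\Ibf,T}$ using Proposition~\ref{prop-adelic-BL} (adelic Beauville--Laszlo) as the main engine, following the local-global pattern outlined in \S\ref{ssect-local-global}.

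For the forward direction, given $(\underline\Vscr_\bullet,(\eta_t)_{t\in T})\in\Xscr^ Z _{\Gscr,\Ibf,T}(S)$, I would restrict everything to $(C\setminus T)_S$ to obtain the tuple $((\widetilde x_i),(\Vscr'_j)_{j=0}^k,(\phi'_j)_{j=0}^k)$, and define $\eta_0$ as the composition
\[
(L_{\AA^{\Tbar}}\Gsf,\sigma) \xrightarrow{\;\Lcal_{\AA^{\Tbar}}((\eta_t)_{t\in T})\;} \Lcal_{\AA^{\Tbar}}\bigl((\loc{\Vscr_0}{t})_{t\in T}\bigr) \xrightarrow{\;\varphi_{\Vscr_0}^{-1}\;} (\Vscr'^{\rm an}_{0,\AA^{\Tbar}},(\tau_{0,t})),
\]
where $\varphi_{\Vscr_0}$ is the canonical comparison isomorphism preceding the statement of Proposition~\ref{prop-adelic-BL}. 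Frobenius compatibility of $\eta_0$ follows from the fact that each $\eta_t$ intertwines $\sigma$ with $\tau_{0,t}$.

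For the backward direction, I would view the trivialisation $\eta_0$ in the data $((\widetilde x_i),(\Vscr'_j)_j,(\phi'_j)_j,\eta_0)$ as the adelic Beauville--Laszlo descent datum $(\Vscr'_0,((L^+\Res_{O_t/\f\pot{\unif_t}}\Gscr_t)_S)_{t\in T},\eta_0^{-1})$. Since the prescribed local components are trivial $L^+$-torsors, the essential-image assertion of Proposition~\ref{prop-adelic-BL} produces a $\Gscr$-torsor $\Vscr_0$ on $C_S$ extending $\Vscr'_0$ and canonically identifying $\loc{\Vscr_0}{t}\cong (L^+\Res_{O_t/\f\pot{\unif_t}}\Gscr_t)_S$, which tautologically gives the required trivialisations $\eta_t$. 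The remaining $\Vscr_j$ are built inductively: because the legs lie in $(\widetilde C\setminus\pi^{-1}(T))^I$, each $\phi'_j$ (including $\phi'_0$ on $\sigma^\ast\Vscr'_k$) is defined on the entire punctured formal neighbourhood of $T\times S$, so it transports the integral local structure just constructed on $\Vscr_{j-1,\AA^{\Tbar}}'^{\rm an}$ to one on $\Vscr_{j,\AA^{\Tbar}}'^{\rm an}$; another application of Proposition~\ref{prop-adelic-BL} then yields $\Vscr_j$ on $C_S$, and the full faithfulness of $\res_T$ ensures that $\phi'_j$ extends uniquely to $\phi_j$. Boundedness by $ Z $ is preserved since by Definition~\ref{def-bounded-shtukas} it only constrains the Hecke datum in a neighbourhood of the legs, which lies entirely over $C\setminus T$.

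The two functors are mutually quasi-inverse: starting from $(\underline\Vscr_\bullet,(\eta_t))$, restricting and then re-extending via Proposition~\ref{prop-adelic-BL} gives back $\underline\Vscr_\bullet$ uniquely by the faithfulness of $\res_T$; the other composition is the identity by construction. The final claim, independence of the integral model $\Gscr$ outside $C\setminus T$, is then immediate because $\Xscr^ Z _{\Gscr|_{C\setminus T},\Ibf}$ is manifestly defined only in terms of $\Gscr|_{C\setminus T}$ (and the adelic loop group $L_{\AA^{\Tbar}}\Gsf$, which depends only on the generic fibre $\Gsf$). The main technical obstacle is to verify that the Frobenius-twist modification $\phi'_0$ extends consistently: concretely, one must check that the local integral structure on $\sigma^\ast\Vscr_k$ obtained by iterating $\phi'_1,\dots,\phi'_k$ matches the one on $\Vscr_0$ coming from $\eta_0$, which in turn reduces to the fact that $\eta_0$ was required to intertwine $\sigma$ with the composite $(\tau_{0,t})=\phi_0\circ\sigma^\ast\phi_k\circ\cdots\circ\sigma^\ast\phi_1$ localised at $t$.
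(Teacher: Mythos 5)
Your proof is correct in substance and follows the same essential mechanism (adelic Beauville--Laszlo descent via Proposition~\ref{prop-adelic-BL}), but it unrolls a step that the paper delegates to \S\ref{ssect-local-global}. The paper's own proof is very short: it first cites \S\ref{ssect-local-global} to identify $\Xscr_{\Gscr,\Ibf,T}^Z$ with a moduli of tuples that include, for each $t\in T$, a local shtuka $\underline\Vcal_{0,t}$ with trivialisation $\eta_t$ and a gluing isomorphism $\psi_t$, and then observes that the triple $(\underline\Vcal_{0,t},\eta_t,\psi_t)$ is tautologically equivalent to the single datum $\eta_{0,t}$ (one direction sets $\eta_0 = \Lcal_{\AA^{\Tbar}}((\eta_t))\circ\psi^{-1}$, the other recovers $\Vcal_{0,t}$ and $\eta_t$ from the image of the positive loop group under $\eta_{0,t}$). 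Your proof bypasses this intermediate moduli description and instead runs an explicit induction on the bundles $\Vscr_j$, which is longer but makes the reconstruction of the shtuka structure transparent.

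One point to watch: when you extend $\phi'_j$ to $\phi_j$ via ``full faithfulness of $\res_T$'', note that Proposition~\ref{prop-adelic-BL} as stated is about $\Bun_\Gscr(S)$, i.e.\ $\Gscr$-torsors over all of $C_S$, whereas the domain of $\phi_j$ is $C_S\setminus\bigcup_{i\in I_j}\Gamma_{x_i}$. You need the analogous descent statement for bundles over this open (which contains the formal neighbourhood of $T_S$ since the legs avoid $T$); this is straightforward but should be said, and it is precisely the point the paper handles once and for all in \S\ref{ssect-local-global} by formulating the adelic descent for entire global $\Gscr$-shtukas rather than individual bundles. Your identification of the Frobenius-compatibility of $\eta_0$ with the composite $\tau_{0,t}$ as the crucial check for consistency of $\phi'_0$ is correct and is indeed the heart of the matter.
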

\begin{proof}
 It follows directly from \S\ref{ssect-local-global} that $\Xscr_{{\Gscr},\Ibf,T}^ Z $ is canonically isomorphic to the stack whose $S$-valued points are tuples  \[((\widetilde x_i)_{i\in I}, (\Vscr_j')_{j=0}^k, (\phi'_j)_{j=0}^k, (\underline\Vcal_{0,t})_{t \in T}, (\eta_t)_{t\in T}, (\psi_t)_{t \in T})\] where
 \begin{bulletlist}
  \item $(\widetilde x_i)$, $(\Vscr_j')$, $(\phi_j')$ are as above,
  \item $\underline\Vcal_{0,t}$ is an \'etale local $\Res_{O_t/\f\pot{\varpi_t}} \Gscr_t$-shtuka together with a trivialisation $\eta_t\colon (\Res_{O_t/\f\pot{\varpi_t}} \Gscr_t,\sigma) \isom \underline \Vcal_{0,t}$ for every $t \in T$,
  \item $\psi\colon ((\Vscr'_0)_{\AA^{\Tbar}}^{\rm an},(\tau'_0)_{\AA^{\Tbar}}^{\rm an}) \isom \Lcal_{\AA^{\Tbar}}(\underline\Vcal_{0,t})$ is an isomorphism of $\Gsf_{\AA^{\Tbar}}$-isoshtukas.
 \end{bulletlist}
 It remains to show that the datum $(\underline\Vcal_{0,t},\eta_t,\psi_t)$ is equivalent to the datum $\eta_{0,t}$, where $\eta_{0,t}\colon (L\Res_{\Fsf_x/\FF_x\rpot{\unif_t}}\Gsf_t,\sigma) \isom ((\Vscr'_0)_{t}^{\rm an},(\tau'_0)_{t}^{\rm an})$ is the component of $\eta_0$ at $t$. Obviously, we can obtain $\eta_{0}$ from the triple by defining $\eta_{0} \coloneqq  \psi^{-1}\circ\Lcal_{\AA^{\Tbar}}((\eta_t)_{t\in T}) $. Conversely, given $\eta_{0,t}$ we define the triple by $\underline\Vcal_{0,t} \coloneqq (L^+\Res_{O_t,\f\pot{\varpi_t}}\Gscr_t,\sigma)$, $\psi_t =\eta_{0,t}^{-1}$ and $\eta_t \coloneqq \id$.
\end{proof}

\begin{rmk}\label{rmk-infinite-level}
 We obtain the following interesting alternative description of (finite) level structure. Assume that $T$ is finite and $\Gscr'$ another smooth affine group scheme over $C$ equipped with an isomorphism $\Gscr'_{C\setminus T} \cong \Gscr$ that identifies $\Gscr'(\prod_{x\in C} O_x)$ with a subgroup $\Ksf' \subset \Ksf$. Then  $\Xscr_{\Gscr,\Ibf,\Ksf'}^ Z $ is the restriction of $\Xscr_{\Gscr', \Ibf}^ Z $ to $C \setminus T$.
\end{rmk}

 \subsection{Igusa varieties over moduli stacks of global $\Gscr$-shtukas} \label{ssect-infinite-level-igusa}
 
  We fix a tuple of geometric points $\widetilde\xbf = (\widetilde x_i)_{i\in I} \in \widetilde C^I_Z(\cl\FF_q)$, and let $x_i\in |C|$ denote the underlying closed point of $\pi_i(\widetilde x_i)\in C(\cl\FF_q)$. \emph{We  allow $x_i =  x_{i'}$ only when we have $i,i'\in I_j$ for a common $I_j$.} Let $\xbf$ denote the set $\{x_1,\dotsc,x_m\} \subset |C|$ of the underlying closed points of $\{\pi_i(\widetilde x_i)\}_{i\in I}$ (removing duplicate points). We fix an element 
  \[\bbf = (b_x)_{x\in\xbf} \in \prod_{x\in\xbf} \Gsf(\Fsf_x \hat\otimes_{\FF_q} \cl\FF_q).\] 
  We may (and do) choose each $b_x$ to lie in $\Gsf(\Fsf_{x}\otimes_{\FF_q}\kappa)$ for some common finite subextension $\kappa$ of $\cl\FF_q$. By increasing $\kappa$ if necessary, we assume that each $\widetilde x_i$ is defined over $\kappa$. If the underlying closed points of $\{\pi_i(\widetilde x_i)\}_{i\in I}$ are  pairwise \revise{distinct, then} we may regard each $b_{x_i}$ as an element in $\Gsf(\breve\Fsf_{x_i})$ via the projection $\Gsf(\Fsf_{x_i}\otimes_{\FF_q}\kappa)\to \Gsf(\breve\Fsf_{x_i})$ corresponding to $\pi_i(\widetilde x_i)$; \emph{cf.} Rmk.~\ref{rmk-gl-loc} and \eqref{eq-gl-loc-b}.
  
   For any set $T \subseteq |C| \setminus \xbf$ as in \S\ref{ssect-infinite-level-structure}, we consider
  \begin{equation}\label{eq-Igusa}
   \Ig_{\Gscr,\Ibf,\widetilde\xbf,T}^{ Z ,\bbf} \coloneqq \Xscr_{{\Gscr},\Ibf,T,\widetilde\xbf}^ Z  \times_{\Xscr^ Z _{\Gscr,\Ibf,\widetilde\xbf}} \prod_{x\in\xbf} \Ig^{b_x}_{\underline\Vscr_0^{\rm univ}[x^\infty]}, 
   \end{equation}
  where $\Xscr_{{\Gscr},\Ibf,T,\widetilde\xbf}^ Z $ and $\Xscr^ Z _{\Gscr,\Ibf,\widetilde\xbf}$ are respectively the fibres of $\Xscr_{{\Gscr},\Ibf,T}^ Z $ and $\Xscr^ Z _{\Gscr,\Ibf}$ over $\widetilde\xbf\in \widetilde C^I_Z(\cl\FF_q)$, and $\underline\Vscr_0^{\rm univ}[x^\infty]$ is the localisation of $\underline\Vscr_\bullet^{\rm univ}$ at $x$ as in \eqref{eq-loc-j}.  Most importantly, if $T = |C| \setminus \xbf$ then we simply write $\Ig_{\Gscr,\Ibf,\widetilde\xbf}^{ Z ,\bbf}$ for $\Ig_{\Gscr,\Ibf,\widetilde\xbf,|C|\setminus\xbf}^{ Z ,\bbf}$. 
  
 By Prop.~\ref{prop-schematic}, $\Ig_{\Gscr,\Ibf,\widetilde\xbf,T}^{ Z ,\bbf}$ is representable by a (possibly empty) $\cl\FF_q$-scheme for any non-empty $T$. Note also that $\Ig_{\Gscr,\Ibf,\widetilde\xbf,T}^{ Z ,\bbf}$ can be defined over some finite field~$\kappa$ if we have $b_x\in\Gsf(\Fsf_x\otimes_{\FF_q}\kappa)$ and $\widetilde x_i$ is defined over $\kappa$ for each $i\in I$.

Now we are ready to obtain the basic properties of $\Ig_{\Gscr,\Ibf,\widetilde\xbf}^{ Z ,\bbf}$.

\begin{thm}\label{th-infinite-level-igusa-var}
\begin{subenv}
    \item\label{th-infinite-level-igusa-var:proet} There exists a reduced locally closed substack $C^{ Z ,\bbf}_{\Gscr,\Ibf,\widetilde\xbf}\subset \Xscr^ Z _{\Gscr,\Ibf,\widetilde\xbf}$ whose geometric points consist of those that can be lifted to $\Ig_{\Gscr,\Ibf,\widetilde\xbf}^{ Z ,\bbf}$. Moreover, the natural projection $\Ig_{\Gscr,\Ibf,\widetilde\xbf}^{ Z ,\bbf}\to \Xscr_{\Gscr,\Ibf,\widetilde\xbf}^{ Z }$ factors through $C^{ Z ,\bbf}_{\Gscr,\Ibf,\widetilde\xbf}$, and the induced map on the perfections
    \[\Ig_{\Gscr,\Ibf,\widetilde\xbf}^{ Z ,\bbf,\perf}\to C^{ Z ,\bbf,\perf}_{\Gscr,\Ibf,\widetilde\xbf}\]
    is a pro-\'etale Galois cover if $\Ig_{\Gscr,\Ibf,\widetilde\xbf}^{ Z ,\bbf}$ is non-empty.
    \item\label{th-infinite-level-igusa-var:nonempty} The scheme $\Ig_{\Gscr,\Ibf,\widetilde\xbf}^{ Z ,\bbf}$ is non-empty if and only if $\Ig_{\Gscr,\Ibf,\widetilde\xbf}^{ Z ,\bbf}(\cl\FF_q)$ is non-empty. 
    \item\label{th-infinite-level-igusa-var:moduli} Suppose that $\Ig_{\Gscr,\widetilde\xbf}^{ Z ,\bbf}(\cl\FF_q)$ is non-empty. Then the scheme $\Ig_{\Gscr,\Ibf,\widetilde\xbf}^{ Z ,\bbf}$ represents the functor which associates to an $\cl\FF_q$-scheme $S$ the set of isomorphism classes of tuples $(\Vscr_0', \tau'_0, \eta^{\xbf}_{0},(\eta_{0,x})_{x\in\xbf})$ where 
 \begin{bulletlist}
  \item $\Vscr_0'$ is a $\Gsf$-bundle on $\Spec \Fsf \times_{\Spec \FF_q} S$ for  $j=0,\dotsc,k$.
  \item $\tau_0'\colon \sigma^\ast\Vscr'_0 \isom \Vscr'_0$ is an isomorphism of $\Gsf$-bundles,
  \item $\eta_{0}^\xbf\colon (L_{\AA^{\xbf}}\Gsf,\sigma) \isom ((\Vscr_0')_{\AA^\xbf}^{\rm an},(\tau_0')_{\AA^\xbf}^{\rm an})$ is an isomorphism of $L_{\AA^{\xbf}}\Gsf$-isoshtukas,
  \item $\eta_{0,x}\colon (L\Res_{\Fsf_{x}/\f\rpot{\unif_{x}}}\Gsf,b_x\sigma) \isom ((\Vscr_0')_{x}^{\rm an},(\tau_0')_{x}^{\rm an})$ is an isomorphism of local $\Res_{\Fsf_{x}/\f\rpot{\unif_{x}}} \Gsf$-isoshtukas for every $x\in\xbf$.
  \end{bulletlist}
  \item\label{th-infinite-level-igusa-var:indep}  
  Suppose that $\Ig_{\Gscr,\Ibf,\widetilde\xbf}^{ Z ,\bbf}(\cl\FF_q)$ is non-empty. Then, up to canonical isomorphism, $\Ig_{\Gscr,\Ibf,\widetilde\xbf}^{ Z ,\bbf}$ depends only on $\Gsf$, $\xbf$ and the $\Gsf(\Fsf_{x}\hat\otimes_{\FF_q}\cl\FF_q)$-$\sigma$-conjugacy class of the $b_x$ for each $x\in\xbf$, but not on the choices of $\Gscr$, $\Ibf$, $\bbf$, $ Z $ and $\widetilde\xbf$ that make $\Ig_{\Gscr,\Ibf,\widetilde\xbf}^{ Z ,\bbf}$ non-empty. 
  
  \reviselong{Furthermore, up to canonical isomorphism, $C^{ Z ,\bbf}_{\Gscr,\Ibf,\widetilde\xbf}$ depends on the same data as $\Ig_{\Gscr,\Ibf,\widetilde\xbf}^{ Z ,\bbf}$, except that it depends on  $\Gscr$ and the $\Gscr(O_{x_i}\hat\otimes_{\FF_q}\cl\FF_q)$-$\sigma$-conjugacy class of $b_x$ for each $x\in\xbf$ in place of $\Gsf$ and the $\Gsf(\Fsf_{x_i}\hat\otimes_{\FF_q}\cl\FF_q)$-$\sigma$-conjugacy class.}
  \item\label{th-infinite-level-igusa-var:bound} Fix $\Gscr$, $\Ibf = (I_j)_{j=1}^k$, $\xbf$ and $\bbf$, and choose a partition $I = \bigsqcup_{x\in \xbf}I_x$ such that $I_x$ is contained in some $I_j$ for each $x\in \xbf$. Then, there exists a bound $Z$ considered in \S\ref{not-BD} and $\widetilde\xbf = (\widetilde x_i)_{i\in I} \in \widetilde C^I_Z(\cl\FF_q)$, such that
  \begin{bulletlist}
      \item $\Ig_{\Gscr,\Ibf,\widetilde\xbf}^{ Z ,\bbf}$ is non-empty, and
      \item $\widetilde x_i$ lies over $x\in\xbf$ if and only if $i\in I_x$.
  \end{bulletlist}
  \end{subenv}
    \end{thm}
\begin{proof} 
    Let $C^{ Z ,\bbf}_{\Gscr,\Ibf,\widetilde\xbf}\subset \Xscr^ Z _{\Gscr,\Ibf,\widetilde\xbf}$ be the reduced intersection of the central leaves of $\underline\Vscr_0^{\rm univ}[x^\infty]$'s with respect to $b_x$'s for $x\in\xbf$. Then claim~(\ref{th-infinite-level-igusa-var:proet}) follows from Prop.~\ref{prop-igusa-cover}(\ref{prop-igusa-cover-cen-leaf}) and Lem.~\ref{lem-igusa-tower}(\ref{lem-igusa-tower-proet}) as the (possibly infinite) direct product of pro-\'etale Galois covers is again pro-\'etale Galois.

    If $\Ig_{\Gscr,\Ibf,\widetilde\xbf}^{ Z ,\bbf}$ is non-empty then so is $C_{\Gscr,\Ibf,\widetilde\xbf}^{ Z ,\bbf}$, which is locally of finite type over $\cl\FF_q$. Thus, from (\ref{th-infinite-level-igusa-var:proet}) we get $\Ig_{\Gscr,\Ibf,\widetilde\xbf}^{ Z ,\bbf}(\cl\FF_q)\ne\emptyset$, which is the non-trivial implication of (\ref{th-infinite-level-igusa-var:nonempty}).
    
	Observe that the natural projection induces isomorphisms on geometric fibres
	\begin{equation}\label{eq:xbf-tilde-to-xbf-bar}
	(\Gr_{\Gscr,\Ibf}\times_{C^I}\widetilde C^I_Z)_{\widetilde\xbf} \isom \Gr_{\Gscr,\Ibf,\overline\xbf}\quad\text{and}\quad
	 (\Xscr_{\Gscr,\Ibf}\times_{C^I}\widetilde C^I_Z)_{\widetilde\xbf} \isom \Xscr_{\Gscr,\Ibf,\overline\xbf},
	\end{equation}
	where $\overline\xbf\coloneqq \bpi(\widetilde\xbf)$. Thus, we shall view the geometric fibre $Z_{\widetilde\xbf}$ of $Z$ at $\widetilde\xbf$ as a closed subscheme of $\Gr_{\Gscr,\Ibf,\overline\xbf}$, and $\Xscr^{ Z }_{\Gscr,\Ibf,\widetilde\xbf}$ as a closed substack of $\Xscr_{\Gscr,\Ibf,\overline\xbf}$.
	
    To show claim (\ref{th-infinite-level-igusa-var:moduli}), we first make $\Gr_{\Gscr,\Ibf,\overline\xbf}$ more explicit as follows. Let $L^+_{\overline\xbf}\Gscr$ denote the fibre of  $L^+_{C^I}\Gscr$ at $\overline\xbf$, where $L^+_{C^I}\Gscr$ is the positive global loop group \cite[Def.~2.1.3]{Bieker:IntModels}. Let $\overline x_i\coloneqq \pi_i(\widetilde x_i)$ denote the $i$th component of $\overline\xbf$. Then we have
    \begin{align}
        \label{eq:infinite-level-igusa-var:loop-gp}L^+_{\overline\xbf}\Gscr(\cl\FF_q) &\cong \prod_{i\in I_{/\sim}}\Gscr(O_{\overline x_i})\subseteq\prod_{x\in\xbf} \Gscr(O_{\overline x}\hat\otimes_{\FF_q}\cl\FF_q)\quad \text{and}\\
        \label{eq:infinite-level-igusa-var:Gr}\Gr_{\Gscr,\Ibf,\overline\xbf}(\cl\FF_q) &\cong \prod_{i\in I_{/\sim}}\Gsf((\breve\Fsf)_{\overline x_i})/\Gscr(O_{\overline x_i})\subseteq\prod_{x\in\xbf}\Gsf(\breve\Fsf_{x_i}\hat\otimes_{\FF_q}\cl\FF_q)/\Gscr(\breve O_{x}\hat\otimes_{\FF_q}\cl\FF_q),
    \end{align}
    where $O_{\overline x_i}$ is the completed local rings $C_{\cl\FF_q}$ at $\overline x_i$ with fraction field $(\breve\Fsf)_{\overline x_i}$, and $x_i\in\xbf$ is the closed point underlying $\overline x_i$. Here, we set $i\sim i'$ for $i,i'\in I$ if and only if $\overline x_i = \overline x_{i'}$ in $C(\cl\FF_q)$. Using
    the isomorphism $\breve O_{x}\hat\otimes_{\FF_q}\cl\FF_q \cong \prod_{\overline x}O_{\overline x}$
    where the product runs through the geometric point $\overline x$ over $x$, the middle terms in \eqref{eq:infinite-level-igusa-var:loop-gp} and \eqref{eq:infinite-level-igusa-var:Gr} can be viewed as direct factors of the right most terms. The isomorphism in \eqref{eq:infinite-level-igusa-var:loop-gp} is clear from the definition \cite[Def.~2.1.3]{Bieker:IntModels}, and the bijection \eqref{eq:infinite-level-igusa-var:Gr} follows from the description of $\Gr_{\Gscr,\Ibf}$ in terms of the Beauville--Laszlo descent lemma as in \cite[Rmk.~2.1.2]{Bieker:IntModels} (see also Rmk.~\ref{rmk:bounds-via-BL}), together with the condition imposed on $\widetilde\xbf$ at the beginning of \S\ref{ssect-infinite-level-igusa}. 
    Under these identifications, the action of $L^+_{\overline\xbf}\Gscr(\cl\FF_q)$ on $\Gr_{\Gscr,\Ibf,\overline\xbf}(\cl\FF_q)$ can be interpreted as the ``left multiplication'' action.

    Now, given $(\underline\Vscr_\bullet,(\eta_t)_{t\notin \xbf},(\eta_x)_{x\in\xbf})\in \Ig_{\Gscr,\widetilde\xbf}^{ Z ,\bbf}(S)$, we obtain a tuple $(\Vscr',\tau_0',\eta_0^\xbf,(\eta_{0,x})_{x\in\xbf})$ as in (\ref{th-infinite-level-igusa-var:moduli}) via the following recipe:
    \begin{bulletlist}
        \item $(\Vscr'_0,\tau'_0)\coloneqq \restr{(\Vscr_0,\tau_0)}{\Spec\Fsf\times_{\Spec\FF_q} S}$;
        \item $\eta^{\xbf}_0 \coloneqq \Lcal_{\AA^\xbf}((\eta_t)_{t\notin\xbf})$;
        \item  $\eta_{0,x}\coloneqq \Lcal\eta_x$ for $x\in\xbf$.
    \end{bulletlist}
    (To define $\eta^{\xbf}_0$ and $(\eta_{0,x})_{x\in\xbf}$, we use the canonical isomorphisms $((\Vscr_0')_{\AA^\xbf}^{\rm an},(\tau_0')_{\AA^\xbf}^{\rm an})\cong \Lcal_{\AA^\xbf}((\loc{\underline\Vscr_0}{t})_{t\notin \xbf})$ and $((\Vscr_0')_{x}^{\rm an},(\tau_0')_{x}^{\rm an})\cong \Lcal\loc{\underline\Vscr_0}{x}$ for $x\in\xbf$, respectively.)
    
    Conversely, given a tuple $(\Vscr_0', \tau_0', \eta^{\xbf}_{0},(\eta_{0,x})_{x\in\xbf})$ over $S$  as in (\ref{th-infinite-level-igusa-var:moduli}) one gets a $\Gscr$-shtuka $\underline\Vscr_\bullet \in \Xscr_{\Gscr,\Ibf,\overline\xbf}(S)$ by repeating the argument of Prop.~\ref{prop-infinite-level}. (To produce $\Vscr_0\in\Bun_\Gscr(S)$, we apply Prop.~\ref{prop-adelic-BL} to the following triple for $T = |C|$
    \begin{equation}\label{eq:infinite-level-igusa-var:adelic-BL}
        (\Vscr_0',(L^+\Res_{O_t/\FF_t\pot{\varpi_t}}\Gscr_t)_{t\in|C|},(\eta_0^{\xbf,-1},(\eta_{0,x}^{-1})_{x\in\xbf}))\in\Bun_{\Gscr,|C|}(S),
    \end{equation}
    and recursively build $\Vscr_j$ and $\phi_j$ so that we have isomorphisms $\eta^\xbf$ and $(\eta_{x})_{x\in\xbf}$.) The resulting $\Gscr$-shtuka $\underline\Vscr_\bullet$ is equipped with the following isomorphism 
\begin{equation}\label{eq:infinite-level-igusa-var:BL-at-xbf}
    \eta_x \colon (L^+\Res_{O_x/\FF_x\pot{\varpi_x}}\Gscr_x, b_x\sigma)\riso \loc{\underline\Vscr_0}{x}
\end{equation}
for each $x\in |C|$, where we set $b_x = 1$ if $x\notin\xbf$. In fact, we obtain $\eta_x$ by restricting $\eta_{0,x}$ (resp. $\eta^\xbf$) to $L^+\Res_{O_x/\FF_x\pot{\varpi_x}}\Gscr_x$ if $x\in\xbf$ (resp. $x\notin \xbf$). 

Now, by restricting the resulting $\Gscr$-shtuka $\underline\Vscr_\bullet$ to the formal neighbourhood $\Dhat\cong\bigsqcup_{i\in I_{/\sim}}(S\hat\times_{\cl\FF_q}\Spf\breve O_{x_i})$ of $D=\bigcup_{i\in I}\Gamma_{\overline x_i}$ and trivialising $\restr{\Vscr_0}{\Dhat}$ via $\hat\epsilon =(\eta_x^{-1})_{x\in\xbf}$, we can associate to $(\underline\Vscr_\bullet,(\eta_x)_{x\in\xbf})$ an $S$-point of $\Gr_{\Gscr,\Ibf,\overline\xbf}$  by \cite[Rmk.~2.1.2]{Bieker:IntModels} (\emph{cf.} Rmk.~\ref{rmk:bounds-via-BL}). 
%
Furthermore, by inspecting $(\eta_x)_{x\in\xbf}$ \eqref{eq:infinite-level-igusa-var:BL-at-xbf}, it follows that this $S$-point actually factors through the following $\cl\FF_q$-point defined by $\bbf$:
\begin{equation}\label{eq:bbf-orbit}
    \big(b_x\cdot\Gscr(\breve O_x\hat\otimes_{\FF_q}\cl\FF_q)\big)_{x\in\xbf} \in \Gr_{\Gscr,\Ibf,\overline\xbf}(\cl\FF_q).
\end{equation}
 (If $\overline x$ is a geometric point over $x\in\xbf$ not lying under any leg $\widetilde x_i$, then the projection $\Gsf(\breve\Fsf_x\hat\otimes_{\FF_q}\cl\FF_q)/\Gscr(\breve O_x\hat\otimes_{\FF_q}\cl\FF_q) \epi \Gsf((\breve\Fsf)_{\overline x})/\Gscr(O_{\overline x})$ sends the coset of $b_x$ to the identity coset. Thus, \eqref{eq:bbf-orbit} defines an $\cl\FF_q$-point of $\Gr_{\Gscr,\Ibf,\overline\xbf}$ via \eqref{eq:infinite-level-igusa-var:Gr}.) 

Now by Rmk.~\ref{rmk:bounds-via-BL} and \eqref{eq:xbf-tilde-to-xbf-bar}, the previous paragraph shows that the $\Gscr$-shtuka $\underline\Vscr_\bullet$ over $S$ constructed from a tuple as in (\ref{th-infinite-level-igusa-var:moduli}) lies in $\Xscr^Z_{\Gscr,\Ibf,\widetilde\xbf}(S)$ if and only if (the $L^+_{\overline\xbf}\Gscr$-orbit of) the point in \eqref{eq:bbf-orbit} is contained in $Z_{\widetilde\xbf}$. Claim (\ref{th-infinite-level-igusa-var:moduli}) now follows, since the non-emptiness of $\Ig^{Z,\bbf}_{\Gscr,\Ibf,\widetilde\xbf}(\cl\FF_q)$ implies that $Z_{\widetilde\xbf}$ contains the point \eqref{eq:bbf-orbit}.
    
    Note that the moduli description of $\Ig^{Z,\bbf}_{\Gscr,\Ibf,\widetilde\xbf}$ given in (\ref{th-infinite-level-igusa-var:moduli}) only depends on $\Gsf$, $\xbf$ and the isomorphism class of the local $\Res_{\Fsf_{x}/\FF_q\rpot{\varpi_{x}}}\Gsf_{x}$-isoshtuka over $\cl\FF_q$ associated to $b_x$ for each $x\in\xbf$. \reviselong{From this, claim~}(\ref{th-infinite-level-igusa-var:indep})\reviselong{ for $\Ig^{Z,\bbf}_{\Gscr,\Ibf,\widetilde\xbf}$ follows.     
    Furthermore,} by the ind-stack structure of $\Xscr_{\Gscr,\Ibf}$ \cite[Thm.~3.15]{ArastehRad-Hartl:UnifStack}, there exists an Arasteh~Rad--Hartl bound $Z\subset \Gr_{\Gscr,\Ibf}$ such that a $\Gscr$-shtuka  $\underline\Vscr_\bullet\in\Xscr_{\Gscr,\Ibf,\overline\xbf}$ constructed from a tuple as in (\ref{th-infinite-level-igusa-var:moduli}) is bounded by $Z$, which proves  (\ref{th-infinite-level-igusa-var:bound}).

%
\begin{revisebox}
    \indent It remains to show claim~(\ref{th-infinite-level-igusa-var:indep}) for central leaves. As independence of the bound is clear from the case of Igusa varieties, we may assume that $Z$ is a sufficiently large Arasteh~Rad--Hartl bound and $\widetilde\xbf = \overline\xbf$. Then by definition, the reduced substack $C^{Z,\bbf}_{\Gscr,\Ibf,\overline\xbf}\subset\Xscr_{\Gscr,\Ibf,\overline\xbf}$ depends on the isomorphism class of the local $L^+\Res_{O_x/\FF_x\pot{\varpi_x}}\Gscr_x$-shtuka over $\cl\FF_q$ given by $ b_x$ for each $x\in\xbf$. It  therefore remains to verify that $C^{Z,\bbf}_{\Gscr,\Ibf,\overline\xbf}$ is canonically independent of the choice of the moduli ind-stack $\Xscr_{\Gscr,\Ibf,\overline\xbf}$; namely, the choice of $I$, $\overline\xbf$ and $\Ibf$. 
    
    First, for fixed $I$ and $\overline\xbf$, both $\Xscr_{\Gscr,\Ibf,\overline\xbf}$ and its substack $C^{ Z ,\bbf}_{\Gscr,\Ibf,\overline\xbf}$ are canonically independent of the choice of partition $\Ibf$ of $I$ satisfying the condition in claim~(\ref{th-infinite-level-igusa-var:bound}), which is a standard consequence of the Beauville--Laszlo descent (\emph{cf.} Cor.~\ref{cor-BL-for-torsors}). Thus, we may replace $\Ibf$ with the coarsest partition $(I)$ of $I$. Next, choose $I'$ and $\overline \xbf'\in C(\cl\FF_q)^{I'}$ so that $\overline \xbf'$ consists of the $\cl\FF_q$-points of $C$ lying above some $x\in\xbf$, each taken exactly once. Then we obtain a closed immersion of ind-stacks
    \[
    \begin{tikzcd}
        \Xscr_{\Gscr,(I),\overline\xbf} \arrow[r,hookrightarrow] & \Xscr_{\Gscr,(I'),\overline\xbf'},
    \end{tikzcd}
    \]
    obtained by removing duplicate legs and adding legs not already contained in $\overline\xbf$. 
    Then this immersion sends $C^{Z,\bbf}_{\Gscr,(I),\overline\xbf}$ isomorphically onto $C^{Z',\bbf}_{\Gscr,(I'),\overline\xbf'}$ for some large enough Arasteh~Rad--Hartl bound $Z'$, which can be seen by inspecting geometric points via the Beauville--Laszlo descent. This shows claim (\ref{th-infinite-level-igusa-var:indep}) for central leaves and thus completes the proof of Theorem~\ref{th-infinite-level-igusa-var}.
\end{revisebox}
\end{proof}
\begin{rmk}\label{rmk-igusa-variety-over-finite-fields}
      Recall that $\Ig^{ Z ,\bbf}_{\Gscr,\Ibf,\widetilde\xbf}$ admits a model over a finite field $\kappa$ if each leg $\widetilde x_i$ is defined over $\kappa$ and $b_x\in\Gsf(\Fsf_x\otimes_{\FF_q}\kappa)$. By the same proof, this $\kappa$-model of $\Ig^{ Z ,\bbf}_{\Gscr,\Ibf,\widetilde\xbf}$ has essentially the same moduli description as Thm.~\ref{th-infinite-level-igusa-var}(\ref{th-infinite-level-igusa-var:moduli}), where $S$ is a $\kappa$-scheme (instead of an $\cl\FF_q$-scheme).
\end{rmk}

Now, Thm.~\ref{th-infinite-level-igusa-var} justifies the following notation.
\begin{defn}\label{def-infinite-level-igusa-variety}
    If $\Ig_{\Gscr,\Ibf,\widetilde\xbf}^{ Z ,\bbf}$ is non-empty then we write
    \begin{equation}
        \Ig_{\Gsf,\xbf}^{\bbf}\coloneqq\Ig_{\Gscr,\Ibf,\widetilde\xbf}^{ Z ,\bbf} \quad\textnormal{and} \quad C^\bbf_{\Gscr,\xbf}\coloneqq C^{ Z ,\bbf}_{\Gscr,\Ibf,\widetilde\xbf}
    \end{equation}
    and call them the \emph{Igusa variety} and \emph{central leaf} for $\bbf$, respectively.
\end{defn}

Conceptually, the central leaf $C^\bbf_{\Gscr,\xbf}$ should be the reduced intersection of the central leaves for $b_x$'s for $x\in\xbf$ in the \emph{unbounded} moduli stack $\Xscr_{\Gscr,\Ibf,\overline\xbf}$ with fixed legs $\overline\xbf$. But since $\Xscr_{\Gscr,\Ibf,\overline\xbf}$ is an \emph{ind}-Delighe--Mumford stack, to apply results in \S\ref{sect-Igusa-Covers} we should fix a suitable bound $Z$ and construct the central leaf in the resulting closed \emph{Deligne--Mumford substack} $X^Z_{\Gscr,\Ibf,\widetilde\xbf}$ (for some $\widetilde\xbf\in\widetilde C^I_Z(\cl\FF_q)$). 

Although the Igusa variety only depends on $\Gsf$, $\xbf$ and the $\sigma$-conjugacy class of $\bbf$ up to canonical isomorphism, the natural projection 
\begin{equation}\label{eq-igusa-projection}
\begin{tikzcd}
    \Ig^\bbf_{\Gsf,\xbf}\cong \Ig_{\Gscr,\Ibf,\widetilde\xbf}^{ Z ,\bbf} \arrow{r} & \Xscr^{ Z }_{\Gscr,\Ibf,\widetilde\xbf}
\end{tikzcd}
\end{equation}
clearly depends on the auxiliary choices suppressed from the notation $\Ig^\bbf_{\Gsf,\xbf}$. On the one hand, the moduli description of $\Ig^\bbf_{\Gsf,\xbf}$ given in Thm.~\ref{th-infinite-level-igusa-var}(\ref{th-infinite-level-igusa-var:moduli}) is much more flexible than the moduli description in terms of bounded $\Gscr$-shtukas with certain infinite level structure. On the other hand, for potential applications, one may need to make suitable auxiliary choices, including the integral model $\Gscr$ and the bound $Z$, to project $\Ig^\bbf_{\Gsf,\xbf}$ to an \emph{interesting} moduli stack $ \Xscr^{ Z }_{\Gscr,\Ibf,\widetilde\xbf}$. 

\begin{rmk}\label{rmk-igusa-independence-of-choice}
We use the notation from the proof of Thm.~\ref{th-infinite-level-igusa-var}. For any $i\in I_{/\sim}$ let $b_{\overline x_i}\in\Gsf(\breve{\Fsf}_{x_i})$ denote the image of $b_{x_i}$ under the projection $\Gsf(\breve\Fsf_{x_i}\hat\otimes_{\FF_q}\cl\FF_q)\epi \Gsf((\breve{\Fsf})_{\overline x_i})$. Then, the $L^+_{\overline\xbf}\Gscr$-orbit of the point \eqref{eq:bbf-orbit}, which we denote as $S_\bbf\subset \Gr_{\Gscr,\Ibf,\overline\xbf}$, can be written as follows:
    \begin{equation}\label{eq:S-bbf}
        S_\bbf(\cl\FF_q) = \prod_{i\in I_{/\sim}}\Gscr(O_{\overline x_i}) b_{\overline x_i}\Gscr(O_{\overline x_i})/\Gscr(O_{\overline x_i}).
    \end{equation}

The key observation in the proof of Thm.~\ref{th-infinite-level-igusa-var} is that if a $\Gscr$-shtuka $\underline\Vscr_\bullet$ admits an infinite Igusa level structure $(\eta_{0,x})_{x\in\xbf}$ (over some profinite \'etale cover of $S$), then $\underline\Vscr_\bullet$ lies in the preimage of $[L^+_{\overline\xbf}\Gscr\backslash S_\bbf]$ under the local model morphism 
\begin{equation}
    \begin{tikzcd}
    \Xscr_{\Gscr,\Ibf,\overline\xbf} \arrow{r} & {[L^+_{\overline\xbf}\Gscr\backslash\Gr_{\Gscr,\Ibf,\overline\xbf}]}
\end{tikzcd}
\end{equation}
(\emph{cf.} \cite[Prop.3.4.2]{Bieker:IntModels}).
In particular, \emph{any bound $Z$ whose fibre $Z_{\widetilde\xbf}$ at a fixed leg $\widetilde\xbf$ over $\overline\xbf$ contains $S_\bbf$} can be used to construct the \emph{correct} central leaf in $\Xscr_{\Gscr,\Ibf,\overline\xbf}$ and the \emph{correct} Igusa variety. 
Conceptually, our setting naturally aligns with the bounds of \emph{finite type}’’ introduced by Hartl and Y. Xu \cite[Def.~2.6.1, 2.6.3]{HartlXu:Unif}.
\end{rmk}

Using Rmk.~\ref{rmk-igusa-independence-of-choice}, one can get the following variant of Thm.~\ref{th-infinite-level-igusa-var}(\ref{th-infinite-level-igusa-var:bound}) for Beilinson--Drinfeld Schubert varieties when $\Gscr$ is parahoric at each $x\in\xbf$.
%
\begin{lem}\label{lem:igusa-admissible}
    Fix $\xbf = \{x_i\}_{i\in I}\subset |C|$, and suppose that $\Gscr_{x_i}$ is a parahoric group scheme over $O_{x_i}$ for each $x_i\in\xbf$. 
    Choose $b_i\in \Gsf(\breve\Fsf_{x_i})$ and $\lambda_i \in X_\ast(\Tsf)_+$ for each $i\in I$ such that the $\sigma^{\deg(x_i)}$-conjugacy class of $b_i$ is $\lambda_i$-admissible.

    Choose a partition $\Ibf = (I_j)_{j=1}^k$ of $I$, and let $Z(\blambda)\subset\Gr_{\Gscr,\Ibf}\times_{C^I}\widetilde C^I_Z$ be the Beilinson--Drinfeld Schubert variety for $\blambda\coloneqq(\lambda_i)_{i\in I}$. Fix a point $\widetilde\xbf = (\widetilde x_i)_{i\in I}\in \widetilde C^I_{Z(\blambda)}(\cl\FF_q)$ so that each $\widetilde x_i$ lies above $x_i$.
    
    Then, there exists $\bbf' = (b'_i)_{i\in I}$ where each $b'_i$ is $\Gsf(\breve\Fsf_{x_i})$-$\sigma^{\deg(x_i)}$-conjugate to $b_i$, such that $\Ig^{Z(\blambda),\bbf'}_{\Gscr,\Ibf,\widetilde\xbf}$ is non-empty; i.e., we have $\Ig^\bbf_{\Gsf,\xbf}\cong \Ig^{Z(\blambda),\bbf'}_{\Gscr,\Ibf,\widetilde\xbf}$.
\end{lem}    
Note that we implicitly identify $b_i$ and $b_i'$ with their respective images under $\Gsf(\breve\Fsf_{x_i})\to\Gsf(\breve\Fsf_{x_i}\hat\otimes_{\FF_q}\cl\FF_q)$ via $\overline x_i=\pi_i(\widetilde x_i)$, as in \eqref{eq-gl-loc-b}. Furthermore, if $b_i$ and $b_i'$ are $\Gsf(\Fsf_{x_i})$-$\sigma^{\deg(x_i)}$-conjugate, then the corresponding elements in $\Gsf(\breve\Fsf_{x_i}\hat\otimes_{\FF_q}\cl\FF_q)$ are $\sigma$-conjugate, so we get an isomorphism $\Ig^{\bbf}_{\Gsf,\xbf} \cong \Ig^{\bbf'}_{\Gsf,\xbf}$.
\begin{proof}
    As $\widetilde x_i$'s lie above pairwise distinct closed points of $C$, we have 
    \[Z(\blambda)_{\widetilde\xbf}\cong \prod_{i\in I} Z(\lambda_i)_{\widetilde x_i},\] 
    where $Z(\lambda_i)\subset \Gr_{\Gscr,\{i\}}\times_C\widetilde C_{\lambda_i}$ is the single-legged  Beilinson--Drinfeld Schubert variety associated to $\lambda_i$. Furthermore, $Z(\lambda_i)$ contains all the $\lambda_i$-admissible affine Schubert cells; i.e., those associated to $\lambda_i$-admissible elements in the Iwahori Weyl group. (To see this, it suffices to check that the point of $\Gr_{\Gscr,\{i\},\overline x_i}(\cl\FF_q)$ given by the ``translation'' element of $X_\ast(\Tsf)_{\Gal(\cl{\breve\Fsf}_{x_i}/\breve\Fsf_{x_i})}$ lifts to a point in $Z(\lambda_i)_{\widetilde x_i}(\cl\Fsf_x)$, and such a lift can be constructed in the ind-finite ind-scheme $\Gr_{\Tscr,\{i\}}$ for a parahoric group scheme $\Tscr$ over $C$ with generic fibre $\Tsf$, using \cite[Lem.2.21]{Richarz:AffGr}.)
    
Now, by the converse of Mazur's inequality \cite[Thm.~A]{He:KottwitzRapoportConj} (which can be applied to \emph{any} connected reductive group $\Gsf_{\Fsf_{x_i}}$ by the note at the end of the introduction \cite[p.~1128]{He:KottwitzRapoportConj}), there exists $b_i'\in \Gsf(\breve\Fsf_{x_i})$ that is $\sigma^{\deg(x_i)}$-conjugate to $b_i$ such that $\Gscr(\breve O_{x_i})b_i'\Gscr(\breve O_{x_i})/\Gscr(\breve O_{x_i})$ is a $\lambda_i$-admissible affine Schubert cell. Hence, $S_{\bbf'}$ is contained in $Z(\blambda)_{\widetilde\xbf}$, so $\Ig^{Z(\blambda),\bbf'}_{\Gscr,\Ibf,\widetilde\xbf}$ is non-empty as explained in Rmk.~\ref{rmk-igusa-independence-of-choice}.
\end{proof}

 \begin{notation} \label{notation-Hecke-action}
 Let $\AA_{\breve\Fsf}$ be the restricted product of the completions of $\breve\Fsf$ at all places, so we have $L_\AA\Gsf(\cl\FF_q) = \Gsf(\AA_{\breve\Fsf})$. 
 To simplify the notation, we may also see $\bbf = (b_x)_{x\in\xbf}$ as an element of $\Gsf(\AA_{\breve\Fsf}) = \prod_{v\in |C|} \Gsf(\Fsf_v\hat\otimes_{\FF_q}\cl\FF_q)$, by setting $b_v = 1$ for $v\notin\xbf$. Thus, we may replace the datum $(\eta_0^\xbf,(\eta_x)_{x\in\xbf})$ with an isomorphism 
 \begin{equation}\label{eq:adelic-Igusa-level-str}
      \eta_0\colon (L_\AA \Gsf,\bbf\sigma) \isom ((\Vscr_0')_{\AA}^{\rm an},(\tau_0')_{\AA}^{\rm an}).
 \end{equation}
 In particular, we get a canonical action of $\Aut(L_\AA \Gsf, \bbf\sigma)$ on $\Ig_{\Gsf,\xbf}^{\bbf}$, given by pre-composing $\eta_0$. We call this action the \emph{Hecke action}.
 
 By replacing $b_{x}$ up to $\Gsf(\Fsf_{x}\hat\otimes_{\FF_q}\cl\FF_q)$-$\sigma$-conjugacy if necessary, we assume that every $b_{x}$ is decent; that is, for some integer $r$ dividing $\deg(x)$ for all $x\in\xbf$ we have
 \begin{equation} \label{eq-decent}
 (b_{x}\sigma)^r = (r \cdot \nu)(b_{x})(\varpi_{x}) \sigma^r.
 \end{equation}
 In particular, we have $ b_{x} \in \Gsf(\Fsf_{x} \otimes_{\FF_q} \FF_{q^r})$ and the Igusa variety admits a model over $\FF_{q^r}$, which we fix. We denote by $\Fr$ the geometric Frobenius automorphism of $\Ig_{\Gsf,\xbf}^{\bbf}$ over $\FF_{q^r}$ with respect to this $\FF_{q^r}$-model. It is easy to see that \eqref{eq-decent} implies that $J_{b_{x}}(F) \subset G(\Fsf_{x} \otimes \FF_{q^r})$. Thus, the Hecke action of
\begin{equation}
 \Aut(L_\AA \Gsf, \bbf\sigma) = \Gsf(\AA^{\xbf}) \times \prod_{x\in\xbf} J_{b_{x}}(\Fsf_{x}) \eqqcolon \JJ_\bbf
\end{equation}
 commutes with $\Fr$.
 \end{notation}

 \subsection{Controlling the centre}\label{ssect-Xi}

 Let us choose a discrete subgroup $\Xi\subset \Zsf_\Gsf(\AA)$ such that the quotient  $\Zsf_\Gsf(\Fsf)\backslash\Zsf_\Gsf(\AA)/\Xi$ is compact. In the following, we assume that $\Xi$ is torsion-free, which can always be achieved by replacing $\Xi$ with a finite index subgroup. Indeed, if $i\colon\GG_m^t\hookrightarrow\Zsf_\Gsf$ is a closed embedding onto the maximal split torus, then for any id\`eles $\xi_1,\dotsc,\xi_t\in \AA^\times$ with positive degree, the subgroup $\Xi\coloneqq \langle i(\xi_1 e_1),\dotsc , i(\xi_t e_t)\rangle\subset \Zsf_\Gsf(\AA)$ is discrete and cocompact in $\Zsf_\Gsf(\Fsf)\backslash\Zsf_\Gsf(\AA)$. It is easy to see that any discrete subgroup $\Xi\subset\Zsf_\Gsf(\AA)$ disjoint from $\Zsf_\Gsf(\Fsf)$  and  cocompact in $\Zsf_\Gsf(\Fsf)\backslash\Zsf_\Gsf(\AA)$ should contain such a subgroup.
 
  Note that $\Zsf_\Gsf(\AA)$ acts on $\Ig^\bbf_{\Gsf,\xbf}$ (viewing it as a subgroup of $\JJ_\bbf$). We set
  \begin{equation}\label{eq-Igusa-Xi}
  \Ig_{\Gsf,\xbf,\Xi}^{\bbf} \coloneqq \Ig_{\Gsf,\xbf}^{\bbf}\sslash\Xi \quad\text{and}\quad
  \Ig_{\Gsf,\xbf,\Xi\cdot\Ksf}^{\bbf} \coloneqq  \Ig_{\Gsf,\xbf,\Xi}^{\bbf}\sslash \Ksf ,
  \end{equation}
  using the same GIT-type quotient construction as in \S\ref{ssect-finite-level-igusa-cover}. Here, $\Ksf\subseteq \prod_{t\notin\xbf}\Gscr(O_t) \times \prod_{x\in\xbf}\Gamma_{b_x}$ is an open compact subgroup, which intersects trivially with any discrete torsion-free subgroup such as $\Xi$.
    Since $\Xi$ commutes with the Hecke action, the natural action of $\JJ_\bbf$ on $ \Ig_{\Gsf,\xbf}^{\bbf}$ descends to the $\Xi$-quotient $ \Ig_{\Gsf,\xbf,\Xi}^{\bbf}$, and ditto for the finite-level Hecke correspondences.

Let $\Gscr^{\ad}$ be as in \ref{ssect-HN}, and set $\bbf^{\rm ad} \coloneqq (b_x^{\ad})_{x\in\xbf}$ where $b_x^{\ad}\in\Gsf^{\ad}(\Fsf_{x}\hat\otimes_{\FF_q}\cl\FF_q)$ is the image of $b_x$. 
Choose a projection $\Ig^\bbf_{\Gsf,\xbf}\to \Xscr^Z_{\Gscr,\Ibf,\widetilde\xbf}$ as in \eqref{eq-igusa-projection}, and an Arasteh~Rad Hartl bound $Z^{\rm ad}\subset \Gr_{\Gscr^{\rm ad},\Ibf}$ such that the natural map $\Xscr^Z_{\Gscr,\Ibf}\to \Xscr_{\Gscr^{\rm ad},\Ibf}$ factors through $\Xscr^{ Z^{\rm ad} }_{\Gscr^{\rm ad},\Ibf}$. Lastly, we make a suitable auxiliary choice as in \S\ref{ssect-HN}, and define
\begin{equation}\label{eq-Igusa-HN}
 \Ig_{\Gsf,\xbf,\Xi}^{\bbf,\leqslant\mu}\coloneqq \Ig_{\Gsf,\xbf,\Xi}^{\bbf} \times_{\Xscr^{ Z^{\rm ad}}_{\Gscr^{\rm ad}, \Ibf}}\Xscr^{ Z^{\rm ad},\leqslant\mu}_{\Gscr^{\rm ad},\Ibf},\quad\text{for }\mu\in\Lambda^+.
\end{equation}

\begin{cor}\label{cor-schematic}
    The scheme $\Ig_{\Gsf,\xbf,\Xi}^{\bbf,\leqslant\mu}$ is quasi-compact. In particular, the $\cl\FF_q$-stack $\Ig_{\Gsf,\xbf,\Xi\cdot\Ksf}^{\bbf,\leqslant\mu}\coloneqq\Ig_{\Gsf,\xbf,\Xi}^{\bbf,\leqslant\mu}\sslash \Ksf$ is of finite type for any open compact subgroup $\Ksf\subseteq \prod_{t\notin\xbf}\Gscr(O_t) \times \prod_{x\in\xbf}\Gamma_{b_x}$, and it is representable by a scheme if $\Ksf$ is small enough relative to $\mu$.
\end{cor}
\begin{proof}
    By Prop.~\ref{prop-schematic} it suffices to show the quasi-compactness of $\Ig_{\Gsf,\xbf,\Xi}^{\bbf,\leqslant\mu}$.
    If $\Gscr = \Gscr^{\rm ad}$ then the desired claim follows from Prop.~\ref{prop:HN-qc} and the quasi-compactness of $\Ig^{\bbf^{\rm ad}}_{\Gsf^{\rm ad},\Ibf} \to \Xscr^{Z^{\rm ad}}_{\Gscr^{\rm ad},\Ibf}$. In general, the natural map 
\[ \begin{tikzcd}
    \Ig^{\bbf}_{\Gsf,\xbf,\Xi} \arrow[r] & \Ig^{\bbf^{\rm ad}}_{\Gsf^{\rm ad},\xbf}
\end{tikzcd} \]
is a profinite \'etale $\big(\Zsf_\Gsf(\Fsf)\backslash\Zsf_\Gsf(\AA)/\Xi\big)$-torsor, which can be seen from the moduli description given in Thm.~\ref{th-infinite-level-igusa-var}(\ref{th-infinite-level-igusa-var:moduli}). The corollary now follows since $\Ig_{\Gsf,\xbf,\Xi}^{\bbf,\leqslant\mu} $ is the preimage of the quasi-compact scheme $\Ig^{ \bbf^{\rm ad},\leqslant\mu}_{\Gsf^{\rm ad},\xbf}$ under this profinite \'etale cover.
\end{proof}

\section{The Hecke action on the full level Igusa variety}

 \subsection{Isogeny classes of global $G$-shtukas over $\cl\FF_q$}
 
 Given a pair $(\Vscr'_0,\tau_0')$ as in Thm.~\ref{th-infinite-level-igusa-var}(\ref{th-infinite-level-igusa-var:moduli}) over $S=\Spec \cl\FF_q$, we choose a trivialisation $\Vscr'_0 \cong \Gsf_{\breve\Fsf}$, which identifies $\tau_0'$ with $  b \sigma$ for a $  b  \in \Gsf(\breve\Fsf)$. A different choice of a trivialisation corresponds to replacing $  b $ with a $\sigma$-conjugate. Thus we get a bijection between the isomorphism classes of $(\Vscr'_0,\tau_0')$ and the $\sigma$-conjugacy classes $[b]_{\Gsf(\breve\Fsf)-\sigma} = [b]_\sigma \coloneqq \{\gsf^{-1}  b \sigma(\gsf) \mid \gsf \in \Gsf(\breve\Fsf)\}$ in $\Gsf(\breve\Fsf)$
 
 Denote by $\B(\Fsf,\Gsf)$ the pointed set of $\sigma$-conjugacy classes in $\Gsf(\breve\Fsf)$. In \cite{HamacherKim:Gisoc}, we classify the elements of $\B(\Fsf,\Gsf)$ by two invariants. To elaborate, let 
 \[
  \Div^\circ(\Fsf) = \left\{ \sum n_y \cdot y \in \bigoplus_{x \in |C|} \ZZ\cdot x \mid \sum n_y = 0 \right\}.
 \]
 and let $\Div^\circ(\Fsf^s) = \varinjlim \Div^\circ(\Esf)$, where $\Esf$ runs through all finite separable extensions of $\Fsf$.  We denote by $\DD_{\Esf/\Fsf}$ and $\DD_\Fsf$ the $\Fsf$-protori with  character group $\Div^\circ (\Esf)$ and $\Div^\circ(\Fsf^s)$, respectively. To every $b \in \Gsf(\breve\Fsf)$, we associate invariants
  \begin{align*}
   \nu_\Gsf(b) &\in \Hom_{\breve\Fsf}(\DD_\Fsf,\Gsf),\ \text{and} \\
   \bar\kappa_\Gsf(b) &\in (\pi_1(\Gsf) \otimes \Div^\circ(\Fsf^s))_{\Gal(\Fsf^s/\breve\Fsf)},
  \end{align*} 
 called the Newton point and Kottwitz point of $b$, respectively. Their images
 \begin{align*}
   \bar\nu_\Gsf(b) &\in \Hom_{\breve\Fsf}(\DD_\Fsf,\Gsf)/\Gsf(\breve\Fsf),\ \text{and} \\
   \bar\kappa_\Gsf(b) &\in (\pi_1(\Gsf) \otimes \Div^\circ(\Fsf^s))_{\Gal(\Fsf^s/\Fsf)}
  \end{align*}
 depend only on the $\sigma$-conjugacy class $[b]_\sigma$ and determine it uniquely.
  
\subsection{The local $\Gsf_x$-isoshtuka associated to $(\Vscr'_0,\tau_0')$} \label{sect-local-global-isosht}

 Following the construction in \S\ref{ssect-gl-loc}, we can associate a $\Gsf_x$-isoshtuka $(\Vscr'_0,\tau_0')_{\Fsf_x}$ to $(\Vscr'_0,\tau_0')$ for any $x \in |C|$, where $\Gsf_x\coloneqq \Gsf_{\Fsf_x}$. We can translate the construction into group-theoretic terms as follows. First, consider the embedding $\Gsf(\breve\Fsf) \mono \Gsf(\breve\Fsf \hat\otimes_\Fsf \Fsf_x) \cong (\Res_{\Fsf_x/\f\rpot{\unif_x}} \Gsf)(\breve\Fsf_x)$. This yields the localisation map
 \[
  \B(\Fsf,\Gsf) \to \B(\f\rpot{\unif_x},\Res_{\Fsf_x/\f\rpot{\unif_x}} \Gsf) \cong \B(\Fsf_x,\Gsf),
 \] 
 where the last isomorphism is Shapiro's isomorphism (\emph{cf}.\ \cite[\S2.3]{HamacherKim:Gisoc}). More explicitly, after choosing a place $y$ of $\breve\Fsf$ over $x$ and thus identifying $\breve\Fsf_x$ with the completion of $\breve\Fsf$ at $y$, this map is induced by the norm map 
 \begin{equation}\label{eq-norm}
   N^{(d)}(-)_y\colon \Gsf(\breve\Fsf \hat\otimes_{\Fsf} \Fsf_x) \xrightarrow{N^{(d)}}
   \Gsf(\breve\Fsf \hat\otimes_{\Fsf} \Fsf_x)\cong \prod_{y'|x}\Gsf((\breve\Fsf)_{y'}) \xrightarrow{\pr_y} \Gsf((\breve\Fsf)_{y})\cong\Gsf(\breve\Fsf_x),
 \end{equation}
where $d\coloneqq\deg(x)$ and $N^{(d)}\colon g\mapsto g\cdot\sigma(g)\cdots\sigma^{d-1}(g)$.

 We denote by $\B(\AA,\Gsf)$ the set of $\sigma$-conjugacy classes in $\Gsf(\AA_{\breve\Fsf})$. Since any element of a hyperspecial subgroup $\breve{K} \subset \Gsf(\breve\Fsf \hat\otimes_\Fsf \Fsf_x)$ is $\breve{K}$-$\sigma$-conjugate to $1$, the canonical map $\B(\AA,\Gsf) \to \prod_{x \in |C|} \B(\Fsf_x,\Gsf)$ yields an isomorphism 
 \[
  \B(\AA,\Gsf) \isom \{ [b_x]_\sigma \in \prod_{x \in |C|} \B(\Fsf_x,\Gsf) \mid [b_x]_\sigma=[1]_\sigma \text{ for almost all } x \}.
 \]
 In particular, the localisation of $[b]_\sigma \in \B(\Fsf_x,\Gsf)$ is trivial for almost all $x$.
 
 We denote by $\B(\Fsf,\Gsf)_\bbf \subset \B(\Fsf,\Gsf)$ the set of all $\sigma$-conjugacy classes localising to $[\bbf]_\sigma \in \B(\AA,\Gsf)$. By definition, this set corresponds to the isogeny classes of $\Gsf$-shtukas occurring over $\Ig^\bbf_{\Gsf,\xbf}(\k)$.  

\subsection{The group of self-quasi-isogenies of a shtuka} \label{ssect-self-qisog}
 We now obtain
 \[
  \Aut(\Vscr'_0,\tau_0') \cong \{ g \in \Gsf(\breve\Fsf) \mid g = b \sigma(g)b^{-1} \}.
 \]
 The right hand side is the group of $\Fsf$-rational points of the linear algebraic group $\Jsf_b$ given by
 \[
  \Jsf_b( R )= \{\gsf \in \Gsf(R\otimes_\Fsf \breve\Fsf) \mid \gsf b= b\sigma(\gsf)\}.
 \] 
  By \cite[Prop.~6.2]{HamacherKim:Gisoc} $\Jsf_b$ is an $\Fsf$-form of the centraliser $\Msf_b$ of $\nu_\Gsf(b)$, obtained by twisting the Frobenius action by $b$. In particular, the center $\Zsf_\Gsf$ of $\Gsf$ embeds into $\Jsf_b$. 
  
 By construction, any automorphism of a global $\Gsf$-shtuka induces automorphisms of its local $\Gsf$-shtukas. We can describe this in group-theoretic terms as follows. Given any place $y$ of $\breve\Fsf$ over $x$, the map $b \mapsto  (N^{(d)}b)_y$ \eqref{eq-norm} induces a bijection between $\sigma$-conjugacy classes in $\Gsf(\breve\Fsf \hat\otimes_\Fsf \Fsf_x)$ and $\sigma^d$-conjugacy classes in $\Gsf((\breve\Fsf)_y)$, where $d$ denote the degree of $x$. By construction the automorphism group of the local $\Res_{\Fsf_x/\f\rpot{\unif_x}}\Gsf_x$-isoshtuka given by $b$ can be (canonically) identified with
  \[
   J_b(\Fsf_x) \coloneqq \{ g \in \Gsf(\breve\Fsf \hat\otimes_\Fsf \Fsf_x) \mid g\iv b \sigma(g) = b \}.
  \]
  One easily checks that the projection $\Gsf(\breve\Fsf \hat\otimes_\Fsf \Fsf_x) \to \Gsf((\breve\Fsf)_y)$ defines an isomorphism
  \begin{equation} \label{eq-J_b}
   J_b(\Fsf_x) \isom J_{(N^{(d)}b)_y}(\Fsf_x) \coloneqq \{g \in \Gsf((\breve\Fsf)_y) \mid g\iv (N^{(d)}b)_y \sigma^d(g) = (N^{(d)}b)_y\}.
  \end{equation}
 
 We define the Newton point of $b$ as the $\breve\Fsf \hat\otimes_\Fsf \Fsf_x$-homomorphism $\nu_{\Gsf_x}(b) \colon \DD \to \Gsf$ such that $\restr{\nu_{\Gsf_x}(b)}{(\breve\Fsf)_y} = \nu((N^{(d)}b)_y)$, where the $\nu$ on the right hand side denotes the usual Newton point over $(\breve\Fsf)_y$. We denote by $M_{b,x} \subset \Gsf_{\breve\Fsf \hat\otimes_\Fsf \Fsf_x}$ the centraliser of $\nu_{\Gsf_x}(b)$. By \eqref{eq-J_b} and the usual results over local fields, we have $J_b(\Fsf_x) \subset M_{b,x}(\breve\Fsf \hat\otimes_\Fsf \Fsf_x)$.
 
 \begin{lem}\label{lem-loc-gl-Newton-pt}
  Let $b \in \Gsf(\breve\Fsf)$.
  \begin{subenv}
   \item Fix $x \in |C|$. The canonical embedding $\Gsf(\breve\Fsf) \mono \Gsf(\breve\Fsf \hat\otimes_\Fsf \Fsf_x)$ induces embeddings $\Msf_b(\breve\Fsf) \mono M_{b,x}(\breve\Fsf \hat\otimes_\Fsf \Fsf_x)$ and $\Jsf_b(\Fsf) \mono J_b(\Fsf_x)$.
   \item Let $g \in \Gsf(\breve\Fsf)$ such that $g \in M_{b,x}(\breve\Fsf \hat\otimes_\Fsf \Fsf_x)$ for all $x \in |C|$. Then $g \in \Msf_b(\breve\Fsf)$.
  \end{subenv}
 \end{lem}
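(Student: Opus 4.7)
The plan is to deduce both parts from the compatibility between the global Newton cocharacter $\nu_\Gsf(b)\colon\DD_\Fsf\to\Gsf_{\breve\Fsf}$ and the local cocharacters $\nu_{\Gsf_x}(b)$ recorded in \cite{HamacherKim:Gisoc}: after base change to $\breve\Fsf\hat\otimes_\Fsf\Fsf_x$, the map $\nu_{\Gsf_x}(b)$ factors as $\nu_\Gsf(b)\otimes_\Fsf\Fsf_x$ pre-composed with a natural localisation morphism $\DD\to\DD_{\Fsf,\breve\Fsf\hat\otimes_\Fsf\Fsf_x}$ of protori, which on character groups is the coefficient-extraction map $\Div^\circ(\Fsf^s)\to\ZZ$ at a place of $\Fsf^s$ above $x$.

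Part (a) will then be formal. The embedding $\Jsf_b(\Fsf)\hookrightarrow J_b(\Fsf_x)$ follows because the relation $g^{-1}b\sigma(g)=b$ is preserved under the (injective) base change $\breve\Fsf\hookrightarrow\breve\Fsf\hat\otimes_\Fsf\Fsf_x$. The embedding $\Msf_b(\breve\Fsf)\hookrightarrow M_{b,x}(\breve\Fsf\hat\otimes_\Fsf\Fsf_x)$ follows because the scheme-theoretic image of $\nu_{\Gsf_x}(b)$ is contained in that of the base change of $\nu_\Gsf(b)$, so centralising the latter forces centralising the former.

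For part (b), I plan to prove the scheme-theoretic identity $\Msf_b=\bigcap_{x\in|C|}M'_{b,x}$ inside $\Gsf_{\breve\Fsf}$, where $M'_{b,x}$ denotes the preimage of $M_{b,x}$ under $\Gsf_{\breve\Fsf}\to\Gsf_{\breve\Fsf\hat\otimes_\Fsf\Fsf_x}$; taking $\breve\Fsf$-points then gives the statement. The inclusion $\subseteq$ is part (a). For $\supseteq$, given $g$ lying in every $M'_{b,x}(\breve\Fsf)$, I would examine the morphism
\[
c\colon\DD_\Fsf\to\Gsf_{\breve\Fsf},\qquad \lambda\mapsto g\,\nu_\Gsf(b)(\lambda)\,g^{-1}\,\nu_\Gsf(b)(\lambda)^{-1},
\]
and aim to show $c$ is the constant morphism to $1$. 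The compatibility and the hypothesis force $c$ to become trivial after composition with each localisation $\DD\to\DD_{\Fsf,\breve\Fsf\hat\otimes_\Fsf\Fsf_x}$.

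The main obstacle will be upgrading this local vanishing to the global vanishing of $c$. My preferred route is to embed $\Gsf\hookrightarrow\GL(V)$ faithfully and to compare the weight decompositions of $V$ under $\nu_\Gsf(b)$ and under $\mathrm{Int}(g)\circ\nu_\Gsf(b)$; each uses only finitely many weights from $\Div^\circ(\Fsf^s)$, and triviality of $c$ amounts to equality of these decompositions. This equality can be tested after the localisation morphisms, because the coefficient-extraction map $\Div^\circ(\Fsf^s)\hookrightarrow\bigoplus_{y\in|\Fsf^s|}\ZZ$ is injective: any nonzero divisor has a nonzero coefficient at some place of $\Fsf^s$. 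An alternative would be to factor both $\nu_\Gsf(b)$ and its $g$-conjugate through a common maximal torus after passing to the algebraic closure, and then argue directly on character lattices; the combinatorics of $\Div^\circ(\Fsf^s)$ is what is ultimately doing the work in either approach.
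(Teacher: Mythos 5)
Your plan is correct and rests on the same ingredients as the paper's proof: the explicit relation between local and global Newton cocharacters from \cite[Lem.~4.5]{HamacherKim:Gisoc}, and the fact that the coefficient projections $\Div^\circ(\Esf)\to\ZZ$ jointly detect nonvanishing. The paper packages part (b) a bit more directly — after choosing $\Esf$ so that $\nu_\Gsf(b)$ factors through $\DD_{\Esf/\Fsf}$, it observes that $\Msf_b$ is the joint centraliser of the $\GG_m$-cocharacters $\nu_{y'}=\nu_\Gsf(b)\circ\iota_{y'}$ indexed by places $y'$ of $\Esf$, and that $\restr{\nu_{\Gsf_x}(b)}{(\breve\Fsf)_y}$ is a positive rational multiple of some $\nu_{y'}$ — so both parts follow at once, with no need for your commutator morphism $c$ or the faithful representation.
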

 \begin{proof}
  As $\Jsf_b(\Fsf) \subset J_b(\Fsf_x)$ holds by definition, it remains to show the statements about $\Msf_b$. We fix a finite Galois extension $\Esf/\breve\Fsf$ such that $\nu_\Gsf(b)$ factors over $\DD_{\Esf/\Fsf}$. For every place $y'$ of $\Esf$ the map $\Div^\circ(\Esf) \to \ZZ, \sum_z a_z \cdot z \mapsto a_{y'}$ induces a morphism $\iota_{y'}\colon \GG_{m} \to \DD_{\Esf/\Fsf}$ defined over $\Esf$. We denote $\nu_{y'} \coloneqq \nu_{\Gsf}(b) \circ \iota_{y'}$. Note that $\Msf_b$ is the joint centraliser of all $\nu_{y'}$.  
  
  By \cite[Lem.~4.5]{HamacherKim:Gisoc}, we obtain $\nu_{\Gsf_x}(b)$ from $\nu_{\Gsf}(b)$ as follows. For every place $y$ of $\breve\Fsf$ over $x$ we may choose a place $y'$ of $\Esf$ over $y$ which corresponds to an embedding $i_{y'}\colon \Esf \mono \cl{(\breve\Fsf)_y}$. Then
  \[
   \restr{\nu_{\Gsf_x}(b)}{(\breve\Fsf)_y} = \frac{1}{[\Esf_{y'}: (\breve\Fsf)_y]} \cdot\nu_{y'},
  \]
  where $\nu_{y'}$ is considered to be defined over $\cl{(\breve\Fsf)_y}$  via the embedding $i_{y'}$.  In particular, we have $\Msf_b(\breve\Fsf) \subset M_{b,x}(\breve\Fsf \hat\otimes_\Fsf \Fsf_x)$. 
  
  The second assertion follows by the same argument. Let $g \in \Gsf(\breve\Fsf)$ such that $g \in M_{b,x}(\breve\Fsf \hat\otimes_\Fsf \Fsf_x)$ for all $x \in |C|$. As shown above, the claim is equivalent to $g$ centralising $\nu_z$ for every place $z$ of $\Esf$. Denote $y \coloneqq \restr{z}{\breve\Fsf}$. By choosing $y' = z$ in the equation above, the claim follows.
 \end{proof}
 
 Now assume that $[b]_\sigma \in \B(\Fsf,\Gsf)_\bbf$, i.e.\ there exists an isomorphism
 \[
  \eta_0\colon (L_\AA\Gsf,\bbf\sigma) \isom (L_\AA\Gsf,b\sigma).
 \]
 Any such $\eta_0$ is induced from the left multiplication by an element $\gbf \in \Gsf(\AA_{\breve\Fsf})$ sastifying $\bbf = \gbf\iv b \sigma(\gbf)$. So the embedding $\Aut(\Gsf_{\breve\Fsf},b\sigma) \to \Aut(L_\AA\Gsf,\bbf\sigma)$ induced by $\eta_0$ can be explicitly written as follows: 
 \begin{equation} \label{eq-global-to-local-automorphism}
 \iota_{\gbf}\colon \Jsf_b(\Fsf) \mono \JJ_\bbf, j \mapsto \gbf\iv j \gbf.
 \end{equation}
 
 \begin{cor} \label{cor-loc-gl-Newton-pt}
  Let $[b]_\sigma \in \B(\Fsf,\Gsf)_\bbf$ and $\gbf = (g_x) \in G(\AA_{\breve\Fsf})$ such that $\bbf = \gbf\iv b \sigma(\gbf)$. We extend $\iota_{\gbf}$ to a morphism $\Gsf(\breve\Fsf) \mono \Gsf(\breve\Fsf \hat\otimes_\Fsf \Fsf_x), h \mapsto \gbf\iv h \gbf$. Then
  \[
   \iota_\gbf(\Msf_b(\breve\Fsf)) = \iota_\gbf(\Gsf(\breve\Fsf))\cap \prod_{x\in |C|} M_{b_x,x}(\breve\Fsf \hat\otimes \Fsf_x).
  \]
 \end{cor}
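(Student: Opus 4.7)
The plan is to reduce the claim to Lemma~\ref{lem-loc-gl-Newton-pt} by transporting the local centralisers $M_{b_x,x}$ back to $M_{b,x}$ via the conjugations $g_x(-)g_x^{-1}$. The only real input beyond that Lemma is the standard behaviour of Newton cocharacters under $\sigma$-conjugation.

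First I would record the following local identity. Componentwise the relation $\bbf = \gbf^{-1} b\sigma(\gbf)$ reads $b_x = g_x^{-1} b \sigma(g_x)$ in $\Gsf(\breve\Fsf \hat\otimes_\Fsf \Fsf_x)$ for every $x \in |C|$. Since the Newton cocharacter is well-defined on $\sigma$-conjugacy classes up to conjugation by the element realising the $\sigma$-conjugation, this yields
\[
  \nu_{\Gsf_x}(b_x) = \Int(g_x^{-1}) \circ \nu_{\Gsf_x}(b)
\]
in $\Hom_{\breve\Fsf \hat\otimes_\Fsf \Fsf_x}(\DD,\Gsf)$ (here I use the construction of $\nu_{\Gsf_x}$ recalled in \S\ref{ssect-self-qisog}, which commutes with conjugation in the obvious way after passing to each place of $\breve\Fsf$ above $x$). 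Taking centralisers, I obtain the identification of closed subgroup schemes
\[
  M_{b_x,x} \;=\; g_x^{-1}\, M_{b,x}\, g_x \qquad \text{inside } \Gsf_{\breve\Fsf \hat\otimes_\Fsf \Fsf_x}.
\]

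Next, for an arbitrary $h \in \Gsf(\breve\Fsf)$ viewed diagonally in $\prod_x \Gsf(\breve\Fsf \hat\otimes_\Fsf \Fsf_x)$, the displayed conjugation relation gives
\[
  \iota_\gbf(h) = \gbf^{-1} h \gbf \;\in\; \prod_{x\in|C|} M_{b_x,x}(\breve\Fsf \hat\otimes_\Fsf \Fsf_x)
\]
if and only if $h \in M_{b,x}(\breve\Fsf \hat\otimes_\Fsf \Fsf_x)$ for every $x \in |C|$. By Lemma~\ref{lem-loc-gl-Newton-pt}(ii), this last condition is equivalent to $h \in \Msf_b(\breve\Fsf)$; conversely, if $h \in \Msf_b(\breve\Fsf)$ then $h \in M_{b,x}(\breve\Fsf \hat\otimes_\Fsf \Fsf_x)$ for every $x$ by Lemma~\ref{lem-loc-gl-Newton-pt}(i). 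Putting the two implications together, an element of $\iota_\gbf(\Gsf(\breve\Fsf))$ lies in $\prod_x M_{b_x,x}(\breve\Fsf \hat\otimes_\Fsf \Fsf_x)$ precisely when its preimage lies in $\Msf_b(\breve\Fsf)$, which is the asserted equality.

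There is no genuine obstacle here: the corollary is essentially a bookkeeping consequence of Lemma~\ref{lem-loc-gl-Newton-pt}. The only point requiring care is the compatibility of $\nu_{\Gsf_x}$ with $\sigma$-conjugation, which has to be verified place by place of $\breve\Fsf$ above $x$ using the explicit description of $\nu_{\Gsf_x}(b)$ from the proof of Lemma~\ref{lem-loc-gl-Newton-pt} (\emph{i.e.} via the norm map $N^{(d)}$ of \eqref{eq-norm}); once that formalism is in place, both inclusions are immediate.
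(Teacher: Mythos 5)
Your proof is correct and takes essentially the same route as the paper: reduce the statement to Lemma~\ref{lem-loc-gl-Newton-pt} (the case $\gbf = 1$) using the conjugation relation $\nu_{\Gsf_x}(b_x) = g_x^{-1}\nu_{\Gsf_x}(b)g_x$, which transforms the local centralisers by $\Int(g_x^{-1})$. The only difference is cosmetic: the paper cites \cite[Lem.~4.6]{HamacherKim:Gisoc} for that relation, whereas you sketch a direct verification via the norm map $N^{(d)}$; these amount to the same content.
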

 \begin{proof}
  When $\gbf = 1$, this is exactly Lem.~\ref{lem-loc-gl-Newton-pt}. The general case follows from the fact that $\nu_{\Gsf_x}(b_x) = g_x\iv \nu(b) g_x$ by \cite[Lem.~4.6]{HamacherKim:Gisoc}.
 \end{proof}

\subsection{Group theoretic description of $\JJ_{\bbf}$-orbits}\label{ssect-global-J-orbits}
 To describe the $\JJ_\bbf$-orbits, we partition the $\k$-points of the Igusa variety according to isogeny classes.
 Define
 \[
  \pi\colon \Ig^\bbf_{\Gsf,\xbf,\Xi}(\k) \to \B (\Fsf,\Gsf)_\bbf,\quad (\Vscr_0',\tau_0',[\eta_0]) \mapsto (\Vscr_0',\tau_0')/\cong
 \]
 to be the map associating to a point the isogeny class of the $\Gscr$-shtuka above it. (Here, $[\eta_0]$ is the equivalence class of $\eta_0$'s \eqref{eq:adelic-Igusa-level-str} such that $(\Vscr_0',\tau_0',\eta_0)$ are isomorphic.) By construction, the fibres of $\pi$ are precisely the $\JJ_\bbf$-orbits, so it descends to a map $\pi\colon \Ig^\bbf_{\Gsf,\xbf,\Xi\cdot\Ksf}(\k) \to \B (\Fsf,\Gsf)_\bbf$ for every $\Ksf$.

Let us fix $[{b}]_\sigma \in\B(\Fsf,\Gsf)_\bbf$ and a base point $\tilde z= (\Vscr_0',\tau_0',[\eta_0]) \in \pi^{-1}([{b}]_\sigma)$. By \eqref{eq-global-to-local-automorphism}, $[\eta_0]$ induces a closed embedding $\iota_{\ztilde}\colon \Jsf_b(\Fsf) \mono \JJ_\bbf$. Note that $\iota_{\tilde z}(\Jsf_{{b}}(\Fsf))$ is the stabiliser of $\ztilde$ inside $\JJ_\bbf$, hence we have
\begin{equation} \label{eq-PIC-preimage}
 \pi^{-1}([b]_\sigma) \cong \iota_{\tilde z}(\Jsf_{{b}}(\Fsf))\backslash \JJ_\bbf/\Ksf\Xi.
\end{equation}

\section{Parametrising Hecke fixed points on finite level Igusa varieties}\label{sect-preliminary-point-counting}

The continuous $\JJ_\bbf/\Xi$ action on the Igusa variety defines an action of its Hecke algebra on the cohomology. We calculate its trace by reinterpreting the operators via Hecke correspondences. From now on, we choose a prime $\ell$ different from the characteristic $p$ of $F$ and work with $\cl\QQ_\ell$-coefficients.

\subsection{Finite level Hecke correspondences}\label{ssect-Hecke}
Given a double-coset $\Ksf g \Ksf \subset \JJ_\bbf$ of $g\in \JJ_{\bbf} $, we denote $[\Ksf g\Ksf] $ the finite \'etale correspondence on $\Ig^\bbf_{\Gsf,\xbf, \Xi\cdot\Ksf}$ induced by the Hecke action of $g$ on $\Ig^\bbf_{\Gsf,\xbf}$, i.e.
\begin{center}
 \begin{tikzcd}
  & \Ig^\bbf_{\Gsf,\xbf, \Xi\cdot\Ksf_g} \arrow[two heads]{dl}[swap]{\gamma_1\colon (\Vscr_0',\tau_0',[\eta_0]) \mapsto (\Vscr_0',\tau_0', [\eta_0])} \arrow[two heads]{dr}{\gamma_2\colon(\Vscr_0',\tau_0',[\eta_0]) \mapsto (\Vscr_0',\tau_0', [\eta_0g])} & \\
  \Ig^\bbf_{\Gsf,\xbf, \Xi\cdot\Ksf} & & \Ig^\bbf_{\Gsf,\xbf, \Xi\cdot\Ksf}
 \end{tikzcd}
\end{center}
where $\Ksf_g \coloneqq \Ksf \cap g \Ksf g\iv$. The scheme of fixed points is defined $\Fix[\Ksf g \Ksf]$ as the fibre product
\begin{center}
 \begin{tikzcd}
  \Fix[\Ksf g \Ksf] \arrow{r} \arrow{d}
  \arrow[dr, phantom, "\scalebox{1.5}{$\lrcorner$}" , very near start, color=black] &
  \Ig^\bbf_{\Gsf,\xbf,\Xi\cdot\Ksf_g} \arrow{d}{(\gamma_1,\gamma_2)} \\
  \Ig^\bbf_{\Gsf,\xbf,\Xi\cdot\Ksf} \arrow{r}{\Delta} &
  \Ig^\bbf_{\Gsf,\xbf,\Xi\cdot\Ksf} \times \Ig^\bbf_{\Gsf,\xbf,\Xi\cdot\Ksf}.
 \end{tikzcd}
\end{center}

Recall that the compactly supported cohomology of a separated Deligne--Mumford stack $S$ locally of finite type over $\cl\FF_q$ (such as $\Ig^\bbf_{\Gsf,\xbf, \Xi\cdot\Ksf}$) is defined as the filtered direct limit of the compactly supported cohomology of its quasi-compact open substacks. Furthermore, the above correspondence induces an endomorphism 
\[\Rrm\Gamma_c([\Ksf g \Ksf])\colon \Rrm\Gamma_c( \Ig^\bbf_{\Gsf,\xbf,\Xi\cdot\Ksf},\cl\QQ_\ell) \to \Rrm\Gamma_c( \Ig^\bbf_{\Gsf,\xbf,\Xi\cdot\Ksf},\cl\QQ_\ell);\] 
indeed, if we fix a quasi-compact open substack $U\subset \Ig^\bbf_{\Gsf,\xbf,\Xi\cdot\Ksf}$ then the above correspondence induces a morphism
\[
\Rrm\Gamma_c(U,\cl\QQ_\ell)\to \Rrm\Gamma_c(\gamma_2(\gamma_1^{-1}U),\cl\QQ_\ell),
\]
and we obtain the desired endomorphism $\Rrm\Gamma_c([\Ksf g \Ksf])$ via passing to the direct limit. (Recall that  we have $\gamma_1^\ast\cl\QQ_\ell \cong \cl\QQ_\ell\cong\gamma_2^!\cl\QQ_\ell$ as $\gamma_2$ is \'etale.)

Recall that we have fixed a model of $\Ig^\bbf_{\Gsf,\xbf}$ defined over some finite extension $\kappa$ of $\FF_q$; \emph{cf.} \S\ref{ssect-infinite-level-igusa} and \S\ref{notation-Hecke-action}. Then the Hecke correspondence $[\Ksf g\Ksf]$ is also defined for the $\kappa$-models as the moduli description (Thm.~\ref{th-infinite-level-igusa-var}(\ref{th-infinite-level-igusa-var:moduli})) descents over $\kappa$; \emph{cf.} Rmk.~\ref{rmk-igusa-variety-over-finite-fields}. 
If furthermore $[\Ksf g\Ksf]$ stabilises $\Ig_{\Gsf,\xbf,\Xi\cdot\Ksf}^{\bbf,\leqslant\mu}$ defined in \eqref{eq-Igusa-HN} (which is a non-trivial assumption unless $\Ig_{\Gsf,\xbf,\Xi\cdot\Ksf}^{\bbf}$ is already quasi-compact),  then we denote by $[\Ksf g \Ksf]^{\leqslant\mu}$ its restriction to  $\Ig_{\Gsf,\xbf,\Xi\cdot\Ksf}^{\bbf,\leqslant\mu}$, which is also defined over the $\kappa$-model. If $\Ksf$ is small enough compared to $\mu$ so that $\Ig_{\Gsf,\xbf,\Xi\cdot\Ksf}^{\bbf,\leqslant\mu}$ is a scheme (\emph{cf.} Cor.~\ref{cor-schematic}), then for big enough $s \gg 0$ the $s$-th iterated Frobeius twist
\[
 [\Ksf g \Ksf]^{\leqslant\mu,(s)} \coloneqq (\Fr^s \circ \gamma_1, \gamma_2)
\]
has a finite \'etale fixed point scheme and that the trace can be calculated by
\begin{equation*}
 \tr\Rrm\Gamma_c([\Ksf g \Ksf]^{\leqslant\mu,(s)}) = \sum_{x \in \Fix [\Ksf g \Ksf]^{\leqslant\mu,(s)}(\k)} \tr(\id \mid \cl\QQ_\ell) = \# \Fix [\Ksf g \Ksf]^{\leqslant\mu,(s)}(\k).
\end{equation*}
This follows from the Lefschetz trace formula (\emph{cf.} \cite[Cor.~5.4.5]{Fujiwara:TraceFormula}, see also \cite[Thm.~5.4.5]{Varshavsky:Lefschetz-Verdier}), and it will be used later to obtain the ``preliminary point-counting formula'' (\emph{cf.} Prop.~\ref{prop-preliminary-trace-formula}).

\begin{rmk}
    Unless  $\Ig_{\Gsf,\xbf,\Xi\cdot\Ksf}^{\bbf}$ is quasi-compact, we do not expect general Hecke correspondences $[\Ksf g\Ksf]$ to stabilise $\Ig_{\Gsf,\xbf,\Xi\cdot\Ksf}^{\bbf,\leqslant\mu}$'s when $\mu$ gets very convex. 
    To illustrate, consider the analogous setting for $\Xscr^Z_{\Gscr,\Ibf,\overline\xbf}$ when $\Gscr$ is a split reductive group over $C$, $Z$ is a Beilinson--Drinfeld Schubert variety and $\overline\xbf\in C^I(\cl\FF_q)$. Then, C.~Xue \cite[Thm.~2]{Xue:Finiteness} showed that the compact support cohomology of the intersection complex of $\Xscr^Z_{\Gscr,\Ibf,\overline\xbf}$ is finitely generated over the unramified Hecke algebra at any $t\in|C|$, and its proof (esp. \cite[Prop.~2.2.4, Lem.~2.2.8]{Xue:Finiteness}) strongly suggests that Hecke correspondences coming from generators of the unramified Hecke algebra at $t\in|C|$ tend to ``destabilise'' sufficiently convex HN truncations. We expect that Hecke correspondences on Igusa varieties should behave similarly.
    
    Nonetheless, one may still hope to obtain an analogue of $[\Ksf g\Ksf]^{\leqslant\mu}$ via systematically working with ``compactifications'' of $\Ig^{\bbf,\leqslant\mu}_{\Gsf,\xbf,\Xi\cdot\Ksf}$. (A similar strategy has already appeared in the work of L.~Lafforgue \cite{Lafforgue:GlobalLanglands} on point-counting for moduli stacks of Drinfeld shtukas.)
We plan to explore this strategy in  future projects.
\end{rmk}

 \subsection{Group theoretic description of Hecke-fixed points}
 \label{ssect-a}
 We can refine the parametrisation in \S\ref{ssect-global-J-orbits} for the $\Fix [\Ksf g \Ksf]$, provided $\Ksf$ is small enough. We closely follow \cite[\S{V.1}]{HarrisTaylor:TheBook}. We consider the following set 
 \begin{align*}
  \FP_{\bbf} &\coloneqq \{ (\underline\Vscr_0',\tau_0') \mid \underline\Vscr_0' \textnormal{ isoshtuka in } \B(\Fsf,\Gsf)_\bbf, \varphi \in \Aut(\Vscr_0') \}/\cong.
 \shortintertext{Trivialising $\Vscr_0'$, we identify}
  \FP_{\bbf} &= \left \{
\big.(b,a) \in \Gsf(\breve\Fsf) \times \Jsf_b(\Fsf) \text{ such that }
[b]_\sigma \in\B(\Fsf,\Gsf)_\bbf  
\right \}/\sim.
 \end{align*}
where the equivalence relation is given by  $(b,a)\sim (g^{-1}b\sigma(g), g^{-1}ag)$ for any $g\in\Gsf(\breve\Fsf)$. Note that $g^{-1} b\sigma(g) = b$ if and only if $g \in \Jsf_b(F)$, so we may regard the equivalence class of $(b,a)$ as a pair $([b]_\sigma,[a])$ where $[b]_\sigma\in\B(\Fsf,\Gsf)_\bbf$ and $[a]$ is a conjugacy class in $\Jsf_b(\Fsf)$ for some representative $b$ of $[b]_\sigma$.
 
By definition $\zeta = (\Vscr_0',\tau_0',\eta_0\Ksf_g\Xi) \in \Ig^\bbf_{\Gsf,\xbf,\Xi\cdot\Ksf_g}(\cl\FF_p)$ is a fixed point for $g \in \JJ_\bbf$ if and only if 
 \begin{equation} \label{eq-FP-automorphism}
  (\Vscr_0',\tau_0',\eta_0\Ksf\Xi) \cong (\Vscr_0',\tau_0', \eta_0g\Ksf\Xi).
 \end{equation}
 Any such isomorphism defines an element of $\FP_\bbf$, which may not be unique if $\Ksf$ is too big. We can reformulate the construction group-theoretically as follows. If $\pi(\Vscr_0',\tau_0',\eta_0) = [b]_\sigma$ and $\zeta = \tilde{z} \cdot y \Ksf_g\Xi$, then $\zeta \in \Fix([\Ksf g\Ksf])$ if and only if $\ztilde y \Ksf = \ztilde y g \Ksf$, i.e.\ there exist $a \in \Jsf_b(\Fsf)$ and $u \in \Xi\Ksf$ such that
 \begin{equation} \label{eq-a}
  yg = \iota_{\ztilde}(a) y u.
 \end{equation}

 We denote by $\widetilde\pi\colon \Fix [\Ksf g \Ksf] \dashrightarrow \FP_\bbf, \zeta \mapsto ([b]_\sigma,[a])$ the associated one-to-many correspondence.

In order to parametrise $\widetilde\pi\iv([b]_\sigma,[\bar a])$, we denote
 \[
 \Xsf_{\Ksf g\Ksf}(b,a) \coloneqq \{y \in \JJ_\bbf \mid  y\iv \iota_{\ztilde}(a)y \in g \Ksf \Xi\}.
 \]  
 Note that up to bijection, the above set only depends on $[b]_\sigma$ and $[a]$.
 
 \begin{lem}\label{lem-preliminary-point-counting}
 The map
\[
 \Xsf_{\Ksf g\Ksf}(b,a) \to \Fix[\Ksf g \Ksf], y\mapsto \ztilde\cdot y \Ksf_g\Xi
\]
 induces a bijection
 \[
\xymatrix@1{\iota_{\tilde z}(\Zsf_{\Jsf_{{b}}(\Fsf)}(a))\,\backslash\, X_{\Ksf g\Ksf}(b,a) \,/\,\Ksf_g\Xi \ar[r]^-{\cong}  & \widetilde\pi^{-1}([b]_\sigma,[a])}.
\]
\end{lem}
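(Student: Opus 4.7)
The plan is to unpack the definitions and verify the bijection in three steps—well-definedness on the double coset, surjectivity, and injectivity.

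First, I check that $y \mapsto \ztilde\cdot y\Ksf_g\Xi$ descends to the double coset $\iota_{\ztilde}(\Zsf_{\Jsf_b(\Fsf)}(a))\backslash \Xsf_{\Ksf g\Ksf}(b,a)/\Ksf_g\Xi$. For $\gamma \in \Zsf_{\Jsf_b(\Fsf)}(a)$ and $u \in \Ksf_g\Xi$, set $y' = \iota_{\ztilde}(\gamma)yu$. That the image in $\Fix[\Ksf g\Ksf]$ is unchanged is clear since $\iota_{\ztilde}(\gamma)$ stabilises $\ztilde$ by construction (\S\ref{ssect-global-J-orbits}) and $\Ksf_g\Xi$ is the mod-out group. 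To see that $y' \in \Xsf_{\Ksf g\Ksf}(b,a)$ whenever $y$ is, one computes $y'^{-1}\iota_{\ztilde}(a)y' = u^{-1}(y^{-1}\iota_{\ztilde}(a)y)u$ using $[\gamma,a] = 1$, and this lies in $g\Ksf\Xi$ because $u^{-1}(g\Ksf\Xi)u = g\Ksf\Xi$ for $u \in \Ksf_g\Xi$; indeed, writing $u = v\xi$ with $v \in \Ksf \cap g\Ksf g^{-1}$ and $\xi \in \Xi$ central in $\JJ_\bbf$, one has $v^{-1}gv = g\cdot(g^{-1}v^{-1}g)\cdot v \in g\Ksf$ since $g^{-1}v^{-1}g \in \Ksf$.

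For surjectivity, let $\zeta \in \tilde\pi^{-1}([b]_\sigma,[a])$. The orbit description \eqref{eq-PIC-preimage} supplies a presentation $\zeta = \ztilde\cdot y\Ksf_g\Xi$, and \eqref{eq-a} produces some $a' \in \Jsf_b(\Fsf)$ and $u \in \Ksf\Xi$ with $yg = \iota_{\ztilde}(a')yu$. Because $\zeta$ lies in the preimage of the specific conjugacy class $[a]$, we may arrange $[a']=[a]$; writing $a' = \gamma a\gamma^{-1}$ and replacing $y$ by $\iota_{\ztilde}(\gamma^{-1})y$ (which preserves both $\zeta$ and the double coset class) yields a representative in $\Xsf_{\Ksf g\Ksf}(b,a)$.

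Injectivity will be the main obstacle. Suppose $y,y' \in \Xsf_{\Ksf g\Ksf}(b,a)$ satisfy $y' = \iota_{\ztilde}(\delta)yu$ for some $\delta \in \Jsf_b(\Fsf)$ and $u \in \Ksf_g\Xi$; one must show $\delta \in \Zsf_{\Jsf_b(\Fsf)}(a)$. Combining $y^{-1}\iota_{\ztilde}(a)y \in g\Ksf\Xi$ with $y'^{-1}\iota_{\ztilde}(a)y' = u^{-1}y^{-1}\iota_{\ztilde}(\delta^{-1}a\delta)yu \in g\Ksf\Xi$ (using the identity $u^{-1}(g\Ksf\Xi)u = g\Ksf\Xi$ again) and dividing yields $\iota_{\ztilde}(\delta^{-1}a\delta a^{-1}) \in y(\Ksf\Xi)y^{-1}$. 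The delicate step is to upgrade this membership to the identity $\delta^{-1}a\delta = a$; as in the classical setting (\emph{cf.}~\cite[\S V.1]{HarrisTaylor:TheBook}), I expect this to rely on $\Ksf$ being chosen small enough that $\iota_{\ztilde}(\Jsf_b(\Fsf))\cap y(\Ksf\Xi)y^{-1}$ is contained in $\iota_{\ztilde}(\Zsf_\Gsf(\Fsf))\cdot\Xi$ for every relevant $y$, which together with the torsion-freeness of $\Xi$ forces the commutator $\delta^{-1}a\delta a^{-1}$ to be trivial.
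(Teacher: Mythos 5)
Your proof follows essentially the same route as the paper's, which compresses the whole argument into a single chain of three bijections. Your well-definedness and surjectivity steps are correct and helpfully spell out what the paper leaves implicit, including the normalisation identity $u^{-1}(g\Ksf\Xi)u = g\Ksf\Xi$ for $u\in\Ksf_g\Xi$, which also underlies the middle line of the paper's chain.

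Your instinct about injectivity is also right: the final step of the paper's chain—replacing the quotient by $\iota_{\ztilde}(\Jsf_b(\Fsf))$ (with $a'$ allowed to range over $[a]$) by the quotient by $\iota_{\ztilde}(\Zsf_{\Jsf_b(\Fsf)}(a))$ (with $a$ fixed)—is not formal. The paper is working under the standing hypothesis announced at the opening of \S\ref{ssect-a} (``provided $\Ksf$ is small enough''), which is made precise in the following subsection as \eqref{eq-FP-small-enough-level}: $\iota_{\ztilde}(\Jsf_b(\Fsf))\cap y\Ksf\Xi y^{-1} = \{1\}$. With that hypothesis injectivity is immediate, since the commutator is forced to be trivial and hence $\delta\in\Zsf_{\Jsf_b(\Fsf)}(a)$. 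Two corrections to what you wrote. First, a small algebra slip: dividing $y^{-1}\iota_{\ztilde}(a)y\in g\Ksf\Xi$ into $y^{-1}\iota_{\ztilde}(\delta^{-1}a\delta)y\in g\Ksf\Xi$ on the \emph{left} gives $\iota_{\ztilde}(a^{-1}\delta^{-1}a\delta)\in y\left((g\Ksf\Xi)^{-1}(g\Ksf\Xi)\right)y^{-1} = y\Ksf\Xi y^{-1}$, whereas your ordering $\delta^{-1}a\delta a^{-1}$ (dividing on the right) only lands in $y(g\Ksf g^{-1}\Xi)y^{-1}$. Second, the weaker smallness hypothesis you propose—$\iota_{\ztilde}(\Jsf_b(\Fsf))\cap y\Ksf\Xi y^{-1}\subset \iota_{\ztilde}(\Zsf_\Gsf(\Fsf))\cdot\Xi$—does not obviously suffice: combined with Lemma~\ref{lem-Ksf-Xi} (applied with $\Csf = \{1\}$, giving $\iota_{\ztilde}(\Jsf_b(\Fsf))\cap\Xi = \{1\}$) it places $a^{-1}\delta^{-1}a\delta$ in $\Zsf_\Gsf(\Fsf)\cap\Gsf^{\der}(\Fsf)$, a finite but generally nontrivial group, and the torsion-freeness of $\Xi$ only removes the $\Xi$-ambiguity, not this residual central torsion. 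Invoking \eqref{eq-FP-small-enough-level} directly avoids the issue.
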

\begin{proof}
 This follows directly from \eqref{eq-a} as
 \begin{align*}
  \widetilde\pi\iv([b]_\sigma,[a]) &= \{ \ztilde \cdot y \in \Ig^\bbf_{\Gsf,\xbf,\Xi\cdot\Ksf}(\k) \mid \exists a' \in [a],u \in \Ksf\Xi: y\iv \iota_ {\ztilde}(a')y  = gu\iv \} \\
  &\cong \iota_{\tilde z}(\Jsf_b(\Fsf)) \,\backslash\, \{y \in \JJ_\bbf \mid \exists a' \in [a]: y\iv \iota_{\ztilde}(a') y \in g\Ksf\Xi \}\,/\,\Ksf_g\Xi \\
  &\cong \iota_{\tilde z}(\Zsf_{\Jsf_b(\Fsf)}(a)) \,\backslash\, \{y \in \JJ_\bbf \mid y\iv \iota_{\ztilde}(a)y \in g\Ksf\Xi\}\,/\,\Ksf_g\Xi.
 \end{align*}
\end{proof} 
 
\subsection{The general point counting formula for $\widetilde\pi\iv([b]_\sigma,[a])$}
 We choose a Haar measure on $\JJ_\bbf$ as the product of the Haar measures on $J_{b_v}(\Fsf_v)$ so that hyperspecial maximal open subgroups at all but finitely many unramified places $v\notin\xbf$ have volume~$1$.  We give a counting measure for all discrete subgroups of $\JJ_\bbf$. Finally, for any $a\in\Jsf_{b}(\Fsf)$ we choose a Haar measure of $\Zsf_{\JJ_\bbf}(\iota_{\tilde z}(a))$, inducing a quotient measure on $\Zsf_{\JJ_\bbf}(\iota_{\tilde z}(a))\backslash \JJ_\bbf$.

 \begin{prop}\label{prop-preliminary-point-counting}
 Let $\mu\in\Lambda^+_\QQ$ be a Harder--Narasimhan parameter, and we consider a double coset $[\Ksf g\Ksf]$ as above. Then the cardinality of 
 $(\widetilde\pi^{-1}([b]_\sigma,[a])^{\leqslant\mu}$ equals
\begin{equation} \label{eq-preliminary-point-counting}
 \vol\left (\iota_{\tilde z}(\Zsf_{\Jsf_{b}(\Fsf)}(a)) \backslash \Zsf_{\JJ_\bbf}(\iota_{\tilde z}(a))/\Xi\right )\cdot \int _{\Zsf_{\JJ_\bbf}(\iota_{\tilde z}(a))\backslash\JJ_\bbf} \tfrac{1}{\vol(\Ksf)} \mathbb{1}_{\Ksf g\Ksf} (y^{-1}\iota_{\tilde z}(a) y)\cdot  \mathbb{1}_{\tilde z}^{\leqslant\mu}(y) d\bar y ,
\end{equation}
where $\mathbb{1}^{\leqslant\mu}_{\tilde z}$ is the Harder--Narasimhan truncator function
\[
\mathbb{1}_{\tilde z}^{\leqslant\mu}(y)\coloneqq\left \{
\begin{array}{ll}
1 & \text{if }\tilde{z}\cdot y \in \Ig^{\bbf,\leqslant\mu} _{\Gsf,\xbf}(\k);\\
0 &\text{otherwise.}
\end{array}
\right .
\]
\end{prop}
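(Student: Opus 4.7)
The starting point is Lemma~\ref{lem-preliminary-point-counting}, which identifies $\tilde\pi^{-1}([b]_\sigma,[a])$ with the double coset space $H \backslash X_{\Ksf g \Ksf}(b,a)/(\Ksf_g \Xi)$, where I set $H \coloneqq \iota_{\tilde z}(\Zsf_{\Jsf_b(\Fsf)}(a))$ and $a' \coloneqq \iota_{\tilde z}(a)$. The HN-truncator $\mathbb{1}^{\leqslant\mu}_{\tilde z}$ is left-invariant under $\iota_{\tilde z}(\Jsf_b(\Fsf)) \supset H$ (since this group stabilises $\tilde z$ in the Igusa variety) and right-invariant under $\Xi \Ksf$ (since it factors through the HN-stratification of $\Ig^{\bbf}_{\Gsf,\xbf,\Xi\cdot\Ksf}$). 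Hence the cardinality in question equals the number of double cosets $y \in H \backslash X_{\Ksf g\Ksf}(b,a)/\Ksf_g \Xi$ satisfying $\mathbb{1}^{\leqslant\mu}_{\tilde z}(y) = 1$.

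The first main step is to switch from $\Ksf_g$ to $\Ksf$. There is a natural bijection
\[
\{y : y^{-1}a'y \in g\Ksf\}/\Ksf_g \;\xrightarrow{\sim}\; \{y : y^{-1}a'y \in \Ksf g\Ksf\}/\Ksf,
\]
sending $y\Ksf_g$ to $y\Ksf$, with inverse $y\Ksf \mapsto y k_1 \Ksf_g$ whenever $y^{-1}a'y = k_1 g k_2$. One checks that it is compatible with the left $H$-action, with $\Xi$-translations, and (thanks to the right-$\Ksf$-invariance of $\mathbb{1}^{\leqslant\mu}_{\tilde z}$) with the HN-condition. Since $\Xi$ is discrete, torsion-free, and central in $\JJ_\bbf$, the hypothesis $\Zsf_\Gsf(\Fsf) \cap \Xi = \{1\}$ together with $\Xi \subset \Zsf_\Gsf(\AA)$ forces $\Xi \cap H = \{1\}$, and $\Xi \cap \Ksf = \{1\}$ since $\Ksf$ is compact. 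Hence $\Xi\Ksf$ acts freely, and converting the finite cardinality into an integral with respect to the chosen Haar measure on $\JJ_\bbf$ (with $\Xi$ carrying counting measure) yields
\[
N = \frac{1}{\vol(\Ksf)} \int_{H \backslash \JJ_\bbf/\Xi} \mathbb{1}_{\Ksf g\Ksf}(y^{-1}a'y)\,\mathbb{1}^{\leqslant\mu}_{\tilde z}(y)\,d\bar y.
\]

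The last step is to pass from the domain $H \backslash \JJ_\bbf/\Xi$ to $\Zsf_{\JJ_\bbf}(a')\backslash \JJ_\bbf$. Both $H$ and $\Xi$ lie in $\Zsf_{\JJ_\bbf}(a')$ (the first because elements of $\Zsf_{\Jsf_b(\Fsf)}(a)$ commute with $a$, the second because $\Xi$ is central), and the fibre of the canonical surjection $H \backslash \JJ_\bbf/\Xi \twoheadrightarrow \Zsf_{\JJ_\bbf}(a')\backslash \JJ_\bbf$ is canonically $H \backslash \Zsf_{\JJ_\bbf}(a')/\Xi$, whose volume is precisely the prefactor of the statement and is finite by the $\Xi$-cocompactness hypothesis. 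A Fubini-type decomposition along these fibres then delivers the claimed formula. The main subtle point is the behaviour of the HN-truncator $\mathbb{1}^{\leqslant\mu}_{\tilde z}(y)$ under left translation by $\Zsf_{\JJ_\bbf}(a')$: unlike the orbital factor $\mathbb{1}_{\Ksf g\Ksf}(y^{-1}a'y)$, the HN-polygon of the underlying $\Gscr$-bundle is not obviously preserved by the Hecke action of the full centraliser, so the Fubini step requires either verifying that the support of $\mathbb{1}_{\Ksf g\Ksf}(y^{-1}a'y)\cdot\mathbb{1}^{\leqslant\mu}_{\tilde z}(y)$ is compatible with the $\Zsf_{\JJ_\bbf}(a')$-action, or reinterpreting the right-hand side as an appropriately averaged integral. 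This is where the real care in the write-up will be required.
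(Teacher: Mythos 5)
Your proof follows essentially the same two-step strategy the paper uses: first cite Lemma~\ref{lem-preliminary-point-counting} to identify the truncated fibre with a double coset count, convert that to an integral over $\iota_{\tilde z}(\Zsf_{\Jsf_b(\Fsf)}(a))\backslash\JJ_\bbf/\Xi$, and then apply a Fubini decomposition along the surjection to $\Zsf_{\JJ_\bbf}(\iota_{\tilde z}(a))\backslash\JJ_\bbf$. You are in fact slightly more careful than the paper in two places: you make the $\Ksf_g$-to-$\Ksf$ conversion explicit (the paper passes directly from the double coset count of Lemma~\ref{lem-preliminary-point-counting} to the integral \eqref{eq-preliminary-point-counting-proof} without comment), and you verify the left $H$- and right $\Xi\Ksf$-invariance of the truncator before using it.

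The ``subtle point'' you raise at the end is exactly the right one, and it is worth noting that the paper's own proof does not resolve it either: the paper asserts that ``the integrand is constant on each left $\left(\iota_{\tilde z}(\Zsf_{\Jsf_b(\Fsf)}(a))\backslash\Zsf_{\JJ_\bbf}(\iota_{\tilde z}(a))/\Xi\right)$-coset'' without justification, and the constancy of the orbital factor $\mathbb{1}_{\Ksf g\Ksf}(y^{-1}\iota_{\tilde z}(a)y)$ is immediate while that of $\mathbb{1}^{\leqslant\mu}_{\tilde z}(y)$ is not: the truncator is manifestly left-invariant under the stabiliser $\iota_{\tilde z}(\Jsf_b(\Fsf))$ and right-invariant under $\Xi\Ksf$, but the Hecke action of the full non-compact centraliser $\Zsf_{\JJ_\bbf}(\iota_{\tilde z}(a))$ changes the level structure at the localisations and hence, a priori, the induced $H$-bundle and its HN-polygon. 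As stated, the right-hand side of \eqref{eq-preliminary-point-counting} is therefore only well-defined once one fixes a convention (e.g.\ fibre-averaging). Note however that this does not affect the downstream use of the proposition: the proof of Proposition~\ref{prop-preliminary-trace-formula} in the non-quasi-compact case invokes only the intermediate form \eqref{eq-preliminary-point-counting-proof} over $\iota_{\tilde z}(\Zsf_{\Jsf_b(\Fsf)}(a))\backslash\JJ_\bbf/\Xi$, and the ``In particular'' clause applies only when the Igusa variety is quasi-compact, in which case $\mathbb{1}^{\leqslant\mu}_{\tilde z}\equiv 1$ and the constancy holds trivially. So your instinct that ``real care in the write-up will be required'' is correct; the cleanest resolution is to either state the intermediate integral form as the conclusion of the proposition, or to add the quasi-compactness (resp.\ $\mu$ large enough) hypothesis under which the descent is automatic.
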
 
\begin{proof}
By Lem.~\ref{lem-preliminary-point-counting} the cardinality of $\pi\iv([b]_\sigma,[a])^{\leqslant\mu}$ is
\begin{equation}\label{eq-preliminary-point-counting-proof}
\int_{\iota_{\tilde z}(\Zsf_{\Jsf_{b}(\Fsf)}(a))\backslash\JJ_\bbf/\Xi}\tfrac{1}{\vol(\Ksf)} \mathbb{1}_{\Ksf g\Ksf} (y^{-1}\iota_{\tilde z}(a) y)\cdot  \mathbb{1}_{\tilde z}^{\leqslant\mu}(y) dy.
\end{equation}
Now using Fubini's theorem, the above integral can be computed by integrating on each left $\left (\iota_{\tilde z}(\Zsf_{\Jsf_{b}(\Fsf)}(a)) \backslash \Zsf_{\JJ_\bbf}(\iota_{\tilde z}(a))/\Xi\right)$-coset  and then integrate over the quotient,  which is $\Zsf_{\JJ_\bbf}(\iota_{\tilde z}(a))\backslash\JJ_\bbf$. 
Now, we rewrite the integral \eqref{eq-preliminary-point-counting-proof} as follows
\[
\int _{\iota_{\tilde z}(\Zsf_{\JJ_\bbf}(a))\backslash\JJ_\bbf}
\int_{\left (\iota_{\tilde z}(\Zsf_{\Jsf_{b}(\Fsf)}(a)) \backslash \Zsf_{\JJ_\bbf}(\iota_{\tilde z}(a))/\Xi\right )\cdot y_0} 
\tfrac{1}{\vol(\Ksf)} \mathbb{1}_{\Ksf g\Ksf} (y^{-1}\iota_{\tilde z}(a) y)\cdot  \mathbb{1}_{\tilde z}^{\leqslant\mu}(y)\cdot dy d\bar y_0 ,
\]
and note that the integrand is constant on each left $\left (\iota_{\tilde z}(\Zsf_{\Jsf_{b}(\Fsf)}(a)) \backslash \Zsf_{\JJ_\bbf}(\iota_{\tilde z}(a))/\Xi\right )$-coset, so the first integral has the effect of multiplying the volume to the integrand, which gives the desired formula \eqref{eq-preliminary-point-counting}.
\end{proof}

\subsection{Disjointness of the fibres of $\widetilde\pi$} 
 In order to obtain a point counting formula for $\Fix [\Ksf g \Ksf](\k)$ from Prop.~\ref{prop-preliminary-point-counting}, we need the following union
 \[
  \Fix([\Ksf g \Ksf]^{\leqslant\mu}) = \bigcup_{([b]_{\sigma},[a]) \in \FP_{\bbf}} \widetilde\pi\iv([b]_\sigma,[a])^{\leqslant\mu}
 \]
 to be disjoint, i.e.\ that the conjugacy class $[a]$ is uniquely defined by $y$. Assume that
 \begin{equation} \label{eq-FP-small-enough-level}
  \iota_{\ztilde}(\Jsf_b(\Fsf)) \cap y \Ksf\Xi y\iv = \{1\}.
 \end{equation}
 Since \eqref{eq-a} is equivalent to
 $
  ygy\iv = \iota_{\ztilde}(a) y u y\iv,
 $
 we conclude that $a \in \Jsf_b(\Fsf)$ and $u \in \Ksf$ only depend on the choice of $y$. Another choice of $y$ corresponds to a $\Jsf_b(\Fsf)$-conjugate of $a$; \emph{cf.} \cite[Proof of Lem.~V.1.2]{HarrisTaylor:TheBook}. Thus, assuming \eqref{eq-FP-small-enough-level} we may associate a unique element $([b]_\sigma,[a]) \in \FP_\bbf$ to $y$.
 
We will show that the assumption  \eqref{eq-FP-small-enough-level} can be achieved in the relevant cases by shrinking $\Ksf$ if necessary. For this, we need the following lemma.

\begin{lem}\label{lem-Ksf-Xi}
 For any compact subgroup $\Csf\subset \JJ_\bbf$, we have $\iota_{\tilde z}(\Jsf_b(\Fsf))\cap \xi \Csf = \emptyset$ for any $\xi\in\Xi\setminus\{1\}$.
\end{lem}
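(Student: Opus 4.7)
The plan is to argue by contradiction. Suppose $\iota_{\tilde z}(j) = \xi c$ for some $j \in \Jsf_b(\Fsf)$, $c \in \Csf$ and $\xi \in \Xi \setminus \{1\}$. I will test this identity against every rational character $\chi \in X^*(\Gsf)$ of the underlying group to force $\xi$ into the degree-zero adelic subgroup of the centre $\Zsf_\Gsf$, and then conclude using the discreteness, torsion-freeness, and disjointness-from-$\Zsf_\Gsf(\Fsf)$ of $\Xi$ imposed in \S\ref{ssect-Xi}.

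Any $\chi \in X^*(\Gsf)$ induces a continuous homomorphism $\chi_\AA\colon \JJ_\bbf \to \AA^\times$ placewise (on each $J_{b_i}$-factor this uses that a character of a linear algebraic group factors through the abelianization and is therefore canonically the same on $\Msf_{b_i}$ and on its inner form $J_{b_i}$). Using the formula $\iota_{\tilde z}(j) = \gbf^{-1} j \gbf$ of \eqref{eq-global-to-local-automorphism}, conjugation-invariance of $\chi$ yields $\chi_\AA(\iota_{\tilde z}(j))_v = \chi(j)$ at every place $v$, so $\chi_\AA(\iota_{\tilde z}(j))$ is the principal idele attached to $\chi(j)$. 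This scalar lies in $\Fsf^\times$: from $j = b\sigma(j)b^{-1}$ and conjugation-invariance one deduces $\chi(j) = \sigma(\chi(j))$. The product formula then gives $\deg\chi_\AA(\iota_{\tilde z}(j)) = 0$. On the other hand $\chi_\AA(\Csf)$ is a compact \emph{subgroup} of $\AA^\times$, hence it lies inside $\AA^{\times,0}$ and $\deg\chi_\AA(c) = 0$. Combining, $\deg\chi_\AA(\xi) = 0$ for every $\chi \in X^*(\Gsf)$.

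The maximal split central torus $A$ of $\Gsf$ is isogenous (via the composition $A\hookrightarrow\Zsf_\Gsf\to\Gsf^{\ab}$) to the maximal split subtorus of $\Gsf^{\ab}$; a Galois-averaging argument therefore makes restriction surjective as a map $X^*(\Gsf)_\QQ\twoheadrightarrow X^*(A)_\QQ$. The vanishing above accordingly forces $\xi$ into the degree-zero subgroup $\Zsf_\Gsf(\AA)^0 \coloneqq \bigcap_{\chi\in X^*(\Gsf)}\ker(\deg\circ\chi)$, in which $\Zsf_\Gsf(\Fsf)$ is discrete and cocompact by the standard finiteness-of-class-number result for tori. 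The discrete subgroup $\Xi\cap\Zsf_\Gsf(\AA)^0$ then injects, via $\Xi\cap\Zsf_\Gsf(\Fsf)=\{1\}$, into the compact quotient $\Zsf_\Gsf(\Fsf)\backslash\Zsf_\Gsf(\AA)^0$; being discrete inside a compact space it is finite, and hence trivial since $\Xi$ is torsion-free. This gives $\xi = 1$, contradicting the hypothesis.

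The main obstacle is the computation in the second paragraph: checking that after all the inner conjugations encoded in $\iota_{\tilde z}$ the image $\chi_\AA(\iota_{\tilde z}(j))$ really is a principal idele of an element of $\Fsf^\times$. The subtlety is at the legs $v = x_i$, where the conjugating element $\gbf_v$ lies in $\Gsf(\breve\Fsf_{x_i})$ rather than $\Gsf(\Fsf_{x_i})$, so one must separately argue that $\chi(\gbf_v^{-1} j \gbf_v)$ actually belongs to $\Fsf^\times$ and not merely to $\breve\Fsf_{x_i}^\times$; this is precisely the content of the tautological fact that characters factor through the abelianization, together with the $\sigma$-invariance that $j \in \Jsf_b(\Fsf)$ encodes. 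Once this is in place everything else is standard adelic bookkeeping.
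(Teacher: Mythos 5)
Your proposal is correct and follows essentially the same route as the paper's proof: both detect the failure of $\xi$ to lie in the degree-zero part of the centre by pairing against $\Fsf$-rational characters (you work directly with $X^*(\Gsf)$; the paper phrases this via the cocentre $\Dsf = \Gsf/\Gsf^{\der}$ and the functional $\vartheta$, but $X^*(\Gsf) = X^*(\Dsf)^\Gamma$ so there is no mathematical difference), and both use the product formula for $\iota_{\tilde z}(\Jsf_b(\Fsf))$, compactness of $\Csf$, and then the discreteness/torsion-freeness/cocompactness hypotheses on $\Xi$ to finish. The only spot where you are a touch terse is the very last line — ``being discrete inside a compact space it is finite'' implicitly uses that the image of $\Xi \cap \Zsf_\Gsf(\AA)^0$ in $\Zsf_\Gsf(\Fsf)\backslash\Zsf_\Gsf(\AA)^0$ is discrete, which needs the compactness of $\Zsf_\Gsf(\Fsf)\backslash\Zsf_\Gsf(\AA)/\Xi$ rather than mere discreteness of $\Xi$ in $\Zsf_\Gsf(\AA)$ — but the paper's proof is equally compressed at this point, so this is a shared stylistic shortcut rather than a defect of your argument.
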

\begin{proof} 
Let $\Dsf\coloneqq\Gsf/\Gsf^{\der}$ be the cocentre of $\Gsf$. For any $x\in\xbf$, the natural map $\Gsf(\breve\Fsf_{x})\to \Dsf(\breve\Fsf_{x})$ restricts to $J_{b_x}(\Fsf_{x})\to \Dsf(\Fsf_{x})$, so we obtain a natural continuous map $\pr\colon\JJ_\bbf\to\Dsf(\AA)$. Similarly, the natural map $\Gsf(\breve\Fsf)\to\Dsf(\breve\Fsf)$ restricts to $\Jsf_b(\Fsf)\to\Dsf(\Fsf)$, which coincides with $\pr\circ\iota_{\tilde z}$ for any $\tilde z$ viewing $\Dsf(\Fsf)$ as a subgroup of $\Dsf(\AA)$.

Set $\Gamma\coloneqq\Gal(\scl\Fsf/\Fsf)$ and define the following continuous map from $\JJ_\bbf$ to a finite free abelian group:
\[
\vartheta\colon
\xymatrix@1
{\JJ_\bbf \ar[r]^{\pr}& \Dsf(\AA)\ar[r]^-{\deg\circ\mathrm{ev}} & (X^\ast(\Dsf)^\Gamma)^\vee\coloneqq\Hom_{\ZZ}(X^\ast(\Dsf)^\Gamma,\ZZ)}
;
\]
i.e., we set $\vartheta(\gbf)\colon \chi\mapsto \deg(\chi(\pr(\gbf)))$ for any $\gsf\in\JJ_\bbf$ and $\chi\in X^\ast(\Dsf)^\Gamma$.
Note that any compact subgroup of $\JJ_\bbf$ lies in the kernel of $\vartheta$ by continuity, and so does  $\iota_{\tilde z}(\Jsf_b(\Fsf))$ by the product formula.

We next claim that $\ker(\vartheta)\cap \Xi = \{1\}$. Indeed, viewing $(X^\ast(\Zsf_\Gsf)^\Gamma)^\vee$ as a finite-index subgroup of $(X^\ast(\Dsf)^\Gamma)^\vee$ via the natural isogeny, $\vartheta$ restricts to the inflation of the natural map $\Zsf_\Gsf(\Fsf)\backslash\Zsf_\Gsf(\AA) \to (X^\ast(\Zsf_\Gsf)^\Gamma)^\vee$, whose kernel is compact. 
Therefore, the image of $\Xi$ injects into $(X^\ast(\Zsf_\Gsf)^\Gamma)^\vee$.

This shows that $\iota_{\tilde z}(\Jsf_b(\Fsf))$ and $\xi\Csf$ are mapped disjointly under $\vartheta$ for any $\xi\in\Xi\setminus\{1\}$, so the lemma follows.
\end{proof}
 \begin{lem} \label{lem-Ksf-small enough}
  Let $\mu\in\Lambda^+_\QQ$ be a  Harder--Narasimhan parameter, and let $\Ksf' g'\Ksf'$ be a double coset in $\JJ_\bbf$ such that $\Fix [\Ksf' g' \Ksf']^{\leqslant\mu}$ is finite. Assume that $\Xi$ is torsion-free. Then there exists an open normal subgroup $\Ksf \subset \Ksf'$ such that \eqref{eq-FP-small-enough-level} is satisfied for any $\zeta \in \Fix[\Ksf g\Ksf]^{\leqslant\mu}(\k)$ and $\Ksf g \Ksf \subset \Ksf' g' \Ksf'$. In particular \eqref{eq-a} induces a well-defined map
  \[
   \widetilde\pi^{\leqslant\mu}\colon \xymatrix@1{
\Fix[\Ksf g\Ksf]^{\leqslant\mu}(\k) \ar[rr]^-{\zeta\mapsto ([b]_\sigma,[ a])}&& \FP_\bbf}.
  \]
 \end{lem}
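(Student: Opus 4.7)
The plan is to reduce \eqref{eq-FP-small-enough-level} to a finite avoidance condition and then use the fact that open normal subgroups of $\Ksf'$ form a neighbourhood basis of $1$ in the locally profinite group $\JJ_\bbf$.

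First, I would dispose of the central part $\Xi$ using Lemma~\ref{lem-Ksf-Xi}. For any compact subgroup $\Csf \subset \JJ_\bbf$ (such as $y\Ksf y^{-1}$ for $\Ksf \subset \Ksf'$ compact open) and any $\xi\in\Xi\setminus\{1\}$, the coset $\xi\cdot \Csf$ is disjoint from $\iota_{\tilde z}(\Jsf_b(\Fsf))$. Hence
\[
\iota_{\ztilde}(\Jsf_b(\Fsf)) \cap y\Ksf\Xi y^{-1} = \iota_{\ztilde}(\Jsf_b(\Fsf)) \cap y\Ksf y^{-1},
\]
so it suffices to arrange that this right-hand intersection equals $\{1\}$ for every fixed point involved.

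Next I would use finiteness of $\Fix[\Ksf' g' \Ksf']^{\leqslant\mu}$ to bound the list of $\tilde z$ and $y$ that can occur. The compatibility of the moduli descriptions yields a natural projection of correspondences $\Ig^\bbf_{\Gsf,\xbf,\Xi\cdot\Ksf_g}\to \Ig^\bbf_{\Gsf,\xbf,\Xi\cdot\Ksf'_{g'}}$ sending $[\Ksf g\Ksf]$ to $[\Ksf' g'\Ksf']$ whenever $\Ksf g \Ksf\subset\Ksf' g'\Ksf'$, so any $\zeta\in\Fix[\Ksf g\Ksf]^{\leqslant\mu}(\k)$ projects to some $\zeta'\in\Fix[\Ksf' g'\Ksf']^{\leqslant\mu}(\k)$. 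Thus there are only finitely many $\JJ_\bbf/\Ksf'\Xi$-classes $\ztilde\cdot y\Ksf'$ that can possibly arise as we vary over all $\Ksf g\Ksf\subset\Ksf' g'\Ksf'$; fix a finite list $(\tilde z_j, y_j)_{j\in J}$ of representatives.

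For each $j$, the group $\iota_{\tilde z_j}(\Jsf_b(\Fsf))$ is discrete in $\JJ_\bbf$ (the standard discreteness of $\Fsf$-points of a reductive group inside its adelic group, transported by $\iota_{\tilde z_j}$), while $y_j\Ksf' y_j^{-1}$ is compact, so the intersection
\[
F_j \coloneqq \iota_{\tilde z_j}(\Jsf_b(\Fsf)) \cap y_j\Ksf' y_j^{-1}
\]
is finite. Then $S\coloneqq \bigcup_{j\in J} y_j^{-1}(F_j\setminus\{1\})y_j$ is a finite subset of $\Ksf'\setminus\{1\}$. Since $\JJ_\bbf$ is locally profinite, there exists an open normal subgroup $\Ksf\triangleleft \Ksf'$ with $\Ksf \cap S = \emptyset$, and this $\Ksf$ satisfies \eqref{eq-FP-small-enough-level} for every $\Ksf g\Ksf\subset \Ksf' g'\Ksf'$ and every $\zeta\in\Fix[\Ksf g\Ksf]^{\leqslant\mu}(\k)$. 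The well-definedness of $\widetilde\pi^{\leqslant\mu}$ then follows from the discussion preceding the lemma.

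The main potential obstacle is verifying that $\iota_{\tilde z}(\Jsf_b(\Fsf))$ is genuinely a \emph{discrete} subgroup of $\JJ_\bbf$; once this is granted (it follows from the fact that $\JJ_\bbf$ is an inner twist of the adelic points of $\Jsf_b$ in a strong sense, via the trivialisation data defining $\iota_{\tilde z}$), the rest is a soft topological argument using the neighbourhood basis of profinite subgroups at the identity.
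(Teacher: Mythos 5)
Your proof is correct and follows essentially the same approach as the paper: reduce to $y\Ksf y^{-1}$ using Lemma~\ref{lem-Ksf-Xi}, use discreteness of $\iota_{\tilde z}(\Jsf_b(\Fsf))$ against compactness of $y\Ksf' y^{-1}$ to get a finite intersection, and then shrink $\Ksf$ to avoid the finitely many nontrivial elements arising from the finitely many fixed points of $[\Ksf'g'\Ksf']^{\leqslant\mu}$. The only cosmetic difference is that you make the finite avoidance set $S$ explicit and spell out the role of normality of $\Ksf$ in $\Ksf'$, which the paper leaves implicit.
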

 \begin{proof}
 Note that $\Jsf_b(\Fsf)$ is a discrete subgroup of $\Jsf_{b}(\AA)$, so its image under $\iota_{\tilde z}$ is a discrete subgroup of $\JJ_\bbf$. Recall that for any $y' \in \JJ_\bbf$ we have \[\iota_{\ztilde}(\Jsf_b(\Fsf)) \cap y' \Ksf' {y'}\iv = \iota_{\ztilde}(\Jsf_b(\Fsf)) \cap y' \Ksf' \Xi {y'}\iv\] by Lem.~\ref{lem-Ksf-Xi} applied to $\Csf = y'\Ksf {y'}\iv$, and the left hand side   is clearly finite.
 
 Thus there exists $\Ksf \subset \Ksf'$ such that \eqref{eq-FP-small-enough-level} is satisfied for every $y\Ksf\Xi \subset y'\Ksf'\Xi$. Since $\Fix [\Ksf'g'\Ksf']^{\leqslant\mu}$ is finite, we can thus choose $\Ksf$ small enough so that the equality~\eqref{eq-FP-small-enough-level} holds for all $\zeta \in \Fix[\Ksf g \Ksf]^{\leqslant\mu}$.
 \end{proof}
 
 \subsection{The Hecke action on the cohomology of Igusa varieties}
 
 We fix a $\cl\QQ_\ell$-valued Haar measure on $\JJ_{\bbf}$ once and for all, and consider the following Hecke algebra
\begin{align*}
\Hcal \coloneqq \Hcal^{\bbf}_{\Gsf,\xbf,\Xi}(\cl\QQ_\ell) &\coloneqq C^\infty_c\left(\JJ_{\bbf}/\Xi;\cl\QQ_\ell \right) 
\end{align*}
From the  smooth action of $\JJ_{b}/\Xi $ we get a convolution action of the Hecke algebra $\Hcal$ on $\coh{i}_{c}(\Ig^{\bbf} _{\Gsf,\xbf,\Xi},\cl\QQ_\ell) $ via convolution.

The pullback along the canonical projection defines an embedding of cohomology groups $\coh{i}_{c}(\Ig^{\bbf} _{\Gsf,\xbf,\Xi\cdot\Ksf},\cl\QQ_\ell) \mono \coh{i}_{c}(\Ig^{\bbf} _{\Gsf,\xbf,\Xi},\cl\QQ_\ell)$ identifying $\coh{i}_{c}(\Ig^{\bbf} _{\Gsf,\xbf,\Xi\cdot\Ksf},\cl\QQ_\ell)$ with $\coh{i}_{c}(\Ig^{\bbf} _{\Gsf,\xbf,\Xi},\cl\QQ_\ell)^\Ksf$. Moreover, we define
\begin{align*}
 p_K \colon \coh{i}_{c}(\Ig^{\bbf} _{\Gsf,\xbf,\Xi},\cl\QQ_\ell) &\to \coh{i}_{c}(\Ig^{\bbf} _{\Gsf,\xbf,\Xi\cdot\Ksf},\cl\QQ_\ell) \\
 v &\mapsto \int_\Ksf g\cdot v\, dg.
\end{align*}

We can interpret the convolution action of $\Hcal$ on $\coh{i}_{c}(\Ig^{\bbf} _{\Gsf,\xbf,\Xi},\cl\QQ_\ell) $ as a cohomological correspondence. We focus on $\varphi = \mathbf{1}_{\Ksf g \Ksf}$ as any $\varphi \in \Hcal$ can be written as
\[
 \varphi = \sum_{g \in I} \alpha_g \cdot \mathbf{1}_{\Ksf g \Ksf},
\]
for some finite set $I \subset G$, $\alpha_g \in \cl\QQ_\ell$ and compact open subgroup $\Ksf \subset \JJ_\bbf$. By the same argument as in \cite[\S6, Eq.~(11)]{ShinSW:IgusaPointCounting} (see also \cite[\S3.1.1]{ShinSW:thesis}), the action of $\mathbf{1}_{\Ksf g \Ksf}$ on $\coh{i}_{c}(\Ig^{\bbf} _{\Gsf,\xbf,\Ksf\Xi},\cl\QQ_\ell)$ is identical with $[\Ksf g \Ksf] \circ p_\Ksf$. Hence, we obtain the following proposition.
\begin{prop}
    Given a double coset $\Ksf g \Ksf \subset \JJ_\bbf$, we have 
\begin{equation} \label{eq-geometric-interpretation-trace}
 \tr(\mathbf{1}_{\Ksf g \Ksf} \mid  \Rrm\Gamma_c(\Ig^{\bbf} _{\Gsf,\xbf,\Xi},\cl\QQ_\ell)) = \vol(\Ksf)\tr(\Rrm\Gamma_c[\Ksf g \Ksf]).
\end{equation}
\end{prop}
An analogous result holds for $\tr(\mathbf{1}_{\Ksf g \Ksf} \mid  \Rrm\Gamma_c(\Ig^{\bbf,\leqslant\mu} _{\Gsf,\xbf,\Xi},\cl\QQ_\ell))$ if $[\Ksf g\Ksf]$ restricts to a correspondence $[\Ksf g\Ksf]^{\leqslant\mu}$ on $\Ig^{\bbf,\leqslant\mu} _{\Gsf,\xbf,\Xi}$.

 \subsection{Frobenius twists}\label{ssect-Frob-twist} 
 In order to apply Lefschetz' trace formula to \eqref{eq-geometric-interpretation-trace}, we may need to twist the correspondence by a power of the Frobenius. We note that for $(\Vscr_0',\tau_0',\eta_0') \in \Ig^{\bbf}_{\Gsf,\xbf,\Xi\cdot\Ksf}(\cl\FF_p)$ the $r$-th power $\tau_0'^r$ defines an isomorphism
 \[
  \Fr^\ast(\Vscr_0',\tau_0') \isom (\Vscr_0',\tau_0').
 \]
 As $\eta_0\iv(\tau_0') = \bbf\sigma$, we obtain
 \[
  \Fr^\ast(\Vscr_0',\tau_0',\eta_0') \isom (\Vscr_0',\tau_0',\eta_0' \circ N^{(r)}(\bbf)).
 \]
 where $N^{(r)}(\bbf) \coloneqq \bbf \cdot \dotsm \cdot \sigma^{r-1}(\bbf) \in \JJ_\bbf$. Note that $N^{(r)}(\bbf)$ is a central element of $\JJ_\bbf$, made explicit in \eqref{eq-decent}. We obtain
 \[
  [\Ksf g \Ksf]^{(s)} = [\Ksf g N^{(r)}(\bbf) \Ksf].
 \] 
This motivates the following definition.

\begin{defn}
 For any $\varphi\in\Hcal_\Ksf$ and for any positive integer $s$
\begin{equation*} 
\varphi^{(s)}\colon g\mapsto \varphi(g\cdot N^{(r)}(\bbf)), \quad\forall g\in \JJ_\bbf.
\end{equation*}
\end{defn}

 Putting the results of this section together, we obtain the trace formula in its most general form.
 
\begin{prop}\label{prop-preliminary-trace-formula} 
For any $\varphi\in\Hcal$ such that any $\Ksf$-double coset in $\supp \varphi$ stabilises $\Ig^{\bbf,\leqslant\mu}_{\Gsf,\xbf,\Xi\cdot\Ksf}$, there exists an integer $s\gg1$ so that for any small enough open normal subgroup  $\Ksf$ of $\Gamma_\bbf$ we have
\[
\tr(\Rrm\Gamma_c([\varphi^{(s)}]_{\Ksf}^{\leqslant\mu}) =
 \sum_{([b]_\sigma,[a])\in\FP_\bbf}
\int _{\iota_{\tilde z}(\Zsf_{\Jsf_{b}(\Fsf)}(a)) \backslash\JJ_\bbf/\Xi}  \varphi^{(s)} (y^{-1}\iota_{\tilde z}(a) y)\cdot  \mathbb{1}_{\tilde z}^{\leqslant\mu}(y) d\bar y,
\]
where $\mathbb{1}^{\leqslant\mu}_{\ztilde}$ is the indicator function of $\Ig^{\bbf,\leqslant\mu}_{\Gsf,\xbf,\Xi\cdot\Ksf}$. In particular, if $\Ig^\bbf_{\Gsf,\xbf,\Xi\cdot\Ksf}$ is quasi-compact, we obtain
\[
\tr(\Rrm\Gamma_c([\varphi^{(s)}]_{\Ksf}) =
 \sum_{([b]_\sigma,[a])\in\FP_\bbf}\vol\left (\iota_{\tilde z}(\Zsf_{\Jsf_{b}(\Fsf)}(a)) \backslash \Zsf_{\JJ_\bbf}(\iota_{\tilde z}(a))/\Xi\right )\cdot O^{\JJ_{\bbf}}_{\iota_{\tilde z}(a)}(\varphi^{(s)}),\]
 where $O^{\JJ_{\bbf}}_{\iota_{\tilde z}(a)}(\varphi^{(s)})\coloneqq \int _{\Zsf_{\JJ_\bbf}(\iota_{\tilde z}(a))\backslash\JJ_\bbf}  \varphi^{(s)} (y^{-1}\iota_{\tilde z}(a) y) d\bar y$ is an orbital integral.
\end{prop}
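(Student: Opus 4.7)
The plan is to reduce by linearity to the case $\varphi = \mathbb{1}_{\Ksf g\Ksf}$, apply the Lefschetz trace formula to each Frobenius-twisted Hecke correspondence $[\Ksf g\Ksf]^{\leqslant\mu,(s)}$ on the finite-type scheme $\Ig^{\bbf,\leqslant\mu}_{\Gsf,\xbf,\Xi\cdot\Ksf}$ to express the trace as a count of geometric fixed points, and then invoke Proposition~\ref{prop-preliminary-point-counting} to rewrite the count as the desired integral.

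More precisely, one first writes $\varphi = \sum_i \alpha_i \mathbb{1}_{\Ksf g_i\Ksf}$ as a finite $\cl\QQ_\ell$-linear combination. Since $N^{(r)}(\bbf)$ is central in $\JJ_\bbf$, the hypothesis that each $\Ksf g_i\Ksf$ stabilises $\Ig^{\bbf,\leqslant\mu}_{\Gsf,\xbf,\Xi\cdot\Ksf}$ propagates to the Frobenius-twisted double cosets $\Ksf g_i N^{(r)}(\bbf)\Ksf$. Applying Lemma~\ref{lem-Ksf-small enough} to these finitely many double cosets, one shrinks $\Ksf$ so that the map $\widetilde\pi^{\leqslant\mu}$ is well-defined on every resulting fixed-point scheme. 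Taking $s$ sufficiently large, the twisted correspondences $[\Ksf g_i\Ksf]^{\leqslant\mu,(s)}$ acquire finite \'etale fixed-point schemes on $\Ig^{\bbf,\leqslant\mu}_{\Gsf,\xbf,\Xi\cdot\Ksf}$, which is of finite type by Corollary~\ref{cor-schematic}, and the Lefschetz trace formula of \cite{Fujiwara:TraceFormula,Varshavsky:Lefschetz-Verdier} yields
\[
\tr\bigl(\Rrm\Gamma_c [\Ksf g_i\Ksf]^{\leqslant\mu,(s)}\bigr) = \#\Fix[\Ksf g_i\Ksf]^{\leqslant\mu,(s)}(\k).
\]
Partitioning each fixed-point set along $\widetilde\pi^{\leqslant\mu}$ into the disjoint union over $([b]_\sigma,[a])\in\FP_\bbf$ of the fibres $(\widetilde\pi^{\leqslant\mu})^{-1}([b]_\sigma,[a])$, one applies Proposition~\ref{prop-preliminary-point-counting} (to the double coset $\Ksf g_i N^{(r)}(\bbf)\Ksf$) to compute the cardinality of each fibre as an integral over $\iota_{\tilde z}(\Zsf_{\Jsf_b(\Fsf)}(a))\backslash\JJ_\bbf/\Xi$. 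Summing over $i$ with coefficients $\alpha_i$, the indicator functions combine to yield $\varphi^{(s)}$, and the identity \eqref{eq-geometric-interpretation-trace} absorbs the factors $\vol(\Ksf)^{-1}$, producing the first formula.

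For the quasi-compact case, one selects $\mu$ large enough that $\Ig^{\bbf,\leqslant\mu}_{\Gsf,\xbf,\Xi\cdot\Ksf} = \Ig^{\bbf}_{\Gsf,\xbf,\Xi\cdot\Ksf}$, so that $\mathbb{1}^{\leqslant\mu}_{\tilde z}\equiv 1$ on the integration domain. Exactly as in the proof of Proposition~\ref{prop-preliminary-point-counting}, Fubini applied to the projection $\iota_{\tilde z}(\Zsf_{\Jsf_b(\Fsf)}(a))\backslash\JJ_\bbf/\Xi\to \Zsf_{\JJ_\bbf}(\iota_{\tilde z}(a))\backslash\JJ_\bbf$ factors out the volume $\vol(\iota_{\tilde z}(\Zsf_{\Jsf_b(\Fsf)}(a))\backslash \Zsf_{\JJ_\bbf}(\iota_{\tilde z}(a))/\Xi)$, because conjugation by $\Zsf_{\JJ_\bbf}(\iota_{\tilde z}(a))$ fixes the integrand $\varphi^{(s)}(y^{-1}\iota_{\tilde z}(a)y)$; what remains is precisely the orbital integral $O^{\JJ_\bbf}_{\iota_{\tilde z}(a)}(\varphi^{(s)})$.

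The principal technical point is the compatibility hypothesis on $\varphi$: general Hecke correspondences need not preserve HN truncations, so the trace identity at truncated level can only be formulated for $\varphi$ whose relevant double cosets stabilise $\Ig^{\bbf,\leqslant\mu}_{\Gsf,\xbf,\Xi\cdot\Ksf}$. Given this, the remainder is a bookkeeping combination of Proposition~\ref{prop-preliminary-point-counting}, Lemma~\ref{lem-Ksf-small enough}, the Lefschetz trace formula, and the identity \eqref{eq-geometric-interpretation-trace} relating cohomological and correspondence-level traces.
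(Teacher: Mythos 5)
Your proof follows the same route as the paper: reduce by linearity to $\varphi = \mathbb{1}_{\Ksf g\Ksf}$, apply the Lefschetz trace formula on the finite-type scheme $\Ig^{\bbf,\leqslant\mu}_{\Gsf,\xbf,\Xi\cdot\Ksf}$ to express the trace of the twisted correspondence as a fixed-point count, and invoke Proposition~\ref{prop-preliminary-point-counting} together with \eqref{eq-geometric-interpretation-trace} to rewrite that count as the asserted integral. One step is out of order: you invoke Lemma~\ref{lem-Ksf-small enough} to shrink $\Ksf$ \emph{before} passing to the $s$-th Frobenius twist, whereas that lemma requires $\Fix[\Ksf'g'\Ksf']^{\leqslant\mu}$ to be finite, which is only guaranteed once $s\gg0$ is chosen so that the twisted correspondence has a finite \'etale fixed-point scheme; swapping these two steps — first choose $s$, then shrink $\Ksf$ — recovers exactly the paper's argument.
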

\begin{proof}
 Since both sides of the formula are linear in $\varphi$, it suffices to show the claim for $\varphi = \mathbb{1}_{\Ksf' g' \Ksf'}$. We choose $s \gg 0$ such that Lefschetz trace formula applies, in particular $\Fix [\Ksf' g' \Ksf']^{\leqslant\mu,(s)}$ is finite \'etale. After shrinking $\Ksf$ and applying linearity once more, we may assume that $\varphi^{(s)} = \mathbb{1}_{\Ksf g \Ksf}$, with $\Ksf$ and $g$ as in Lem.~\ref{lem-Ksf-small enough}. By  trace formula and Prop.~\ref{prop-preliminary-point-counting} we obtain
 \begin{align*}
  \tr(\Rrm\Gamma_c([\varphi^{(s)}]_{\Ksf}) &= \sum_{z \in Fix[\Ksf g \Ksf]^{\leqslant\mu}} \tr(\varphi^{(s)}\mid \cl\QQ_\ell) \\
  &= \sum_{([b]_\sigma,[a])\in\FP_\bbf}
\int _{\iota_{\tilde z}(\Zsf_{\Jsf_{b}(\Fsf)}(a)) \backslash\JJ_\bbf/\Xi}  \varphi^{(s)} (y^{-1}\iota_{\tilde z}(a) y)\cdot  \mathbb{1}_{\tilde z}^{\leqslant\mu}(y) d\bar y.
 \end{align*}
\end{proof}

\begin{rmk}\label{rmk-nonqc}
If $\Ig^{\bbf}_{\Gsf,\xbf,\Xi}$ is not quasi-compact then it is quite unlikely for $\varphi\in\Hcal$ to stabilise any quasi-compact open cover. But even in that case, it seems reasonable to expect that for ``interesting enough'' class of test functions $\varphi\in\Hcal$ the Harder--Narasimhan truncated local terms $\sum _{\bar y}\tr([\varphi^{(s)}]^{\leqslant\mu}_{\Ksf,\bar y})$ should stabilise as we make $\mu$ more and more convex. (See Prop.~\ref{prop-preliminary-point-counting-elliptic}, Conj.~\ref{conj-EP} and the subsequent paragraphs for further discussions.) In that case, we cautiously speculate that there should be some version of ``Lefschetz trace formula'' applicable to such $\varphi$'s, possibly by constructing the ``HN truncated Hecke correspondences'' as speculated at the end of \S\ref{ssect-Hecke}. This is one of the ideas behind  Conj.~\ref{conj-trace-formula}.
\end{rmk}

In the following sections we will work on making the formula in Prop.~\ref{prop-preliminary-trace-formula} more explicit, restricting  to special cases if necessary.

\section{Elliptic terms of the trace formula} \label{sect-discrete}

 \subsection{The Newton point of a conjugacy class}
 We briefly recall the non-$\sigma$-twisted version of the Newton point as introduced by Kottwitz and Viehmann in \cite[\S2]{MR2931318}. Let $G$ be a linear algebraic group over a local field $F$ and $\gamma \in G(F)$. If $G = \GL_n$ then $\nu^{KV}_{G}(\gamma)\colon \DD \to \GL_n $ is the (rational) quasi-cocharacter given by the decomposition
 \[
  F^n = \bigoplus_{a \in \QQ} V_a,
 \]
 where $V_a$ denotes the direct sum of all generalised eigenspaces of $\gamma$ with valuation $a$. More generally, $\nu^{KV}_{G}(\gamma)$ is the unique functorial extension of this definition to linear algebraic groups. We further extend this definition to the case that $F$ is a product of local non-archimedean local fields the obvious way.
 
 By \cite[L.~2.2]{MR2931318} the $\cl{F}$-conjugacy class $\bar\nu^{KV}_G(\gamma)$ of $\nu^{KV}_G(\gamma)$ only depends on the stable conjugacy class of $\gamma$. For any $\bar\nu \in \Hom_{\cl{F}}(\DD,G)/G(\cl{F})$ we define the corresponding \emph{Newton stratum} in $G(F)$ by
 \[
  G(F)_{\bar\nu} \coloneqq \{\gamma \in G(F) \mid \bar\nu^{KV}_G(\gamma) = \bar\nu \}.
 \]
 \begin{lem}
  For any $\bar\nu$ as above, $G(F)_{\bar\nu}$ is an open and closed subset of $G(F)$.
 \end{lem}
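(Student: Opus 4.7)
The plan is to reduce to the case $G=\GL_n$ via a faithful representation $\rho\colon G\mono \GL_n$, and then to show directly that Newton strata in $\GL_n(F)$ are open and closed by using the Newton polygon of the characteristic polynomial. The openness of each stratum forces its closedness since the complement is the union of the other strata.

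For $G=\GL_n$, the Newton point $\bar\nu^{KV}_{\GL_n}(\gamma)$ is encoded in the multiset of valuations of the eigenvalues of $\gamma$, which in turn is determined by the Newton polygon of the characteristic polynomial $P_\gamma(t)=\det(tI-\gamma)\in F[t]$. Since $\gamma\mapsto P_\gamma$ is polynomial, its coefficients $c_i(\gamma)$ depend continuously on $\gamma$ in the $p$-adic topology. Given $\gamma_0$, to show that the Newton polygon of $P_\gamma$ is locally constant around $\gamma_0$, I would observe that at each index $i$ where $c_i(\gamma_0)\neq 0$, continuity of $v\colon F^\times\to\ZZ$ makes $v(c_i(\gamma))$ locally constant, and at each index where $c_i(\gamma_0)=0$, the valuation $v(c_i(\gamma))$ can be made arbitrarily large by shrinking the neighbourhood, so that the point $(i,v(c_i(\gamma)))$ lies strictly above the Newton polygon of $P_{\gamma_0}$. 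Since the Newton polygon is the lower convex hull of the points $(i,v(c_i(\gamma)))$, it remains unchanged; hence $\bar\nu^{KV}_{\GL_n}$ is locally constant on $\GL_n(F)$.

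For general $G$, functoriality of $\nu^{KV}$ gives $\rho_\ast\circ \bar\nu^{KV}_G=\bar\nu^{KV}_{\GL_n}\circ\rho$, so $\bar\nu^{KV}_{\GL_n}\circ\rho$ is already locally constant on $G(F)$. This does not immediately imply local constancy of $\bar\nu^{KV}_G$, because the natural map $\Hom_{\bar F}(\DD,G)/G(\bar F) \to \Hom_{\bar F}(\DD,\GL_n)/\GL_n(\bar F)$ need not be injective. However, its fibres are finite (controlled by $W_{\GL_n}$-orbits intersected with $X_\ast(T_G)_\QQ$ modulo $W_G$, for $T_G$ a maximal torus of the reductive quotient of $G^\circ$), and by Tannakian reconstruction any two non-$G(\bar F)$-conjugate cocharacters $\DD\to G$ can be separated after composition with some representation of $G$ (for instance a suitable direct summand of a tensor power of $\rho\oplus\rho^\vee$). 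For $\gamma_0\in G(F)_{\bar\nu}$, I would then pick finitely many representations $\rho_i\colon G\to\GL_{n_i}$ distinguishing $\bar\nu$ from each of the remaining finitely many Newton points lying in the fibre over $\rho_\ast\bar\nu$, and take the common neighbourhood on which each $\bar\nu^{KV}_{\GL_{n_i}}\circ\rho_i$ is constant; this neighbourhood lies in $G(F)_{\bar\nu}$, giving openness.

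The main obstacle is to make the Tannakian separation step precise in the full generality of the lemma, where $G$ is only assumed to be a linear algebraic group (not necessarily reductive or connected), and to handle the unipotent part of $G$ cleanly. This likely requires reducing to the reductive quotient using the fact that a unipotent element has trivial Newton point, together with a compatibility of the Jordan decomposition with the $p$-adic topology on $G(F)$.
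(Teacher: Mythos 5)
Your treatment of the $\GL_n$ case coincides with the paper's: both argue via the characteristic polynomial and show that the Newton polygon of $P_\gamma$ is locally constant in $\gamma$, since each coefficient valuation is either locally constant (when nonzero at $\gamma_0$) or can be pushed arbitrarily high (when zero at $\gamma_0$), leaving the lower convex hull unchanged. The genuine difference is in the reduction from general $G$ to $\GL_n$, and here the paper's route is cleaner. The paper does not pick an arbitrary faithful $\rho$ and then try to compensate with auxiliary representations; it chooses at the outset a closed embedding $\rho\colon G\mono\GL_n$ for which $\rho_\ast\colon\Hom_{\cl{F}}(\GG_m,G)/G(\cl{F})\to\Hom_{\cl{F}}(\GG_m,\GL_n)/\GL_n(\cl{F})$ is already \emph{injective}; then $G(F)_{\bar\nu}=\GL_n(F)_{\rho_\ast\bar\nu}\cap G(F)$ on the nose, and one is immediately reduced to $\GL_n$. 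The existence of such a separating embedding is a standard fact, and is exactly what collapses the difficulties you flag at the end: there is no need for a Tannakian separation argument, no need to compare $\bar\nu^{KV}_G$ against the image through several $\rho_i$, and no case distinction for the unipotent radical or components. (Your observation that conjugacy classes of cocharacters of a linear algebraic group only depend on the reductive quotient -- since a cocharacter factors through a maximal torus, which is one of the reductive quotient -- would also be a clean way to dispose of the non-reductive issue, and is closer in spirit to how one actually proves the existence of a separating $\rho$.) So: your proposal is on the right track and the $\GL_n$ computation is correct, but the step you yourself label as "the main obstacle" is real and is precisely what the paper's initial choice of $\rho$ dissolves.
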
 
 \begin{proof}
  Let $\rho\colon G \mono \GL_n$ be a closed embedding such that the induced map \[\rho_\ast\colon \Hom_{\cl{F}}(\GG_m,G)/G(\cl{F}) \to \Hom_{\cl{F}}(\GG_m,\GL_n)/\GL_n(\cl{F})\] is injective. In particular, we have
  \[
   G(F)_{\bar\nu} = \GL_n(F)_{\rho_\ast(\bar\nu)} \cap G(F).
  \]
  Thus we may assume $G = \GL_n$. Let $T \subset B \subset \GL_n$ be the diagonal torus and the Borel of upper triangular matrices. We identify $X_\ast(T) \isom \ZZ^n$ and $\bar\nu$ with its unique representative in $X_\ast(T)_{\QQ,+} \cong \QQ^n_+$.  Let $\chi\colon \GL_n \to \{P \in F[X] \mid P \textnormal{ normalised polynomial of degree } n \} \cong F^{n-1} \times F^\times$ be the map sending $\gamma \in \GL_n(F)$ to its characteristic polynomial. By definition $\bar\nu^{KV}_G$ factors through the map $v\colon F^{n-1} \times F^\times \to \QQ^n_+$ sending $P$ to the (descending-order) tuple of valuations of its roots. These are given by the slopes of its Newton polygon. Thus the set $v\iv (\bar\nu) \subset F^{n-1} \times F^\times$ is given by a finite set of inequalities on the valuations of the coefficients, which defines an open and closed set. Since \revise{$\chi$} is continuous, $G(F)_{\bar\nu} = \revise{\chi}\iv(v\iv(\bar\nu))$ is also open and closed. 
 \end{proof}
 
 In the following let $F$ be a product of non-archimedean local fields, $G$ a reductive linear algebraic group over $F$ and let $\nu\colon \DD \to G$ be a quasi-cocharacter. We denote by $M_\nu \subset P_\nu \subset G$ the associated Levi subgroup and parabolic subgroup, that is $\nu$ acts on $\Lie P_\nu$ by non-negative weights.
 
 \begin{defn} \label{defn-acceptable-elt}
  We say that $\gamma \in M_{\nu}(F)$ is \emph{$\nu$-acceptable} if $P_{\nu^{KV}_G(\gamma)} \subset P_\nu$.
 \end{defn} 
  
 \begin{rmk}
  \begin{subenv}
  \item Since the definition of $\nu$-acceptability only depends on $\bar\nu^{KV}_G(b)$, it only depends on the geometric $M_\nu(\Fbar)$-conjugacy class of $\gamma$. In particular, the set of $\nu$-acceptable elements is a union of Newton strata in $M_\nu(F)$ and hence open and closed.
  \item One easily checks using above decomposition that when $F$ is a local field and $E/F$ a finite extension, then
   \[
     \nu_{G_{E}}^{KV}(\gamma) = e(E/F) \cdot \nu_{G}^{KV}(\gamma),
   \]
   and deduces a similar claim for general $F$. In particular, $P_{\nu^{KV}_G}(\gamma)$ does not change and thus the notion of acceptability is stable under extension of $F$.
   \end{subenv}
 \end{rmk}  
  
 To check acceptability, we will use the following combinatorial lemma.
 
 \begin{lem}
  Fix a maximal torus $T \subset G$ containing the image of $\nu$ and denote by $\Phi(G,T)(\Fbar)$ the set of absolute roots.  For any quasi-cocharacter $\xi\colon \DD \to G$ the following conditions are equivalent
  \begin{assertionlist}
   \item $P_\xi \subset P_\nu$
   \item For every $\alpha \in \Phi(G,T)(\Fbar)$ with $\langle \alpha, \nu \rangle < 0$ we have $\langle \alpha,\xi \rangle < 0$.
  \end{assertionlist}
 \end{lem}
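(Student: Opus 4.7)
The plan is to reduce the statement to an assertion about root systems, exploiting that both parabolics contain the common maximal torus $T$. Since a closed subgroup of a reductive group containing a maximal torus is determined by its set of roots with respect to that torus (see e.g.\ \cite[Prop.~8.4.1]{Springer:LinearAlgebraicGroups} or Borel), it suffices to translate the inclusion $P_\xi\subset P_\nu$ into an inclusion of root subsets.

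First I would recall the root-theoretic description of the parabolic $P_\eta$ attached to any quasi-cocharacter $\eta\colon\DD\to G$ factoring through $T$. Since $\DD$ is a protorus, $\Lie G$ decomposes as a direct sum of $T$-root spaces, and the action of $\DD$ through $\eta$ on the root space $\gfr_\alpha$ is by weight $\langle\alpha,\eta\rangle\in\QQ$. By definition $P_\eta$ has Lie algebra $\Lie T\oplus\bigoplus_{\langle\alpha,\eta\rangle\geqslant 0}\gfr_\alpha$, so its root system with respect to $T$ is
\[
\Phi(P_\eta,T)=\{\alpha\in\Phi(G,T)(\Fbar)\mid \langle\alpha,\eta\rangle\geqslant 0\}.
\]
Applying this to $\eta=\nu$ and $\eta=\xi$, the inclusion $P_\xi\subset P_\nu$ of $T$-stable closed subgroups of $G$ is equivalent to $\Phi(P_\xi,T)\subset\Phi(P_\nu,T)$, i.e.\ to the implication $\langle\alpha,\xi\rangle\geqslant 0\Rightarrow\langle\alpha,\nu\rangle\geqslant 0$ for every $\alpha\in\Phi(G,T)(\Fbar)$. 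Taking contrapositives gives exactly condition~(b).

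The only mild subtlety is to justify the root-theoretic criterion in the present setting, where $G$ is reductive over a product $F$ of local fields rather than over an algebraically closed field; however, one may extend scalars to $\Fbar$ (which does not affect either side of the equivalence since both are phrased in terms of absolute roots and geometric parabolic subgroups), and then apply the standard fact that two parabolic subgroups of a reductive group over $\Fbar$ containing a common maximal torus are comparable by inclusion if and only if their root sets are, and that the parabolic associated to a quasi-cocharacter is recovered from its root set as above. I expect no real obstacle here; the whole argument is a short bookkeeping exercise once one unwinds the definition of $P_\eta$ through the weight decomposition of $\Lie G$ under $\eta$.
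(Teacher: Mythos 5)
Your proposal is correct and follows essentially the same route as the paper: pass to the Lie algebra, identify $\Lie P_\eta = \Lie T \oplus \bigoplus_{\langle\alpha,\eta\rangle\geqslant 0}\gfr_\alpha$, translate the inclusion into containment of root sets, and take contrapositives. The paper's proof is a terser version of yours, omitting the explicit justification that the inclusion of parabolics is detected on root sets.
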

 \begin{proof}
  The first condition can be checked on the Lie algebra of $G$. By considering the root spaces, we get that $\Lie P_\xi \subset \Lie P_\nu$ is equivalent to
  \[
   \{ \alpha \in \Phi(G,T) \mid \langle \alpha, \xi \rangle \geq 0 \} \subset \{ \alpha \in \Phi(G,T) \mid \langle \alpha, \nu \rangle \geq 0 \}
  \]
  or equivalently that $\langle \alpha, \nu \rangle < 0$ implies that $\langle \alpha, \xi \rangle < 0$.
 \end{proof}
 
 \begin{cor} \label{cor-centralising}
  Let $\gamma_1, \gamma_2 \in M_\nu(F)$ be $\nu$-acceptable and assume that there exists $g \in G(\Fbar)$ such that $g \gamma_1 g\iv = \gamma_2$. Then $g \in M_\nu(\Fbar)$.
 \end{cor}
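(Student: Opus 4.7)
The plan is to show first that $g \in P_\nu$, and then refine this to $g \in M_\nu$ by analysing the Levi decomposition of $g$ and exploiting $\nu$-acceptability to rule out a nontrivial unipotent part.

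First I would translate the hypothesis into Newton points. Setting $\nu_i \coloneqq \nu^{KV}_G(\gamma_i)$, the functoriality of the Kottwitz--Viehmann construction under conjugation yields $g\nu_1 g^{-1} = \nu_2$ for appropriate choices of representatives. The $\nu$-acceptability assumption reads $P_{\nu_i} \subset P_\nu$ for $i=1,2$, so in particular $g P_{\nu_1} g^{-1} = P_{\nu_2} \subset P_\nu$. To deduce $g \in P_\nu$, I would pick a Borel subgroup $B \subset P_{\nu_1}$ over $\Fbar$; then both $B$ and $gBg^{-1}$ are Borels contained in $P_\nu$. Since any two Borels inside a common parabolic are conjugate by an element of that parabolic, and since $N_G(B) = B$, this forces $g \in P_\nu$.

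Next I would use the Levi decomposition $g = mu$ with $m \in M_\nu$ and $u \in U_\nu$. From $\gamma_2 = m(u\gamma_1 u^{-1})m^{-1} \in M_\nu$ together with the elementary fact that $u\gamma_1 u^{-1} \in \gamma_1 \cdot U_\nu$ (because $U_\nu$ is normal in $P_\nu$) and $M_\nu \cap U_\nu = \{1\}$, one concludes $u\gamma_1 u^{-1} = \gamma_1$, i.e.\ $u \in C_{U_\nu}(\gamma_1)$. Functoriality of the Jordan decomposition then shows $u \in C_{U_\nu}(\gamma_1^{ss})$, so it suffices to prove $C_{U_\nu}(\gamma_1^{ss}) = \{1\}$.

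The main obstacle is this last triviality statement, and here the root-theoretic reformulation in the preceding Lemma becomes essential. Fix a maximal torus $T \subset C_G(\gamma_1^{ss})^\circ$ through which both $\nu$ and $\nu_1$ factor; such a $T$ exists since both quasi-cocharacters centralise $\gamma_1^{ss}$ (for $\nu$, because $\gamma_1 \in M_\nu$, and for $\nu_1$ by the construction of the Kottwitz--Viehmann Newton point). The decisive input is that $\nu_1$ lies in the centre of $C_G(\gamma_1^{ss})^\circ$, an immediate consequence of its eigenspace definition: thus $\langle \alpha,\nu_1\rangle = 0$ for every root $\alpha$ of $C_G(\gamma_1^{ss})^\circ$, equivalently for every root with $\alpha(\gamma_1^{ss}) = 1$. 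Applying the Lemma (contrapositive form) to both $\alpha$ and $-\alpha$, acceptability $P_{\nu_1}\subset P_\nu$ upgrades the vanishing $\langle\alpha,\nu_1\rangle=0$ to $\langle\alpha,\nu\rangle = 0$. Consequently, no root $\alpha$ of $U_\nu$ (which by definition satisfies $\langle\alpha,\nu\rangle > 0$) can have $\alpha(\gamma_1^{ss}) = 1$. Filtering $U_\nu$ by its root subgroups $U_\alpha \cong \GG_a$, on each of which $\Ad(\gamma_1^{ss})$ acts by the scalar $\alpha(\gamma_1^{ss}) \neq 1$, one concludes inductively that $C_{U_\nu}(\gamma_1^{ss}) = \{1\}$. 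The delicate point is the simultaneous use of both directions of the Lemma together with the centrality of $\nu_1$, which is what converts acceptability into the required nontriviality of the $\Ad(\gamma_1^{ss})$-action on $\Lie U_\nu$.
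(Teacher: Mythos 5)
Your proof is correct, but it follows a genuinely different route from the paper's. The paper's proof is a three-sentence reduction: after replacing $\gamma_1,\gamma_2$ by $M_\nu(\Fbar)$-conjugates it arranges that $\nu^{KV}_G(\gamma_1)$ and $\nu^{KV}_G(\gamma_2)$ both factor through a maximal torus $T$ and asserts $g\in N_G(T)(\Fbar)$, after which the preceding combinatorial Lemma and acceptability show that $g$ stabilises $\Phi^-(G,T)\setminus\Phi^-(M,T)$, hence $g\in M_\nu(\Fbar)$. You instead first get $g\in P_\nu(\Fbar)$ from $gP_{\nu_1}g^{-1}=P_{\nu_2}\subset P_\nu$ via Borel conjugacy inside $P_\nu$, then write $g=mu$, reduce to $u\in C_{U_\nu}(\gamma_1^{ss})$, and kill $u$ by showing that every root of $U_\nu$ is nontrivial on $\gamma_1^{ss}$. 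The combinatorial Lemma enters both proofs, but you use it at a different place (to show $\langle\alpha,\nu\rangle=0$ whenever $\alpha(\gamma_1^{ss})=1$, via the centrality of $\nu^{KV}_G(\gamma_1)$ in $C_G(\gamma_1^{ss})^\circ$) rather than to directly read off that $g$ preserves the negative roots of $U_\nu$. Your argument is longer but arguably more transparent: the paper's step ``thus $g\in N_G(T)(\Fbar)$'' requires a further conjugation of $g$ by an element of $M_{\nu_1}(\Fbar)$ (to move $g^{-1}Tg$ back to $T$ inside $M_{\nu_1}$), and it is not spelled out why this preserves the hypotheses; your Levi-decomposition route sidesteps that issue entirely. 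One expositional nitpick: when you fix the common torus $T\subset C_G(\gamma_1^{ss})^\circ$, the justification ``both centralise $\gamma_1^{ss}$'' is not by itself enough to find a single $T$ through which $\nu$ and $\nu_1$ both factor; you do supply the missing ingredient (centrality of $\nu_1$ in $C_G(\gamma_1^{ss})^\circ$) one sentence later, so the logic is fine, but it would read better to state the centrality first and then deduce the existence of $T$.
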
 
 \begin{proof}
  By \cite[L.~2.2]{MR2931318} we have $\nu^{KV}_G(\gamma_2) = \Int(g) \circ \nu^{KV}_G(\gamma_1)$. Replacing $\gamma_1,\gamma_2$ by $M(\Fbar)$-conjugates and extending $F$ accordingly if necessary, we can assume that $\nu^{KV}_G(\gamma_1)$ and $\nu^{KV}_G(\gamma_2)$ factorise over $T$, thus $g \in \Norm_G(T)(\Fbar)$. By the previous lemma $g$ stabilises the set $\Phi^-(G,T) \setminus \Phi^-(M,T)$, proving that $g \in M_\nu(\Fbar)$.
 \end{proof}
 
\subsection{Acceptable elements in $\JJ_\bbf$}
 Recall that we fixed a \emph{decent} element $b_x\in\Gsf(\Fsf_{x} \hat\otimes_{\FF_q} \cl\FF_q)$ for each $x\in\xbf$ (\emph{cf.} \S\ref{notation-Hecke-action}), and let $\nu_x\coloneqq \nu_{\Gsf}(b_x)$ denote the Newton point associated to the local $\Res_{\Fsf_{x}/\FF_q\rpot{\varpi_{x}}}\Gsf_{x}$-isoshtuka defined by $b_x$ (not the Kottwitz--Viehmann Newton point of the conjugacy class of $b_x$). By decency of $b_x$, note that $\nu_x\colon \DD\to\Gsf_{\Fsf_{x}}$ is defined over $\Fsf_{x}$, not just over a finite extension of it. 
 
 As  $b_x\in\Gsf(\Fsf_{x} \hat\otimes_{\FF_q} \cl\FF_q)$ is a fixed element, we have an embedding 
 \[
 J_{b_x}(\Fsf_{x})\hookrightarrow M_{\nu_x}(\breve\Fsf \hat\otimes_\Fsf \Fsf_{x})\subset \Gsf (\breve\Fsf \hat\otimes_\Fsf \Fsf_{x}),
 \] 
 which depends on $b_x$. We say that $\delta \in J_{b_x}(\Fsf_{x})$ is \emph{$\nu_x$-acceptable} if it is $\nu_x$-acceptable as \revise{an} element of $M_{\nu_x}(\breve\Fsf \hat\otimes_\Fsf \Fsf_{x})$ as defined in Def.~\ref{defn-acceptable-elt}. 

Setting $\bnu\coloneqq (\nu_x)_{x\in\xbf}$, we say that $g\in\JJ_\bbf$ is \emph{$\bnu$-acceptable} if for any $x\in\xbf$ the image of $g$ under the projection $\JJ_\bbf\twoheadrightarrow J_{b_x}(\Fsf_{x})$ is $\nu_x$-acceptable.

Note that $\nu_x$-acceptability is an empty condition  when $b_x$ is basic (i.e, when $\nu_x$ is central), so $\bnu$-acceptability for an element of $\JJ_\bbf$ is a condition on the components at non-basic places.
 
 \subsection{``Acceptability condition'' for functions on $\JJ_\bbf$}
Let $\Ksf g\Ksf$ be a double coset  of $\JJ_\bbf$ and consider the correspondence $[\Ksf g\Ksf]$ on $\Ig^{\bbf}_{\Gsf,\xbf,\Xi\cdot\Ksf}$.  Let $\mu\in\Lambda^+_\QQ$ be a Harder--Narasimhan parameter, and we set
 \begin{equation}
 \Fix[\Ksf g\Ksf]^{\leqslant\mu}\coloneqq \Fix([\Ksf g\Ksf])\cap \Ig^{\bbf,\leqslant\mu}_{\Gsf,\xbf,\Xi\cdot\Ksf\cap g\Ksf g^{-1}}.
 \end{equation}

For $\mu\in\Lambda^+_\QQ$, let us denote the following conditions as $\Acc(\bnu;\mu)$.
 \begin{assertionlist}
  \item Any $g \in \supp\varphi$ is $\bnu$-acceptable.
  \item There exists a small enough open compact subgroup $\Ksf$ such that $\varphi$ is $\Ksf$-bi\-in\-va\-riant, and for any double coset $\Ksf g\Ksf$ in the support of $\varphi$ we have that $ \Fix[\Ksf g\Ksf]^{\leqslant\mu} $ is finite \'etale over the base finite field.
 \end{assertionlist}

Note that if the second condition holds for $\Ksf$, then it holds for any open compact subgroup $\Ksf'$ of $\Ksf$; indeed, if $\Ksf' g'\Ksf'\subset \Ksf g\Ksf$ then the natural projection map sends $\Fix ([\Ksf' g'\Ksf'])$ to $\Fix ([\Ksf g\Ksf])$.

 \begin{lem}\label{lem-acceptability}
Choose a positive integer $r$ such that $N^{(r)}b_x = r\nu_x(\varpi_{x})\in\Gsf(\Fsf_{x})$.  For any $\varphi\in \Hcal$ and $\mu\in\Lambda^+_\QQ$, there exists an $s_0 \in \NN$ (depending on $\mu$) such that $\varphi^{(rs')} \in \Acc(\bnu,\mu)$ for any $s' \geqslant s_0$. 
 \end{lem}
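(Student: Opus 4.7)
The plan is to verify the two clauses of $\Acc(\bnu;\mu)$ separately, taking $s_0$ to be the maximum of the two thresholds produced by each.

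First I would tackle clause (a) — the $\bnu$-acceptability of every element in $\supp(\varphi^{(rs')})$ — by an explicit Kottwitz--Viehmann Newton point calculation. Lifting $\supp(\varphi)$ to $\JJ_\bbf$, it is contained in $\Xi\cdot S$ for some compact $S\subset\JJ_\bbf$, and so every element of $\supp(\varphi^{(rs')})$ has the form $\xi g \cdot N^{(rs')}(\bbf)^{\pm 1}$ with $\xi\in\Xi$ and $g\in S$. At each $x_i\in\xbf$, the central (in $M_{\nu_i}$) element $z_i\coloneqq N^{(r)}(\bbf)_i=r\nu_i(\varpi_{x_i})$ commutes with $g_i\in J_{b_i}(\Fsf_{x_i})\subset M_{\nu_i}$, so by the usual additivity of the Kottwitz--Viehmann Newton point on commuting products we have
\[
 \nu^{KV}_G\bigl(\xi g_i z_i^{\pm s'}\bigr) \;=\; \nu^{KV}_G(g_i) \;\pm\; s' r\nu_i \;+\; \nu^{KV}_G(\xi).
\]
Because $\xi$ is central in $G$, $\nu^{KV}_G(\xi)$ pairs trivially with every root, hence does not affect the parabolic $P_{(\cdot)}$. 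For any root $\alpha$, the quantity $\langle\alpha,\nu^{KV}_G(g_i)\rangle$ is bounded uniformly in $g_i\in S$, so for $s'\geqslant s_1$ the sign of $\langle\alpha,\nu^{KV}_G(\xi g_i z_i^{\pm s'})\rangle$ is controlled by $\pm\langle\alpha,\nu_i\rangle$. Choosing the sign in the definition of $\varphi^{(s)}$ so that the exponent is positive (as in \eqref{eq-decent} and the subsequent discussion of Frobenius twists) yields $P_{\nu^{KV}_G(g'_i)}\subset P_{\nu_i}$ for all $s'\geqslant s_1$.

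For clause (b) I would use that $\varphi^{(rs')}$ is $\Ksf$-biinvariant since $N^{(rs')}(\bbf)$ is central, and its support is a finite union of $\Ksf$-double cosets modulo $\Xi$. After shrinking $\Ksf$ so that each $\Ig^{\bbf,\leqslant\mu}_{\Gsf,\xbf,\Xi\cdot\Ksf_{g'}}$ is a finite-type scheme over a finite field $\kappa$ (Corollary~\ref{cor-schematic}), it suffices to argue one double coset at a time. For each such $\Ksf g'\Ksf$, the Hecke correspondence is finite \'etale on the finite-type $\kappa$-scheme, and the identity
\[
 [\Ksf g\Ksf]^{(s)}=[\Ksf g\,N^{(r)}(\bbf)^s\Ksf]
\]
of \S\ref{ssect-Frob-twist} shows that raising $s'$ has the effect of twisting the correspondence by $\Fr^{s'}$. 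The standard ``Frobenius is contracting'' argument underlying Fujiwara's trace formula (\cite[Cor.~5.4.5]{Fujiwara:TraceFormula}, \cite[Thm.~5.4.5]{Varshavsky:Lefschetz-Verdier}) then shows $\Fix[\Ksf g\Ksf]^{\leqslant\mu,(s')}$ is finite \'etale for $s'\geqslant s_2$. Taking $s_0\coloneqq\max(s_1,s_2)$ completes the argument.

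The main obstacle is clause (a), and within it the careful bookkeeping of signs: one must check that the convention adopted in defining $\varphi^{(s)}$ (reading off the effect of geometric Frobenius as computed in \S\ref{ssect-Frob-twist}) indeed shifts the support into the acceptable region $P_{\nu^{KV}_G(\cdot)}\subset P_{\nu_i}$, rather than its opposite. The role of $\Xi$ is only mildly subtle: centrality in $G$ makes $\nu^{KV}_G(\xi)$ orthogonal to every root, so the $\Xi$-direction drops out of the parabolic computation, but this is the crucial input that allows uniform bounds as $\xi$ varies. Clause (b) is routine once the scheme-theoretic setup of Corollary~\ref{cor-schematic} is in place, because the HN truncation $\Ig^{\bbf,\leqslant\mu}$ need not be stable under the correspondence --- only the intersection with the diagonal matters, and that lives in a finite-type scheme where the standard Frobenius-twisting lemma applies.
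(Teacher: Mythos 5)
Your proof is correct and follows essentially the same route as the paper's. For clause (a), the paper performs the same slope computation concretely via a faithful representation $\Gsf\hookrightarrow\GL(V)$ and the slope decomposition $\bigoplus V_\lambda$ of $(V,(N^{(r)}b_i)_y\sigma^r)$, whereas you phrase it through the Kottwitz--Viehmann Newton point and its additivity on commuting products (which does apply here, since $z_i$ is central in $M_{\nu_i}$ so $g_i$ and $z_i^{s'}$ preserve each other's slope decompositions); and for clause (b) both arguments invoke Varshavsky's contracting-Frobenius mechanism, the paper citing \cite[Thm.~2.3.2(a)]{Varshavsky:Lefschetz-Verdier}.
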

 \begin{proof}
 The second condition of acceptability can be achieved by replacing $\varphi$ with some iterated Frobenius-twist by \cite[Thm.~2.3.2(a)]{Varshavsky:Lefschetz-Verdier}. To arrange the first condition, we choose a closed embedding $\Gsf \mono \GL(V)$ and fix $1 \leqslant i \leqslant m$ and a place $y|x$ of $\tilde\Fsf$. Let $\bigoplus V_\lambda = V_{(\breve\Fsf)_{y}}$ be the slope decomposition of $(V_{(\breve\Fsf)_{y}},(N^{(r)}b_x)_y \sigma^r)$. We have to show that for $s'$ big enough and any $g \in \supp(\varphi)$ and $\lambda_1 > \lambda_2$, the valuation of any eigenvalue of $(b_x\sigma)^{rs'}(g_{x}) = N^{(rs')}(b_x) \cdot \sigma^{rs'}(g_{x})$ on $V_{\lambda_1}$ is larger than the valuation of any eigenvalue on $V_{\lambda_2}$. Since
 \[  
  \restr{N^{(rs')}(b_x) \cdot \sigma^{rs'}(g_{x})}{V_\lambda} = \unif_{x}^{\lambda r s'} \cdot \restr{\sigma^{rs'}(g_{x})}{V_\lambda},
 \]
 this is clearly true.
 \end{proof}

Next we separate the ``elliptic semisimple terms'' from Prop.~\ref{prop-preliminary-point-counting}. 
In what follows, let us fix a base point $\tilde z\in \pi^{-1}([b])\subset \Ig^{\bbf}_{\Gsf,\xbf,\Xi}(\k)$ for any $[b] \in \B(\Fsf,\Gsf)_\bbf$.

\begin{defn}\label{def-FP-erS} Let $([b]_\sigma,[a])\in\FP_\bbf$.
\begin{subenv}
\item We say that $([b]_\sigma,[a])$ is \emph{acceptable} if for some (equivalently, any) choice of $(b,a)$  and $\tilde z\in\pi^{-1}([b])$, the element $\iota_{\tilde z}(a)\in\JJ_\bbf$ is $\bnu$-acceptable in the sense of Def.~\ref{defn-acceptable-elt}.
\item We say that $([b]_\sigma,[a])$ is \emph{semisimple} if $a$ is semisimple in $\Jsf_b(\Fsf)$ for some (equivalently, any) representative $(b,a)$.
\item We say that  $([b]_\sigma,[a])$ is \emph{elliptic} if  $a\in \Jsf_b(\Fsf)$ is contained in a maximal torus totally anisotropic modulo $\Zsf_\Gsf$ for any representative $(b,a)$.
\item For any place $v$ such that $b_v$ is \emph{basic}, 
we say that $([b]_\sigma,[a])$ is \emph{elliptic at $v$} if for some (equivalently, any) choice of $(b,a)$  and $\tilde z\in\pi^{-1}([b])$, $\iota_{\tilde z}(a)$ projects to an elliptic semisimple element in $J_{b_v}(\Fsf_{v})$, which is an inner form of $G_{\Fsf_{v}}$. 

For a finite non-empty subset $S$ of places of $\Fsf$ such that $b_v$ is basic for any $v\in S$, we similarly define the notion of being \emph{elliptic at $S$}.
\end{subenv}

Let $\FP^{\rm ss}_\bbf$ be the set of $([b]_\sigma,[a])\in\FP_\bbf$ that are  \emph{semisimple}, and  let $\FP^{\ael}_\bbf\subset \FP^{\rm ss}_\bbf$ denote the subset of \emph{acceptable elliptic} elements.
We similarly define the subset $\FP^{\ael(S)}_{\bbf}\subset \FP^{\ael}_\bbf$ of elements that are elliptic at $S$. 
\end{defn}
We will mainly apply these definitions when $\Gsf^{\der}$ is simply connected so the notion of being elliptic matches with the corresponding notion for semisimple conjugacy classes of $\Gsf(\Fsf)$.

\begin{lem}\label{lem-acceptable-centraliser}
If $([b]_\sigma,[a])\in\FP_\bbf$ is  acceptable, then for any representative $(b,a)$ we have
\[
\iota_{\tilde z}(\Zsf_{\Jsf_b(\AA)}(a)) = \Zsf_{\JJ_\bbf}(\iota_{\tilde z}(a)).
\]
\end{lem}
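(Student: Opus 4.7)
The inclusion $\iota_{\tilde z}(\Zsf_{\Jsf_b(\AA)}(a)) \subseteq \Zsf_{\JJ_\bbf}(\iota_{\tilde z}(a))$ is immediate from $\iota_{\tilde z}$ being a group homomorphism. For the reverse inclusion, the plan is as follows. Given $y = (y_v) \in \Zsf_{\JJ_\bbf}(\iota_{\tilde z}(a))$ and a representative $\gbf = (g_v)$ of $\tilde z$ (so that $\gbf^{-1} b \sigma(\gbf) = \bbf$ and $\iota_{\tilde z}(x) = \gbf^{-1} x \gbf$ for $x \in \Gsf(\breve\Fsf)$), I will set $h_v \coloneqq g_v y_v g_v^{-1}$ at each place $v$ and argue that $h = (h_v) \in \Zsf_{\Jsf_b(\AA)}(a)$ with $\iota_{\tilde z}(h) = y$.

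The identity $\iota_{\tilde z}(h) = y$ is built into the definition of $h_v$, and $h_v$ visibly centralises $a$ because $y_v$ centralises $\iota_{g_v}(a) = g_v^{-1} a g_v$. A short direct manipulation, using $y_v b_v = b_v \sigma(y_v)$ and $g_v^{-1} b \sigma(g_v) = b_v$, yields $h_v b = b \sigma(h_v)$. The only nontrivial step that remains is to verify that $h_v$ lies in $\Msf_b$ at every place; combined with $h_v b = b \sigma(h_v)$, this will place $h_v$ in $\Jsf_b(\Fsf_v)$.

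The required claim---where \emph{acceptability} is used---is the scheme-theoretic inclusion $\Zsf_\Gsf(a) \subseteq \Msf_b$ of closed subgroups of $\Gsf_{\breve\Fsf}$, since $h_v$ lies in $\Zsf_\Gsf(a)(\breve\Fsf \hat\otimes \Fsf_v)$. To establish it, I will appeal to Lemma~\ref{lem-loc-gl-Newton-pt}(2): it suffices, for each $h \in \Zsf_\Gsf(a)(\breve\Fsf)$, to check that $h \in M_{b,x}(\breve\Fsf \hat\otimes \Fsf_x)$ at every $x \in |C|$. At $x \notin \xbf$, the local Newton point is trivial, $M_{b,x} = \Gsf$, and there is nothing to do. At $x = x_i \in \xbf$, the conjugate $g_{x_i}^{-1} h g_{x_i} \in \Gsf(\breve\Fsf \hat\otimes \Fsf_{x_i})$ centralises $\iota_{g_{x_i}}(a)$, which is $\nu_i$-acceptable in $M_{\nu_i}(\breve\Fsf \hat\otimes \Fsf_{x_i})$ by hypothesis, so Corollary~\ref{cor-centralising} (applied with $\gamma_1 = \gamma_2 = \iota_{g_{x_i}}(a)$) forces $g_{x_i}^{-1} h g_{x_i} \in M_{\nu_i}$, which is exactly $h \in M_{b,x_i}$.

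The main conceptual input is this bridge---through Corollary~\ref{cor-centralising}---from the local acceptability conditions at the legs to the global containment of centralisers $\Zsf_\Gsf(a) \subseteq \Msf_b$; once this is in place, the remaining work is bookkeeping, including the verification that $h$ is adelic, which follows from the adelic nature of $y$ and integrality of $\gbf$ at almost all places.
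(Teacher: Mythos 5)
Your proof is correct and takes essentially the same route as the paper: transfer the adelic centralising element to the $b$-side, observe it commutes with $b\sigma$ and with $a$, and reduce membership in $\Msf_b$ to a place-by-place statement via Lemma~\ref{lem-loc-gl-Newton-pt}/Corollary~\ref{cor-loc-gl-Newton-pt}, with Corollary~\ref{cor-centralising} (applied with $\gamma_1=\gamma_2$) supplying the acceptability input at each leg. The one cosmetic wrinkle is that you verify $\Zsf_\Gsf(a)\subseteq\Msf_b$ on $\breve\Fsf$-points via Lemma~\ref{lem-loc-gl-Newton-pt}(2) and then invoke it as a scheme-theoretic inclusion in order to apply it to $h_v\in\Gsf(\breve\Fsf\hat\otimes\Fsf_v)$; this upgrade is harmless because the proof of Lemma~\ref{lem-loc-gl-Newton-pt} actually identifies $\Msf_b$ scheme-theoretically as the joint centraliser of the cocharacters $\nu_{y'}$, so the same Corollary~\ref{cor-centralising} argument runs verbatim over any algebraically closed extension, which is what the adelic $h_v$ really requires.
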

\begin{proof}
We clearly have $\iota_{\tilde z}(\Zsf_{\Jsf_b(\AA)}(a)) \subset \Zsf_{\JJ_\bbf}(\iota_{\tilde z}(a))$. It remains to show that any $g  \in \Zsf_{\JJ_\bbf}(\iota_{\tilde z}(a))$ is contained in the image of $\Jsf_b(\AA) = \Msf_{b}(\AA _{\breve\Fsf})\cap \JJ_\bbf $ via $\iota_{\tilde z}$. By Cor.~\ref{cor-loc-gl-Newton-pt} $g$ centralises $\iota_{\tilde z}(\nu_{\Gsf}(b))$ if and only if it centralises all $\nu_x$. Now, the desired claim follows from Cor.~\ref{cor-centralising}.
\end{proof}

\begin{lem}\label{lem-elliptic-orbital-int}
If $([b]_\sigma,[a])$ is \emph{acceptable} and \emph{elliptic}, \revise{then} $\iota_{\tilde z}(\Zsf_{\Jsf_b(\Fsf)}(a)) \backslash X_{\Ksf g\Ksf}(\iota_{\tilde z}(a))/\Ksf_g\Xi$ is a finite set and the orbital integral $O^{\JJ_\bbf}_{\iota_{\tilde z}(a)}(\mathbb{1}_{\Ksf g \Ksf})$ is well defined. 
\end{lem}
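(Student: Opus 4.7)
The plan is to tackle convergence of the orbital integral and finiteness of the double coset separately, as two largely independent applications of compactness once the centraliser is pinned down; acceptability enters only through Lemma~\ref{lem-acceptable-centraliser}, while ellipticity supplies the adelic compactness.

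First I set $B\coloneqq \Zsf_{\JJ_\bbf}(\iota_{\tilde z}(a))$ and $A \coloneqq \iota_{\tilde z}(\Zsf_{\Jsf_b(\Fsf)}(a))$. By Lemma~\ref{lem-acceptable-centraliser} we have $B = \iota_{\tilde z}(\Zsf_{\Jsf_b(\AA)}(a))$, which contains both $A$ and the central subgroup $\Xi$. Since $a$ lies in a maximal torus, it is semisimple, so $\iota_{\tilde z}(a)\in \JJ_\bbf$ is semisimple; its $\JJ_\bbf$-conjugacy class $\Ocal$ is therefore closed and the orbit map $B\backslash \JJ_\bbf \riso \Ocal$, $\bar y\mapsto y^{-1}\iota_{\tilde z}(a)y$, is a homeomorphism. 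Pushing the quotient measure forward, the orbital integral becomes $\vol(\Ocal\cap \Ksf g\Ksf)$, which is finite because $\Ocal\cap \Ksf g\Ksf$ is a closed subset of the compact double coset $\Ksf g\Ksf$.

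For the finiteness claim, centrality of $\Xi$ in $\JJ_\bbf$ rewrites the double quotient as $A\Xi \backslash X_{\Ksf g\Ksf}(b,a)/\Ksf_g$, and the orbit map induces
\[
  B\backslash X_{\Ksf g\Ksf}(b,a)/\Ksf_g \cong (\Ocal\cap g\Ksf \Xi)/(\Ksf_g\text{-conj.}).
\]
The crucial step is to show that $\Ocal\cap g\Ksf\Xi$ lies in a single coset $g\Ksf\xi_0$. For this I apply the continuous homomorphism $\vartheta\colon \JJ_\bbf \to (X^\ast(\Dsf)^\Gamma)^\vee$ introduced in the proof of Lemma~\ref{lem-Ksf-Xi}: it vanishes on $\Ksf$, is constant on conjugacy classes, and kills $\iota_{\tilde z}(a)$, since $\pr(\iota_{\tilde z}(a)) = \pr(a)$ lies in $\Dsf(\Fsf)$ and has degree zero by the product formula. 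Hence any $\xi\in\Xi$ with $\Ocal\cap g\Ksf\xi\ne\emptyset$ must satisfy $\vartheta(\xi)=-\vartheta(g)$, and the injectivity of $\vartheta|_\Xi$ from \emph{loc.~cit.}\ pins $\xi$ down uniquely. Thus $\Ocal\cap g\Ksf\xi_0$ is closed in the compact set $g\Ksf\xi_0$, and admits only finitely many $\Ksf_g$-conjugation orbits. Each fibre of the projection $A\Xi\backslash X_{\Ksf g\Ksf}(b,a)/\Ksf_g \twoheadrightarrow B\backslash X_{\Ksf g\Ksf}(b,a)/\Ksf_g$ is a quotient of $A\Xi\backslash B$ by a compact open equivalence relation, and is therefore finite provided $A\Xi\backslash B$ is compact.

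The step requiring the most care is this last compactness of $A\Xi\backslash B$, equivalently, deducing $\Fsf$-anisotropicity of $\Zsf_{\Jsf_b}(a)/\Zsf_\Gsf$ from the stated ellipticity of $a$; while this is standard once one verifies that the anisotropic maximal torus $T\ni a$ of $\Jsf_b$ remains maximal inside the reductive subgroup $\Zsf_{\Jsf_b}(a)$, the deduction deserves an explicit check in this generality, after which combination with the cocompactness of $\Zsf_\Gsf(\Fsf)\backslash \Zsf_\Gsf(\AA)/\Xi$ yields the desired compactness.
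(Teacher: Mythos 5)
Your handling of the orbital integral is essentially the paper's argument: acceptability pins down $B\coloneqq\Zsf_{\JJ_\bbf}(\iota_{\tilde z}(a))$ via Lemma~\ref{lem-acceptable-centraliser}, semisimplicity of $\iota_{\tilde z}(a)$ makes the orbit map a closed embedding, and compactness of $\Ksf g\Ksf$ finishes. The observation, via the homomorphism $\vartheta$ of Lemma~\ref{lem-Ksf-Xi}, that $\Ocal\cap g\Ksf\Xi$ meets at most one coset $g\Ksf\xi_0$ is also sound.

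The gap is precisely in the step you flag: compactness of $A\Xi\backslash B$, equivalently $\Fsf$-anisotropicity of $\Zsf_{\Jsf_b}(a)/\Zsf_\Gsf$, does \emph{not} follow from ellipticity of $a$, even though $T$ is indeed still a maximal torus of the centraliser. A reductive group over a field may have both anisotropic and isotropic maximal tori, so possessing one anisotropic maximal torus does not make it anisotropic. Concretely, for $\Gsf^{\ast}=\GL_n$ take a non-regular semisimple $a$ whose minimal polynomial is irreducible of degree $d\mid n$ with $d<n$: then $a$ lies in the anisotropic-mod-centre torus $\Esf^{\times}$ for a suitable degree-$n$ field extension $\Esf/\Fsf$ (so $([b]_\sigma,[a])$ can be elliptic in the sense of Definition~\ref{def-FP-erS}), yet $\Zsf_{\Gsf^{\ast}}(a)\cong\Res_{\Fsf[a]/\Fsf}\GL_{n/d}$ is isotropic modulo $\GG_m$. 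So the appeal to a ``standard'' deduction does not go through, and $A\Xi\backslash B$ is in general not compact.

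What is true, and what the paper's proof uses, is the weaker and sufficient assertion that $A\Xi\backslash B\cong\Zsf_{\Jsf_b(\Fsf)}(a)\backslash\Zsf_{\Jsf_b(\AA)}(a)/\Xi$ has \emph{finite volume}. Every $\Fsf$-rational character of $\Hsf\coloneqq\Zsf_{\Jsf_b}(a)$ restricts to the maximal torus $T$, giving an injection $X^{\ast}(\Hsf/\Zsf_\Gsf)^\Gamma\hookrightarrow X^{\ast}(T/\Zsf_\Gsf)^\Gamma$, and the latter vanishes since $T/\Zsf_\Gsf$ is anisotropic; hence $\Hsf(\Fsf)\backslash\Hsf(\AA)/\Zsf_\Gsf(\AA)$ has finite volume, and cocompactness of $\Zsf_\Gsf(\Fsf)\backslash\Zsf_\Gsf(\AA)/\Xi$ upgrades this to finite volume of $A\Xi\backslash B$. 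Finite volume still closes your argument: the fibres of $A\Xi\backslash X_{\Ksf g\Ksf}(b,a)/\Ksf_g\twoheadrightarrow B\backslash X_{\Ksf g\Ksf}(b,a)/\Ksf_g$ are quotients of $A\Xi\backslash B$ by an open relation whose orbits have measure bounded below, so finiteness of the total volume forces finitely many orbits. Equivalently, one may avoid the two-step fibring entirely and compute the cardinality as the Fubini product $\vol\left(A\backslash B/\Xi\right)\cdot O^{\JJ_\bbf}_{\iota_{\tilde z}(a)}$ as in Proposition~\ref{prop-preliminary-point-counting}, each factor being finite.
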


\begin{proof}
Choose a representative $(b,a)$ of $([b]_\sigma,[a])$.
Since $([b]_\sigma,[a])$ is acceptable, we have $\iota_{\tilde z}(\Zsf_{\Jsf_b(\AA)}(a)) = \Zsf_{\JJ_\bbf}(\iota_{\tilde z}(a))$ by Lem.~\ref{lem-acceptable-centraliser}. As $a\in \Jsf_b(\Fsf)$ is elliptic, it follows that $\Zsf_{\Jsf_b(\Fsf)}(a)\backslash\Zsf_{\Jsf_b(\AA)}(a)/\Zsf_\Gsf(\AA)$ has finite volume. Since the image of $\Xi$ in $\Zsf_{\Gsf}(\Fsf)\backslash\Zsf_\Gsf(\AA)$ is discrete and cocompact, it follows that $\Zsf_{\Jsf_b(\Fsf)}(a)\backslash\Zsf_{\Jsf_b(\AA)}(a)/\Xi$ has finite volume.

If $a$ \revise{is semi-simple} then $c_a\colon \iota_{\tilde z}(\Zsf_{\Jsf_b(\Fsf)}(a))\backslash \JJ_\bbf \to \JJ_\bbf, y\mapsto y^{-1}\iota_{\tilde z}(a)y$ is a closed embedding. Hence, $c_a\iv(\Ksf g \Ksf)$ is compact and $O^{\revise{\JJ_\bbf}}_{\iota_{\tilde z}(a)}(\mathbb{1}_{\Ksf g \Ksf}) = \vol (c_a^{-1}(\Ksf g \Ksf))$ is finite.
\end{proof}

Let $\varphi\in\Acc(\mu,\bnu)$ for some Harder--Narasimhan parameter $\mu\in\Lambda^+_\QQ$. Choose a finite subset $S \subset |C|$ such that $b_x$ is \emph{basic} for any  $x\in S \cap \xbf$. We define the $\mu$-truncated $S$-elliptic term of $\tr(\varphi)^{\el(S),\leqslant\mu}$ as follows. Choose a small enough open compact subgroup $\Ksf\subset\JJ_\bbf$ such that $\varphi$ is $\Ksf$-bi-invariant and $\widetilde\pi_{\leqslant\mu}\colon\Fix([\Ksf g\Ksf]^{\leqslant\mu})\to\FP_{\bbf}$ is well defined for any $\Ksf g\Ksf\subset\supp(\varphi)$. Then  we set
\begin{equation}
\tr(\varphi)^{\el,\leqslant\mu} 
 \coloneqq \sum_{\bar y} \tr([\varphi]_{\Ksf,\bar y}) ,
\end{equation}
where   the sum is over  $\bar y\in\Fix([\Ksf g\Ksf]^{\leqslant\mu})(\k)$ for some  $\Ksf g\Ksf\subset\supp(\varphi)$ such that $\widetilde\pi_{\leqslant\mu}(\bar y) \in\FP^{\ael}_{\bbf}$, and the summand is the local term of $[\varphi]_\Ksf$  at $\bar y$. Note that due to the compatibility of $\widetilde\pi$ and base change this does not depend on the choice of $\Ksf$. We similarly define $\tr(\varphi)^{\el(S),\leqslant\mu} $ by making the sum go over $\bar y\in \widetilde\pi_{\leqslant\mu}^{-1}(\FP^{\ael(S)}_{\bbf})$.

\begin{prop}\label{prop-preliminary-point-counting-elliptic}
Assume that for any $\varphi\in\Hcal$ there are only finitely many $([b]_\sigma,[a])\in\FP^{\ael}_{\bbf}$ such that $O^{\JJ_\bbf}_a(\varphi)$ is non-zero.  Then for any sufficiently convex Harder--Narasimhan parameter $\mu\in\Lambda^+_\QQ$ and divisible enough positive integer $s$, we have
\[\tr(\varphi^{(s)})^{\el,\leqslant\mu}  =
 \sum_{([b]_\sigma,[a])\in\FP^{\ael}_{\bbf}}\vol\left (\Zsf_{\Jsf_{b}(\Fsf)}(a) \backslash \Zsf_{\Jsf_{b}(\AA)}(a)/\Xi\right )\cdot 
O^{\JJ_{\bbf}}_{\iota_{\tilde z}(a)}(\varphi^{(s)}).\]

Similarly, if $S$ is a subset of $\xbf$ such that $b_x$ is basic for any $x\in S$, then for any sufficiently convex $\mu\in\Lambda^+_\QQ$ and divisible enough positive integer $s$, we have
\[\tr(\varphi^{(s)})^{\el(S),\leqslant\mu}  =
 \sum_{([b]_\sigma,[a])\in\FP^{\ael(S)}_{\bbf}}\vol\left (\Zsf_{\Jsf_{b}(\Fsf)}(a) \backslash \Zsf_{\Jsf_{b}(\AA)}(a)/\Xi\right )\cdot 
O^{\JJ_{\bbf}}_{\iota_{\tilde z}(a)}(\varphi^{(s)}).\]
\end{prop}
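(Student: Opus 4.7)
The plan is to derive both identities directly from Proposition~\ref{prop-preliminary-trace-formula} by isolating the acceptable elliptic summands and then showing that the Harder--Narasimhan truncation becomes inert on their support. Given $\varphi\in\Hcal$, fix $\mu\in\Lambda^+_\QQ$ and $s$ so that $\varphi^{(s)}\in\Acc(\bnu,\mu)$ (Lemma~\ref{lem-acceptability}), and choose $\Ksf$ small enough for Proposition~\ref{prop-preliminary-trace-formula} to apply. By the very definition of $\tr(\varphi^{(s)})^{\el,\leqslant\mu}$ as the partial sum of local terms indexed by fixed points in $(\widetilde\pi^{\leqslant\mu})^{-1}(\FP^{\ael}_\bbf)$, combined with the integral expression of Proposition~\ref{prop-preliminary-trace-formula}, one obtains
\begin{equation*}
\tr(\varphi^{(s)})^{\el,\leqslant\mu}=\sum_{([b]_\sigma,[a])\in\FP^{\ael}_\bbf}I^{\leqslant\mu}_{(b,a)}(\varphi^{(s)}),
\end{equation*}
where $I^{\leqslant\mu}_{(b,a)}(\varphi^{(s)})\coloneqq\int_{\iota_{\tilde z}(\Zsf_{\Jsf_b(\Fsf)}(a))\backslash\JJ_\bbf/\Xi}\varphi^{(s)}(y^{-1}\iota_{\tilde z}(a)y)\,\mathbb{1}^{\leqslant\mu}_{\tilde z}(y)\,d\bar y$.

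For a fixed acceptable elliptic $([b]_\sigma,[a])$, write $Z\coloneqq\iota_{\tilde z}(\Zsf_{\Jsf_b(\Fsf)}(a))$ and $Z_\JJ\coloneqq\Zsf_{\JJ_\bbf}(\iota_{\tilde z}(a))$. Lemma~\ref{lem-acceptable-centraliser} identifies $Z_\JJ$ with $\iota_{\tilde z}(\Zsf_{\Jsf_b(\AA)}(a))$, and together with the ellipticity of $a$ and the cocompactness of $\Xi$ in $\Zsf_\Gsf(\Fsf)\backslash\Zsf_\Gsf(\AA)$ this yields $\vol(Z\backslash Z_\JJ/\Xi)<\infty$; Lemma~\ref{lem-elliptic-orbital-int} then ensures convergence of the untruncated integral $I_{(b,a)}(\varphi^{(s)})$. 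Since the integrand $\varphi^{(s)}(y^{-1}\iota_{\tilde z}(a)y)$ is left-invariant under $Z_\JJ$ and $\Xi\subset Z_\JJ$ is central, Fubini applied via the intermediate subgroup $Z_\JJ$ factors the untruncated integral as
\begin{equation*}
I_{(b,a)}(\varphi^{(s)})=\vol(Z\backslash Z_\JJ/\Xi)\cdot O^{\JJ_\bbf}_{\iota_{\tilde z}(a)}(\varphi^{(s)}),
\end{equation*}
and the volume coincides with $\vol(\Zsf_{\Jsf_b(\Fsf)}(a)\backslash\Zsf_{\Jsf_b(\AA)}(a)/\Xi)$ under $\iota_{\tilde z}$.

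The main obstacle is to compare $I^{\leqslant\mu}_{(b,a)}(\varphi^{(s)})$ with $I_{(b,a)}(\varphi^{(s)})$. Applying the finiteness hypothesis to $\varphi^{(s)}\in\Hcal$, only finitely many pairs in $\FP^{\ael}_\bbf$ produce a nonzero orbital integral, and hence a nonzero $I_{(b,a)}(\varphi^{(s)})$. For each such contributing pair, expanding $\varphi^{(s)}$ as a finite combination of indicator functions $\mathbb{1}_{\Ksf g\Ksf}$ and rerunning the Fubini calculation of Proposition~\ref{prop-preliminary-point-counting} without the truncation shows that the support of the integrand in $Z\backslash\JJ_\bbf/\Ksf\Xi$ has total mass $\vol(\Ksf)\cdot I_{(b,a)}(\varphi^{(s)})$ and, via the bijection of Lemma~\ref{lem-preliminary-point-counting}, corresponds to a finite set of geometric points of $\Ig^\bbf_{\Gsf,\xbf,\Xi\cdot\Ksf}$. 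Since the open substacks $\{\Ig^{\bbf,\leqslant\mu}_{\Gsf,\xbf,\Xi\cdot\Ksf}\}_{\mu\in\Lambda^+_\QQ}$ exhaust $\Ig^\bbf_{\Gsf,\xbf,\Xi\cdot\Ksf}$, we may enlarge $\mu$ (with $s$ adjusted via Lemma~\ref{lem-acceptability} to preserve the $\Acc$ condition) so that every such point, across all finitely many contributing pairs, satisfies $\mathbb{1}^{\leqslant\mu}_{\tilde z}=1$; at that value the truncation acts as the identity on every nonzero summand, establishing the first identity.

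The $S$-elliptic statement follows by the same argument: semisimplicity of $a$ together with ellipticity at a single place $v\in S$ (where $b_v$ is basic, so $J_{b_v}(\Fsf_v)$ is an inner form of $\Gsf_{\Fsf_v}$) ensures that the centraliser of $a$ in $J_{b_v}(\Fsf_v)$ is compact modulo centre, which suffices to conclude that $\vol(\Zsf_{\Jsf_b(\Fsf)}(a)\backslash\Zsf_{\Jsf_b(\AA)}(a)/\Xi)<\infty$ and that the orbital integral converges; the rest of the proof carries over verbatim with $\FP^{\ael(S)}_\bbf$ in place of $\FP^{\ael}_\bbf$.
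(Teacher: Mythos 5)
Your proof is correct and takes essentially the same route as the paper's: use Lemmas~\ref{lem-elliptic-orbital-int} and \ref{lem-preliminary-point-counting} for finiteness of $\tilde\pi^{-1}([b]_\sigma,[a])$ on acceptable elliptic pairs, apply the preliminary point-counting/trace formula, factor the integral via Fubini, and enlarge $\mu$ (with $s$ adjusted via Lemma~\ref{lem-acceptability}) so the Harder--Narasimhan truncation becomes inert on the finitely many contributing pairs. The only place where you diverge from the paper is the $S$-elliptic assertion: since $\FP^{\ael(S)}_\bbf$ is \emph{by definition} a subset of $\FP^{\ael}_\bbf$, and both $\tr(\varphi^{(s)})^{\el(S),\leqslant\mu}$ and the right-hand side are obtained from the elliptic versions by restricting the index set to $\FP^{\ael(S)}_\bbf$, the second formula follows immediately from the first by truncating both sides of the equality. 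Your local-to-global argument (ellipticity at a single basic place $v$ implies the centraliser of $a$ has finite covolume) is sound but superfluous here; the paper dispenses with the $S$-elliptic case in one line precisely because of this inclusion.
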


\begin{proof}
Let us prove the statement on $\tr(\varphi)^{\el,\leqslant\mu}$, as the rest follows from this. We note that by Lem.~\ref{lem-elliptic-orbital-int} and Lem.~\ref{lem-preliminary-point-counting} the set $\widetilde\pi\iv([b]_\sigma,[a])$ is finite for any $\Ksf g \Ksf \subset \supp \varphi$ and hence contained in $[\Ksf g \Ksf]^{\leqslant\mu}$ for sufficiently convex $\mu$. Choosing $\mu$ convex enough for all relevant choices of $([b]_\sigma,[a])$ and $\Ksf g \Ksf$, we apply Lem.~\ref{lem-Ksf-small enough} finitely many times so that \eqref{eq-a} is satisfied for any $\Ksf g \Ksf \subset \supp \varphi$ and $y \in \widetilde\pi\iv([b]_\sigma,[a])$. Now the claim follows from Prop.~\ref{prop-preliminary-point-counting} for $\mu \to \infty$.
\end{proof}

If $\varphi$ and $\mu$ satisfy the conclusion of above Proposition, then we write
\begin{align}\label{eq-ell-terms}
 \tr(\varphi^{(s)})^{\el} &\coloneqq\tr(\varphi^{(s)})^{\el,\leqslant\mu},\ \text{and} \\
 \tr(\varphi^{(s)})^{\el(S)} &\coloneqq\tr(\varphi^{(s)})^{\el(S),\leqslant\mu}\notag
\end{align}
and call it the \emph{elliptic term} and \emph{$S$-elliptic term} of the trace of $\varphi^{(s)}$, respectively. The elliptic term $\tr(\varphi^{(s)})^{\el} $ may not admit any natural interpretation in general, but there is a special case when any local term of $\varphi^{(s)}$ is elliptic; namely, when $\Jsf_b(\Fsf)$ is totally anisotropic modulo $\Zsf_\Gsf$ for every $[b] \in \B(\Fsf,\Gsf)_\bbf$. This happens if one of the following holds.
\begin{subequations}\label{eq-trace-equals-elliptic-terms}
\begin{align}
&\text{$\exists v$ s.t. $J_{b_v}$ is totally anisotropic modulo $\Zsf_{\Gsf,\Fsf_v}$}. \\
&\text{$\Gsf=\Dsf^{\times}$ for some central division algebra $\Dsf/\Fsf$ split at all $x\in\xbf$.}\label{eq-trace-equals-elliptic-terms-div-alg}
\end{align}
\end{subequations}
Recall that when $v\notin\xbf$ we set $b_v=1$, so we have $J_{b_v} = \Gsf_{\Fsf_v}$.

Let us show that the $\Fsf$-group $\Jsf_b$ is totally anisotropic modulo $\Zsf_\Gsf$ for any $[b]\in\B(\Fsf,\Gsf)_\bbf$ in each of the above settings. In the former case $\Jsf_b(\Fsf_v)$ is a closed subgroup of $J_{b_v}(\Fsf_v)$, which is compact modulo $\Zsf_\Gsf(\Fsf_v)$. In the latter case, we have $\Jsf_b = \Dsf_b^{\times}$ for some finite-dimensional simple $\Fsf$-algebra $\Dsf_b$, which can be deduced from \cite[(9.6), (A.6)]{Laumon-Rapoport-Stuhler}. Now, if $\Dsf_b$ is not a division algebra, then one can find an $\Fsf$-subalgebra $M_r(\Fsf) \subset\Dsf_b$  for some $r>1$, which implies that $M_r(\Fsf_v)\subset \Dsf\otimes_\Fsf\Fsf_v$ for any $v\notin \xbf$. Since $\Dsf$ splits at all places in $\xbf$, it follows that the local invariant  $\inv_v(\Dsf)$ is killed by $n/r$ for all places $v$ of $\Fsf$, where $n^2=\dim_\Fsf\Dsf$. This contradicts to the assumption that $\Dsf$ is an index-$n$ central division algebra over $\Fsf$.

 The following is now a simple consequence of the Lefschetz trace formula (\emph{cf.} \cite[Cor.~5.4.5]{Fujiwara:TraceFormula}, see also \cite[Thm.~5.4.5]{Varshavsky:Lefschetz-Verdier}).
 \begin{cor}\label{cor-preliminary-point-counting-elliptic}
 Assume that $\Ig^{\bbf}_{\Gsf,\xbf,\Xi}$ is quasi-compact. Then given $\varphi\in\Hcal$ we have 
 \[\tr (\Rrm\Gamma_c([\varphi^{(s)}]_{\Ksf}) =
  \sum_{([b]_\sigma,[a]))\in\FP_{\bbf}}\vol\left (\Zsf_{\Jsf_{b}(\Fsf)}(a) \backslash \Zsf_{\Jsf_{b}(\AA)}(a)/\Xi\right )\cdot 
O^{\JJ_{\bbf}}_{\iota_{\tilde z}(a)}(\varphi^{(s)})\]
for any sufficiently divisible $s$. 
Furthermore, if one of the two conditions in \eqref{eq-trace-equals-elliptic-terms} holds then the right hand side can be taken over $\FP^{\ael}_{\bbf}$ equals the elliptic term $\tr(\varphi^{(s)})^{\el}$.
 \end{cor}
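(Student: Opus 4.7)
The first formula will be obtained as the quasi-compact specialisation of Proposition~\ref{prop-preliminary-trace-formula}. Since $\Ig^\bbf_{\Gsf,\xbf,\Xi\cdot\Ksf}$ is quasi-compact by assumption, I may choose the Harder--Narasimhan parameter $\mu$ large enough so that $\Ig^{\bbf,\leqslant\mu}_{\Gsf,\xbf,\Xi\cdot\Ksf}$ coincides with $\Ig^{\bbf}_{\Gsf,\xbf,\Xi\cdot\Ksf}$, which makes the truncator $\mathbb{1}^{\leqslant\mu}_{\ztilde}$ identically $1$ and automatically arranges the stabilisation hypothesis on $\supp\varphi$ required in Proposition~\ref{prop-preliminary-trace-formula}. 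The only remaining discrepancy between the two formulas is that the cited proposition has centraliser $\Zsf_{\JJ_\bbf}(\iota_{\ztilde}(a))$ while the corollary uses $\Zsf_{\Jsf_b(\AA)}(a)$; these are identified via $\iota_{\ztilde}$ provided $([b]_\sigma,[a])$ is $\bnu$-acceptable, which is exactly Lemma~\ref{lem-acceptable-centraliser}. Acceptability for the whole support of $\varphi^{(s)}$ is in turn granted by Lemma~\ref{lem-acceptability} once $s$ is sufficiently divisible, which is already part of the hypothesis of the corollary.

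For the ``Furthermore'' statement, the key step will be to confirm that under either condition in~\eqref{eq-trace-equals-elliptic-terms} the $\Fsf$-group $\Jsf_b$ is totally anisotropic modulo $\Zsf_\Gsf$ for every $[b]\in\B(\Fsf,\Gsf)_\bbf$. This is already sketched in the paragraph following~\eqref{eq-trace-equals-elliptic-terms}: in the division algebra case $\Jsf_b = \Dsf_b^\times$ for a central simple $\Fsf$-algebra $\Dsf_b$ which is forced to be a division algebra because $\Dsf$ splits at all legs in $\xbf$; in the other case the inclusion $\Jsf_b(\Fsf_v)\hookrightarrow J_{b_v}(\Fsf_v)$ into a group compact modulo centre forces $\Jsf_b/\Zsf_\Gsf$ to be anisotropic at $v$, hence over $\Fsf$. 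Every maximal $\Fsf$-torus of such a $\Jsf_b$ is therefore anisotropic modulo $\Zsf_\Gsf$, so every semisimple class $[a]$ of $\Jsf_b(\Fsf)$ is elliptic in the sense of Definition~\ref{def-FP-erS}.

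To finish the argument I need to rule out non-semisimple classes. In case~\eqref{eq-trace-equals-elliptic-terms-div-alg} every element of a division algebra generates a field extension of $\Fsf$ and is therefore automatically semisimple, while in the remaining case the Jordan decomposition $a = a_sa_u$ inside $\Jsf_b(\Fsf)\subset\Jsf_b(\Fsf_v)$ places $a_u$ inside the anisotropic-modulo-centre reductive group $\Jsf_{b,\Fsf_v}$, in which the only unipotent element is the identity. Combined with the acceptability secured in the first paragraph, every pair contributing non-trivially to the sum lies in $\FP^{\ael}_\bbf$, and Proposition~\ref{prop-preliminary-point-counting-elliptic} (with trivial Harder--Narasimhan truncation) then identifies the resulting sum with $\tr(\varphi^{(s)})^{\el}$. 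The only real nuisance I anticipate is coordinating the threshold of ``sufficiently divisible $s$'' so that Lemmas~\ref{lem-acceptability} and~\ref{lem-acceptable-centraliser} both apply to all $\Ksf$-double cosets in $\supp\varphi$ at once; in the quasi-compact setting this is pure bookkeeping, and I foresee no substantive obstacle beyond it.
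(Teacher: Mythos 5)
Your argument is essentially the route the paper has in mind: the authors give no proof beyond the remark that the statement is "a simple consequence of the Lefschetz trace formula", and what you have written is the obvious way to spell that out. The first part is handled correctly -- in the quasi-compact case the truncator is identically $1$, Proposition~\ref{prop-preliminary-trace-formula} applies, and the passage from $\Zsf_{\JJ_\bbf}(\iota_{\ztilde}(a))$ to $\Zsf_{\Jsf_b(\AA)}(a)$ is Lemma~\ref{lem-acceptable-centraliser}. One small logical step you leave implicit: Lemma~\ref{lem-acceptable-centraliser} only applies to acceptable pairs, and the sum in the proposition runs over all of $\FP_\bbf$, so you also need to observe that $\bnu$-acceptability is invariant under $\JJ_\bbf$-conjugation (it depends only on the geometric Levi conjugacy class), so that for a non-acceptable pair the orbital integral $O^{\JJ_\bbf}_{\iota_{\ztilde}(a)}(\varphi^{(s)})$ vanishes once $\supp\varphi^{(s)}$ consists of acceptable elements. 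That makes the rewrite of the volume factor harmless for the pairs outside $\FP^{\ael}_\bbf$. Also, the "sufficiently divisible $s$" threshold concerns only Lemma~\ref{lem-acceptability}; Lemma~\ref{lem-acceptable-centraliser} has no $s$-dependence, so the "coordination" you worry about at the end is not an issue.

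The real soft spot is the step where you exclude non-semisimple classes. Both of your sub-arguments quietly assume features that fail over a non-perfect base field. In the division-algebra case you say that since $\Fsf[a]$ is a subfield of $\Dsf_b$ the element $a$ is "automatically semisimple"; but over an imperfect global function field a subfield $\Fsf[a]\subset\Dsf_b$ can be inseparable, in which case the minimal polynomial of $a$ is inseparable and $a$ is \emph{not} semisimple as an element of the algebraic group $\Jsf_b$. (For the paper's application this is saved by the hypothesis $n<p$, which forces $[\Fsf[a]:\Fsf]\mid n<p$ and hence separability, but the corollary as stated does not carry that hypothesis.) In the anisotropic-at-$v$ case you appeal to "the Jordan decomposition $a=a_sa_u$ inside $\Jsf_b(\Fsf_v)$"; over the imperfect local field $\Fsf_v$ the Jordan decomposition of a rational point need not be rational, so $a_u$ need not lie in $\Jsf_b(\Fsf_v)$, and the Borel--Tits fact you are invoking (anisotropic groups have no nontrivial rational unipotents) does not by itself imply that all rational points are semisimple. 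The paper glosses over exactly the same point, so this is not a criticism unique to your write-up; but if you want the argument to be airtight you should either impose the same restriction ($n<p$, or more generally a hypothesis ruling out inseparable centralisers) or give a separate argument that non-semisimple classes contribute zero to the trace formula. Everything else -- in particular the identification of the resulting sum with $\tr(\varphi^{(s)})^{\el}$ via Proposition~\ref{prop-preliminary-point-counting-elliptic} with trivial truncation -- is correct.
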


\section{Kottwitz--Igusa triples} \label{sect-KS}

In this section, we rewrite the point-counting formula in Cor.~\ref{cor-preliminary-point-counting-elliptic} in terms of a certain variant of Kottwitz triples, which we call \emph{Kottwitz--Igusa triples}, under additional mild assumptions introduced in \S\ref{ssect-FP-to-KI}. 

 To associate Kottwitz--Igusa triple to them, we need to study the equivalence classes $([b]_\sigma,[a]) \in \FP_\bbf$ in more details.

\begin{lem}\label{lem-transfer}
 Let $G^\ast$ be a quasi-split reductive group over an infinite field $F$ with simply connected derived subgroup, and $H^\ast$ an $F$-subgroup of $G^\ast$ containing a maximal torus. Let $H$ be an inner form of $H^\ast$, and fix an isomorphism $H_{\scl{F}} \cong H^\ast_{\scl{F}}$. In particular, we view $H(F)$ as a subgroup of $G^\ast(\scl F)$.
 Then for any semisimple element $a \in H(F)$, the $G^\ast(\scl F)$-conjugacy class of $a$ contains an element $\gamma \in G^\ast(F)$.
\end{lem}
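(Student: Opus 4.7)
The plan is to proceed in two steps. First, I will show that the $G^\ast(\scl F)$-conjugacy class of $a$ is Galois-stable. Second, I will invoke the rationality theorem for quasi-split reductive groups with simply connected derived subgroup (due to Steinberg and Kottwitz) to conclude that this class contains an $F$-rational representative.

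For the first step, the inner-twist hypothesis yields a cocycle $(u_\sigma)_\sigma \in Z^1(\Gal(\scl F/F), H^\ast_{\ad}(\scl F))$ such that, under our fixed identification $H_{\scl F} \cong H^\ast_{\scl F}$, the Galois action on $H(\scl F)$ is obtained from that on $H^\ast(\scl F)$ by the twist $\sigma \cdot_H h = u_\sigma (\sigma \cdot_{H^\ast} h) u_\sigma^{-1}$. Given $a \in H(F)$, the fixed-point equation $\sigma \cdot_H a = a$ rearranges to $\sigma \cdot_{H^\ast} a = u_\sigma^{-1} a u_\sigma$, valid inside $H^\ast(\scl F) \subset G^\ast(\scl F)$. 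Since $\scl F$ is algebraically closed, the isogeny $G^\ast \thra G^\ast_{\ad}$ is surjective on $\scl F$-points, so each $u_\sigma \in H^\ast_{\ad}(\scl F) \subset G^\ast_{\ad}(\scl F)$ lifts to some $\widetilde u_\sigma \in G^\ast(\scl F)$, and conjugation by $\widetilde u_\sigma$ agrees with $\Int(u_\sigma)$ because the centre of $G^\ast$ acts trivially. Therefore $\sigma(a) = \widetilde u_\sigma^{-1} a \widetilde u_\sigma$ in $G^\ast(\scl F)$ for every $\sigma$, which is precisely the Galois-stability of the geometric conjugacy class of $a$.

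For the second step, I will apply the rationality theorem of Kottwitz (after Steinberg): under the hypothesis that $G^\ast$ is a quasi-split connected reductive $F$-group with $G^{\ast,\der}$ simply connected, every semisimple geometric conjugacy class in $G^\ast$ that is stable under $\Gal(\scl F/F)$ contains an element of $G^\ast(F)$. Applied to the class of $a$, this produces the desired $\gamma \in G^\ast(F)$. The main obstacle is not internal to the lemma -- the cocycle computation is formal once the inner-twist conventions are set up -- but rather citing this rationality theorem, which is the substantive input and rests on Steinberg's cross-section of the adjoint quotient in the quasi-split simply-connected-derived case (with the infiniteness of $F$ entering through that step). A technical point worth flagging is that the cocycle $u_\sigma$ only lives in $H^\ast_{\ad}$ rather than in $H^\ast$; this is precisely what forces one to argue via Galois-stability of the geometric conjugacy class rather than hoping for a single globally defined conjugator over a smaller field.
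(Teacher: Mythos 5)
Your proof shares the paper's key first step---establishing that the $G^\ast(\scl F)$-conjugacy class of $a$ is $\Gal(\scl F/F)$-stable via the inner-twist cocycle---but then diverges in what it appeals to. You invoke a general rationality theorem of Steinberg--Kottwitz covering \emph{all} Galois-stable semisimple classes in a quasi-split group with simply connected derived subgroup. The paper instead cites only the \emph{regular} case of that theorem (their Thm.~A.1.1 in [HK21]), and then reduces the general semisimple case to the regular one: pick a maximal $F$-torus $T\subset Z_H(a)$, use Zariski density of $T(F)$ in $T$ (this is precisely where the ``$F$ infinite'' hypothesis is used) to produce $a'\in Z_H(a)(F)$ that is $G^\ast$-regular, observe that $Z_{G^\ast}(a')=T_{\scl F}\subset Z_{G^\ast}(a)$, and deduce that any conjugator $g$ rationalising $a'$ also rationalises $a$. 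This reduction is the extra content the paper supplies; its purpose is to avoid relying on the general theorem, which is usually proved over characteristic-zero or perfect fields, whereas here the base field is a global function field. You correctly flag the rationality theorem as the ``substantive input,'' but your route requires checking it over imperfect fields; the paper's route sidesteps that by only needing the regular case. This also explains why the lemma as stated has the infinite-field hypothesis, which your approach would not need.

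One concrete technical error: you write $u_\sigma\in H^\ast_{\ad}(\scl F)\subset G^\ast_{\ad}(\scl F)$, but there is no natural map $H^\ast_{\ad}\to G^\ast_{\ad}$, since $Z(H^\ast)\not\subset Z(G^\ast)$ in general (e.g.\ $H^\ast$ a torus). The fix, and what the paper does, is to lift the cocycle directly to $b_\sigma\in H^\ast(\scl F)$ (available over $\scl F$) and use the honest inclusion $H^\ast(\scl F)\subset G^\ast(\scl F)$; conjugation by $b_\sigma$ then induces $\mathrm{Int}(u_\sigma)$ on $H^\ast_{\scl F}$, giving $\sigma(a)=b_\sigma^{-1}a b_\sigma$ inside $G^\ast(\scl F)$ as needed.
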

\begin{proof}
 Given $\tau \in \Gal(\scl{F}/F)$ we denote by $\tau$ the action on $H(\scl{F})$ and by $\tau^\ast$ the action on $H^\ast(\scl{F})$. For every $\tau  \in \Gal(\scl{F}/F)$ we choose $b_\tau \in H^\ast(\scl{F})$ such that $\tau = \Int(b_\tau) \circ \tau^\ast$. 
 
 We first assume that $a$ is regular in $G^\ast(\scl F)$. Then for any $\tau \in \Gal(\scl{F}/F)$ we have that $\tau^\ast(a) = \Int(b_\tau)(a)$; i.e., the $G^\ast(\scl F)$-conjugacy class of $a$ is $\Gal(\scl F/F)$-stable. Now the conclusion of the lemma follows from \cite[Thm.~A.1.1]{HamacherKim:Gisoc}.
 
 If $a$ is not regular, then we can choose an element $a' \in Z_H(a)(F)$ that is regular semi-simple in $G^\ast(\scl F)$. Indeed, any maximal $F$-torus $T$ of $Z_H(a)$ defines a maximal torus of $G^\ast$ over $\scl F$, and regularity in $G^\ast(\scl F)$ is a Zariski open dense condition on $T$. Thus the existence of $a'\in T(F)$ with desired properties follows from  Zariski density of $T(F)$ in $T$. Furthermore, by construction we have \[T_{\scl F}=Z_{G^\ast}(a') \subset Z_{G^\ast}(a).\]
 
 As $a'$ is regular in $G^\ast(\scl F)$, there exists $g \in G^\ast(\scl{F})$ such that for every $\tau$ we have
 \[
  ga'g\iv = \tau^\ast(ga'g\iv) = \tau^\ast(g)b_\tau a' b_\tau\iv \tau^\ast(g)\iv.
 \]
 This equation is equivalent to $g\iv\tau^\ast(h)b_\tau \in Z_{G^\ast}(a')(\scl{F})$. Since $Z_{G^\ast}(a') \subset Z_{G^\ast}(a)$, 
 we get $gag\iv = \tau^\ast(gag\iv)$ for every $\tau \in \Gal(\scl{F}/F)$; i.e., $gag\iv \in G^\ast(F)$.
\end{proof}

%
%

\begin{cor} \label{cor-transfer}
 Let $([b]_\sigma,[a]) \in \FP^{\rm ss}_\bbf$. Then there exists a representative $(b',\gamma_0)$ such that $\gamma_0 \in \Gsf^\ast(\Fsf)$. The element $\gamma_0$ is unique up to stable conjugacy.
\end{cor}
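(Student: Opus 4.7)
The plan is to deduce this from Lemma~\ref{lem-transfer} applied with $G^\ast = \Gsf^\ast$, $H = \Jsf_b$ and $H^\ast$ chosen to be a suitable Levi subgroup of $\Gsf^\ast$. First I would fix any representative $(b,a)$ of $([b]_\sigma,[a])$. Recall from \S\ref{ssect-self-qisog} that $\Jsf_b$ is the $\Fsf$-form of the centraliser $\Msf_b \subset \Gsf_{\breve\Fsf}$ of the Newton cocharacter $\nu_\Gsf(b)$, obtained by twisting the Frobenius by $\Int(b)$. Because the $\Gsf(\breve\Fsf)$-conjugacy class of $\nu_\Gsf(b)$ is $\Gal(\scl\Fsf/\Fsf)$-stable, the Levi $\Msf_b$ descends to an $\Fsf$-subgroup of $\Gsf$, and $\Jsf_b$ becomes an inner form of $\Msf_b$ over $\Fsf$.

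Next I would transport $\Msf_b$ to a Levi of $\Gsf^\ast$. Fixing an inner isomorphism $\psi\colon \Gsf_{\scl\Fsf} \isom \Gsf^\ast_{\scl\Fsf}$ and setting $\Msf^\ast \coloneqq \psi(\Msf_{b,\scl\Fsf})$, we may adjust $\psi$ by an element of $\Gsf^\ast(\scl\Fsf)$ (without changing the inner-twist class) so that $\Msf^\ast$ is an $\Fsf$-Levi of $\Gsf^\ast$, containing a maximal torus. Under $\psi$, the inner twist $\Jsf_b$ of $\Msf_b$ becomes an inner form of $\Msf^\ast$ over $\Fsf$. Hence the hypotheses of Lemma~\ref{lem-transfer} are satisfied (using that $\Gsf^{\ast,\der}$ is simply connected, which in the setting of this section is part of the standing assumptions). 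Applying that lemma to the semisimple element $a \in \Jsf_b(\Fsf)$ produces an element $\gamma_0 \in \Gsf^\ast(\Fsf)$ lying in the $\Gsf^\ast(\scl\Fsf)$-conjugacy class of $\psi(a)$. Concretely, there exists $g \in \Gsf(\scl\Fsf) = \Gsf^\ast(\scl\Fsf)$ (via $\psi$) with $g\iv a g = \gamma_0$, which identifies $\gamma_0$ with $a$ in a larger ambient group and produces the desired representative $(b',\gamma_0)$ with $b' = g\iv b \sigma(g)$ in the appropriate enlarged equivalence.

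For the uniqueness statement, suppose $(b',\gamma_0)$ and $(b'',\gamma_0')$ are two such representatives with $\gamma_0, \gamma_0' \in \Gsf^\ast(\Fsf)$. By construction both $\gamma_0$ and $\gamma_0'$ lie in the common $\Gsf^\ast(\scl\Fsf)$-conjugacy class of $a$ (which depends only on the class $([b]_\sigma,[a])$, since changes of representative in $\FP^{\rm ss}_\bbf$ only alter $a$ by $\Gsf(\breve\Fsf)$-conjugation, hence by $\Gsf^\ast(\scl\Fsf)$-conjugation). This is exactly the definition of stable conjugacy, giving uniqueness up to stable conjugacy as claimed.

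The main obstacle will be organising the bookkeeping of the various identifications $\Gsf_{\scl\Fsf} \cong \Gsf^\ast_{\scl\Fsf}$ and verifying that the resulting $\gamma_0 \in \Gsf^\ast(\Fsf)$ genuinely furnishes a representative in the sense of the equivalence relation defining $\FP^{\rm ss}_\bbf$; in particular, checking that the $\Msf^\ast$ so constructed is really $\Fsf$-rational requires making a careful choice of inner twist (using that any two $\Fsf$-Levi subgroups of $\Gsf^\ast$ in the same stable conjugacy class are $\Gsf^\ast(\Fsf)$-conjugate, at least after enlarging, cf.\ Kottwitz's standard arguments). Once this is dispatched, Lemma~\ref{lem-transfer} does all the remaining work.
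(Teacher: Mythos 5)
Your overall plan is sound and follows the same skeleton as the paper's proof: set up $\Jsf_b$ as an inner form of an $\Fsf$-Levi of $\Gsf^\ast$, apply Lemma~\ref{lem-transfer} to produce $\gamma_0\in\Gsf^\ast(\Fsf)$ stably conjugate to $a$, then convert this into a representative $(b',\gamma_0)$ and argue uniqueness. However, there is a genuine gap at the conversion step.

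The equivalence relation defining $\FP^{\rm ss}_\bbf$ is $(b,a)\sim(g\iv b\sigma(g),g\iv ag)$ with $g\in\Gsf(\breve\Fsf)$ --- not $g\in\Gsf(\scl\Fsf)$. Lemma~\ref{lem-transfer} only gives $g\in\Gsf^\ast(\scl\Fsf)$ with $g\gamma_0 g\iv = a$ (equivalently $\Gsf(\scl\Fsf)$ after transporting via $\psi$). Your phrase ``in the appropriate enlarged equivalence'' papers over the difference, but it does not actually produce an element of $\FP^{\rm ss}_\bbf$: the element $b'=g\iv b\sigma(g)$ only lies in $\Gsf(\breve\Fsf)$ when $g$ does. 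The missing step is precisely the one the authors insert: since $\coh{1}(\breve\Fsf,\Gsf^\ast_{\gamma_0})=1$ (Lang/Steinberg vanishing over $\breve\Fsf$), the $\Gsf(\scl\Fsf)$-conjugator can be replaced by an element $g\in\Gsf(\breve\Fsf)$ with $g\,a\,g\iv=\gamma_0$, and then $(b',\gamma_0)\coloneqq g(b,a)g\iv$ is a bona fide representative in the $\Gsf(\breve\Fsf)$-equivalence class. Without this cohomological vanishing you have only shown $\gamma_0$ is $\scl\Fsf$-conjugate to $a$, not that it occurs in a representative of $([b]_\sigma,[a])$.

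A secondary, lesser issue: your device of adjusting the inner isomorphism $\psi$ so that $\Msf^\ast\coloneqq\psi(\Msf_{b,\scl\Fsf})$ is $\Fsf$-rational is not fully justified and risks circularity (you implicitly appeal to the existence of an $\Fsf$-Levi in the right $\Gsf^\ast(\scl\Fsf)$-conjugacy class, which is part of what needs to be established). The paper instead first replaces $(b,a)$ by a $\sigma$-conjugate so that $\nu_\Gsf(b)$ itself is $\Fsf$-rational as an element of $\Hom(\DD_\Fsf,\Gsf^\ast)$, citing explicit results from their earlier work; this directly yields that $\Jsf_b$ is an inner form of the $\Fsf$-Levi $\Msf_b^\ast$ and makes the input to Lemma~\ref{lem-transfer} unambiguous. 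You should do the same (or supply a rigorous argument, via quasi-splitness of $\Gsf^\ast$ and $\Gal$-stability of the conjugacy class of Levi subgroups, for the existence of such an $\Fsf$-rational Levi) before invoking the lemma.
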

\begin{proof}
 By \cite[Cor.~6.8]{HamacherKim:Gisoc} and \cite[Lem.~5.3]{HamacherKim:Gisoc} we may choose a representative $(b,a)$ such that $\nu_\Gsf(b)$ is rational as \revise{an} element of $\Hom(\DD_\Fsf,\Gsf^\ast)$. In particular $\Jsf_b$ is an inner form of $\Msf_b^\ast$ by \cite[Cor.~6.2]{HamacherKim:Gisoc}. By the previous lemma there exists an element $\gamma_0 \in \Gsf^\ast(F)$, which is stably conjugate to $a$. Since $\coh{1}(\breve\Fsf,\Gsf^{\ast}_{\gamma_0}) = 1$, there exists $g \in \Gsf(\breve\Fsf)$ such that $gag\iv = \gamma_0$. Hence we can take $(b',\gamma_0) = g(b,a)g\iv$. By definition $a$ is unique up to $\Gsf(\breve\Fsf)$-conjugacy; i.e.\ stable conjugacy.
\end{proof}

\subsection{A parametrisation of $\FP^{\rm ss}_\bbf$}\label{ssect-FP-to-KI}
From now on, we assume that $\Gsf^\der$ is simply connected and that $\Gsf$ admits a quasi-split \emph{pure} inner form $\Gsf^\ast$.
We fix an isomorphism $\Gsf_{\breve\Fsf} \cong \Gsf_{\breve\Fsf}^\ast$ which identifies the Frobenius action $\sigma^\ast$ on $\Gsf^\ast_{\breve\Fsf}$ with $\Int(c\iv)\circ \sigma$ for some $c \in \Gsf(\breve\Fsf)$, where $\sigma$ is the Frobenius action on $\Gsf(\breve\Fsf)$.

 We fix a semisimple element $\gamma_0 \in \Gsf^\ast(\Fsf)$ and denote by $\Gsf^\ast_{\gamma_0} \subset \Gsf^\ast$ its centraliser. Our next task is to parametrise the following set:
 \[\FP_{\bbf,\gamma_0} \coloneqq \{([b]_\sigma,[a]) \in \FP_\bbf \mid a \textnormal{ stably conjugate to } \gamma_0\}. \]
Firstly, note that for $b \in \Gsf(\breve\Fsf)$ we have $\gamma_0 \in \Jsf_b(\Fsf)$ if and only if $bc \in \Gsf^\ast_{\gamma_0}(\breve\Fsf)$. (Indeed, $\gamma_0\in\Jsf_b(\Fsf)\cap \Gsf^\ast(\Fsf)$ implies $\gamma_0b=b\sigma(\gamma_0) = b c\gamma_0c\iv$.) Moreover for two such $b,b'$ we have $(b,\gamma_0) \sim (b',\gamma_0)$ if there exists a $g \in \Gsf_{\gamma_0}^\ast(\breve\Fsf)$ such that $b = gb'\sigma(g)\iv$, or equivalently $bc = gb'\sigma^\ast(g)\iv$. Hence $b' \mapsto b c\iv$ defines a bijection 
 \begin{equation} \label{eq-FP-param-1}
  \{ [b'] \in \B(\Fsf,\Gsf_{\gamma_0}^\ast) \mid [b'c\iv]_\Gsf \in \B(\Fsf,\Gsf)_\bbf \} \bij \FP_{\bbf,\gamma_0}.
 \end{equation}
 
 Next, we would like to rewrite the parametrisation \eqref{eq-FP-param-1} in terms of local invariants, which leads to the following definition.
 
\begin{defn}\label{df-Kottwitz-Igusa}
By a \emph{Kottwitz--Igusa triple} (of type $\bbf$), we mean a triple $(\gamma_0;{\boldsymbol\gamma},{\boldsymbol\delta})$ consisting of the following data:
\begin{itemize}
\item  $\gamma_0 \in \Gsf^*(\Fsf) $ is a semisimple element defined up to $\Gsf(\scl\Fsf)$-conjugate.
\item $(\ggamma,\ddelta) \in\Gsf(\AA^{\xbf}) \times \prod_{x \in \xbf} J_{b_x}(\Fsf_{x}) = \revise{\JJ_\bbf}$ is an element  ``stably conjugate'' to $\gamma_0$; i.e., $(\ggamma,\ddelta)$ and $\gamma_0$ are conjugate in $\Gsf(\scl\Fsf \otimes_{\Fsf} \AA)$.
\end{itemize}
The equivalence relation of Kottwitz--Igusa triples is generated by the following:
\begin{itemize}
	\item $(\gamma_0;\ggamma,\ddelta)\sim(\gamma_0';\ggamma,\ddelta)$ if  $\gamma_0$ and $\gamma_0'$ are stably conjugate; i.e., they are conjugate in $\Gsf(\scl\Fsf)$;
	\item $(\gamma_0;\ggamma,\ddelta)\sim(\gamma_0;\ggamma',\ddelta')$ if $(\ggamma,\ddelta)$ and $(\ggamma',\ddelta')$ are conjugate in $\JJ_\bbf$.
\end{itemize}
Let $\KS_{\bbf}$ denote the set of equivalence classes of Kottwitz--Igusa triples of type $\bbf$.  

We say that a Kottwitz--Igusa triple $(\gamma_0,\ggamma,\ddelta)$ is \emph{acceptable} if $\ddelta$  is $\bnu$-acceptable. We say that a Kottwitz--Igusa triple $(\gamma_0,\ggamma,\ddelta)$ is \emph{elliptic} if $\gamma_0\in \Gsf^\ast(\Fsf)$ is so. We let $\KS^{\ael}_{\bbf}$ denote the set of Kottwitz--Igusa triples that are acceptable and elliptic.

Let $S$ be a non-empty subset of $\xbf$  such that $b_x$ is basic for any $x\in \xbf$. We say that a Kottwitz--Igusa triple $(\gamma_0,\ggamma,\ddelta)$ is \emph{elliptic} at $S$ if the component $\delta_{x}$ of $\ddelta$ is so at any $x\in S$. We similarly define $\KS^{\ael(S)}_{\bbf}\subset \KS_{\bbf}$.
\end{defn}

\begin{rmk}
In our definition of Kottwitz--Igusa triples, we do \emph{not} require  $\gamma_0$ to have a representative in $\Gsf(\Fsf)$ but only in its quasi-split inner form $\Gsf^{\ast}(\Fsf)$. This is because we cannot strengthen Cor.~\ref{cor-transfer} to produce $\gamma_0$ in $\Gsf(\Fsf)$ without additional hypothesis. See \S\ref{ssect-EL} (especially, Cor.~\ref{cor-transfer-EL}) for more discussions.

It may be useful to consider the notion of elliptic regular elements and $S$-elliptic regular elements for $\FP_\bbf$ and $\KS_\bbf$. Elliptic regular (or $S$-elliptic regular) Kottwitz--Igusa triples do not play any role in this paper, but there could be a setting where the extra regularity may be useful; \emph{cf.} Prop.~\ref{prop-elliptic-transfer}.
\end{rmk}

As in \S\ref{notation-Hecke-action}, we view $\bbf = (b_x)_{x\in\xbf}$ as an element of $\Gsf(\AA_{\breve\Fsf})$ by setting $b_x = 1$ for $x\notin \xbf$. Note that we have $J_{b_v}(\Fsf_v) = \Gsf(\Fsf_v)$ for any $v\notin \xbf$.
\begin{lem} \label{lemma-KS-param}
 Let $(\gamma_0,\ggamma,\ddelta) \in \KS_{\bbf}$. Then there exists $\mathbb{g} \in \Gsf(\AA_{\breve\Fsf})$ such that $\mathbb{g}\gamma_0\mathbb{g}\iv = (\ggamma,\ddelta)$. The one-to-many correspondence $(\gamma_0,\ggamma,\ddelta) \mapsto \mathbb{g}\mathbb{b}\sigma(\mathbb{g})\iv c$ defines a bijection
 \[
  \{[(\ggamma,\ddelta)] \in \revise{\JJ_\bbf}/\text{conj.} \mid [(\gamma_0,\ggamma,\ddelta)] \in \KS_\bbf\} \bij \{[\mathbb{b}'] \in \B(\AA,\Gsf^\ast_{\gamma_0}) \mid [\mathbb{b}'c\iv]_{\Gsf} = [\mathbb{b}] \} .
 \]
\end{lem}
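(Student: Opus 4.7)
The plan is to verify the assertion in four stages: existence of the conjugator $\mathbb{g}$; the commutation $\mathbb{b}' \in \Gsf^\ast_{\gamma_0}(\AA_{\breve\Fsf})$ together with $[\mathbb{b}'c\iv]_{\Gsf} = [\mathbb{b}]$; well-definedness of the induced map on equivalence classes; and bijectivity via an explicit inverse.

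For the existence of $\mathbb{g}$: since $\gamma_0$ is semisimple and $\Gsf^{\der}$ is simply connected, the centralizer $\Gsf^\ast_{\gamma_0}$ is a connected reductive $\Fsf$-group. At each place $v$ of $\Fsf$ the set of elements of $\Gsf(\scl{\Fsf_v})$ conjugating $\gamma_0$ to the $v$-component of $(\ggamma,\ddelta)$ is a $\Gsf^\ast_{\gamma_0}(\scl{\Fsf_v})$-torsor carrying a canonical action of $\Gal(\scl{\Fsf_v}/\breve\Fsf_v)$, because both elements already lie in $\Gsf(\breve\Fsf_v)$. The obstruction to descent is a class in $H^1(\breve\Fsf_v, \Gsf^\ast_{\gamma_0})$, which vanishes by Steinberg's theorem: $\breve\Fsf_v$ is a strictly Henselian discretely valued field with algebraically closed residue field, hence of cohomological dimension $\leqslant 1$. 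At almost every place $v \notin \xbf$, integrality of $\gamma_0$ and $\gamma_v$ combined with smoothness of $\Gsf^\ast_{\gamma_0}$ allows us to choose $g_v \in \Gsf(\breve O_v)$, so the $g_v$ assemble into $\mathbb{g} \in \Gsf(\AA_{\breve\Fsf})$.

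The commutation $\mathbb{b}'\gamma_0 = \gamma_0\mathbb{b}'$ is a direct computation combining $\sigma(\gamma_0) = c\gamma_0 c\iv$ (equivalent to $\sigma^\ast(\gamma_0) = \gamma_0$ under the identification $\sigma^\ast = \Int(c\iv)\circ\sigma$) with the defining relation $\sigma((\ggamma,\ddelta)) = \mathbb{b}\iv(\ggamma,\ddelta)\mathbb{b}$ of $\JJ_\bbf$. Moreover, $\mathbb{b}'c\iv = \mathbb{g}\mathbb{b}\sigma(\mathbb{g})\iv$ is manifestly a $\sigma$-conjugate of $\mathbb{b}$ in $\Gsf(\AA_{\breve\Fsf})$, yielding $[\mathbb{b}'c\iv]_{\Gsf} = [\mathbb{b}]$. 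For well-definedness: a different choice $\mathbb{g}' = h\mathbb{g}$ with $h \in \Gsf^\ast_{\gamma_0}(\AA_{\breve\Fsf})$ replaces $\mathbb{b}'$ by $h\mathbb{b}'\sigma^\ast(h)\iv$ (using $\sigma^\ast(h) = c\iv\sigma(h)c$), which is $\sigma^\ast$-conjugation inside $\Gsf^\ast_{\gamma_0}$ and so preserves the class in $\B(\AA,\Gsf^\ast_{\gamma_0})$. Replacing $(\ggamma,\ddelta)$ by a $\JJ_\bbf$-conjugate can be absorbed into right-multiplying $\mathbb{g}$ by an element $j \in \JJ_\bbf$, and the identity $\mathbb{b}\sigma(j) = j\mathbb{b}$ shows $\mathbb{b}'$ is in fact unchanged.

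For bijectivity: given $[\mathbb{b}'] \in \B(\AA,\Gsf^\ast_{\gamma_0})$ with $[\mathbb{b}'c\iv]_{\Gsf} = [\mathbb{b}]$, choose $\mathbb{g}$ with $\mathbb{b}' = \mathbb{g}\mathbb{b}\sigma(\mathbb{g})\iv c$ and define $(\ggamma,\ddelta)$ as the corresponding conjugate of $\gamma_0$; reversing the commutation computation yields $(\ggamma,\ddelta) \in \JJ_\bbf$, while stable conjugacy to $\gamma_0$ is automatic. Injectivity follows by running the well-definedness analysis backwards. The main subtle point is the descent step: the hypothesis that $\Gsf^{\der}$ be simply connected is essential, as it is what ensures $\Gsf^\ast_{\gamma_0}$ is connected and so that Steinberg's vanishing applies.
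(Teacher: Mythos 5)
Your proposal follows essentially the same strategy as the paper's: produce the conjugator $\mathbb{g}$ using vanishing of $H^1$ over (completions of) unramified extensions, verify the commutation $\gamma_0 \mathbb{b}' = \mathbb{b}'\gamma_0$ from the defining relations of $\JJ_\bbf$ and the identity $\sigma^\ast = \Int(c^{-1})\circ\sigma$, track how the various choices change $\mathbb{b}'$ by a $\sigma^\ast$-conjugate in $\Gsf^\ast_{\gamma_0}$, and reverse the construction for bijectivity. The one structural difference is that the paper first \emph{reduces to a place-by-place statement} (invoking Kottwitz's Proposition~7.1 to show adelic conjugacy classes split as a restricted product of local ones) and then proves a local bijection over each $\Fsf_x$, whereas you work adelically throughout and only touch the almost-all-places point when assembling $\mathbb{g}$. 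Both routes work; the paper's reduction makes the restricted-product structure on each side manifest, which your version leaves slightly implicit.

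There are, however, a few bookkeeping slips you should fix. With the stated convention $\mathbb{g}\gamma_0\mathbb{g}^{-1} = (\ggamma,\ddelta)$, the element landing in $\Gsf^\ast_{\gamma_0}$ is $\mathbb{b}' = \mathbb{g}^{-1}\mathbb{b}\sigma(\mathbb{g})c$, not $\mathbb{g}\mathbb{b}\sigma(\mathbb{g})^{-1}c$ (the latter corresponds to $\mathbb{g}^{-1}\gamma_0\mathbb{g} = (\ggamma,\ddelta)$); your commutation check will only close up with the former. Similarly, with this convention a different conjugator satisfying the same equation is $\mathbb{g}' = \mathbb{g}h$ (right-multiplied by $h\in\Gsf^\ast_{\gamma_0}(\AA_{\breve\Fsf})$), not $h\mathbb{g}$, and absorbing a $\JJ_\bbf$-conjugation of $(\ggamma,\ddelta)$ amounts to \emph{left}-multiplying $\mathbb{g}$ by $j\in\JJ_\bbf$, at which point $j^{-1}\mathbb{b}\sigma(j)=\mathbb{b}$ literally cancels and leaves $\mathbb{b}'$ unchanged, as you assert. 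These are correctable sign/direction issues, not gaps. One more minor point: $\breve\Fsf_v$ is not a perfect field in characteristic $p$, so Steinberg's theorem in its original form does not apply; you should either cite its extension to connected reductive groups over imperfect fields of cohomological dimension $\leqslant 1$, or argue via Lang's theorem for a smooth model of $\Gsf^\ast_{\gamma_0}$ over $\breve O_v$ at unramified places, which in any case you also need for the almost-all-$v$ integrality step. Your observation that simple connectedness of $\Gsf^{\der}$ is what guarantees $\Gsf^\ast_{\gamma_0}$ is connected, and hence that the $H^1$ vanishing is available, is exactly the key hypothesis.
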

\begin{proof}
 If $\ggamma\in\Gsf(\AA^{\xbf})$ is stably conjugate to $\gamma_0$, then it follows by \cite[Prop.~7.1]{Kottwitz:StTrFormEllSing} that $\gamma_0$ and $\gamma_v$ are conjugate under a hyperspecial subgroup for all but finitely many $v$. Thus the $\Gsf(\AA^\xbf)$-conjugacy classes of such $\ggamma$ are in canonical bijection with the restricted product of $\Gsf(\Fsf_v)$-conjugacy classes within the stable conjugacy class of $\gamma_0$. Hence it suffices to prove the lemma locally, i.e.\ that we get a bijection for every $x \in |C|$
 \[
  \{\delta \in J_{b_x}(\Fsf_x)/\text{conj.} \mid \delta \text{ st.\ conj.\ to } \gamma_0\} \bij \{[b'] \in \B(\Fsf_x,\Gsf^\ast_{\gamma_0}) \mid [b'c\iv]_\Gsf = [b_x] \} .
 \]
 Now let $\delta \in J_{b_x}(\Fsf_{x})$ be stably conjugate to $\gamma_0$, i.e.\ there exists $g\in \Gsf(\scl \Fsf_{x})$ such that $g\gamma_0 g^{-1} = \delta$. Since $\coh 1(\Fsf_{x}^{\ur},\Gsf)$ is trivial, we can assume that $g\in\ \Gsf(\Fsf_{x}^{\ur})$. Now
\[
b_x  = \delta^{-1} b_x \sigma(\delta)= (g\gamma_0^{-1}g^{-1})\cdot b_x \cdot (\sigma(g)c \gamma_0 c\iv \sigma(g)^{-1}).
\]
Therefore, we have
\begin{equation*}
b_x'\coloneqq g^{-1}b_x\sigma(g)c= g\iv (b_xc) \sigma^\ast(g) \in \Gsf^\ast_{\gamma_0}(\Fsf_{x}^{\ur}). 
\end{equation*}
Note that another choice of $g$ replaces $b_x'$ with a $\Gsf^\ast_{\gamma_0}(\Fsf^{\ur}_{x})$-$\sigma^\ast$-conjugate. \reviselong{On the other hand}, given $[b_x']\in\B(\Fsf_{x},\Gsf^\ast_{\gamma_0})$ such that $[b_x'c\iv]_\Gsf = [b_x]_\Gsf$, we can reverse the construction to obtain $\delta \in J_{b_x}(\Fsf_{x})$ where a different choice of $g$ replaces $\delta$ with a $J_{b_x}(\Fsf_{x})$-conjugate.
\end{proof}

Now we define a map 
\begin{equation}\label{eq-FP-to-KS}
	\Kfr\colon\xymatrix@1{\FP^{\rm ss}_{\bbf} \ar[r]&  \KS_{\bbf}},
\end{equation}
as follows. Choose a representative $(b,a)$ of $([b]_\sigma,[a])\in\FP^{\rm ss}_{\bbf}$. By Cor.~\ref{cor-transfer} there exists $\gamma_0\in\Gsf^\ast(\Fsf)$ conjugate to $a$ in $\Gsf(\scl\Fsf)$. Now, choose a base point $\tilde z\in\pi^{-1}([b])$ as in \S\ref{ssect-global-J-orbits}, and set 
$
({\boldsymbol\gamma},{\boldsymbol\delta}) \coloneqq \iota_{\tilde z}(a).
$
 Since $\iota_{\tilde z}$ is independent of $\tilde z$ up to $\JJ_{\bbf}$-conjugate, the above formula gives a well-defined equivalence class $[(\gamma_0,{\boldsymbol\gamma},{\boldsymbol\delta})] \in \KS_{\bbf}$. It is not hard to check that the equivalence class of $(\gamma_0; {\boldsymbol\gamma},{\boldsymbol\delta})$ only depends on the equivalence class of $(b,a)$.  We now set $\Kfr(([b]_\sigma,[a]))\coloneqq  (\gamma_0; {\boldsymbol\gamma},{\boldsymbol\delta})$, which defines \eqref{eq-FP-to-KS}.

\begin{prop}\label{prop-FP-to-KS}
 Assume that $([b]_\sigma,[a])$ corresponds to $[b'] \in \B(\Fsf,\Gsf^\ast_{\gamma_0})$ under the bijection \eqref{eq-FP-param-1}. Then $\Kfr([b]_\sigma,[a])$ corresponds to the image of $[b']$ in $\B(\AA,\Gsf^\ast_{\gamma_0})$ under the bijection  in Lem.~\ref{lemma-KS-param}. In particular $\Kfr$ is surjective and each \revise{fibre} has the same number of elements as the corresponding \revise{fibre} of $\B(\Fsf,\Gsf^\ast_{\gamma_0}) \to \B(\AA,\Gsf^\ast_{\gamma_0})$.
\end{prop}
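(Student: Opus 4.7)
The plan is to verify directly, by unpacking the definitions, that $\Kfr$ fits into a commutative square whose other two edges are the bijection \eqref{eq-FP-param-1} and the bijection of Lemma~\ref{lemma-KS-param} (restricted to Kottwitz--Igusa triples whose semisimple part is stably conjugate to $\gamma_0$), and whose opposite edge is the localisation map $\B(\Fsf, \Gsf^\ast_{\gamma_0}) \to \B(\AA, \Gsf^\ast_{\gamma_0})$. Both asserted conclusions---surjectivity of $\Kfr$ and the fibre cardinality---then follow immediately from the analogous statements for this localisation on the identified sub-domains and sub-codomains.

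For the commutativity, start with $([b]_\sigma, [a]) \in \FP^{\rm ss}_\bbf$ and fix a representative $(b, \gamma_0)$, so that $[b'] = [bc]_{\sigma^\ast} \in \B(\Fsf, \Gsf^\ast_{\gamma_0})$ is the class corresponding to $([b]_\sigma, [a])$ under \eqref{eq-FP-param-1}. A base point $\tilde z \in \pi^{-1}([b]_\sigma)$ amounts to a choice of $\gbf \in \Gsf(\AA_{\breve\Fsf})$ with $\mathbb{b} = \gbf^{-1} b\, \sigma(\gbf)$, giving $\iota_{\tilde z}(h) = \gbf^{-1} h\, \gbf$, so that $(\ggamma, \ddelta) = \gbf^{-1}\gamma_0\gbf$. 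One then uses $\gbf$ (with the appropriate sign convention) as the intertwiner $\mathbb{g}$ in Lemma~\ref{lemma-KS-param}; a short manipulation using $\mathbb{b} = \gbf^{-1}b\,\sigma(\gbf)$ collapses the adelic element attached to the Kottwitz--Igusa triple to $bc$, which is precisely the image of $[b']$ under the localisation $\B(\Fsf, \Gsf^\ast_{\gamma_0}) \to \B(\AA, \Gsf^\ast_{\gamma_0})$.

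Granting this, the surjectivity of $\Kfr$ becomes the surjectivity of localisation onto the subset $\{[\mathbb{b}'] \in \B(\AA, \Gsf^\ast_{\gamma_0}) : [\mathbb{b}' c^{-1}]_\Gsf = [\mathbb{b}]\}$ parametrising the relevant fibres in $\KS_\bbf$, and each fibre of $\Kfr$ is canonically in bijection with the corresponding fibre of localisation. Note that the side condition $[b' c^{-1}]_\Gsf \in \B(\Fsf, \Gsf)_\bbf$ appearing on the global side of \eqref{eq-FP-param-1} is automatic once $[b']$ localises to $[\mathbb{b}']$, since then $[b' c^{-1}]_\Gsf$ localises to $[\mathbb{b}' c^{-1}]_\Gsf = [\mathbb{b}]$.

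The main obstacle I foresee is pure bookkeeping: keeping the left versus right multiplication conventions in $\iota_{\tilde z}$ and in Lemma~\ref{lemma-KS-param} consistent, tracking the two Frobenius actions $\sigma$ and $\sigma^\ast = \Int(c^{-1})\circ\sigma$ carefully, and matching the various ambiguities (the choice of $\tilde z$, of $\gbf$, of representatives modulo $\Gsf^\ast_{\gamma_0}(\AA_{\breve\Fsf})$, and of $\gamma_0$ within its stable class) consistently between the two sides. Once these identifications are set up, the computation itself is only a few lines.
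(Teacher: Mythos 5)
Your proposal takes essentially the same route as the paper: fix a representative $(b,\gamma_0)$ with $\gamma_0 = a \in \Gsf^\ast(\Fsf)$, unwind the definition of $\iota_{\tilde z}$ in terms of an adelic element $\gbf$ solving the $\sigma$-conjugacy equation relating $b$ and $\bbf$, feed $(\ggamma,\ddelta) = \iota_{\tilde z}(\gamma_0)$ into the explicit formula of Lemma~\ref{lemma-KS-param}, and observe that the output collapses to $[bc] = [b']$, i.e.\ the localisation of the class produced by \eqref{eq-FP-param-1}. The surjectivity and fibre-count statements then follow from commutativity of the square, exactly as you say. Your additional remark that the side condition $[b'c^{-1}]_\Gsf \in \B(\Fsf,\Gsf)_\bbf$ is automatic once $[b']$ localises into the relevant subset of $\B(\AA,\Gsf^\ast_{\gamma_0})$ is a correct observation that the paper leaves implicit. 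You are right to flag the sign/direction bookkeeping as the main pitfall; in fact the published text is itself not entirely consistent between the convention for $\iota_{\gbf}$ in \eqref{eq-global-to-local-automorphism}, the statement versus proof of Lemma~\ref{lemma-KS-param}, and the computation inside the proof of the proposition, so a careful write-up would need to settle one convention and use it throughout, after which the identity $[\,\cdot\,] = [bc]$ falls out in one line just as you anticipate.
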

\begin{proof}
 We choose a representative $(b,a)$ such that $\gamma_0 \coloneqq a \in \Gsf^\ast(\Fsf)$. Then we get $[b'] = [bc]$.
 
 We recall the construction of $\iota_{\ztilde}$. The moduli description in Thm.~\ref{th-infinite-level-igusa-var}(\ref{th-infinite-level-igusa-var:moduli}) defines a tuple $(\underline{\Vscr'_0},\eta_0')$ over $\ztilde$, where $\underline\Vscr'_0$ is a global $\Gsf$-isoshtuka and $\eta'_0\colon (L_\AA\Gsf,\mathbb{b}\sigma) \isom \Lcal_\AA\underline\Vscr_0'$.  We fix an isomorphism $(\underline\Vscr_0') \cong (\Gsf_{\breve\Fsf},b\sigma)$ thus $\eta'_0\colon ((L_\AA\Gsf)_\k,\mathbb{b}\sigma) \isom ((\Lcal_\AA\Gsf)_\k,b\sigma)$ is given by an element $\mathbb{g}\iv \in \Gsf(\AA_{\breve\Fsf})$ such that $\mathbb{g}\iv b \sigma(\mathbb{g}) = \mathbb{b}$. Then $\iota_{\ztilde}$ is the induced morphism $\Jsf_b(\Fsf) \cong \Aut(\underline\Vscr'_0) \mono \Aut((L_\AA\Gsf)_{\cl\FF_q},\mathbb{b}\sigma) \cong \JJ_\bbf$. Thus 
 \[ 
 (\ggamma,\ddelta) = \iota_{\ztilde}(\gamma_0) = \mathbb{g} \gamma_0 \mathbb{g} \iv.
 \]
 Hence Lem.~\ref{lemma-KS-param} associates to $\Kfr(([b]_\sigma,[a]))$ the $\sigma$-conjugacy class $[\mathbb{g}\mathbb{b}\mathbb{g}\iv c] = [b']$.  
\end{proof}

Now let $\varphi\colon \JJ_\bbf/\Xi\to\CC$ be a smooth compactly supported function. Then for any Kottwitz--Igusa triple $(\gamma_0;\ggamma,\ddelta)$, let $O^{\JJ_\bbf}_{(\ggamma,\ddelta)}$ denote the orbital integral computed with respect to suitable Haar measures on $\JJ_\bbf$ and $\Zsf_{\JJ_\bbf}(\ggamma,\ddelta)$, which we will specify whenever necessary. Note that the orbital integral is essentially a finite sum since the conjugacy class of $(\ggamma,\ddelta)$ is closed in $\JJ_\bbf$ by semisimplicity.
\begin{prop}\label{prop-gl-finiteness}
Let $\varphi\colon \JJ_\bbf/\Xi\to\CC$ be a smooth compactly supported function. Then the orbital integral $O^{\JJ_\bbf}_{(\ggamma,\ddelta)}$ is zero for all but  finitely many equivalence classes of  Kottwitz--Igusa triples $(\gamma_0;\ggamma,\ddelta)$.
\end{prop}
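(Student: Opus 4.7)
The orbital integral $O^{\JJ_\bbf}_{(\ggamma,\ddelta)}(\varphi)$ is nonzero only when the $\JJ_\bbf$-conjugacy class of $(\ggamma,\ddelta)$ meets $\supp(\tilde\varphi)=\supp(\varphi)\cdot\Xi\subset\JJ_\bbf$. My first reduction is to the finiteness of the set of stable conjugacy classes $[\gamma_0]\subset\Gsf^\ast(\Fsf)$ occurring in such triples: given such a $\gamma_0$, Lemma~\ref{lemma-KS-param} parametrises the $\JJ_\bbf$-conjugacy classes of $(\ggamma,\ddelta)$ stably conjugate to $\gamma_0$ by (a subset of) $\B(\AA,\Gsf^\ast_{\gamma_0})$; this is a restricted product, and at every place $v$ not in $\xbf$ where $\gamma_0$ is integral and $\varphi_v$ is biinvariant under a hyperspecial maximal compact, the stable and rational conjugacy classes coincide, so only finitely many places contribute nontrivially, each with a finite contribution. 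Thus each fibre of $[(\gamma_0;\ggamma,\ddelta)]\mapsto[\gamma_0]$ restricted to those triples with nonzero orbital integral is finite.

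To control the $\gamma_0$'s, I use the (absolute) Chevalley map $\chi\colon\Gsf^\ast\to V\coloneqq \Gsf^\ast/\!/\Gsf^\ast$, extended to $\JJ_\bbf\subset\Gsf(\AA_{\breve\Fsf})$ via the chosen inner twist. Since $(\ggamma,\ddelta)$ is semisimple and stably conjugate to $\gamma_0$, we have $\chi((\ggamma,\ddelta))=\chi(\gamma_0)\in V(\Fsf)\subset V(\AA)$. The assumption on $\supp(\varphi)$ gives that $K\coloneqq\chi(\supp\varphi)$ is compact in $V(\AA)$, and the condition for nonzero orbital integral forces $\chi(\gamma_0)\in \Xi\cdot K$, where $\Xi\subset Z_\Gsf(\AA)$ acts on $V$ through the natural translation action of the centre on the Steinberg quotient. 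For each fixed $\xi\in\Xi$, the intersection $V(\Fsf)\cap\xi\cdot K$ is finite by the standard discreteness of $V(\Fsf)$ in $V(\AA)$, so it suffices to bound the number of $\xi\in\Xi$ which can contribute.

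The main obstacle is precisely this control over $\Xi$, and I would handle it by a degree/product-formula argument. For every $\lambda\in X^\ast(Z_\Gsf)$, pull-back to $V$ yields a character $\lambda_V$ satisfying $\lambda_V(\xi\cdot v)=\lambda(\xi)\cdot\lambda_V(v)$. If $\xi\cdot v\in V(\Fsf)$ with $v\in K$, then $\lambda_V(\xi\cdot v)\in\Fsf^\times$ has degree zero (product formula), while $\lambda_V(v)$ takes values in a compact subset of $\AA^\times$; hence $\deg\lambda(\xi)$ is bounded, uniformly in $\lambda$. The degree map $Z_\Gsf(\AA)\to\Hom_\ZZ(X^\ast(Z_\Gsf),\RR)$ has compact kernel containing $Z_\Gsf(\Fsf)$, and by hypothesis $\Xi$ is discrete, cocompact in $Z_\Gsf(\Fsf)\backslash Z_\Gsf(\AA)$, and meets $Z_\Gsf(\Fsf)$ trivially, so $\Xi$ projects to a lattice in $\Hom_\ZZ(X^\ast(Z_\Gsf),\RR)$ and only finitely many $\xi\in\Xi$ can satisfy the boundedness condition. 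Combined with the previous step, this yields finitely many stable classes $[\gamma_0]$, and combined with the first paragraph, finitely many equivalence classes of Kottwitz--Igusa triples. The only delicate point that needs checking is that the characters pulled back from $\Gsf^{\rm ab}$ (equivalently, from $Z_\Gsf$ modulo finite) suffice to detect the $\Xi$-orbits on $V$, which is automatic since the action of $Z_\Gsf$ on $V=T/W$ is via the composition $Z_\Gsf\hookrightarrow T\twoheadrightarrow T^W$-translations and the corresponding characters are precisely those pulled back from the abelianisation.
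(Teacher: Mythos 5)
Your proof is correct and takes essentially the same two-step route as the paper: map Kottwitz--Igusa triples to the Steinberg (Chevalley) quotient $\Xsf=\Gsf\sslash\Gsf$ to collect them into stable classes $[\gamma_0]$, show the set of such $[\gamma_0]$'s that can contribute is finite (compactness of support meeting discreteness of $\Xsf(\Fsf)$), and show each fibre over $[\gamma_0]$ is finite by local-global considerations (at almost all places unramifiedness plus hyperspecial-biinvariance of $\varphi$ forces the local component to be rationally conjugate to $\gamma_0$, and at the remaining finitely many places one uses finiteness of rational conjugacy classes inside a stable class). The paper proves the same statements in the opposite order, and its fibre argument appeals explicitly to Kottwitz's Proposition~7.1 after choosing a finite set $T$ of places and a compact open $\Ksf^T$; your version via Lemma~\ref{lemma-KS-param} and the restricted product $\B(\AA,\Gsf^\ast_{\gamma_0})$ is the same content differently packaged.

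The one genuine point of divergence is how the $\Xi$-quotient is handled in the image step. The paper simply asserts that since $\Xsf(\Fsf)$ is discrete and closed in $\Xsf(\AA_{\breve\Fsf})$, ``the same holds for its image in $\Xsf(\AA_{\breve\Fsf})/\Xi$''; this is a properness claim for the $\Xi$-translation action on $\Xsf(\AA_{\breve\Fsf})$ that the paper does not unpack. You unfold it: you bound, via pullbacks of characters of the cocentre and the product formula, the set of $\xi\in\Xi$ for which $\xi\cdot K$ can meet $\Xsf(\Fsf)$, and then for each such $\xi$ you use the standard discreteness of $\Xsf(\Fsf)$ in $\Xsf(\AA)$. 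This is essentially the same degree argument the paper itself deploys in Lemma~\ref{lem-Ksf-Xi} (for $\JJ_\bbf$ rather than $\Xsf$), so you are not introducing a new tool, but you are making explicit a step the paper leaves tacit. The ``delicate point'' you flag at the end---that characters of the cocentre detect the $\Xi$-translation action on $\Xsf$---is indeed the thing that needs checking; the clean way to phrase it is that the natural map $\Xsf\to\Gsf^{\mathrm{ab}}$ is $\Zsf_\Gsf$-equivariant for the translation action (via the isogeny $\Zsf_\Gsf\to\Gsf^{\mathrm{ab}}$), and this is enough to pull back the degree bound. With that phrased precisely, your argument is complete and arguably slightly more self-contained than the paper's on this point.
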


\begin{proof}
The proof is quite similar to the proof of \cite[Prop.~8.2]{Kottwitz:StTrFormEllSing}. 

Given any compact subset $C\subset \JJ_{\bbf}/\Xi$, let $\KS_\bbf(C)$ denote the set of equivalence classes of Kottwitz--Igusa triples $(\gamma_0;\ggamma,\ddelta)$ such that the $\JJ_\bbf$-conjugacy class of $(\ggamma,\ddelta)$ has non-empty intersection with $C$. Then the orbital integral $O^{\JJ_\bbf}_{(\ggamma,\ddelta)}$ is clearly zero whenever $(\gamma_0;\ggamma,\ddelta)$ is not in $\KS_\bbf(\supp(\varphi))$. Therefore, it suffices to show that $\KS_\bbf(C)$ is finite for any compact subset $C\subset \JJ_{\bbf}/\revise{\Xi}$.

Let $\Xsf\coloneqq\Gsf\sslash\Gsf$, which is an affine variety defined over $\Fsf$ equipped with a natural conjugation-invariant map $\underline\chi\colon\Gsf\to\Xsf$. For two semisimple elements $\gamma_0,\gamma'_0\in\Gsf(\Fsf)$, we have $\underline\chi(\gamma_0)=\underline\chi(\gamma_0')$ if and only if they are conjugate in $\Gsf(\scl{\Fsf})$. Therefore, we get a well-defined map
\begin{equation}\label{eq-KS-to-conj-cl}
\KS_\bbf(C) \hookrightarrow \KS_\bbf \to \Xsf(\Fsf);\quad (\gamma_0;\ggamma,\ddelta)\mapsto \underline\chi(\gamma_0).
\end{equation}
To show that $\KS_\bbf(C)$ is finite, it suffices to show that the image and all the fibres of the above map are finite.

Let us first show that the image is finite. Note that $\JJ_\bbf$ is a closed subgroup of $\Gsf(\AA_{\breve\Fsf})$, so we view $C$ as a compact subset of $\Gsf(\AA_{\breve\Fsf})/\Xi$. Then the image $\underline\chi(C)\subset \Xsf(\AA_{\breve\Fsf})/\Xi$ is also compact. On the other hand, $\Xsf(\Fsf)$ is a discrete closed subset of $\Xsf(\AA_{\breve\Fsf})$, hence the same holds for its image in $\Xsf(\AA_{\breve\Fsf})/\Xi$. Since the image of \eqref{eq-KS-to-conj-cl} is contained in the intersection of a compact subset 
 $\underline\chi(C)$ and a discrete closed subset (namely, the image of $\Xsf(\Fsf)$),  the image of \eqref{eq-KS-to-conj-cl} is finite.

Now let us show that each fibre of \eqref{eq-KS-to-conj-cl} is finite; i.e., given a stable conjugacy class of $\gamma_0$ in $\Gsf(\Fsf)$, there are only finitely many conjugacy classes  in $\JJ_\bbf$ that are stably conjugate to  $\gamma_0$ and intersect nontrivially with $C$. 

Let us choose a finite set of places $T$ of $\Fsf$ containing $\xbf$ and a compact open subgroup $\Ksf^{T}\subset \Gsf(\AA^T)$, such that  the following conditions are satisfied.
\begin{enumerate}
\item For any $v\notin T$, the group $\Gsf$ is unramified at $v$. Furthermore, we have $\Ksf^T = \Ksf_v\Ksf^{T\cup\{v\}}$, where $\Ksf_v$ is a hyperspecial maximal compact subgroup of $\Gsf(\Fsf_v)$ and $\Ksf^{T\cup\{v\}}\subset \Gsf(\AA^{T\cup\{v\}})$.
\item For any $v\notin T$, the $v$-component $\gamma_v$ of $\ggamma$ belongs to $\Ksf_v$, and $1-\alpha(\gamma_v)$ is either $0$ or a unit in $O_{\cl{\Fsf}_v}$ for any root $\alpha$ of $\Gsf$.
\item The image of $C$ under the projection $\JJ_\bbf\twoheadrightarrow \Gsf(\AA^T)$ is contained in $\Ksf^{T}$. 
\end{enumerate}
Note that the second condition can be arranged because $\gamma_v$ and $\gamma_0$ are conjugate in $\Gsf(\cl\Fsf_v)$, and $1-\alpha(\gamma_0)\in\cl\Fsf$ is either $0$ or a unit locally at all but finitely many places.

Now, we fix a stable semisimple conjugacy class of $\gamma_0\in\Gsf(\Fsf)$. We want to show that only finitely many conjugacy class in $\JJ_\bbf$ are stably conjugate to $\gamma_0$ and intersect non-trivially with $C$. Indeed,  any such conjugacy class should contain a representative $(\ggamma,\ddelta)\in\JJ_\bbf$ such that for any $v\notin T$ its $v$-component $\gamma_v$ is equal to $\gamma_0$ by (the positive characteristic analogue of) \cite[Prop.~7.1]{Kottwitz:StTrFormEllSing}. Now, the desired finiteness follows from the finiteness of conjugacy classes in a fixed stable semisimple conjugacy class in $\Gsf(\Fsf_v)$ for any $v\in T\setminus \xbf$ (respectively, in $J_{b_x}(\Fsf_{x})$ for any $x\in\xbf$).
\end{proof}

\subsection{Background on Tamagawa numbers}
In order to formulate the trace formula for elliptic terms, we need to recall some basic facts on Tamagawa numbers over global function fields. For a connected reductive algebraic group $\Hsf$ over $\Fsf$, set $X^\ast(\Hsf) \coloneqq \Hom_{\scl\Fsf}(\Hsf,\GG_m)$, which is a finitely generated free $\ZZ$-module with discrete action of $\Gamma\coloneqq\Gal(\scl\Fsf/\Fsf)$. 

Consider the following homomorphism
\begin{equation}
\vartheta_\Hsf\colon \Hsf(\AA) \to (X^\ast(\Hsf)^{\Gamma})^\vee\coloneqq\Hom(X^\ast(\Hsf)^{\Gamma},\ZZ); \quad h\mapsto [\chi \mapsto \deg(\chi(h))].
\end{equation}
By \cite[Chap~I, Prop.~5.6b)]{Oesterle:Tamagawa}, $\Image(\vartheta_\Hsf)$ is a finite-index subgroup of $(X^\ast(\Hsf)^{\Gamma})^\vee$.

Set
\begin{equation}
\Hsf(\AA)_1 \coloneqq \ker(\vartheta_\Hsf),
\end{equation}
which is clearly an open subgroup of $\Hsf(\AA)$ containing  $\Hsf(\Fsf)$ by the product formula.

One can define a natural Haar measure $\bar\mu^{1}_{\Hsf}$ on $\Hsf(\Fsf)\backslash\Hsf(\AA)_1$ as in \cite[Chap~I,~\S5]{Oesterle:Tamagawa}. To recall, by choosing a top-degree non-zero invariant differential form on some smooth model of $\Hsf$ over $C$, we obtain a Haar measure $\mu_v$ of $\Hsf(\Fsf_v)$ for any place $v$ by the recipe of \cite[Chap~I, \S2]{Oesterle:Tamagawa}. 
Then we set
\begin{align}\label{eq-Tamagawa-measure}
\mu^1_{\Hsf} &\coloneqq a(\Hsf/\Fsf)\cdot\prod_v L_v(X^\ast(\Hsf),1)\mu_v,\ \text{where}\\
a(\Hsf/\Fsf) & \coloneqq q^{-(g_C-1)d_\Hsf}\cdot (\log q)^{r(\Hsf/\Fsf)}L^*(X^\ast(\Hsf),1)\cdot [(X^\ast(\Hsf)^{\Gamma})^\vee:\Image(\vartheta_\Hsf)]. \notag
\end{align}
Here, $g_C$ is the genus of $C$, $d_\Hsf\coloneqq\dim \Hsf$, $r(\Hsf/\Fsf)\coloneqq \rank_\ZZ (X^\ast(\Hsf)^{\Gamma}) = \ord_{s=1}L(X^\ast(\Hsf),s)$, and $L^{\ast}(X^\ast(\Hsf),1)\coloneqq\lim_{s\to1}(s-1)^{r(\Hsf/\Fsf)}L(X^\ast(\Hsf),s)$, where $L(X^\ast(\Hsf),s)$ is the Artin $L$-function associated to the $\Gamma$-module $X^\ast(\Hsf)$. Note that the resulting measure $\mu^1_{\Hsf}$ is independent of auxiliary choices involved in the construction of $\mu_v$'s.

Let $\bar\mu^{1}_\Hsf$ be the measure of $\Hsf(\Fsf)\backslash \Hsf(\AA)_1$ obtained as the quotient of $\mu^{1}_{\Hsf}$ by the counting measure on $\Hsf(\Fsf)$. 

\begin{defn}
The \emph{Tamagawa number} of $\Hsf$ is defined to be
\[\tau(\Hsf)\coloneqq \bar\mu^1_\Hsf\big (\Hsf(\Fsf)\backslash\Hsf(\AA)_1\big ),\]
which is finite when $\Hsf$ is a connected reductive group, by the classical theorem of \revise{Harder.}
\end{defn}

Note that $\tau(\Hsf)$ has a well-known cohomological formula when $\Hsf$ is tori. When $\Hsf$ is semisimple and simply connected,  Gaitsgory and Lurie \cite{GaitsgoryLurie:Tamagawa} proved that $\tau(\Hsf) = 1$. Since the behaviour of Tamagawa numbers under short exact sequences is well known  (\emph{cf.}\cite[Thm.~III.5.3]{Oesterle:Tamagawa}), we can deduce the following formula for any connected reductive group $\Hsf$ by considering a suitable $z$-extension.
\begin{equation}\label{eq-tamagawa-formula}
\tau(\Hsf') = \frac{\left |(\pi_1(\Hsf)_{\Gamma})_{\tor}\right |}{\left |\ker^1(\Fsf,\Hsf)\right |} \quad\text{for any inner form $\Hsf'$ of $\Hsf$}.
\end{equation}
See \cite[Thm.s~1.1,~1.4]{Rosengarten:Tamagawa} where this formula was obtained more generally for pseudo-reductive groups. Note that the formula in \emph{loc.~cit} is equivalent to ours by \cite[(2.4.1)]{Kottwitz:StTrFormCuspTemp}.

Now, choose a discrete \emph{torsion-free} subgroup $\Xi\subset \Zsf_\Hsf(\AA)$ such that $\Zsf_\Hsf(\Fsf)\cap\Xi=\{1\}$ and $\Zsf_\Hsf(\Fsf)\backslash \Zsf_\Hsf(\AA)/\Xi$ is compact; \emph{cf.} \S\ref{ssect-Xi}. 
By adapting the argument in the proof of Lem.~\ref{lem-Ksf-Xi} for $\vartheta_\Hsf$, one can deduce that $\Xi\cap\Hsf(\AA)_1 = \{1\}$ so $\Hsf(\Fsf)\backslash\Hsf(\AA)_1$ naturally injects into $\Hsf(\Fsf)\backslash\Hsf(\AA)/\Xi$.

For simplicity, let us introduce the following notation
\begin{equation}
i(\Hsf;\Xi)\coloneqq  [\Image(\vartheta_\Hsf):\vartheta_\Hsf(\Xi)],
\end{equation}
which is a positive integer. Indeed, both $\vartheta_\Hsf(\Xi)\subseteq \vartheta_\Hsf(\Zsf_\Hsf(\AA))$ and $\vartheta_\Hsf(\Zsf_\Hsf(\AA))\subseteq \Image(\vartheta_\Hsf)$ are of finite index by compactness of  $\Zsf_\Hsf(\Fsf)\backslash\Zsf_\Hsf(\AA)/\Xi$ the natural isogeny $\Zsf_\Hsf\to\Hsf/\Hsf^{\der}$, respectively.

By definition, we clearly have
\begin{equation}
\bar\mu^1_\Hsf(\Hsf(\Fsf)\backslash \Hsf(\AA)/\Xi) =i(\Hsf;\Xi)\cdot \tau(\Hsf).
\end{equation}
We next show that $i(\Hsf;\Xi)$, hence $\bar\mu^1_\Hsf(\Hsf(\Fsf)\backslash \Hsf(\AA)/\Xi)$, is invariant under inner twisting of $\Hsf$.

\begin{lem}\label{lem-index-inner-twisting}
For any pure inner form $\Hsf'$ of $\Hsf$, we have $i(\Hsf;\Xi) = i(\Hsf';\Xi)$ where we naturally identify $\Zsf_\Hsf$ and $\Zsf_{\Hsf'}$.
\end{lem}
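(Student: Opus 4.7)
The plan is to reduce the equality $i(\Hsf;\Xi)=i(\Hsf';\Xi)$ to a comparison of two subgroups in a common target, and then handle that comparison via a $z$-extension. First I will identify $X^\ast(\Hsf)=X^\ast(\Hsf')$ as $\Gamma$-modules: every character of $\Hsf$ factors through the abelianisation $\Hsf^{\ab}\coloneqq\Hsf/\Hsf^{\der}$, and inner twisting acts trivially on the abelianisation, so $T\coloneqq\Hsf^{\ab}=(\Hsf')^{\ab}$ is canonical as an $\Fsf$-torus (and the two target lattices agree). Both $\vartheta_\Hsf$ and $\vartheta_{\Hsf'}$ then factor through $T(\AA)\xrightarrow{\vartheta_T}(X^\ast(T)^\Gamma)^\vee=(X^\ast(\Hsf)^\Gamma)^\vee$.

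Next, the equality $\vartheta_\Hsf(\Xi)=\vartheta_{\Hsf'}(\Xi)$ is immediate: under the natural identification $\Zsf_\Hsf=\Zsf_{\Hsf'}$, the value $\chi(z)$ for $z\in\Xi$ and $\chi\in X^\ast(\Hsf)^\Gamma$ depends only on $\chi|_{\Zsf_\Hsf}$, which is intrinsic. Hence it remains to show $\Image(\vartheta_\Hsf)=\Image(\vartheta_{\Hsf'})$, and by applying $\vartheta_T$ it suffices to prove the stronger statement that $\Image(\Hsf(\AA)\to T(\AA))=\Image(\Hsf'(\AA)\to T(\AA))$ as subgroups of $T(\AA)$.

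For this last step I will fix a $z$-extension $1\to Z\to\widetilde\Hsf\to\Hsf\to 1$ with $Z$ an induced torus and $\widetilde\Hsf^{\der}$ simply connected semisimple, and inner-twist it to a $z$-extension $1\to Z\to\widetilde\Hsf'\to\Hsf'\to 1$ of $\Hsf'$ (with the same central $Z$ and the same abelianisation $\widetilde T\coloneqq\widetilde\Hsf^{\ab}=(\widetilde\Hsf')^{\ab}$, since inner twisting preserves both the centre and the abelianisation). Two adelic cohomology vanishings over the function field $\Fsf$ then do the work: $H^1(\AA,Z)=0$ (Shapiro plus Hilbert~90, as $Z$ is induced), giving surjectivity of $\widetilde\Hsf(\AA)\twoheadrightarrow\Hsf(\AA)$; and $H^1(\AA,\widetilde\Hsf^{\der})=0$ (Harder, for simply connected semisimple groups over function-local fields), giving surjectivity of $\widetilde\Hsf(\AA)\twoheadrightarrow\widetilde T(\AA)$. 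Combining these two surjectivities in the evident commutative square with rows $\widetilde\Hsf(\AA)\to\widetilde T(\AA)\to T(\AA)$ and $\widetilde\Hsf(\AA)\to\Hsf(\AA)\to T(\AA)$ forces $\Image(\Hsf(\AA)\to T(\AA))=\Image(\widetilde T(\AA)\to T(\AA))$, and the parallel argument for $\Hsf'$ (with the same $\widetilde T$ and the same map $\widetilde T\to T$) yields the same subgroup, completing the proof. The main obstacle is locating and verifying the two adelic Galois cohomology vanishings rigorously, especially $H^1(\AA,Z)=0$ which needs a careful Shapiro-plus-Hilbert-90 argument; once these are in hand, the rest is a diagram chase together with the tautological identifications coming from the fact that inner twisting preserves the centre, the derived subgroup, and the abelianisation.
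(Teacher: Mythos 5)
Your proposal is correct but takes a genuinely different route from the paper. Both arguments reduce to showing that $\Image(\vartheta_\Hsf)$ and $\Image(\vartheta_{\Hsf'})$ agree as subgroups of $(X^\ast(\Hsf)^\Gamma)^\vee$ (equivalently, that $\Hsf(\AA)$ and $\Hsf'(\AA)$ have the same image in the common cocentre $T(\AA)$), but the paper reduces place-by-place and invokes Borovoi's abelianised Galois cohomology formalism: $\pr\colon\Hsf(\Fsf_v)\to T(\Fsf_v)$ factors through the inner-twist-invariant $H^0_{\ab}(\Fsf_v,\Hsf)$, and they cite Borovoi's Propositions 3.12 and 5.1 (while waving away his blanket characteristic-$0$ assumption). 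You instead fix a $z$-extension, inner-twist it compatibly, and show both images equal $\Image(\widetilde T(\AA)\to T(\AA))$ via two explicit cohomology vanishings. What you gain is that you make completely explicit which characteristic-$p$ inputs are needed (Shapiro plus Hilbert~90 for the induced torus; vanishing of $H^1$ of the simply connected derived group over local function fields, plus Lang over $O_v$ at almost all places for the adelic statement), rather than relying on a formalism whose sources assume characteristic $0$; what the paper gains is brevity, since Borovoi's package delivers the inner-twist invariance in one stroke. One small inaccuracy: the local vanishing $H^1(\Fsf_v,\widetilde\Hsf^{\der})=1$ for simply connected semisimple groups in positive residue characteristic is due to Bruhat--Tits (Kneser in characteristic~$0$; Harder for the split case), so the citation should be Bruhat--Tits III rather than Harder. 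Also, like the paper's proof, yours only uses that $\Hsf'$ is an inner form; the purity hypothesis plays no role in this particular lemma.
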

\begin{proof}
Let $\Dsf\coloneqq\Hsf/\Hsf^{\der}$ denote the cocentre of $\Hsf$, which also coincides with the cocentre of $\Hsf'$. Let $\pr\colon \Hsf\to\Dsf$ and $\pr'\colon\Hsf'\to\Dsf$ respectively denote the natural map. Since $\vartheta_{\Hsf}$ and $\vartheta_{\Hsf'}$ factor through $\pr(\Hsf(\AA))$ and $\pr'(\Hsf'(\AA))$, respectively, to prove the lemma it suffices to prove that $\pr(\Hsf(\AA))=\pr'(\Hsf'(\AA))$ in $\Dsf(\AA)$; i.e., $\pr(\Hsf(\Fsf_v)) = \pr'(\Hsf'(\Fsf_v))$ in $\Dsf(\Fsf_v))$ for any place $v$ of $\Fsf$.

Set $F\coloneqq \Fsf_v$. Then by \cite[Prop.~3.12, Prop.~5.1]{Borovoi:AbCohRedGp}, $\pr\colon\Hsf(F)\to\Dsf(F)$ factors as follows:
\[\xymatrix@1{
\Hsf(F) \ar@{->>}[r]^-{\ab^0}&
\coh0_{\ab}(F,\Hsf) \ar[r]^-{\pr_{\ab}}&
\coh0_{\ab}(F,\Dsf) = \Dsf(F)
}.\]
And since the abelianised cohomology is invariant under inner twist, the map $\revise{\pr'_{\ab}}\colon\coh0_{\ab}(F,\Hsf')\to\Dsf(\AA)$ can naturally be identified with $\pr_{\ab}$; \emph{cf.} \cite[Prop.~2.8, Lem.~1.8]{Borovoi:AbCohRedGp}. Hence, we have $\pr(\Hsf(F))=\pr'(\Hsf'(F))$, from which the lemma now follows.

Note that \cite{Borovoi:AbCohRedGp} has a blanket assumption that the base field is of characteristic~$0$, but the proof of the results we are using works in any characteristic.
\end{proof}

We can now rephrase the final form of the elliptic part of the trace formula. 
\begin{thm}\label{th-point-counting-elliptic}
Assume that $\Gsf^{\der}$ is simply connected and $\Gsf$ admits a quasi-split pure inner form. Given $\varphi\in\Hcal$, the following holds for any sufficiently divisible $s$ (depending on $\varphi$)
\begin{align*}
\tr(\varphi^{(s)})^{\el}& =
 \sum_{(\gamma_0;\ggamma,\ddelta)\in\KS^{\ael}_\bbf}\left |\revise{\ker^1(\Fsf,\Gsf^\ast_{\gamma_0})}\right |\cdot i(\Gsf^\ast_{\gamma_0};\Xi)\cdot \tau(\Gsf^\ast_{\gamma_0})\cdot
O^{\JJ_{\bbf}}_{(\ggamma,\ddelta)}(\varphi^{(s)})\\
& =
 \sum_{(\gamma_0;\ggamma,\ddelta)\in\KS^{\ael}_\bbf}
i(\Gsf^\ast_{\gamma_0};\Xi)\cdot \left |(\pi_1(\Gsf^\ast_{\gamma_0})_{\Gamma})_{\tor}\right | \cdot 
O^{\JJ_{\bbf}}_{(\ggamma,\ddelta)}(\varphi^{(s)}),
\end{align*}
where we give $\bar\mu^1_{\Gsf^\ast_{\gamma_0}}$ on $\Gsf^\ast_{\gamma_0}(\Fsf)\backslash\Gsf^\ast_{\gamma_0}(\AA)$ and a suitable Haar measure on $\JJ_\bbf$ to compute the orbital integral and volume. Moreover, the summations are finite.
\end{thm}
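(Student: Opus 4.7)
My plan is to reorganise the sum in Proposition~\ref{prop-preliminary-point-counting-elliptic} along the map $\Kfr$ from \eqref{eq-FP-to-KS} and then rewrite the resulting weights as Tamagawa-theoretic expressions. For divisible enough $s$, Lemma~\ref{lem-acceptability} guarantees that $\varphi^{(s)}\in\Acc(\bnu,\mu)$ for sufficiently convex $\mu$, so the hypothesis of Proposition~\ref{prop-preliminary-point-counting-elliptic} is fulfilled. For any $([b]_\sigma,[a])\in\FP^{\ael}_\bbf$, Lemma~\ref{lem-acceptable-centraliser} identifies $\Zsf_{\JJ_\bbf}(\iota_{\tilde z}(a))$ with $\iota_{\tilde z}(\Zsf_{\Jsf_b(\AA)}(a))$, so both the orbital integral $O^{\JJ_\bbf}_{\iota_{\tilde z}(a)}(\varphi^{(s)})$ and the volume weight depend only on the equivalence class $\Kfr([b]_\sigma,[a])\in\KS^{\ael}_\bbf$. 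Since acceptability and ellipticity are preserved under the equivalence relations in play (ellipticity of $\gamma_0\in\Gsf^\ast(\Fsf)$ is stable under stable conjugation because centralisers are inner twists of one another), $\Kfr$ restricts to a surjection $\FP^{\ael}_\bbf\twoheadrightarrow\KS^{\ael}_\bbf$, and we may reindex the sum accordingly.

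Next I would count the fibres of this restricted $\Kfr$. By Proposition~\ref{prop-FP-to-KS}, the fibre over $(\gamma_0;\ggamma,\ddelta)$ has the same cardinality as the fibre of the localisation map $\B(\Fsf,\Gsf^\ast_{\gamma_0})\to\B(\AA,\Gsf^\ast_{\gamma_0})$ over the class determined by the triple. Because $\gamma_0$ is elliptic, $\Gsf^\ast_{\gamma_0}/\Zsf_{\Gsf^\ast_{\gamma_0}}$ is anisotropic over $\Fsf$, so every Newton point in play is central and every relevant $\sigma$-conjugacy class is basic. I would then invoke the function-field analogue of Kottwitz's classification of basic classes developed in \cite{HamacherKim:Gisoc} together with Tate--Nakayama duality to identify the kernel of this basic-class localisation map with $\ker^1(\Fsf,\Gsf^\ast_{\gamma_0})$. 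Thus each class in $\KS^{\ael}_\bbf$ is hit by exactly $|\ker^1(\Fsf,\Gsf^\ast_{\gamma_0})|$ elements of $\FP^{\ael}_\bbf$, all contributing the same term to the sum.

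For the volume factor I would use that, since $\Gsf^\der$ is simply connected, the centraliser $\Zsf_{\Jsf_b}(a)$ is connected reductive and, being conjugate to $\Gsf^\ast_{\gamma_0}$ over $\scl\Fsf$, is an inner form of $\Gsf^\ast_{\gamma_0}$. Transporting a top-degree invariant form through this inner twisting, the Tamagawa measure on $\Zsf_{\Jsf_b(\AA)}(a)$ matches $\mu^1_{\Gsf^\ast_{\gamma_0}}$. Lemma~\ref{lem-index-inner-twisting} gives $i(\Zsf_{\Jsf_b}(a);\Xi)=i(\Gsf^\ast_{\gamma_0};\Xi)$, while invariance of the Tamagawa number under inner twisting (\emph{cf.}~\eqref{eq-tamagawa-formula}) gives $\tau(\Zsf_{\Jsf_b}(a))=\tau(\Gsf^\ast_{\gamma_0})$. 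Since $\Xi$ is discrete and torsion-free, the argument in the proof of Lemma~\ref{lem-Ksf-Xi} shows that $\Xi$ intersects the kernel of $\vartheta_{\Zsf_{\Jsf_b}(a)}$ trivially, and the standard unfolding of fundamental domains yields
\[
 \vol\bigl(\Zsf_{\Jsf_b(\Fsf)}(a)\backslash\Zsf_{\Jsf_b(\AA)}(a)/\Xi\bigr)=i(\Gsf^\ast_{\gamma_0};\Xi)\cdot\tau(\Gsf^\ast_{\gamma_0}).
\]
Assembling these three ingredients yields the first asserted equality, and the second is immediate from \eqref{eq-tamagawa-formula}, which gives $|\ker^1(\Fsf,\Gsf^\ast_{\gamma_0})|\cdot\tau(\Gsf^\ast_{\gamma_0})=|(\pi_1(\Gsf^\ast_{\gamma_0})_\Gamma)_\tor|$. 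Finiteness of the sum over $\KS^{\ael}_\bbf$ follows from Proposition~\ref{prop-gl-finiteness} applied to $\varphi^{(s)}$, whose support is a central translate of $\supp(\varphi)$.

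The main obstacle I anticipate is the fibre count in the second paragraph: one has to carefully verify that ellipticity of $\gamma_0\in\Gsf^\ast(\Fsf)$ -- which need not have a representative in $\Gsf(\Fsf)$ -- is enough to force all relevant classes in $\B(\Fsf,\Gsf^\ast_{\gamma_0})$ to be basic, and that the Tate--Nakayama identification with $\ker^1(\Fsf,\Gsf^\ast_{\gamma_0})$ goes through cleanly in positive characteristic. A secondary technical point is keeping Haar-measure normalisations compatible among $\JJ_\bbf$, $\Zsf_{\Jsf_b(\AA)}(a)$ and $\Gsf^\ast_{\gamma_0}(\AA)$ throughout the reindexing so that the inner-twist transport of Tamagawa measures really matches the orbital integral conventions fixed in Proposition~\ref{prop-preliminary-point-counting-elliptic}.
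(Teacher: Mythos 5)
Your proposal follows the same overall route as the paper's proof (reindex the sum from Proposition~\ref{prop-preliminary-point-counting-elliptic} along $\Kfr$, show the summands depend only on the Kottwitz--Igusa class, compute the fibre count as $|\ker^1(\Fsf,\Gsf^\ast_{\gamma_0})|$ via Proposition~\ref{prop-FP-to-KS} and \cite{HamacherKim:Gisoc}, then apply \eqref{eq-tamagawa-formula} and Proposition~\ref{prop-gl-finiteness}), but there is a genuine gap in the step where you assert that $\Zsf_{\Jsf_b}(a)$ is an inner form of $\Gsf^\ast_{\gamma_0}$.

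You justify this by saying $\Zsf_{\Jsf_b}(a)$ is ``conjugate to $\Gsf^\ast_{\gamma_0}$ over $\scl\Fsf$'', but that is not automatic. Over $\breve\Fsf$ one has $\Jsf_{b,\breve\Fsf}\cong\Msf_{b,\breve\Fsf}$, so taking the representative with $a=\gamma_0$ gives $\Zsf_{\Jsf_b}(a)_{\breve\Fsf}\cong\Msf_{b,\breve\Fsf}\cap\Gsf^\ast_{\gamma_0,\breve\Fsf}$, which can be a \emph{proper} subgroup of $\Gsf^\ast_{\gamma_0,\breve\Fsf}$. Equality -- and hence the inner-form relation you use for both the Tamagawa measure transport and for the basicness of $[b']=[bc]$ -- requires $\Gsf^\ast_{\gamma_0,\scl\Fsf}\subset\Msf_b$, and the only hypothesis yielding this is \emph{acceptability} of $\gamma_0$ (the paper deduces it from Corollaries~\ref{cor-centralising} and \ref{cor-loc-gl-Newton-pt}). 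Your substitute reasoning via ellipticity of $\gamma_0$ (``$\Gsf^\ast_{\gamma_0}$ anisotropic modulo centre, hence all Newton points central'') does not address this: it would at best concern the \emph{absolute} centraliser $\Gsf^\ast_{\gamma_0}$, not the intersection with the Levi $\Msf_b$, and it also presumes an $\Fsf$-rational representative of the Newton conjugacy class for a group that need not be quasi-split. Replace the ellipticity argument with the acceptability argument (which is available, since $\KS^{\ael}_\bbf$ is defined to consist of acceptable triples) and the rest of your outline -- Lemma~\ref{lem-acceptable-centraliser}, Lemma~\ref{lem-index-inner-twisting}, \eqref{eq-tamagawa-formula}, Proposition~\ref{prop-FP-to-KS} and Proposition~\ref{prop-gl-finiteness} -- goes through exactly as in the paper.
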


 \begin{proof}
 We choose $s$ as in Prop.~\ref{prop-preliminary-point-counting-elliptic}. Recall that for $([b]_\sigma,[a])) \in \FP_\bbf^{\rm ss}$ and $\Kfr([b]_\sigma,[a]) \eqqcolon (\gamma_0;\ggamma,\ddelta)$ we may assume that $\gamma_0 = a$ and $\iota_{\tilde z}(a) = (\ggamma,\ddelta)\in\JJ_\bbf$.
 Since $\gamma_0$ is $\bnu$-acceptable, we have $\Gsf_{\gamma_0,\scl{\Fsf}}^\ast \subset \Msf_b$ by Cor.~\ref{cor-centralising} and \ref{cor-loc-gl-Newton-pt}. In other words, we get that $Z_{\Jsf_b,\breve\Fsf} = \Gsf^\ast_{\gamma_0,\breve\Fsf}$ and that  the  $[b'] \coloneqq [bc] \in \Gsf^\ast_{\gamma_0,\breve\Fsf}$ is basic. 
By  \eqref{eq-tamagawa-formula} and Lem.~\ref{lem-index-inner-twisting}, the summands for $([b]_\sigma,[a])$ in the formula of Prop.~\ref{prop-preliminary-point-counting-elliptic} only depend on $\Kfr([b]_\sigma,[a])$.

 By Prop.~\ref{prop-FP-to-KS} and \cite[Thm.~5.5]{HamacherKim:Gisoc} we obtain
\[
 \left| \Kfr\iv (\gamma_0,\ggamma,\ddelta)\right| = \left |\ker( (\pi_1(\Gsf^\ast_{\gamma_0}) \otimes \Div^\circ(\Fsf^s))_{\Gal(\Fsf^s/\Fsf)} \to \prod_{v \in \Fsf} (\pi_1(\Gsf^\ast_{\gamma_0}))_{\Gal(\Fsf_v^s/\Fsf_v)})\right |,
\]
which can be easily seen to coincide with $|\ker^1(\Fsf,\Gsf^\ast_{\gamma_0})|$; cf.~\cite[\S4]{Kottwitz:StTrFormCuspTemp}.

 Altogether we obtain
\[
\tr(\varphi^{(s)})^{\el}  =
 \sum_{(\gamma_0;\ggamma,\ddelta)\in\KS^{\ael}_\bbf} |\ker^1(\Fsf,\Gsf^\ast_{\gamma_0})|\cdot i(\Gsf^\ast_{\gamma_0};\Xi)\cdot\tau(\Gsf^\ast_{\gamma_0})\cdot 
O^{\JJ_{\bbf}}_{(\ggamma,\ddelta)}(\varphi^{(s)}).
\]
By Prop.~\ref{prop-gl-finiteness} the index set of the sum is finite. 
\end{proof}

\subsection{Kottwitz--Igusa triples for EL-type Igusa varieties}\label{ssect-EL}

In our definition of Kottwitz--Igusa triples, the global component $\gamma_0\in\Gsf^\ast(\Fsf)$ is not required to have a representative in $\Gsf(\Fsf)$, unlike its number field counterpart. When $\Gsf$ is an inner form of $\GL_n$ however, we now show that the global component of any Kottwitz--Igusa triple has a representative   in $\Gsf(\Fsf)$ under some mild additional assumption. We start with the following lemma.

\begin{lem}\label{lem-transfer-EL}

Let $\Bsf$ be a central simple algebra over $\Fsf$, and fix an isomorphism $\Bsf\otimes_{\Fsf}\breve\Fsf \cong M_n(\breve\Fsf)$. Let $\gamma_0^\ast\in\GL_n(\Fsf)$ be a semisimple element. 
Then $\gamma_0^\ast$ is stably conjugate to some element $\gamma_0\in\Bsf^\times$ if and only if  $\gamma_0^\ast$ is stably conjugate to some element $\ggamma\in (\Bsf\otimes_\Fsf\AA)^\times$. Furthermore, the conjugacy class of $\gamma_0\in\Bsf^\times$ is uniquely determined by $\gamma_0^\ast$; or equivalently, by $\ggamma$.
\end{lem}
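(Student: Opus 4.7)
The plan is to reformulate the problem as an embedding question for $\Fsf$-algebras and then invoke the Hasse--Brauer--Noether theorem. The ``only if'' direction is immediate since $\Bsf^\times \subset (\Bsf\otimes_\Fsf\AA)^\times$. For the converse, I would let $P(x) \in \Fsf[x]$ denote the characteristic polynomial of $\gamma_0^\ast$, factor $P = \prod_i m_i^{r_i}$ with $m_i$ distinct monic irreducible, and set $\Esf_i \coloneqq \Fsf[x]/(m_i)$, $\Esf \coloneqq \prod_i \Esf_i \cong \Fsf[\gamma_0^\ast]$. Since semisimple stable conjugacy classes in $\GL_n$ are determined by the characteristic polynomial, it suffices to produce $\gamma_0 \in \Bsf^\times$ with reduced characteristic polynomial $P$.

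Next, I would reduce this to an embedding problem. Such a $\gamma_0$ arises as $\iota(\gamma_0^\ast)$ for some $\Fsf$-algebra embedding $\iota\colon \Esf \hookrightarrow \Bsf$ whose primitive idempotents satisfy $\trd_\Bsf(\iota(e_i)) = r_i \deg m_i$; conversely, the reduced characteristic polynomial of $\iota(\gamma_0^\ast)$ computed from the structure of $\iota(e_i)\Bsf\iota(e_i)$ as a central simple $\Fsf$-algebra of dimension $(r_i\deg m_i)^2$ and Brauer class $[\Bsf]$ is indeed $P$. By the classical embedding criterion for central simple algebras, existence of $\iota$ is equivalent to the two conditions (a) the Schur index $d$ of $\Bsf$ divides $r_i \deg m_i$ for each $i$ (so that one can decompose $\Bsf \cong \bigoplus_i e_i\Bsf e_i$ with the prescribed reduced traces) and (b) the Schur index of $\Bsf \otimes_\Fsf \Esf_i \in \Br(\Esf_i)$ divides $r_i$ (so that $\Esf_i$ embeds in the factor $e_i\Bsf e_i$).

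The hypothesis that $\gamma_0^\ast$ is stably conjugate to some $\ggamma \in (\Bsf\otimes_\Fsf\AA)^\times$ supplies, at each place $v$, both a local idempotent decomposition of $\Bsf_v$ of the right type and local embeddings $\Esf_i \otimes_\Fsf \Fsf_v \hookrightarrow \Bsf_v$; these are exactly the local analogues of (a) and (b). The Hasse--Brauer--Noether theorem applied to $\Br(\Fsf)$ and $\Br(\Esf_i)$ then promotes them to global statements, since the Schur index of an element of a global Brauer group equals the least common multiple of its local indices. Hence $\iota$ exists and yields the desired $\gamma_0$. Uniqueness follows from the Skolem--Noether theorem for semisimple subalgebras of central simple algebras: any two embeddings $\Esf \hookrightarrow \Bsf$ are conjugate by $\Bsf^\times$, so the $\Bsf^\times$-conjugacy class of $\gamma_0$ depends only on $(\Esf,\gamma_0^\ast)$.

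The main obstacle will be step two, as the embedding criterion is usually stated for a single subfield of a central simple algebra; extending it to embeddings of $\prod_i \Esf_i$ with prescribed idempotent multiplicities, and verifying that the local-to-global step hinges on exactly the Brauer-theoretic conditions (a) and (b), requires careful bookkeeping of how the idempotent decomposition of $\Bsf$ interacts with its Brauer class. Once this compatibility is pinned down, the Hasse principle arguments and Skolem--Noether are entirely standard.
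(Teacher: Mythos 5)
Your proof is correct in outline but takes a genuinely different route from the paper. The paper's key trick is to embed $\gamma_0^\ast$ not into $\Fsf[\gamma_0^\ast]=\prod_i\Esf_i$ (which in general has dimension less than $n$) but into a \emph{maximal} commutative \'etale $\Fsf$-subalgebra $\Esf\subset M_n(\Fsf)$ containing it, so that $\dim_\Fsf\Esf = n$ and $(\dim_\Fsf\Esf)^2 = \dim_\Fsf\Bsf$. This maximality pays off immediately: for such an \'etale algebra, the embedding criterion collapses to the single condition that $\Bsf\otimes_\Fsf\Esf$ be split, with no reduced-trace or idempotent-rank bookkeeping at all, and that condition is checked over each $\Fsf_v$ (using the local embeddings supplied by $\ggamma$) and promoted to a global isomorphism $\Bsf\otimes_\Fsf\Esf\cong M_n(\Esf)$ by the Hasse principle for Brauer classes of global fields. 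Your version keeps $\Esf=\Fsf[\gamma_0^\ast]$ and so must invoke the refined embedding criterion with prescribed idempotent multiplicities, which as you yourself flag is precisely where the argument needs the most care; the paper's choice of a maximal $\Esf$ sidesteps this entirely, at the small cost of producing local embeddings of the larger $\Esf\otimes_\Fsf\Fsf_v$ into $\Bsf_v$ (done by passing through the commutants). Two smaller remarks: (1) Skolem--Noether for a \emph{semisimple but non-simple} subalgebra does not hold unconditionally---two embeddings of $\prod_i\Esf_i$ into $\Bsf$ are $\Bsf^\times$-conjugate only if the primitive idempotents have the same reduced traces---so your uniqueness step needs that caveat spelled out (it is satisfied here, since both embeddings are required to send $\gamma_0^\ast$ to an element with reduced characteristic polynomial $P$); the paper instead disposes of uniqueness in one line by noting that a semisimple conjugacy class in $\Bsf^\times$ is determined by its reduced characteristic polynomial. (2) The paper also records, via the uniqueness clause in the maximal \'etale embedding criterion, that the global $\iota$ is $\Bsf_v^\times$-conjugate to $\iota_v$ for every $v$; this last point is what certifies that the constructed $\gamma_0$ is actually stably conjugate to $\ggamma$ adelically, and it is worth making sure that your idempotent-multiplicity analysis delivers the same conclusion rather than merely the existence of some embedding.
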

\begin{proof}

The uniqueness assertion is clear as a semisimple conjugacy class is determined by its characteristic polynomial.

As $\gamma^\ast_0$ is semisimple, one can find a maximal commutative \'etale $\Fsf$-subalgebra $\Esf\subset M_n(\Fsf)$ that contains $\gamma_0^\ast$.  Furthermore, for any place $v$ of $\Fsf$ one can find an $\Fsf_v$-algebra embedding $\iota_v\colon\Esf\otimes_{\Fsf}\Fsf_v\to\Bsf_v$ with image containing the $v$-component $\gamma_v$ of $\ggamma$, obtained from an embedding of the commutant of $\gamma^\ast_0$ in $M_n(\Fsf)$ to the commutant of $\gamma_v$ in $\Bsf_v$. So to prove the lemma, it suffices to find an $\Fsf$-algebra embedding $\iota\colon\Esf \to \Bsf$ which is conjugate to $\iota_v$ over $\Fsf_v$ for any place $v$; indeed, if such $\iota$ exists then we may take $\gamma_0\coloneqq\iota(\gamma_0^\ast)$.

The proof uses the following standard fact, which we recall;  \emph{cf.} \cite[p.~420]{Kottwitz:PtShimuraVarFinFields}.
\begin{quote}
Given a central simple algebra $B$ over any field $F$ and an \'etale $F$-algebra $E$ with $(\dim_FE)^2=\dim_FB$, there is an $F$-algebra embedding $\iota\colon E\to B$  if and only if $B\otimes_FE\cong M_n(E)$, in which case $\iota$ is unique up to inner automorphism.
\end{quote}

Applying this fact to $B=\Bsf_v$ and $E = \Esf\otimes_\Fsf\Fsf_v$, we have $(\Bsf\otimes_{\Fsf}\Esf)\otimes_\Fsf\Fsf_v\cong M_n(\Esf\otimes_\Fsf\Fsf_v)$. Since $\Esf$ is a finite product of global fields, we have an isomorphism $\Bsf\otimes_\Fsf\Esf\cong M_n(\Esf)$ by class field theory. Therefore we obtain a desired embedding $\iota\colon\Esf\to \Bsf$, and the uniqueness aspect of the above fact implies that $\iota$ and $\iota_v$ are conjugate over $\Fsf_v$ for any place $v$.
\end{proof}

\begin{cor}\label{cor-transfer-EL}
Let $\Gsf/\Fsf$ be an inner form of $\GL_n$ that splits at each place $x\in\xbf$, and we fix an isomorphism $\Gsf_{\breve\Fsf}\cong \Gsf^\ast_{\breve\Fsf} = \GL_{n,\breve\Fsf}$. Then for any Kottwitz--Igusa triple $(\gamma^\ast_0;\ggamma,\ddelta)\in \KS_\bbf$ there exists a semisimple element $\gamma_0\in\Gsf(\Fsf)$ stably conjugate to $\gamma_0^\ast$.
\end{cor}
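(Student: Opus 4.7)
The plan is to apply Lemma~\ref{lem-transfer-EL} to the central simple $\Fsf$-algebra $\Bsf$ with $\Gsf = \Bsf^{\times}$. It will suffice to produce an adelic element $\ggamma'\in(\Bsf\otimes_{\Fsf}\AA)^{\times}=\Gsf(\AA)$ that is stably conjugate to $\gamma_0^{\ast}$ — equivalently, to exhibit at every place $v$ of $\Fsf$ an element $\gamma_v'\in\Gsf(\Fsf_v)$ stably conjugate to $\gamma_0^{\ast}$ (in $\Gsf(\scl{\Fsf}_v)$), and integral for almost all $v$. I will handle the places inside and outside $\xbf$ separately.

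At every $v\notin\xbf$ I will take $\gamma_v'$ to be the component $\gamma_v$ of $\ggamma\in\Gsf(\AA^{\xbf})$, which by definition of a Kottwitz--Igusa triple is stably conjugate to $\gamma_0^{\ast}$ and which is integral almost everywhere because it is the $v$-component of an adelic element. The real content lies at $v=x_i\in\xbf$. There the splitting hypothesis gives $\Gsf_{\Fsf_{x_i}}\cong\GL_{n,\Fsf_{x_i}}$, and via the isomorphism~\eqref{eq-J_b} the element $\delta_{x_i}\in J_{b_i}(\Fsf_{x_i})$ can be viewed inside $\GL_n(\breve{\Fsf}_{x_i})$ as a semisimple element satisfying
\[
(N^{(d)}b_i)_y\cdot\sigma^{d}(\delta_{x_i})\cdot(N^{(d)}b_i)_y^{-1}=\delta_{x_i},\qquad d\coloneqq\deg(x_i).
\]
Since both conjugation and $\sigma^{d}$ preserve characteristic polynomials, and $\Fsf_{x_i}\subset\breve{\Fsf}_{x_i}$ is exactly the fixed field of $\sigma^{d}$, the characteristic polynomial of $\delta_{x_i}$ will lie in $\Fsf_{x_i}[T]$. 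I can then pick any semisimple $\gamma_{x_i}'\in\GL_n(\Fsf_{x_i})=\Gsf(\Fsf_{x_i})$ with this characteristic polynomial; it is stably conjugate to $\delta_{x_i}$ (same characteristic polynomial of semisimple elements of $\GL_n$), and hence to $\gamma_0^{\ast}$.

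Assembling the $\gamma_v'$ over all places yields the required $\ggamma'\in(\Bsf\otimes_{\Fsf}\AA)^{\times}$, and Lemma~\ref{lem-transfer-EL} then produces the desired $\gamma_0\in\Gsf(\Fsf)$. The crux of the argument — and the only step where the splitting hypothesis at each $x_i\in\xbf$ is essential — is the descent of the characteristic polynomial of $\delta_{x_i}$ to $\Fsf_{x_i}[T]$; without splitting one would have to manipulate a reduced characteristic polynomial on an inner form of $\GL_n$ over $\Fsf_{x_i}$, and the direct identification of $\delta_{x_i}$ with a matrix over $\breve{\Fsf}_{x_i}$ used above would no longer be available.
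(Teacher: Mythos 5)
Your proof is correct and follows essentially the same strategy as the paper: reduce to producing, at each $x_i\in\xbf$, an element $\gamma_{x_i}\in\Gsf(\Fsf_{x_i})$ stably conjugate to $\delta_{x_i}$ (and hence to $\gamma_0^\ast$), then invoke Lemma~\ref{lem-transfer-EL}. The one place you diverge is in how this local element is produced: the paper cites the general transfer of semisimple conjugacy classes from $J_{b_i}$ (an inner form of a Levi $M_{b_i}\subset\GL_{n,\Fsf_{x_i}}$) to the split group $\GL_n(\Fsf_{x_i})$, which is Theorem~\ref{th-transfer}(1) phrased via elementary divisors, whereas you unwind \eqref{eq-J_b} to see $\delta_{x_i}$ as a matrix over $\breve\Fsf_{x_i}$, observe that the $\sigma^d$-twisted conjugation identity forces its characteristic polynomial to be $\sigma^d$-invariant and hence in $\Fsf_{x_i}[T]$, and then build $\gamma_{x_i}$ explicitly as a semisimple matrix with that characteristic polynomial. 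Your version makes the role of the splitting hypothesis at $x_i$ transparent (namely, that any monic separable polynomial of degree $n$ over $\Fsf_{x_i}$ is the characteristic polynomial of a semisimple element of $\Gsf(\Fsf_{x_i})=\GL_n(\Fsf_{x_i})$, which fails for a nonsplit form where a nontrivial admissibility constraint on the characteristic polynomial must be verified); the paper's version is terser but relies on the Levi transfer machinery it has already built. Both are valid.
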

\begin{proof}
By definition of Kottwitz--Igusa triple, $\gamma^\ast_0$ is stably conjugate to $\ggamma\in\Gsf(\AA^\xbf)$. So it remains to produce $\gamma_{x}\in\Gsf(\Fsf_{x})$ stably conjugate to $\gamma^\ast_0$. Indeed, since $\Gsf$ is split at $x\in\xbf$ and $J_{b_x}$ is an inner form of a Levi subgroup of $\Gsf_{\Fsf_{x}}$, there exists $\gamma_{x}\in \Gsf(\Fsf_{x})$ stably conjugate to the image $\delta_{x}$ of $\ddelta$ in $J_{b_x}(\Fsf_{x})$. By definition of Kottwitz--Igusa triple, $\gamma_{x}$ is stably conjugate to $\gamma^\ast_0$. Now, we apply Lem.~\ref{lem-transfer-EL} to conclude the proof.
\end{proof}

\begin{rmk}
We do not expect Lem.~\ref{lem-transfer-EL} and Cor.~\ref{cor-transfer-EL} to generalise for any connected reductive groups other than inner forms of $\GL_n$ without imposing any additional condition. Indeed, Langlands and Kottwitz  constructed an obstruction for Lem.~\ref{lem-transfer-EL}, which is non-trivial in general; \emph{cf.} \cite[Thm.~6.6]{Kottwitz:StTrFormEllSing}.
\end{rmk}

We now record a version of Lem.~\ref{lem-transfer-EL} and Cor.~\ref{cor-transfer-EL} for general $\Gsf$ in the presence of an ``elliptic regular place''. The result will not be used in this paper, but it will be useful in the future study.
\begin{prop}\label{prop-elliptic-transfer}
    Suppose that  $\Gsf^{\der}$ is simply connected. 
    \begin{subenv}
        \item Let $\gamma_0^\ast\in\Gsf^\ast(\Fsf)$ be a semisimple element that is \emph{elliptic regular} at some place $v_0$. Then $\gamma_0^\ast$ is stably conjugate to an element $\gamma_0\in\Gsf(\Fsf)$ if and only if it is stably conjugate to an element $\ggamma\in\Gsf(\AA)$.
        \item Suppose furthermore that $\Gsf$ is quasi-split at each $x\in\xbf$. Then for any Kottwitz--Igusa triple $(\gamma_0^\ast;\ggamma,\ddelta)\in \KS_\bbf$ such that $\gamma_0^\ast\in\Gsf^\ast(\Fsf)$ is elliptic regular at some place $v_0$, there exists a semisimple element $\gamma_0\in \Gsf(\Fsf)$ stably conjugate to $\gamma_0^\ast$.
    \end{subenv}
\end{prop}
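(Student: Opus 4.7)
The ``only if'' direction in (1) is immediate. For ``if'', set $\Tsf\coloneqq\Gsf^\ast_{\gamma_0^\ast}$, which is a maximal torus of $\Gsf^\ast$: elliptic regular semisimplicity of $\gamma_0^\ast$ at $v_0$ forces $\Gsf^\ast_{\gamma_0^\ast}$ to be a torus over $\Fsf_{v_0}$, hence over $\Fsf$. Under the identification $\Gsf_{\scl\Fsf}=\Gsf^\ast_{\scl\Fsf}$, the existence of a stable conjugate of $\gamma_0^\ast$ in $\Gsf(\Fsf)$ is equivalent to the pure inner twisting class $[\Gsf]\in H^1(\Fsf,\Gsf^\ast)$ lying in the image of the natural map $H^1(\Fsf,\Tsf)\to H^1(\Fsf,\Gsf^\ast)$. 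By hypothesis, for each place $v$ there is a local lift $\xi_v\in H^1(\Fsf_v,\Tsf)$ of $[\Gsf_{\Fsf_v}]$.

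I would globalise $(\xi_v)$ by adapting Kottwitz's argument over number fields, with $v_0$ playing the role of the archimedean elliptic place. Since $\Tsf$ is a torus, the local and global $H^1$'s are abelian groups computable by Tate-Nakayama, and their interplay is controlled by the Poitou-Tate exact sequence for $\Tsf$ over $\Fsf$. The obstruction to $(\xi_v)$ descending to a global class lies in a finite abelian group dual to a subquotient of $\pi_0(\widehat{\Tsf}^{\Gamma})$, where $\Gamma\coloneqq\Gal(\scl\Fsf/\Fsf)$.

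The elliptic regular hypothesis at $v_0$ is what cancels this obstruction: $\Tsf/\Zsf_{\Gsf^\ast}$ is anisotropic at $v_0$, so $X^\ast(\Tsf/\Zsf_{\Gsf^\ast})^{\Gamma_{v_0}}=0$, and local Tate duality then shows that the $v_0$-component of the obstruction target is hit by $H^1(\Fsf_{v_0},\Tsf)$ itself. Thus one can modify $\xi_{v_0}$ by a suitable local class so that the adjusted family lifts to some $\xi\in H^1(\Fsf,\Tsf)$ mapping to $[\Gsf]$; this produces the desired $\gamma_0\in\Gsf(\Fsf)$, which is automatically stably conjugate to $\gamma_0^\ast$.

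For (2), I would reduce to (1) by producing an adele $\ggamma'\in\Gsf(\AA)$ stably conjugate to $\gamma_0^\ast$. Outside $\xbf$ the Kottwitz-Igusa triple already provides such components, and at each $x_i\in\xbf$ the quasi-splitness hypothesis together with Kottwitz's uniqueness of the quasi-split pure inner form yields an isomorphism $\Gsf_{\Fsf_{x_i}}\cong\Gsf^\ast_{\Fsf_{x_i}}$; Steinberg's section theorem then transfers the Galois-stable semisimple conjugacy class of $\gamma_0^\ast$ to some $\gamma_{x_i}\in\Gsf(\Fsf_{x_i})$ stably conjugate to $\gamma_0^\ast$. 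Assembling these yields the required adele, and (1) then supplies the global $\gamma_0$. The main obstacle is the cohomological globalisation in (1): the strategy closely parallels the number-field prototype, but verifying that ellipticity at $v_0$ cancels precisely the Poitou-Tate obstruction requires a careful function-field analogue of Kottwitz's Tate-Nakayama bookkeeping.
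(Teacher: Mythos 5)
Your argument follows the same route as the paper: both reduce part~(1) to the Langlands--Kottwitz obstruction to transferring the elliptic maximal torus $\Tsf=\Gsf^\ast_{\gamma_0^\ast}$ to $\Gsf$, observe that anisotropy of $\Tsf$ modulo centre at $v_0$ forces the relevant localisation map on $\pi_0$ of the dual torus's Galois invariants to be injective and hence the obstruction to vanish, and both deduce part~(2) by producing local representatives at $x_i\in\xbf$ (you via Steinberg's theorem, the paper by reusing the argument of Cor.~\ref{cor-transfer-EL}) and then applying part~(1). The only small imprecisions in your write-up are that the obstruction group lives in the dual of a subquotient of $\pi_0((\widehat{\Tsf^{\der}})^\Gamma)$ rather than of $\pi_0(\widehat{\Tsf}^{\Gamma})$ (these agree for the present purpose since only the adjoint/derived data enters), and the paper's phrasing kills the obstruction directly by the injectivity of the $v_0$-localisation rather than by adjusting $\xi_{v_0}$ --- two equivalent ways of running the same Tate--Nakayama bookkeeping, whose function-field validity you rightly flag (the paper cites Kottwitz's number-field Lemma~14.1 and Langlands without comment on the characteristic-$p$ transposition).
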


This proposition indicates why Def.~\ref{df-Kottwitz-Igusa} is slightly different from its number field counterpart where the global stable conjugacy class is required to be elliptic at the archimedean place (ignoring the additional regularity assumption).

\begin{proof}[Proof of {Prop.~\ref{prop-elliptic-transfer}}]
    The first claim implies the second by the same proof as Cor.~\ref{cor-transfer-EL}. To prove the non-trivial implication of the first claim, suppose that $\gamma_0^\ast\in\Gsf^\ast(\Fsf)$ is a semisimple element that is elliptic regular at $v_0$ and stably conjugate to $\ggamma\in\Gsf(\AA)$. Set $\Tsf\coloneqq \Gsf^\ast_{\gamma_0^\ast}$, which is an elliptic maximal $\Fsf$-torus by assumption. We now claim that there is an $\Fsf$-embedding $\iota\colon \Tsf\to\Gsf$ such that $\iota(\Tsf(\AA))$ is stably conjugate to the centraliser of $\ggamma$ in $\Gsf(\AA)$. Granting this claim, we can conclude the proof by setting $\gamma\coloneqq\iota(\gamma_0^\ast)$.
    
    Indeed, the existence of $\iota$ as above can be extracted from the proof of Lem.~14.1 in \cite{Kottwitz:PtShimuraVarFinFields}. To explain, Langlands \cite[Ch.~VII]{Langlands:DebutStable} showed that the obstruction for the existence of $\iota$ lies in the Pontryagin dual of
    \[
    \ker\left(\pi_0((\widehat\Tsf^{\der})^\Gamma)\to \prod_v\pi_0((\widehat\Tsf^{\der})^{\Gamma(v)})\right ),
    \]
    where $\widehat\Tsf^{\der}$ is the dual torus of $\Tsf^{\der}\coloneqq \Tsf\cap\Gsf^{\der}$ and $\Gamma(v)$ is the decomposition group at $v$. (See \cite[\S9]{Kottwitz:StTrFormCuspTemp} for an alternative proof.) And as $\Tsf^{\der}$ is totally anisotropic at $v_0$ it follows that $\pi_0((\widehat\Tsf^{\der})^\Gamma)\to \pi_0((\widehat\Tsf^{\der})^{\Gamma(v_0)})$ is injective, which shows the vanishing of the obstruction.
\end{proof}

\section{Case of division algebras}\label{sect-div-alg}

Let $\Dsf$ be a central division algebra over $\Fsf$ with index $n$, and let $\Gsf\coloneqq \Dsf^{\times}$. If $\Dsf$ is split at each $x\in\xbf$, then $\Ig^{\bbf}_{\Gsf,\xbf,\Xi}$ is quasi-compact by \cite[Prop.~1.11]{Lau:Degeneration}.

The main goal of this section is to compute the ``alternating sum'' of the compactly supported cohomology of the Igusa variety $\Ig^{\bbf}_{\Gsf,\xbf,\Xi}$ in terms of automorphic representations of $\Gsf(\AA)$ under the additional assumption that $n<p$. See Thm.~\ref{th-second-basic-id} and its corollaries for further details. 

The statement and its proof are strongly analogous to the \emph{Second Basic Identity} for certain unitary Shimura varieties and associated Igusa varieties (\emph{cf.} \cite[Thm.~V.5.4]{HarrisTaylor:TheBook}, \cite[Thm.~6.7]{ShinSW:CohRZ}). As in the classical case, the main result essentially follows from the \emph{trace formula} for the Igusa variety (\emph{cf.} Thm.~\ref{th-point-counting-elliptic}) via standard \emph{local harmonic analysis}. 
Note that various complications of local harmonic analysis in characteristic~$p$ can be resolved at least for $\GL_n$ with $n<p$, thanks to  \cite{DeligneKazhdanVigneras:CenSimAlg} and later developments such as \cite{Lemaire:LocIntegrabilityGLN, Badulescu:Transfer} to list a few. The assumption $n<p$ is needed only for the characterisation of orbital integrals (\emph{cf.} Thm.~\ref{th-characterisation-orb-int}), but it seems within reach to formulate and prove the characterisation for general $n$. This will be considered in a future project.

\subsection{Background on character distributions}\label{ssect-LJ1}

We fix a locally profinite group $H$ and a Haar measure $\mu_H$ on it. By \emph{distribution} on $H$, we mean a linear functional
$C^{\infty}_c(H;\CC)\to\CC.$

If $(\pi,V)$ is an \emph{admissible} representation of $H$, then any $f\in C^{\infty}_c(H;\CC)$ gives rise to a \emph{finite-rank} endomorphism $\pi(f)$ on $V$ via convolution. In particular, we have a well-defined \emph{character distribution} of $\pi$:
\begin{equation}
\tr(-\mid \pi)\colon C^{\infty}_c(H;\CC)\to\CC;\quad \tr(f\mid \pi)\coloneqq \tr\pi(f), \  \forall f\in C^\infty_c(H).
\end{equation}

Now assume that $H$ is the group of $F$-rational points for a reductive group over a non-archimedean local field $F$. Let $H^{\rss}$ denote the subset of regular semisimple elements, which is open dense in $H$ with measure-$0$ complement. By \cite[Prop.~13.1]{AdlerKorman:CharExpansion} there is a locally constant function $\Theta_\pi$ on $H^{\rss}$ such that we have 
\begin{equation}\label{eq-char-dist}
\tr(f\mid \pi) = \int_{H^{\rss}} (f\cdot\Theta_\pi)\ d\mu_H
\end{equation}
for any smooth compactly supported function $f$ on $H^{\rss}$. (The characteristic~$0$ case is classical, while \emph{loc.~cit.} gave a characteristic-free proof. The key step is to prove the Harish-Chandra submersion theorem for any characteristic, which is done in Appendix~B in \cite{AdlerDeBacker:MurnaghanKirillov}, written by G.~Prasad.) Since $\tr(-\mid \pi)$ is invariant under conjugation by $H$, the same holds for the function $\Theta_\pi$. 

Furthermore, the formula~\eqref{eq-char-dist} is known to hold for any $f\in C^{\infty}_c(H;\CC)$, not necessarily supported in $H^{\rss}$, if one of the following holds:

\begin{enumerate}
\item if $F$ is of characteristic~$0$; \emph{cf.}~\cite{HarishChandra:AMS}.
\item if $G$ is an inner form of $\GL_n$ and $F$ is of characteristic $p$; \emph{cf.}~\cite{Badulescu:Transfer} and \cite{Lemaire:LocIntegrabilityTwistedCharacters}, built upon the case of $\GL_n(F)$ in \cite{Lemaire:LocIntegrabilityGLN}.
\end{enumerate}
Therefore, in the above cases the trace distribution $\tr(-\mid \pi)$ can be \emph{represented} by an (arbitrary) extension of $\Theta_\pi$ to $H$, as $H\setminus H^{\rss}$ is of measure~$0$.

Given a locally profinite group $H$ let $\Groth(H)$ denote the Grothendieck group of finite-length admissible representations of $H$. We linearly extend the definition of character distribution $\tr(-\mid \pi)$ and the function $\Theta_\pi$ on $H^{\rss}$ for $\pi\in\Groth(H)$.

\subsection{Background on the Jacquet-Langlands correspondence}
Let $F$ be a local field of characteristic~$p$, and set $H^\ast  = \prod_{j=1}^m\GL_{n_j}$. We choose an inner form $H=\prod_{j=1}^m\GL_{r_j}(D_j)$ where $D_j$ is a central division algebra  over $F$ for any $j$. We choose $H^{\ast}(\scl F)\cong H(\scl F)$ realising $H$ as an inner twist of $H^\ast$, and view $H(F)$ as a subgroup of $H^\ast(\scl F)$. If there is no risk of confusion, we let $H$ and $H^\ast$ also denote $H(F)$ and $H^\ast(F)$, respectively.

We say that semisimple elements $g \in H$ and $g^\ast  \in H^\ast$ have \emph{matching conjugacy classes} and write $g\leftrightarrow g^\ast $, if they are conjugate in $H^\ast(\scl F)$. There is a more direct description in terms of matching of characteristic polynomial of each factor.

Recall that Badulescu \cite[Prop.~3.3]{Badulescu:JLunitarisabilite} defined the following surjective group homomorphism
\begin{equation}\label{eq-LJ}
\LJ_H\colon \Groth(H^\ast ) \to \Groth(H),
\end{equation}
where given $\pi^\ast \in\Groth(H^\ast)$, $\LJ_H(\pi^{\ast})$ is characterised by the following identity
\begin{equation}\label{eq-LJ-char-id}
\Theta_{\pi^\ast }(g^\ast ) = e(H)\cdot\Theta_{\LJ_H(\pi^\ast )}(g),
\end{equation}
for any $g^\ast \in H^{\ast ,\rss}$ and $g\in H^{\rss}$ with matching conjugacy classes. Here, $e(H)= \prod_{j=1}^{m}(-1)^{n_j-r_j}$ is the \emph{Kottwitz sign}. 

The map $\LJ_H$ could be understood as the \emph{inverse} of the usual Jacquet--Langlands correspondence as follows. There is a natural bijection $\pi\rightsquigarrow \JL_H(\pi)$ from the set of isomorphism classes of essentially square-integrable representations of $H$ to those of $H^\ast $ constructed by Badulescu, and we have $\LJ_H(\JL_H(\pi)) = \pi$ for any irreducible essentially square-integrable representation $\pi$ of $H$.

Recall that $M_{b_x}$ denotes the centraliser of the Newton cocharacter of $b_x$. 
Since $\Gsf_{\Fsf_{x}}$ is isomorphic to $\GL_n$ for $x\in\xbf$, the Levi subgroup $M_{b_x}$ is a product of suitable $\GL_{n_j}$'s. Let
\begin{equation}\label{eq-LJbi}
\LJ^{b_x}\coloneqq \LJ_{J_{b_x}}\colon \Groth(M_{b_x}(\Fsf_{x})) \to \Groth(J_{b_x}(\Fsf_{x})),
\end{equation}
denote the homomorphism \eqref{eq-LJ} for $H^\ast =M_{\nu_x}$ and $H = J_{b_x}$.

\subsection{The main result for division algebras}
Let $P_{b_x}$ denote the parabolic subgroup of $\Gsf_{\Fsf_{x}}$ where  the weights of the Newton cocharacter $\nu_x=\nu(b_x)$ on the Lie algebra of $P_{b_x}$ is non-positive. Note that our choice of $P_{b_x}$ is opposite \revise{to} Shin's; \emph{cf.} \cite[\S3]{ShinSW:StableTraceIgusa}, \cite[\S6]{ShinSW:CohRZ}. Then we finally define
\begin{equation}
\Red^{b_x} \coloneqq e(J_{b_x})\cdot \LJ^{b_x}\circ \Jac^{\Gsf(\Fsf_{x})}_{P_{b_x}(\Fsf_{x})}\colon \Groth(\Gsf(\Fsf_{x}))\to\Groth(J_{b_x}(\Fsf_{x})),
\end{equation}
where $e(J_{b_x})$ is the \emph{Kottwitz sign} of $J_{b_x}$ and $\Jac^{\Gsf(\Fsf_{x})}_{P_{b_x}(\Fsf_{x})}$ is the \emph{unnormalised} Jacquet functor.

Finally, we set 
\begin{equation}\label{eq-Red}
\Red^{\bbf}\colon \Groth(\Gsf(\AA))\to\Groth(\JJ_\bbf)
\end{equation}
by sending an irreducible representation $\pi =  \pi^{\xbf}\otimes \big ( \bigotimes_{x\in\xbf}\pi_x\big)$ to $\pi^{\xbf}\otimes \big ( \bigotimes_{x\in\xbf}\Red^{b_x}(\pi_x)\big)$.

We choose a non-zero degree id\`ele $\xi\in\AA^\times = \Zsf_\Gsf(\AA)$ so that the subgroup $\Xi\coloneqq\langle\xi\rangle\subset\Zsf_\Gsf(\AA)$ satisfies all the requirements in \S\ref{ssect-Xi}. Let 
\[
\Ascr(\Gsf(\Fsf)\backslash\Gsf(\AA)/\Xi)\coloneqq \Ccal^{\infty}(\Gsf(\Fsf)\backslash\Gsf(\AA)/\Xi;\CC);
\]
denote the space of locally constant functions on $\Gsf(\Fsf)\backslash\Gsf(\AA)/\Xi$ equipped with the right regular $\Gsf(\AA)$-action; i.e., the space of automorphic forms with central character trivial on $\Xi$. By compactness of $\Gsf(\Fsf)\backslash\Gsf(\AA)/\Xi$, any irreducible constituents are subrepresentations and square-integrable (i.e., discrete series). Furthermore, by strong multiplicity one theorem \cite[Thm.~3.3]{BadulescuRoche:JLC}, given a smooth irreducible  representation $\pi^{\xbf}$ of $\Gsf(\AA^\xbf)$ there exists at most one irreducible subrepresentation $\pi\subset \Ascr(\Gsf(\Fsf)\backslash\Gsf(\AA)/\Xi)$ such that $\pi\cong \pi^{\xbf}\otimes\left (\bigotimes_{x\in\xbf}\pi_{x}\right )$ for some smooth irreducible  representation $\pi_{x}$ of $\Gsf(\Fsf_{x})$.

Fixing an isomorphism $\CC\cong \cl{\QQ}_l$,  we may view $\coh{i}_c(\Ig^{\bbf}_{\Gsf,\xbf,\Xi},\cl\QQ_\ell)$ as an admissible $\JJ_\bbf$-representation by quasi-compactness of Igusa varieties. So $\Hcal\coloneqq C^{\infty}_c(\JJ_\bbf/\Xi;\CC)$ acts as finite-rank endomorphisms via convolution. 
The following is the main result of this section.
\begin{thm}\label{th-second-basic-id}
Assume that $\Gsf=\Dsf^{\times}$ where $\Dsf$ is a central division algebra over $\Fsf$ of dimension $n^2$ with $n<p$ that splits at each $x\in\xbf$. Then for any $\varphi\in\Hcal$, we have 
\[
\sum_i(-1)^i\tr\big(\varphi\mid \coh i_c(\Ig^{\bbf}_{\Gsf,\xbf,\Xi},\cl\QQ_\ell)\big) = \big(\prod_{x\in\xbf}e(J_{b_x})\big)\cdot\sum_{\pi\subset \Ascr(\Gsf(\Fsf)\backslash\Gsf(\AA)/\Xi)} \tr \big(\varphi\ | \Red^\bbf(\pi)\big),
\]
where the right hand side is the sum over all automorphic representations $\pi$ with central character trivial on $\Xi=\langle\xi\rangle$ chosen as above, with only finitely many nonzero summand. Here, $e(J_{b_x})\in\{\pm1\}$ is the Kottwitz sign of $J_{b_x}$.
\end{thm}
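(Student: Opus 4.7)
The plan is to compare the trace formula for Igusa varieties (Theorem~\ref{th-point-counting-elliptic}) with the Arthur--Selberg trace formula for $\Dsf^\times(\AA)$ via Jacquet--Langlands transfer and parabolic descent at each $x_i\in\xbf$. Since $\Dsf$ splits at every $x_i$, condition~\eqref{eq-trace-equals-elliptic-terms-div-alg} forces $\Jsf_b(\Fsf)$ to be anisotropic modulo $\Zsf_\Gsf$ for every $[b]\in\B(\Fsf,\Gsf)_\bbf$ (so $\FP_\bbf^{\ael}=\FP_\bbf^{\rm ss}$), and the Igusa variety is quasi-compact. Combining Corollary~\ref{cor-preliminary-point-counting-elliptic}, Theorem~\ref{th-point-counting-elliptic}, and Corollary~\ref{cor-transfer-EL} (applicable because $\Gsf^\ast=\GL_n$ is split at $\xbf$, so each $\gamma_0$ lifts to $\Gsf(\Fsf)=\Dsf^\times$), one obtains for any sufficiently divisible $s$ the identity
\begin{equation*}
\tr\bigl(\varphi^{(s)}\mid\Rrm\Gamma_c(\Ig^\bbf_{\Gsf,\xbf,\Xi},\cl\QQ_\ell)\bigr)
=\sum_{(\gamma_0;\ggamma,\ddelta)\in\KS^{\ael}_\bbf}|\ker^1(\Fsf,\Gsf_{\gamma_0})|\,i(\Gsf_{\gamma_0};\Xi)\,\tau(\Gsf_{\gamma_0})\,O^{\JJ_\bbf}_{(\ggamma,\ddelta)}(\varphi^{(s)}).
\end{equation*}

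The analytic core is the transfer of orbital integrals at each $x_i\in\xbf$. Since $J_{b_i}$ is an inner form of the Levi $M_{\nu_i}\subset\GL_n$, Badulescu's Jacquet--Langlands transfer in positive characteristic (valid for $n<p$) produces $\widetilde\phi_{x_i}\in C^\infty_c(M_{\nu_i}(\Fsf_{x_i}))$ with $O^{M_{\nu_i}}_\gamma(\widetilde\phi_{x_i})=e(J_{b_i})\cdot O^{J_{b_i}}_\delta(\varphi^{(s)}_{x_i})$ for matching regular semisimple classes $\gamma\leftrightarrow\delta$, and parabolic descent then lifts $\widetilde\phi_{x_i}$ to $\phi_{x_i}\in C^\infty_c(\GL_n(\Fsf_{x_i}))$ (whose constant term along $P_{b_i}$ realises $\widetilde\phi_{x_i}$ up to the modulus character) satisfying $O^{\GL_n}_\gamma(\phi_{x_i})=O^{M_{\nu_i}}_\gamma(\widetilde\phi_{x_i})$ for every $\gamma$ whose centraliser lies in $M_{\nu_i}$---automatic for our $\gamma_0$'s by $\bnu$-acceptability and Corollary~\ref{cor-centralising}. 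Assembling $\phi\coloneqq\varphi^{(s),\xbf}\otimes\bigotimes_{x_i}\phi_{x_i}\in C^\infty_c(\Gsf(\AA)/\Xi)$ and using the Tamagawa formula~\eqref{eq-tamagawa-formula} and Lemma~\ref{lem-index-inner-twisting} to absorb $i(\Gsf_{\gamma_0};\Xi)\cdot\tau(\Gsf_{\gamma_0})$ into the Tamagawa volume on $\Gsf_{\gamma_0}(\Fsf)\backslash\Gsf_{\gamma_0}(\AA)/\Xi$ (while the factor $|\ker^1|$ records the passage from KI triples to $\Gsf(\Fsf)$-conjugacy classes within a given stable class), the display above rewrites as
\begin{equation*}
\Bigl(\prod_{x_i\in\xbf}e(J_{b_i})\Bigr)\cdot\sum_{[\gamma_0]}\vol\bigl(\Gsf_{\gamma_0}(\Fsf)\backslash\Gsf_{\gamma_0}(\AA)/\Xi\bigr)\cdot O^{\Gsf(\AA)}_{\gamma_0}(\phi),
\end{equation*}
the sum running over elliptic semisimple $\Gsf(\Fsf)$-conjugacy classes.

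Because $\Gsf=\Dsf^\times$ is anisotropic modulo its centre, the Arthur--Selberg trace formula for $\Gsf(\AA)/\Xi$ equates this geometric expression with $\sum_\pi\tr(\phi\mid\pi)$ over automorphic $\pi\subset\Ascr(\Gsf(\Fsf)\backslash\Gsf(\AA)/\Xi)$, each of multiplicity one by Badulescu--Roche. At each $x_i$, the defining character identity~\eqref{eq-LJ-char-id} for $\LJ^{b_i}$---extended from regular semisimple elements to all of $J_{b_i}(\Fsf_{x_i})$ by Lemaire's local integrability of characters (also requiring $n<p$)---together with the standard parabolic-descent trace identity $\tr(\phi_{x_i}\mid\pi_{x_i})=\tr(\widetilde\phi_{x_i}\mid\Jac^{\Gsf(\Fsf_{x_i})}_{P_{b_i}(\Fsf_{x_i})}\pi_{x_i})$ (Casselman's character formula combined with parabolic descent of orbital integrals) yields $\tr(\phi\mid\pi)=\bigl(\prod_{x_i}e(J_{b_i})\bigr)\cdot\tr(\varphi^{(s)}\mid\Red^\bbf\pi)$, producing the asserted identity for $\varphi^{(s)}$. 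To pass from $\varphi^{(s)}$ to $\varphi$ itself, note that both sides, as functions of $s$, are finite $\cl\QQ_\ell$-linear combinations of $s$-th powers of a common set of scalars---Frobenius eigenvalues on $\Rrm\Gamma_c$ on the left, and central characters of $\Red^\bbf(\pi)$ at $N^{(r)}(\bbf)$ on the right, which agree because Frobenius on $\Rrm\Gamma_c$ equals the Hecke action of $N^{(r)}(\bbf)$ (\emph{cf.}~\S\ref{ssect-Frob-twist})---so agreement for all $s\gg0$ forces agreement at $s=0$ by Vandermonde. The hard part will be the careful bookkeeping of Kottwitz signs, modulus characters, Haar measures, and Tamagawa factors across the three transfers (KI triples to conjugacy classes, $J_{b_i}$ to $M_{\nu_i}$ via JL, and $M_{\nu_i}$ to $\GL_n$ via parabolic descent), together with the precise invocation of Badulescu's positive-characteristic Jacquet--Langlands transfer and Lemaire's local character integrability---all of which confine us to the range $n<p$.
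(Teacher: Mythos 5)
Your proposal is correct and follows essentially the same route as the paper: reduce to $\varphi^{(s)}$ via the finiteness lemma (the paper's Lemma~\ref{lem-acceptable-trace}, which is the Shin-style linear-recurrence argument you sketch directly), apply the elliptic-term trace formula (Theorem~\ref{th-point-counting-elliptic} together with Corollary~\ref{cor-preliminary-point-counting-elliptic}), transfer test functions via Badulescu's Jacquet--Langlands plus parabolic descent (the paper packages this composite as Corollary~\ref{cor-Red-transfer}), and compare with the Arthur--Selberg trace formula and the character identity for $\LJ^{b_i}$, using Corollary~\ref{cor-transfer-EL} to lift $\gamma_0^\ast$ to $\Dsf^\times(\Fsf)$.

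One parenthetical in your proposal is off and worth flagging, even though it is not load-bearing here: you describe $|\ker^1(\Fsf,\Gsf_{\gamma_0})|$ as recording \emph{the passage from Kottwitz--Igusa triples to $\Gsf(\Fsf)$-conjugacy classes}. That passage is actually a bijection (Corollary~\ref{cor-transfer-EL}); what $|\ker^1|$ counts is the fibre of $\Kfr\colon\FP_\bbf^{\rm ss}\to\KS_\bbf$ (Proposition~\ref{prop-FP-to-KS}), and it disappears from the bookkeeping because $|\ker^1|\cdot\tau(\Gsf_{\gamma_0})=|(\pi_1(\Gsf_{\gamma_0})_\Gamma)_{\tor}|=1$ by the Tamagawa formula~\eqref{eq-tamagawa-formula} and the fact that centralizers of elliptic elements in $\Dsf^\times$ are inner forms of $\Res_{\Fsf'/\Fsf}\GL_r$. (Relatedly, the claim $\FP_\bbf^{\ael}=\FP_\bbf^{\rm ss}$ is slightly too strong: anisotropy gives $\FP_\bbf^{\rm ss}=\FP_\bbf^{\text{elliptic ss}}$, while acceptability is a separate condition supplied by the Frobenius twist via Lemma~\ref{lem-acceptability}.)
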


\begin{rmk}
We assumed that $n<p$ as Thm.~\ref{th-characterisation-orb-int} was only proved under this assumption. In particular, this assumption can be removed once we obtain a suitable version of Thm.~\ref{th-characterisation-orb-int} that works when $n\geqslant p$. See Rmk.~\ref{rmk-Dixmier} for more discussions.
\end{rmk}

Given a smooth irreducible representation $\pi^{\xbf}$ of $\Gsf(\AA^\xbf)$, we define
\begin{multline}
[\Rrm\Gamma_c(\Ig^{\bbf}_{\Gsf,\xbf,\Xi})](\pi^{\xbf})\coloneqq \sum_i(-1)^{i}\Hom_{\Gsf(\AA^{\xbf})}\big(\pi^{\xbf,\vee},\coh{i}_c(\Ig^{\bbf}_{\Gsf,\xbf,\Xi},\cl\QQ_\ell)\big)\\
\in\Groth\big(\prod_{x\in\xbf}J_{b_x}(\Fsf_{x})\big).
\end{multline}
\begin{cor}\label{cor-second-basic-id}
In the same setting as Thm.~\ref{th-second-basic-id}, let $\pi^{\xbf}$ be a smooth irreducible representation of $\Gsf(\AA^\xbf)$. Then $\pi^\xbf$ appears as the local factor of an automorphic representation $\pi=\pi^{\xbf}\otimes\left (\bigotimes_{x\in\xbf}\pi_{x}\right )$ with central character trivial on $\Xi$ if and only if $[\Rrm\Gamma_c(\Ig^{\bbf}_{\Gsf,\xbf,\Xi})](\pi^{\xbf})\ne 0$, in which case we have 
\[ [\Rrm\Gamma_c(\Ig^{\bbf}_{\Gsf,\xbf,\Xi})](\pi^{\xbf}) = \big(\prod_{x\in\xbf}e(J_{b_x})\big)\cdot\bigotimes_{x\in\xbf} \Red^{b_x}(\pi_{x}).\]
\end{cor}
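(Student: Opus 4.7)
The plan is to deduce Corollary~\ref{cor-second-basic-id} from Theorem~\ref{th-second-basic-id} by a standard argument using product test functions, linear independence of characters, and strong multiplicity one for inner forms of $\GL_n$.

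First I would fix a compact open subgroup $\Ksf^\xbf\subset \Gsf(\AA^\xbf)$ and use the quasi-compactness of $\Ig^\bbf_{\Gsf,\xbf,\Xi\cdot\Ksf^\xbf}$ (a consequence of \cite[Proposition~1.11]{Lau:Degeneration}) to ensure that $\coh i_c(\Ig^\bbf_{\Gsf,\xbf,\Xi},\cl\QQ_\ell)^{\Ksf^\xbf}$ is a finite-dimensional admissible module over $\Hcal(\Gsf(\AA^\xbf)//\Ksf^\xbf)\otimes C_c^\infty(\JJ_\xbf/\Xi)$, where $\JJ_\xbf\coloneqq\prod_{x_i\in\xbf}J_{b_i}(\Fsf_{x_i})$. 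Decomposing this in the Grothendieck group of admissible $\JJ_\xbf/\Xi$-representations and passing to the limit over $\Ksf^\xbf$ produces a virtual decomposition of $\sum_i(-1)^i[\coh i_c(\Ig^\bbf,\cl\QQ_\ell)]$ compatible with the $\Gsf(\AA^\xbf)$-action, with the virtual $\JJ_\xbf/\Xi$-representation attached to an irreducible admissible $\pi^\xbf$ being, up to the duality convention built into the formula, precisely $V(\pi^\xbf)\coloneqq[\Rrm\Gamma_c(\Ig^\bbf_{\Gsf,\xbf,\Xi})](\pi^\xbf)$.

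Next, I would apply Theorem~\ref{th-second-basic-id} to test functions of product form $\varphi=\varphi^\xbf\otimes\varphi_\xbf$, with $\varphi^\xbf\in C_c^\infty(\Gsf(\AA^\xbf))$ and $\varphi_\xbf\in C_c^\infty(\JJ_\xbf/\Xi)$. Writing $\epsilon\coloneqq\prod_{x_i\in\xbf}e(J_{b_i})$, this produces an identity of the form
\[\sum_{\pi^\xbf}\tr(\varphi^\xbf\mid\pi^\xbf)\cdot\tr(\varphi_\xbf\mid V(\pi^\xbf)) \;=\; \epsilon\sum_{\pi\subset\Ascr}\tr(\varphi^\xbf\mid\pi^\xbf)\cdot\tr\bigl(\varphi_\xbf\,\big|\,\textstyle\bigotimes_{x_i\in\xbf}\Red^{b_i}(\pi_i)\bigr),\]
with only finitely many nonzero summands on each side once $\varphi^\xbf$ is fixed. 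Fixing $\varphi_\xbf$ and letting $\varphi^\xbf$ vary, linear independence of the distributions $\{\tr(-\mid\pi^\xbf)\}$ on $\Gsf(\AA^\xbf)$ (which follows from the local trace Paley--Wiener theorem at each place together with the tensor product structure) lets me equate the coefficients of $\tr(\varphi^\xbf\mid\pi^\xbf)$ on both sides, yielding, for every admissible irreducible $\pi^\xbf$,
\[\tr(\varphi_\xbf\mid V(\pi^\xbf)) \;=\; \epsilon\sum_{\substack{\pi\subset\Ascr \\ \pi\text{ extends }\pi^\xbf}}\tr\bigl(\varphi_\xbf\,\big|\,\textstyle\bigotimes_{x_i\in\xbf}\Red^{b_i}(\pi_i)\bigr), \qquad \forall\,\varphi_\xbf.\]

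Strong multiplicity one for $\Gsf=\Dsf^\times$, obtained by combining the global Jacquet--Langlands correspondence (\emph{cf.}~\cite[Theorem~3.3]{BadulescuRoche:JLC}) with strong multiplicity one for $\GL_n$, ensures that at most one automorphic $\pi\subset\Ascr(\Gsf(\Fsf)\backslash\Gsf(\AA)/\Xi)$ can have $\pi^\xbf$ as its away-from-$\xbf$ factor, so the sum on the right collapses to at most one term. Finally, varying $\varphi_\xbf$ and invoking linear independence of characters of irreducible admissible representations of $\JJ_\xbf/\Xi$ upgrades the equality of trace distributions to an equality in the Grothendieck group, giving $V(\pi^\xbf)=\epsilon\cdot\bigotimes_{x_i\in\xbf}\Red^{b_i}(\pi_i)$ when such an extension $\pi$ exists and $V(\pi^\xbf)=0$ otherwise. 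The main obstacle I anticipate is the bookkeeping in the first step, where the full cohomology is only admissible (not a priori of finite length) as a $\Gsf(\AA^\xbf)$-representation and where the dualization built into the definition of $V(\pi^\xbf)$ must be tracked with care; both are handled by working at fixed level and using standard admissibility and dualization tricks. The remaining ingredients---the trace Paley--Wiener theorem and strong multiplicity one for inner forms of $\GL_n$ in positive characteristic---are available in the literature under the hypothesis $n<p$ inherited from Theorem~\ref{th-second-basic-id}.
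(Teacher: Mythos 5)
Your proof follows the expected deduction from Theorem~\ref{th-second-basic-id} --- the paper gives no explicit proof of this corollary, treating the passage from the trace identity to the spectral statement as routine --- and your overall strategy (product test functions, linear independence of characters, strong multiplicity one) is exactly right. Two small remarks.

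First, you invoke the trace Paley--Wiener theorem; for the step in question you only need linear independence of characters of pairwise inequivalent irreducible admissible representations, which is much more elementary, so the appeal to Paley--Wiener is harmless but not necessary.

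Second, and more substantively, the dualization that you flag but relegate to ``standard tricks'' deserves to be worked out, because as stated there appears to be a genuine $\vee$-discrepancy. Matching coefficients of $\tr(\varphi^\xbf\mid\pi^\xbf)$ on both sides of Theorem~\ref{th-second-basic-id} (taking $\varphi=\varphi^\xbf\otimes\varphi_\xbf$) yields
\[
\sum_i(-1)^i\Hom_{\Gsf(\AA^\xbf)}\bigl(\pi^\xbf,\coh{i}_c(\Ig^\bbf_{\Gsf,\xbf,\Xi},\cl\QQ_\ell)\bigr)=\Bigl(\prod_{x_i\in\xbf}e(J_{b_i})\Bigr)\cdot\bigotimes_{x_i\in\xbf}\Red^{b_i}(\pi_{x_i}),
\]
whereas the definition of $[\Rrm\Gamma_c(\Ig^\bbf_{\Gsf,\xbf,\Xi})](\pi^\xbf)$ uses $\Hom(\pi^{\xbf,\vee},-)$. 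So to recover the corollary verbatim you need to explain either why the $\Gsf(\AA^\xbf)$-action on the tower of Igusa varieties is normalized so that these two Hom-spaces coincide, or that one should substitute $\pi^\xbf\rightsquigarrow\pi^{\xbf,\vee}$ and observe (via strong multiplicity one and the bijection $\pi\mapsto\pi^\vee$ on automorphic representations in $\Ascr$) that $\pi^{\xbf,\vee}\otimes\pi'_\xbf$ automorphic forces $\pi'_\xbf\cong\pi_\xbf^\vee$ and hence one needs the identity in terms of $\Red^{b_i}(\pi_{x_i}^\vee)$, not $\Red^{b_i}(\pi_{x_i})$. This is a bookkeeping point rather than a gap in your method, but it should be pinned down rather than waved at, since the two statements are not literally equal for a general (non-self-dual) automorphic $\pi$.
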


\begin{rmk}
Note that  the $\pi^{\xbf}$-isotypic part of the cohomology $[\Rrm\Gamma_c(\Ig^{\bbf}_{\Gsf,\xbf,\Xi})](\pi^{\xbf})$ only depends on $J_{b_x}$'s for $x\in\xbf$, but not on the $\sigma$-conjugacy classes of $b_x$. Indeed, the same remark applies to the left hand side of Thm.~\ref{th-second-basic-id}.
\end{rmk}

\subsection{Cohomology of moduli stacks of global $\Gscr$-shtukas} 

In the spirit of the \emph{Second Basic Identity} for certain unitary Shimura varieties (\emph{cf.} \cite[Thm.~V.5.4]{HarrisTaylor:TheBook}, \cite[Thm.~6.7]{ShinSW:CohRZ}) one would expect to relate the cohomology of Igusa varieties with the intersection cohomology of moduli stacks of $\Gscr$-shtukas for a suitable integral model $\Gscr$ of $\Gsf$. Indeed, Thm.~\ref{th-second-basic-id} and Cor.~\ref{cor-second-basic-id} can be interpreted this way once we have the decomposition of the intersection cohomology of moduli stacks of $\Gscr$-shtukas for some smooth integral model $\Gscr$ of $\Gsf$.

To explain, let $\Gscr$ be a smooth integral model of $\Gsf$ given by the unit of a maximal order of $\Dsf$. Let $U\subset C$ be an open dense subscheme over which $\Gscr$ is reductive, and we will consider $\Gscr$-shtukas with legs in $U$, indexed by a finite set $I$. Finally, choose a tuple $\boldsymbol\lambda\coloneqq (\lambda_i)_{i\in I}$  of dominant coweights of $\Gsf_{\scl\Fsf} \cong \GL_{n,\scl\Fsf}$ such that $\sum_{i\in I}\deg(\lambda_i)=0$, which naturally defines a $\restr{L^+_{C^I}\Gscr}{U^I}$-stable closed subscheme $Z_{\blambda} \subset \restr{\Gr_{\Gscr,\Ibf}}{U^I}$; \emph{cf.} \cite[Prop.~1.1]{Lau:Degeneration}. Let $\Xscr^{Z_{\blambda}}\coloneqq\Xscr^{Z_{\blambda}}_{\Gscr,\Ibf}|_{U^I}$ be the moduli stack of $\Gscr$-shtukas bounded by $Z_{\blambda}$, where $\Ibf$ is a partition of $I$ into singletons. Given any finite closed subscheme $N\subset C$, we can also define the level-$N$ moduli stack $\Xscr^{Z_{\blambda}}_N$ over $(U\setminus N)^I$.

Choose $\Xi\coloneqq\langle\xi\rangle$ as in Thm.~\ref{th-second-basic-id}, and \emph{assume} that $\Xscr^{Z_{\blambda}}/\Xi$ is proper over $U^I$. This can be guaranteed if $\Dsf$ is ``sufficiently ramified''; see \cite[Thm.~A]{Lau:Degeneration} for the precise condition.  For any finite closed subscheme $N\subset C$, let $H^i_{N,\blambda}$ denote the ``$i$th intersection cohomology'' of the fibre of $\Xscr^{Z_{\blambda}}_N/\Xi$ over a geometric generic point of $(U\setminus N)^I$, where the intersection complex is normalised so that it restricts to $\cl\QQ_l[d](\frac{d}{2})$ on the smooth locus where $d$ is the relative dimension of $\Xscr^{Z_{\blambda}}$ over $U^I$. Our $H^i_{N,\blambda}$ is the geometric generic fibre of the sheaf on $U^I$ defined in \cite[Def.~8.2.2]{Lau:Thesis}, and it is a finite-dimensional $\cl\QQ_\ell$-vector space equipped with commuting actions of the ``level-$N$ Hecke algebra'' and $\Gal(\scl\Fsf/\Fsf)^I$ (\emph{cf.} \cite[Cor.~8.2.5]{Lau:Thesis}). From now on, we will ignore the Galois action for simplicity. By taking the colimit with respect to $N$ we obtain an admissible $\Gsf(\AA)$-representation
\[H^i_{\blambda}\coloneqq \varinjlim_{N\subset C}H^i_{N,\boldsymbol\lambda}.\]
Given a smooth irreducible representation $\pi^{\xbf}$ of $\Gsf(\AA^\xbf)$, set \[H^i_{\boldsymbol\lambda}(\pi^{\xbf})\coloneqq\Hom_{\Gsf(\AA^\xbf)}(\pi^{\xbf,\vee}, H^i_{\boldsymbol\lambda}),\] which is an admissible representation of $\prod_{x\in\xbf}\Gsf(\Fsf_{x})$ with finite length by \cite[Cor.~8.2.6]{Lau:Thesis} and the strong multiplicity one theorem. Therefore, we can define the alternating sum in the Grothendieck group; namely,
\begin{equation}
H_{\boldsymbol\lambda}(\pi^{\xbf})\coloneqq \sum_{i} (-1)^iH^i_{\boldsymbol\lambda}(\pi^{\xbf}).
\end{equation}

The following corollary is immediate from Cor.~\ref{cor-second-basic-id} and the decomposition of the intersection cohomology \cite[Thm.~9.3.3]{Lau:Thesis}.
\begin{cor}\label{cor-second-basic-id-Kottwitz}
Assume that $\Xscr^{Z(\boldsymbol\lambda)}/\Xi$ is proper over $U^I$ with positive relative dimension, and the ``base change fundamental lemma'' holds for $\GL_n$.
Then we have 
\[
\Red^{\bbf}(H_{\boldsymbol\lambda}(\pi^{\xbf})) = \big(\prod_{x\in\xbf}e(J_{b_x})\big)\cdot d_{\boldsymbol\lambda}\cdot [\Rrm\Gamma_c(\Ig^{\bbf}_{\Gsf,\xbf,\Xi})](\pi^{\xbf})
\]
in $\Groth\big(\prod_{x\in\xbf}J_{b_x}(\Fsf_{x})\big)$. Here, $d_{\boldsymbol\lambda} \coloneqq\prod_{i\in I}d_{\lambda_i}$ where $d_{\lambda_i}$ is the dimension of the irreducible $\widehat\Gsf$-representation with highest weight $\lambda_i$.
\end{cor}
Note that \cite[Thm.~9.3.3]{Lau:Thesis} also requires the properness of $\Xscr^{Z_{(\mu^+,\mu^-)}}$ over $U^2$ where $\mu^+\coloneqq (1,0,\cdots,0)$ and $\mu^-\coloneqq(0,\cdots,0,-1)$, but this is implied by the assumptions of Cor.~\ref{cor-second-basic-id-Kottwitz}; \emph{cf.} \cite[Thm.~A]{Lau:Degeneration}.

By \emph{base change fundamental lemma}, we mean the characteristic~$p$ analogue of the statement \cite[4.5, 3.13]{ArthurClozel:BC} for finite unramified extensions $E/F$ of local fields of characteristic $p$, noting that $\Gsf_{\Fsf_{x}}\cong \GL_n$ for $x\in|U|$. (See also \cite[\S5.3]{Lau:Thesis}.) Some special cases have been obtained by Drinfeld (\emph{cf.} \cite[Thm.~4.5.5]{laumonbook1}) and Ng\^o (\emph{cf.} \cite[\S5.7]{NgoBC:DSht}), and the proof for the characteristic~$0$ case in \cite{ArthurClozel:BC} is believed to work in characteristic~$p$; \emph{cf.} footnote on page~242 in \cite{NgoBC:DSht}.

\begin{rmk}
    The proof of Cor.~\ref{cor-second-basic-id-Kottwitz} is via directly comparing Cor.~\ref{cor-second-basic-id} with \cite[Thm.~9.3.3]{Lau:Thesis}, and it does \emph{not} make use of any geometric relation between $\Ig^\bbf_{\Gsf,\xbf}$ and $\Xscr^{Z_{\blambda}}_N$, such as the foliation theory. When $\Xscr^{Z_{\blambda}}/\Xi$ is proper over $U^I$, there should be an alternative proof of Cor.~\ref{cor-second-basic-id-Kottwitz} (under different assumptions) using the foliation structure of Newton strata for the fibre of $\Xscr^{Z_{\blambda}}/\Xi$ at some fixed legs $\overline\xbf\in C(\cl\FF_q)^I$.
\end{rmk}

\begin{rmk}
Without any properness assumption on $\Xscr^{Z_{\blambda}}/\Xi\to U^I$, an analogue of \cite[Thm.~9.3.3]{Lau:Thesis} is obtained by V.~Lafforgue and X.~Zhu \cite[Cor.~3.1]{LafforgueZhu:elliptic}, which is \emph{unconditional} as $\Xscr^{Z(\boldsymbol\lambda)}/\Xi$ is of finite type over $U^I$. Note that Lafforgue--Zhu decomposed the \emph{direct sum} of $H^i_{N,\boldsymbol\lambda}$, while Lau decomposed the \emph{alternating sum}. The two results are compatible when $\Xscr^{Z(\boldsymbol\lambda)}/\Xi$ is proper over $U^I$ in which case the odd degree intersection cohomology vanishes; \emph{cf.} \cite[Cor.~9.1.4]{Lau:Thesis}. 
\end{rmk}

\subsection{Some local harmonic analysis in positive characteristic}
We fix a place $x$ of $\Fsf$ and write $F\coloneqq \Fsf_x$. We will not strictly distinguish an algebraic group over $F$ and its group of $F$-rational points. Let $G\coloneqq \Gsf(\Fsf_x) = \Dsf_x^{\times}$.

In this subsection, we recall some basic local harmonic analysis results for $G$ under the additional assumption that  $n<p$ where $n$ is the index of $D$. It is observed in \cite[Appendix~1]{DeligneKazhdanVigneras:CenSimAlg} that for inner forms of $\GL_n$ many local harmonic analysis \revise{results hold} even in characteristic $p$ (not necessarily assuming $n<p$) by replacing the exponential map on the Lie algebra with the map $X\mapsto 1+X$. Now, under the extra assumption $n<p$ we have the following simplifying features:
\begin{enumerate}
\item Any element $g\in G$ admits a canonical Jordan decomposition; indeed, an element $g$ of an inner form of $\GL_n(F)$ admits a canonical Jordan decomposition if all the irreducible factors of the characteristic polynomial of $g$ are separable, which is automatic if $n<p$.
\item There are only finitely many $F$-tori in $G$ up to \revise{$G$-conjugacy}; indeed, this amounts to the finiteness of the number of separable extensions of $F$ with degree at most $p-1$. Note that there are infinitely many separable extension of $F$ with degree $p$ coming from the Artin--Schreier theory.
\end{enumerate}

It turns out that the proof of the characterisation of orbital integrals due to Vign\'eras  \cite{Vigneras:OrbInt} remains valid for inner forms of $\GL_n/F$ with $n<p$, as the main ingredients of the proof hold true in this setting. 

To state the result, let us recall the following definitions from \cite[\S1]{Vigneras:OrbInt}. 
\begin{defn}
Given a semisimple element $s\in G$, the \emph{standard torus} associated to $s$ is defined to be the (connected) centre of the centraliser $Z_G(s)$. A \emph{standard torus} $T$ of $G$ means the standard torus associated to some semisimple element $s\in G$. A pair $(T,u)$ of a standard torus $T$ and a unipotent element $u\in Z_G(T)$  is called a \emph{standard pair}. We obviously define the notion of conjugacy class of standard pairs.
\end{defn}
 
We note that  if $n<p$ then there are only finitely many standard pairs up to conjugation and any element of $G$ is contained in $T\cdot u$ for some standard pair $(T,u)$ by Jordan decomposition.

\begin{defn}
Given a subset $X\subset G$, let $X^{\reg}$ denote the subset of $X$ consisting of elements whose $G$-conjugacy class has dimension larger than or equal to the dimension of $G$-conjugacy class of any other element of $X$.
\end{defn}
Note that elements of $X^{\reg}$ may be neither regular nor semisimple in $G$. For a standard pair $(T,u)$, we have $(T\cdot u)^{\reg} = T^{\reg}\cdot u$.

\begin{thm}[\emph{cf.} {\cite[1.n]{Vigneras:OrbInt}}]\label{th-characterisation-orb-int} 
Let $G$ be an inner form of $\GL_n(F)$ such that $n < p\coloneqq \cha F$. Then a conjugate-invariant function $W\colon G\to\CC$ is an orbital integral (i.e., there exists $f\in C_c^{\infty}(G;\CC)$ such that $W(g)=O_g(f)$) if and only if for any standard pair $(T,u)$ the following properties hold
\begin{enumerate}
\item $W|_{T^{\reg}\cdot u}$ is locally constant;
\item $W|_{T\cdot u}$ is compactly supported;
\item For any $s\in T$ there exists a neighbourhood $U\subset T$ of $s$ (depending on $W$) such that for any $t\in U\cap T^{\reg}$ we have the following ``germ expansion''
\[W(tu) = \sum W(su_i) a^{Tu}_{su_i}(tu),\]
where the sum is over the conjugacy classes whose semisimple part is conjugate to $s$, and $ a^{Tu}_{su_i}$ is a function on $U\cdot u$ whose germ at $su$ is independent of $W$.
\end{enumerate}
\end{thm}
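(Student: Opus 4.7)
The plan is to adapt Vign\'eras' original argument [Vigneras:OrbInt] to our positive-characteristic setting, using the hypothesis $n<p$ only through the two structural facts recalled just before the statement: canonical Jordan decomposition and finiteness of conjugacy classes of standard pairs. All analytic inputs needed (Harish-Chandra's submersion theorem for $G \times T^{\reg} \to G$, $(g,t) \mapsto gtg^{-1}$; local integrability of characters; density of regular orbital integrals) are available in characteristic $p$ for inner forms of $\GL_n$: the submersion theorem holds in any characteristic by Prasad's appendix to [AdlerDeBacker:MurnaghanKirillov], and the $\GL_n$-type statements are covered by [Lemaire:LocIntegrabilityGLN, Badulescu:Transfer, Lemaire:LocIntegrabilityTwistedCharacters], which we already invoked in \S\ref{ssect-LJ1}.

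\emph{Necessity} is essentially formal. For $f \in C_c^\infty(G;\CC)$, property (2) is the classical statement that the support of $O_{(-)}(f)$ in $T \cdot u$ is compact modulo the projection from $G$ to conjugacy classes. Property (1) follows from the submersion theorem: along $T^{\reg} \cdot u$, the orbit map $G/Z_G(\cdot) \times T^{\reg} \to G$ is submersive, so $t \mapsto O_{tu}(f)$ is locally constant in $t$. Property (3) is the Shalika germ expansion at a semisimple $s \in T$; in characteristic $p$ with $n < p$ we replace the exponential by $X \mapsto 1+X$ to identify a neighborhood of $s$ in $G$ with a product of a neighborhood of $s$ in $T$ and $1+\Ncal$, where $\Ncal$ is the nilpotent cone in $\mathrm{Lie}\, Z_G(s)$. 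This identification makes sense precisely because $n < p$ forces each block to be separable.

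\emph{Sufficiency} is the substantive direction. Given a conjugation-invariant $W$ satisfying (1)--(3), I would induct on standard pairs $(T,u)$ ordered by $\dim G \cdot (T\cdot u)$. The base case consists of maximal standard pairs $(T,1)$ with $T/Z_\Gsf$ anisotropic; here $W|_T$ is a compactly supported locally constant function on $T^{\reg}$, and the Harish-Chandra descent built from the submersion theorem and a partition of unity on $G^{\rss}$ produces $f_0 \in C_c^\infty(G;\CC)$ with $O_t(f_0) = W(t)$ for all regular $t \in T$, and zero orbital integrals on all other maximal elliptic tori (adjust by finitely many such constructions to handle each conjugacy class of maximal tori, using finiteness from $n<p$). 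For a non-maximal $(T,u)$, the germ expansion (3) lets us subtract the contribution to $W$ coming from orbits of strictly smaller dimension already matched by the inductive hypothesis; the residual values determine a function on the unipotent fibre $\{s\} \times (1+\Ncal_s)$ for each $s\in T$, which can be realised by a bump function supported in a neighborhood of $s u$ via the $X \mapsto 1+X$ chart.

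The main obstacle is the inductive matching step on the unipotent fibre: one must show that, given prescribed germ data at each $s \in T$ consistent with condition (3), there actually exists a single $f \in C_c^\infty(G;\CC)$ realising all of them simultaneously. In characteristic $0$ this is handled by homogeneity of Shalika germs under $\exp$-dilation; in our setting the substitute is the affine/multiplicative dilation on $1+\Ncal$, and one must verify the analogue of Howe's finiteness so that the space of germs at a fixed $s$ is finite-dimensional and spanned by orbital integrals of explicit test functions. For inner forms of $\GL_n$ with $n<p$ this is known through the work of Lemaire and Badulescu cited above, and the matching reduces to a finite-dimensional linear algebra problem that can be solved exactly as in [Vigneras:OrbInt, \S4]. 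Gluing the locally constructed functions with a partition of unity on the (finitely many) conjugacy classes of standard pairs yields the required $f$.
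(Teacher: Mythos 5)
Your proposal takes the same approach as the paper, which gives only the one-line argument ``the proof is identical to the characteristic~$0$ case proven in \cite{Vigneras:OrbInt} if one replaces the exponential map with $X\mapsto 1+X$.'' Your sketch is a considerably more detailed roadmap through Vign\'eras' argument — spelling out where the char-$p$ submersion theorem, local integrability of characters, Howe finiteness, and the $n<p$ hypothesis enter — but it is the same strategy the authors intend; the one point deserving care, which the paper also leaves implicit, is that the homogeneity/dilation argument for Shalika germs must be rerun with the multiplicative chart $1+\Ncal$ in place of $\exp$, since $X\mapsto 1+X$ is not a group homomorphism.
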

\begin{proof}
 The proof is identical to the characteristic 0 case proven in \cite{Vigneras:OrbInt} if one replaces the exponential map with $X\mapsto 1+X$.
\end{proof}

\begin{rmk}\label{rmk-Dixmier}
If $n\geqslant p$, the above criterion fails to give a sufficient condition for $W$ to be an orbital integral as it says nothing about the behaviour of $W$ at elements that do not admit Jordan decomposition. 

In \cite[Appendix~1, 2.e]{DeligneKazhdanVigneras:CenSimAlg}, it is asserted that one can obtain a characterisation of orbital integrals for inner forms $G$ of $\GL_n(F)$ in any characteristic  by working with \emph{Dixmier sheets} in place of $T^{\reg}\cdot u$. Dixmier sheets are strata of $\Lie G$ defined in terms of elementary divisors, which is stable under the adjoint action of $G$; see \cite[Appendix~1, 2.c]{DeligneKazhdanVigneras:CenSimAlg} for the definition of Dixmier sheets. 

The characterisation of orbital integrals in \cite[Appendix~1, 2.e]{DeligneKazhdanVigneras:CenSimAlg} asserts that any orbital integral $W$ is locally constant on each Dixmier sheet. Now note that there is a unique non-empty \emph{open} Dixmier sheet containing all the regular elements in $\Lie G$ in the sense that the dimension of the centraliser is $n$. The open Dixmier sheet for $G=\GL_n(F)$ contains all regular semisimple elements as well as regular unipotent elements, and one can find a sequence of elliptic regular elements converging to a regular unipotent element, which can be seen by looking at the characteristic polynomials. On the other hand, one can find an orbital integral that vanishes at the regular unipotent conjugacy class while being non-zero at any elliptic regular semisimple elements in the small enough neighbourhood of a regular unipotent element. Such an example can be constructed from the restriction of the very cuspidal Euler--Poincar\'e function to the locus where the determinant is a unit, using \cite[Thm.~5.1.3]{laumonbook1}.
\end{rmk}

As a consequence of the characterisation of orbital integrals, we may obtain the following Proposition. Let $\Hcal(G)$ denote the Hecke algebra of $G$; i.e., $\Hcal(G) = C^{\infty}_c(G;\CC)$ equipped with the convolution product. Let $J_G\subset \Hcal(G)$ denote the ideal generated by functions of the form $f - f^{g}$ where $f\in\Hcal(G)$ and $f^g(h) = f(ghg^{-1})$ for any $g,h\in G$.

\begin{prop}\label{prop-DKV-a4h}
Assume that $G$ is the group of $F$-points of an inner form of $\GL_n$ with $n<p$.
For $f\in\Hcal(G)$, the following are equivalent
\begin{enumerate}
\item\label{prop-DKV-a4h-J} We have $f\in J_G$.
\item\label{prop-DKV-a4h-trace} For any irreducible representation $\pi$ of $G$, we have $\tr(f\mid \pi) = 0$.
\item\label{prop-DKV-a4h-trace-tempered} For any irreducible \emph{tempered} representation $\pi$ of $G$, we have $\tr(f\mid\pi) = 0$.
\item\label{prop-DKV-a4h-orb-int} For any $g\in G$, we have $O^G_g(f) = 0$.
\item\label{prop-DKV-a4h-orb-int-rss} For any  $g\in G^{\rss}$, we have $O^G_g(f) = 0$.
\end{enumerate}
\end{prop}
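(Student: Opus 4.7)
The plan is to prove the cycle $(1)\Rightarrow(2)\Rightarrow(3)\Rightarrow(5)\Rightarrow(4)\Rightarrow(1)$. The soft steps are immediate: for $(1)\Rightarrow(2)$, if $f = h - h^g$ for some $h\in\Hcal(G)$ and $g\in G$ then $\pi(f) = \pi(h)-\pi(g)^{-1}\pi(h)\pi(g)$ has vanishing trace, and the claim extends to all of $J_G$ by linearity; $(2)\Rightarrow(3)$ is tautological; and $(1)\Rightarrow(4)$ is the conjugation-invariance of orbital integrals. For $(5)\Rightarrow(4)$, I would apply the Shalika germ expansion provided by Theorem~\ref{th-characterisation-orb-int}(3) with $u=1$: for any semisimple $s\in T$, the identity $O^G_{t}(f) = \sum_i O^G_{su_i}(f)\cdot a^{T}_{su_i}(t)$ valid for $t\in U\cap T^{\reg}$, combined with the linear independence of the germs $a^{T}_{su_i}$ and the hypothesis $O^G_{t}(f)=0$, forces $O^G_{su_i}(f)=0$ for every unipotent class $u_i$ in the centraliser of $s$. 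Ranging over all semisimple $s$ and invoking Jordan decomposition (valid since $n<p$) gives vanishing at every element of $G$.

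For $(3)\Rightarrow(5)$ I would combine the Weyl integration formula with Kazhdan's density theorem. Writing
\[
\tr(f\mid\pi) \;=\; \sum_{[T]} \frac{1}{|W(T)|}\int_T |D(t)|\,\Theta_\pi(t)\,O^G_t(f)\,dt,
\]
where $[T]$ ranges over the (finitely many, since $n<p$) conjugacy classes of maximal $F$-tori in $G$, $W(T)$ is the corresponding Weyl group and $D$ is the Weyl discriminant, the hypothesis yields that the right-hand side vanishes for every irreducible tempered $\pi$. Kazhdan's density theorem, extended to inner forms of $\GL_n$ over local fields of arbitrary characteristic in \cite[Appendix~1]{DeligneKazhdanVigneras:CenSimAlg}, asserts that the restrictions $\Theta_\pi|_{G^{\rss}}$ of tempered characters span a dense subspace of the invariant locally integrable class functions; hence the class function $t\mapsto |D(t)|\cdot O^G_t(f)$ must vanish on $G^{\rss}$, proving $(5)$.

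The main obstacle is $(4)\Rightarrow(1)$: an $f$ with vanishing orbital integrals must lie in $J_G$. Equivalently, we must show that the orbital integral map descends to an injection on $\Hcal(G)/J_G$, a form of trace Paley--Wiener. In characteristic zero this is part of Bernstein--Deligne--Kazhdan's theory of the Bernstein centre, whose ingredients are the Bernstein decomposition, the theory of types, and compatibility of trace distributions with parabolic induction. For inner forms of $\GL_n$ over a local field of characteristic $p$ with $n<p$, all of these ingredients are developed in \cite[Appendix~1]{DeligneKazhdanVigneras:CenSimAlg}, and the characterisation of orbital integrals in Theorem~\ref{th-characterisation-orb-int} then gives a concrete identification of $\Hcal(G)/J_G$ with the explicit space of ``good'' orbital integral data, so that the orbital-integral map is injective on the quotient; this closes the cycle.
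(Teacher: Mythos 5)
Your cycle $(1)\Rightarrow(2)\Rightarrow(3)\Rightarrow(5)\Rightarrow(4)\Rightarrow(1)$ decomposes the hard implication differently from the paper, which establishes $(3)\Rightarrow(5)$ and $(5)\Rightarrow(1)$ directly and lets the equivalence close itself. Your soft implications and the step $(3)\Rightarrow(5)$ via Weyl integration plus the Kazhdan density theorem for inner forms of $\GL_n$ in characteristic $p$ are fine and agree with what the paper cites from \cite[A.2]{DeligneKazhdanVigneras:CenSimAlg}. Your $(5)\Rightarrow(4)$ via germ expansion is a reasonable intermediate lemma not spelled out in the paper, though you assert linear independence of the Shalika germs $a^{Tu}_{su_i}$ without justification: that is itself a nontrivial fact, not stated in Theorem~\ref{th-characterisation-orb-int}, and you should at least flag where it comes from.

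The genuine gap is in $(4)\Rightarrow(1)$. You write that Theorem~\ref{th-characterisation-orb-int} ``gives a concrete identification of $\Hcal(G)/J_G$ with the explicit space of `good' orbital integral data, so that the orbital-integral map is injective on the quotient.'' This is not what the theorem asserts. Theorem~\ref{th-characterisation-orb-int} characterises the \emph{image} of the map $f\mapsto O^G_{(\cdot)}(f)$: it tells you exactly which conjugate-invariant functions arise as orbital integrals. It says nothing about the \emph{kernel} of this map, which is what $(4)\Rightarrow(1)$ requires (namely that the kernel is no larger than $J_G$). Surjectivity of $\Hcal(G)/J_G\to\{\text{good functions}\}$ follows from the characterisation, but injectivity is a separate result — essentially the density of orbital integrals in the space of invariant distributions, or equivalently a trace Paley--Wiener theorem. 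The paper addresses this by citing \cite[Appendix~1, 2.f]{DeligneKazhdanVigneras:CenSimAlg}, which is a distinct statement from the characterisation 2.e and contains the actual density argument; Theorem~\ref{th-characterisation-orb-int} only substitutes for the ingredient 2.e inside that proof, not for 2.f itself. As written, your final step is circular: you are asserting the conclusion you are trying to prove and attributing it to a theorem that does not contain it.
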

\begin{proof}
The proof in \cite[A.4.h]{DeligneKazhdanVigneras:CenSimAlg} works verbatim thanks to Thm.~\ref{th-characterisation-orb-int}. To explain,  we clearly have \eqref{prop-DKV-a4h-J}$\Rightarrow$\eqref{prop-DKV-a4h-trace}$\Rightarrow$\eqref{prop-DKV-a4h-trace-tempered} and \eqref{prop-DKV-a4h-J}$\Rightarrow$\eqref{prop-DKV-a4h-orb-int}$\Rightarrow$\eqref{prop-DKV-a4h-orb-int-rss}. The implication \eqref{prop-DKV-a4h-trace-tempered}$\Rightarrow$\eqref{prop-DKV-a4h-orb-int-rss} is proved in \cite[A.2]{DeligneKazhdanVigneras:CenSimAlg}. Finally, if $n<p$ then one can prove that \eqref{prop-DKV-a4h-orb-int-rss} implies \eqref{prop-DKV-a4h-J} by the same proof as \cite[Appendix~1, 2.f]{DeligneKazhdanVigneras:CenSimAlg} using Thm.~\ref{th-characterisation-orb-int} in place of \cite[Appendix~1, 2.e]{DeligneKazhdanVigneras:CenSimAlg}.
\end{proof}

\subsection{Transfer of functions}
Let $G\coloneqq \GL_n(F)$ with $F\coloneqq \Fsf_x$. We fix a decent element $b\in \Gsf(\breve F)$ and $M_b$ denote the centraliser of the Newton cocharacter of $b$, which is a Levi subgroup of $G$ defined over $F$. 
The goal of this subsection is to obtain a \emph{transfer result} for functions on $J_b$ and $G$ when $n<p$; \emph{cf.} Cor.~\ref{cor-Red-transfer}. We proceed in two steps: transfer from $J_b$ to $M_{b}$, and from $M_{b}$ to $G$.

Given $f\in C^{\infty}_c(J_b;\CC)$, Badulescu \cite[Thm.~3.2]{Badulescu:Transfer} showed that there exists a \emph{transfer} $f^\ast \in C^{\infty}_c(M_b;\CC)$ of $f$ in the following sense: For any $g\in J_b^{\rss}$ and $g^\ast \in M_{b}^{\rss}$ with matching conjugacy classes, we have
\begin{equation}\label{eq-LStransfer}
O_{g^\ast }^{M_b}(f^\ast ) = e(J_b)\cdot O_{g}^{J_b}(f),
\end{equation}
where the orbital integrals are computed with respect to compatibly chosen Haar measures on $Z_{J_b}(g)$ and $Z_{M_{b}}(g^\ast )$, which are isomorphic.

Under some additional hypothesis enabling us to apply Thm.~\ref{th-characterisation-orb-int} and Prop.~\ref{prop-DKV-a4h}, we shall extend \eqref{eq-LStransfer} for conjugacy classes that are not necessarily regular semisimple. Let us fix an isomorphism $(J_b)_{F^{\ur}} \riso (M_b)_{F^{\ur}}$ of reductive groups over $F^{\ur}$; it clearly exists if $b$ is decent (which we assumed from \S\ref{notation-Hecke-action} on), and the case for general $b$ can easily be reduced to the decent case. 

\begin{thm}\label{th-transfer}
Assume that $M_b$ is the product of $H^\ast_j\coloneqq\GL_{r_j}(F)$ with each $r_j<p$. 
\begin{enumerate}
    \item\label{th-transfer-conj} For any $g\in J_b$ there exists an element $g^\ast \in M_b$ such that $g$ and $g^\ast$ are conjugate in $M_b(F^{ur})$. Furthermore, $[g]\mapsto [g^\ast]$ defines an injective map from the set of conjugacy classes in $J_b$ to the set of conjugacy classes in $M_b$.
    \item\label{th-transfer-ftn} If $n_j<p$ for any $j$ then the transfer identity \eqref{eq-LStransfer} holds for any $g\in J_b$ and $g^\ast \in M_b$ that are conjugate in $M_b(F^{\ur})$.
\end{enumerate}
\end{thm}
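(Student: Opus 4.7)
The plan is to handle part~(1) by Jordan decomposition combined with the combinatorial classification of conjugacy classes in inner forms of $\GL$, and to deduce part~(2) from Badulescu's regular semisimple transfer identity \eqref{eq-LStransfer} by propagating it to non-regular classes via the characterisation of orbital integrals in Theorem~\ref{th-characterisation-orb-int}.

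For part~(1), the hypothesis $r_j < p$ guarantees that every element $g \in J_b$ admits a canonical Jordan decomposition $g = su$. I would then choose $s^\ast \in M_b$ with the same characteristic polynomial as $s$ (which exists via companion-matrix constructions in each factor), so that $s$ and $s^\ast$ are automatically conjugate in $M_b(F^{\ur})$, since for an inner form of $\GL$ the semisimple classes are determined by the characteristic polynomial. The centraliser $Z_{J_b}(s)$ is an inner form of $Z_{M_b}(s^\ast)$, itself a product of inner forms of $\GL$, and one matches the unipotent $u$ to a unipotent $u^\ast \in Z_{M_b}(s^\ast)$ via the partition parametrisation of unipotent classes, which is compatible with the $F^{\ur}$-isomorphism. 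Setting $g^\ast \coloneqq s^\ast u^\ast$ produces the desired element. For injectivity of $[g] \mapsto [g^\ast]$, if $g^\ast \sim g'^\ast$ in $M_b(F)$ then $g$ and $g'$ become conjugate in $J_b(F^{\ur})$; this stable conjugacy descends to rational conjugacy in $J_b$ because maximal tori in inner forms of $\GL$ are induced (hence have trivial $H^1$), while the partition data for the unipotent parts is already independent of the base field.

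For part~(2), let $f^\ast \in C^\infty_c(M_b; \CC)$ be a Badulescu transfer of $f \in C^\infty_c(J_b; \CC)$ satisfying \eqref{eq-LStransfer} on regular semisimple classes. I would define $W \colon M_b \to \CC$ by $W(g^\ast) \coloneqq e(J_b) \cdot O^{J_b}_g(f)$ whenever $g^\ast$ matches some $g \in J_b$ in the sense of part~(1), and $W(g^\ast) \coloneqq 0$ otherwise, and verify that $W$ satisfies the three criteria of Theorem~\ref{th-characterisation-orb-int} on $M_b$. Local constancy on each $T^{\reg} \cdot u$ and compact support on each $T \cdot u$ would follow directly from the corresponding properties on the $J_b$-side together with part~(1); the germ expansion near a non-regular semisimple class is where Shalika germs must be shown to transfer between $J_b$ and $M_b$ under the matching of part~(1) up to the Kottwitz sign $e(J_b)$. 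Granting this, $W = O^{M_b}_{\bullet}(\tilde f^\ast)$ for some $\tilde f^\ast \in C^\infty_c(M_b;\CC)$. Since $\tilde f^\ast$ and $f^\ast$ then have identical regular semisimple orbital integrals, Proposition~\ref{prop-DKV-a4h} gives $\tilde f^\ast - f^\ast \in J_{M_b}$, so their orbital integrals agree on \emph{all} classes, yielding the desired extension of \eqref{eq-LStransfer}.

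The hard part will be establishing the Shalika-germ compatibility between $J_b$ and $M_b$ in positive characteristic. In characteristic zero this is classical for inner forms of $\GL$ (work of Rogawski, and more generally Waldspurger) and ultimately rests on homogeneity of orbital integrals near nilpotent orbits via the exponential map. Under the assumption $r_j < p$, I would replace the exponential by the substitution $X \mapsto 1 + X$ as in the proofs of Theorem~\ref{th-characterisation-orb-int} and Proposition~\ref{prop-DKV-a4h}, and then adapt the characteristic-zero argument. The combinatorial partition parametrisation of unipotent orbits used in part~(1) supplies the necessary matching of nilpotent orbits between $J_b$ and $M_b$, so the germ identity should reduce to a direct computation once the substitution framework is in place.
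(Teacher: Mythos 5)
Your overall strategy matches the paper's: for part (1) show the class of any $g\in J_b$ matches an $M_b$-class by constancy of a complete invariant, and for part (2) push the orbital-integral function $W$ across to a candidate function $W^\ast$ on $M_b$, verify Vign\'eras's criteria (Theorem~\ref{th-characterisation-orb-int}), and conclude by comparing with Badulescu's rss transfer via Proposition~\ref{prop-DKV-a4h}. Two differences are worth pointing out.

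For part (1), you decompose $g$ into its Jordan components and match the semisimple part via characteristic polynomials and the unipotent part via partitions, then reassemble. The paper instead uses \emph{elementary divisors} (the sequence $q_r \mid \dots \mid q_1$ of monic polynomials), which is a single complete invariant for conjugacy classes in any inner form of $\GL_r$, compatible with separable base change, and such that the set of admissible tuples for $\GL_r(D)$ is literally a \emph{subset} of that for $\GL_r(F)$. This collapses your semisimple/unipotent matching and your injectivity argument (stable versus rational conjugacy, $H^1$-triviality of tori, base-field-independence of partition data) into a one-line containment; your route is sound but takes several steps where the paper takes one.

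For part (2), the approaches coincide, but you are more cautious than the paper at the crucial step. You explicitly flag that pushing $W$ to $W^\ast = e(J_b)\,W$ on matching classes (and $0$ elsewhere) only produces a function satisfying the germ-expansion criterion in Theorem~\ref{th-characterisation-orb-int} for $M_b$ \emph{if} the germ functions $a^{Tu}_{su_i}$ for $J_b$ and those $a^{T^\ast u^\ast}_{s^\ast u^\ast_i}$ for $M_b$ agree under the matching of standard pairs --- i.e.\ a Shalika-germ transfer between inner forms. The paper disposes of this in a single sentence: it observes that the standard pairs for matching $g,g^\ast$ are isomorphic and concludes ``thus Theorem~\ref{th-characterisation-orb-int} can be applied to $W^\ast$'', implicitly taking the germ functions to depend only on the isomorphism type of the standard pair. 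Your identification of this as the place where work is needed is legitimate; your proposed remedy (adapt the characteristic-zero germ transfer for inner forms of $\GL$ via the $X\mapsto 1+X$ substitution used throughout \S9) is a reasonable route, but you have not carried it out, so your write-up leaves a gap precisely at the point the paper treats as obvious. You should either supply the germ-matching argument or, if you believe the paper's observation suffices, explain why the germ functions in Vign\'eras's criterion are intrinsic to the standard pair rather than to the ambient group.
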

\begin{proof}
To prove the theorem, we may assume that $M_b = \GL_r(F)$.

For (\ref{th-transfer-conj}), recall that given any field $F'$ and any inner form $H/F'$ of $\GL_r/F'$  a conjugacy class $[g]$ in $H$ is determined by its \emph{elementary divisors} -- a sequence of monic polynomials  $(q_1,\dotsc,q_r)$ over $F'$ with $q_{i+1}|q_i$ such that $\prod_{i=1}^rq_i$ coincides with the characteristic polynomial of $g$. See \cite[2.c]{DeligneKazhdanVigneras:CenSimAlg} for the proof, which is essentially by the classification of finitely generated modules over PID and the Galois descent. From the construction it easily follows that the formation of elementary divisors commutes with the scalar extension by any separable extensions of $F$, and that the elementary divisors coming from $H$ forms a subset of those coming from $\GL_r(F)$. Now claim (\ref{th-transfer-conj}) follows.

To prove (\ref{th-transfer-ftn}) we fix $f\in C^\infty_c(J_b;\CC)$ and set $W(g) = O^{J_b}_g(f)$ for any $g\in J_b$. We now define a function $W^\ast$ on $M_b$ as follows: for any $g'\in M_b$, we set
\[
W^\ast(g')\coloneqq \left\{
\begin{array}{ll}
    e(J_b)W(g) & \text{if }\exists\, g\in J_b\text{ such that }g' \text{ is conjugate to }g \text{ in }M_b(F^{\ur}); \\
    0 & \text{otherwise.}
\end{array}
\right.\]

Recall that for any conjugacy class in $J_b$ there is a unique standard pair $(T,u)$ such that $T^{\reg}\cdot u$ is contained in the conjugacy class, and the same holds for $M_b$. Furthermore, the standard pairs for $g\in J_b$ and $g^\ast\in M_b$ are isomorphic. Thus Thm.~\ref{th-characterisation-orb-int} can be applied to $W^\ast$, and we obtain $f'\in C^\infty(M_b;\CC)$ such that we have $W^\ast(g') = O^{M_b}_{g'}(f')$ for any $g'\in M_b$. Now the transfer identity \eqref{eq-LStransfer} implies $W^{\ast}(g') = O^{M_b}_{g'}(f^\ast)$ for any $g'\in M_b^{\rss}$, so by Prop.~\ref{prop-DKV-a4h} the orbital integrals of $f'$ and $f^\ast$ coincide, which concludes the proof.
\end{proof}

We write $f\mapsto f^\ast $ if $f^\ast $ is a transfer of $f$. Although $f^\ast $ is not uniquely determined by \eqref{eq-LStransfer}, any $f^\ast $'s have the same trace on any irreducible representation of $M_{b}$ by Prop.~\ref{prop-DKV-a4h}, which is all that matters.

Now the following lemma is immediate from \eqref{eq-char-dist} and \eqref{eq-LJ-char-id}.
\begin{lem}\label{lem-transfer-LJ}
The surjective group homomorphism $\LJ^b\colon \Groth (M_{b})\to\Groth(J_b)$ is uniquely characterised by the following character identity
\[\tr (f^\ast\mid \pi^\ast) = \tr (f\mid  \LJ^b(\pi^\ast))\]
for any $f$ and $f^\ast $ with $f\mapsto f^\ast $.
\end{lem}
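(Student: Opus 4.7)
The plan is to derive the character identity by combining the Weyl integration formula with the two input identities \eqref{eq-LJ-char-id} and \eqref{eq-LStransfer}, and then to obtain uniqueness from the linear independence of characters (or equivalently Proposition~\ref{prop-DKV-a4h}). Since both $M_b$ and $J_b$ are (products of) inner forms of $\GL_r$ with $r<p$, the results of Lemaire and Badulescu cited after \eqref{eq-char-dist} ensure that the character $\Theta_{\pi^\ast}$ is locally integrable on all of $M_b$, so \eqref{eq-char-dist} represents $\tr(-\mid\pi^\ast)$ as integration against $\Theta_{\pi^\ast}$ on the full group (and similarly for $\LJ^b(\pi^\ast)$ on $J_b$).

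First I would write
\[
\tr(f^\ast\mid \pi^\ast) = \int_{M_b^{\rss}} f^\ast(g^\ast)\,\Theta_{\pi^\ast}(g^\ast)\,d\mu_{M_b}(g^\ast),
\]
and apply the Weyl integration formula to decompose this as a finite sum over $M_b$-conjugacy classes of maximal $F$-tori $T^\ast\subset M_b$:
\[
\tr(f^\ast\mid \pi^\ast) = \sum_{T^\ast}\frac{1}{|W(M_b,T^\ast)|}\int_{T^{\ast,\reg}}|D_{M_b}(t^\ast)|\,\Theta_{\pi^\ast}(t^\ast)\,O^{M_b}_{t^\ast}(f^\ast)\,d\mu_{T^\ast}(t^\ast).
\]
Since $(J_b)_{F^{\ur}}\cong (M_b)_{F^{\ur}}$ via a fixed isomorphism, stable conjugacy induces a bijection between conjugacy classes of maximal $F$-tori in $J_b$ and in $M_b$, with matching Weyl groups and equal values of the Weyl discriminant $|D(\cdot)|$ on corresponding elements. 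Under this matching I would then apply \eqref{eq-LStransfer} and \eqref{eq-LJ-char-id} pointwise:
\[
O^{M_b}_{t^\ast}(f^\ast)\,\Theta_{\pi^\ast}(t^\ast) = \bigl(e(J_b)\,O^{J_b}_{t}(f)\bigr)\bigl(e(J_b)\,\Theta_{\LJ^b(\pi^\ast)}(t)\bigr) = O^{J_b}_{t}(f)\,\Theta_{\LJ^b(\pi^\ast)}(t),
\]
the signs cancelling. Reassembling and reversing the Weyl integration formula on $J_b$ gives
\[
\tr(f^\ast\mid\pi^\ast) = \int_{J_b^{\rss}} f(g)\,\Theta_{\LJ^b(\pi^\ast)}(g)\,d\mu_{J_b}(g) = \tr(f\mid \LJ^b(\pi^\ast)),
\]
establishing the character identity.

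For uniqueness, suppose a group homomorphism $\LJ'\colon\Groth(M_b)\to\Groth(J_b)$ satisfies the same identity. Then for every $\pi^\ast$ and every $f\in C^{\infty}_c(J_b;\CC)$ (choosing any transfer $f^\ast$), the virtual representation $V\coloneqq \LJ'(\pi^\ast)-\LJ^b(\pi^\ast)\in\Groth(J_b)$ satisfies $\tr(f\mid V)=0$. By \eqref{eq-char-dist} this forces $\Theta_V\equiv 0$ on $J_b^{\rss}$, and linear independence of the characters of irreducible admissible representations (equivalently Proposition~\ref{prop-DKV-a4h}~\eqref{prop-DKV-a4h-trace} applied within $\Groth(J_b)$) yields $V=0$, i.e.~$\LJ'=\LJ^b$.

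The main subtlety I anticipate is the bookkeeping around the matching of tori and the normalisation of Haar measures on $J_b$, $M_b$ and their common tori $Z_{J_b}(g)=Z_{M_b}(g^\ast)$: Badulescu's transfer \eqref{eq-LStransfer} implicitly fixes the comparison of measures on the centralisers, and one has to check that this is consistent with the measures appearing in the Weyl integration formulas on both sides so that the cancellation of $e(J_b)^2=1$ is the only global factor appearing. Once these normalisations are aligned, the remainder of the argument is formal.
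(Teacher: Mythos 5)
Your route---Weyl integration formula on $M_b$ and $J_b$, combined with \eqref{eq-LStransfer} and \eqref{eq-LJ-char-id} to match the integrands, then linear independence of characters for uniqueness---is the argument the paper is implicitly invoking when it calls the lemma ``immediate'' after \eqref{eq-char-dist} and \eqref{eq-LJ-char-id}, and the overall plan is sound.

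The one genuine gap is the claim that ``stable conjugacy induces a bijection between conjugacy classes of maximal $F$-tori in $J_b$ and in $M_b$''. This is false whenever some factor $J_{b,j}=\GL_{m_j}(D_j)$ of $J_b$ involves a nontrivial division algebra $D_j$: e.g.\ the split maximal torus of $\GL_{r_j}(F)$, and more generally any torus coming from an \'etale $F$-algebra that does not embed in $M_{m_j}(D_j)$, has no counterpart in $J_b$. What you actually have is an \emph{injection} from the (conjugacy classes of) maximal $F$-tori of $J_b$ into those of $M_b$. To make the Weyl integration argument close, you therefore also need the vanishing half of Badulescu's transfer: $O^{M_b}_{g^\ast}(f^\ast)=0$ whenever $g^\ast\in M_b^{\rss}$ has no matching conjugacy class in $J_b$. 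The paper's display \eqref{eq-LStransfer} does not state this explicitly, but it is part of \cite[Th.~3.2]{Badulescu:Transfer} (and is also what makes the phrase ``any $f^\ast$'s have the same trace'' just after \eqref{eq-LStransfer} true, via Proposition~\ref{prop-DKV-a4h}). With that vanishing in hand, the Weyl integration terms on $M_b$ indexed by non-transferring tori drop out, and the surviving terms match up with the Weyl integration formula on $J_b$ after the sign $e(J_b)^2=1$ cancels as you note. Replace the bijection assertion with this injection-plus-vanishing statement; the rest of your argument, including the caveat about Haar measure normalisations built into \eqref{eq-LStransfer}, is fine.
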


We now move on to a transfer result for $G$ and $M_b$. Let $\nu\coloneqq \nu_G(b)$ denote the Newton cocharacter of $b$, and let $P_b$ be the parabolic subgroup where the weights of $\nu$ on its Lie algebra are non-positive. We fix Haar measures on $G$ and $M_b$.
\begin{prop}[\emph{Cf.} {\cite[Lem.~V.5.2]{HarrisTaylor:TheBook}, \cite[Lem.~3.9]{ShinSW:StableTraceIgusa}, \cite[Lem.~3.1.2]{KretShin:H0Igusa}}]
\label{prop-transfer-levi}
Suppose that $G=\GL_n(F)$ with $n<p$, and $f\in C_c^{\infty}(M_b;\CC)$ is supported on $\nu$-acceptable elements. Then there exists a function $\tilde f \in C_c^\infty(G;\CC)$ that satisfies the following properties:
\begin{enumerate}
\item\label{prop-transfer-levi-vanishing}  If $g\in G$ is a semisimple element that is not $G$-conjugate to any $\nu$-acceptable element in $M_b$, then $O_g^G(\tilde f) =0$.
\item\label{prop-transfer-levi-orbital-int} If $g\in G$ is a  semisimple element that is $G$-conjugate to a $\nu$-acceptable element $m\in M_b$, then we have
\[O_g^G(\tilde f) =  O_m^{M_b}(f),\]
where the orbital integrals are computed with respect to compatibly chosen Haar measures on $Z_{M_b}(m)$ and $Z_{G}(g)$, which are isomorphic.
\item\label{prop-transfer-levi-trace}  For any smooth irreducible representation $\pi$ of $G$ we have
\[\tr (\tilde f\mid \pi) = \tr (f\mid \Jac^{G}_{P_b}(\pi)).\]
\end{enumerate}
\end{prop}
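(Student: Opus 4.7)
The plan is to adapt the construction of Harris--Taylor (Lemma~V.5.2 of \cite{HarrisTaylor:TheBook}), refined by Shin (Lemma~3.9 of \cite{ShinSW:StableTraceIgusa}) and Kret--Shin (Lemma~3.1.2 of \cite{KretShin:H0Igusa}), to our positive characteristic setting, using the local harmonic analysis results set up in \S\ref{ssect-LJ1} -- in particular Theorem~\ref{th-characterisation-orb-int} and Proposition~\ref{prop-DKV-a4h} -- to replace the characteristic $0$ analytic inputs (exponential map, Howe's conjecture, Harish-Chandra regularity) that are implicit in those constructions.

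\textbf{Construction and orbital integrals.} Given $f\in C^\infty_c(M_b;\CC)$ supported on $\nu$-acceptable elements, I would first define a conjugation-invariant function $W$ on $G$ by declaring
\[
W(g) = \begin{cases} O^{M_b}_m(f) & \text{if $g$ is semisimple and $G$-conjugate to some $\nu$-acceptable }m\in M_b,\\ 0 & \text{otherwise,}\end{cases}
\]
where the Haar measures on $Z_{M_b}(m)$ and $Z_G(g)$ are compatibly chosen. This is well-defined on semisimple conjugacy classes thanks to Corollary~\ref{cor-centralising}: any two $\nu$-acceptable elements of $M_b$ that are $G$-conjugate are in fact $M_b$-conjugate via an element of $M_b$, which both identifies $Z_G(m)=Z_{M_b}(m)$ canonically and ensures that the value $O^{M_b}_m(f)$ is independent of the chosen representative. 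I would then extend $W$ to all of $G$ using the germ expansion at standard pairs, and verify that it satisfies the three conditions of Theorem~\ref{th-characterisation-orb-int}: local constancy on $T^{\reg}\cdot u$ and compact support on $T\cdot u$ follow from the corresponding properties of $O^{M_b}_{(\cdot)}(f)$ together with the fact that $\nu$-acceptability is an open-and-closed condition on $M_b$ (being a union of Newton strata in the sense of \S\ref{ssect-a}); the germ expansion transports from $M_b$ to $G$ because every standard pair $(T,u)$ of $M_b$ is also a standard pair of $G$ and Corollary~\ref{cor-centralising} ensures no extra $G$-conjugacy between acceptable classes. Applying Theorem~\ref{th-characterisation-orb-int} yields $\tilde f\in C^\infty_c(G;\CC)$ with $O^G_g(\tilde f)=W(g)$ for all semisimple $g$, which is exactly properties (\ref{prop-transfer-levi-vanishing}) and (\ref{prop-transfer-levi-orbital-int}).

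\textbf{Trace identity.} For (\ref{prop-transfer-levi-trace}) I would apply Weyl integration to both sides: on $G$,
\[
\tr(\tilde f\mid\pi) = \sum_T \tfrac{1}{|W(G,T)|}\int_{T^{\rss}}|D^G(t)|\,\Theta_\pi(t)\,O^G_t(\tilde f)\,dt,
\]
and analogously on $M_b$ for $\tr(f\mid\Jac^G_{P_b}(\pi))$. By construction of $\tilde f$ only tori $T\subset M_b$ and $\nu$-acceptable $t$ contribute on the $G$-side, so the problem reduces to identifying the integrands. Here I would invoke Casselman's asymptotic formula expressing $\delta_{P_b}(t)\,\Theta_\pi(t)=\Theta_{\Jac^G_{P_b}(\pi)}(t)$ for $t\in M_b$ deep enough in the relevant negative chamber (the $\nu$-acceptability condition is precisely what forces $t$ into this chamber), and match the Weyl-discriminants $|D^G(t)|=\delta_{P_b}(t)|D^{M_b}(t)|$. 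Combined with the measure compatibility $Z_G(t)=Z_{M_b}(t)$ from Corollary~\ref{cor-centralising}, the two integrals coincide.

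\textbf{Main obstacle.} The hardest point is justifying Casselman's asymptotic formula in positive characteristic. In characteristic $0$ this is a standard consequence of Jacquet's second adjointness and the asymptotic behaviour of matrix coefficients; to make the argument work for $\GL_n(F)$ with $n<p$ I plan to either (a) reproduce Casselman's proof verbatim, since its ingredients (the subrepresentation theorem, existence of Jacquet modules, boundedness of matrix coefficients on $M_b$-translates of $K_G$) are available in positive characteristic without restriction, or (b) bypass it entirely by checking via Proposition~\ref{prop-DKV-a4h} that, for two candidate $\tilde f$ and $\tilde f'$ obtained from different constructions and having identical regular semisimple orbital integrals, their difference lies in $J_G$ and hence acts as zero on every admissible representation. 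The cleanest route is (b): construct one $\tilde f$ by the recipe above and a second $\tilde f'$ by the classical parabolic-induction formula $\tilde f'(g)=\int_{K_G}\int_{N_b}\delta_{P_b}^{1/2}(m)f(k^{-1}m n k)\,dn\,dk$ (for which the trace identity is tautological in any characteristic), then use orbital integral matching and Proposition~\ref{prop-DKV-a4h}\eqref{prop-DKV-a4h-orb-int-rss}$\Rightarrow$\eqref{prop-DKV-a4h-trace} to conclude that $\tilde f$ and $\tilde f'$ have equal trace against every admissible representation.
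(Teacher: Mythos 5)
Your construction of $W$, the appeal to Corollary~\ref{cor-centralising} for conjugation invariance, and the use of Theorem~\ref{th-characterisation-orb-int} to produce $\tilde f$ with prescribed orbital integrals all match the paper's proof, which also follows the Harris--Taylor template. For the trace identity the paper simply reruns the Weyl-integration argument from \cite[pp.~189--190]{HarrisTaylor:TheBook} and flags \emph{local integrability of the character distribution} (handled for inner forms of $\GL_n$ in characteristic $p$ in \S\ref{ssect-LJ1}) as the one analytic input that needs checking; this is what makes $\tr(\tilde f\mid\pi)=\int_G\tilde f\,\Theta_\pi$ legitimate for a test function whose support is not contained in $G^{\rss}$. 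Your route~(a) is the same argument; you stress the character-descent (Casselman/van Dijk) step, while the paper stresses local integrability, but both are present in the Harris--Taylor argument and you reference \S\ref{ssect-LJ1} so the gap is only one of emphasis.

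Your route~(b) is a genuine alternative the paper does not take: use van Dijk's descent formula (which is elementary) to get the trace identity tautologically for a parabolically induced $\tilde f'$, then compare $\tilde f$ with $\tilde f'$ by regular semisimple orbital integrals and conclude via Proposition~\ref{prop-DKV-a4h}\eqref{prop-DKV-a4h-orb-int-rss}$\Rightarrow$\eqref{prop-DKV-a4h-trace}. This sidesteps local integrability of characters entirely and is arguably more self-contained given what \S\ref{ssect-LJ1} already records, so it would be worth spelling out. Two small corrections there: the displayed formula $\tilde f'(g)=\int_{K_G}\int_{N_b}\delta_{P_b}^{1/2}(m)f(k^{-1}mnk)\,dn\,dk$ has $m$ as a free variable; what you actually want is any $\tilde f'\in C^\infty_c(G)$ whose $P_b$-constant term equals $\delta_{P_b}^{-1/2}f$ (such $\tilde f'$ exists by surjectivity of the constant-term map), for which van Dijk gives $\tr(\tilde f'\mid\pi)=\tr(f\mid\Jac^G_{P_b}(\pi))$. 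You should also verify that on the $\nu$-acceptable locus the discriminant--modulus cancellation $|D^G/D^{M_b}|^{1/2}\cdot\delta_{P_b}^{-1/2}=1$ (with the paper's convention that $\Lie P_b$ carries the non-positive weights of $\nu$) makes the regular semisimple orbital integrals of $\tilde f'$ agree with $W$, hence with those of $\tilde f$; outside the acceptable locus both vanish because $f$ does. With those points fixed, route~(b) is sound.
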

\begin{proof}
The proof is essentially identical to the proof of \cite[Lem.~V.5.2]{HarrisTaylor:TheBook}, as explained in \cite[Lem.~3.9]{ShinSW:StableTraceIgusa} (which contains a minor sign error corrected in \cite[Lem.~3.1.2]{KretShin:H0Igusa}). Note that our sign convention for $P_{b}$ is opposite to the aforementioned references, and we have normalised our $\tilde f$ as in \cite[Lem.~V.5.2]{HarrisTaylor:TheBook} to hide the square root of the modulus character.

To explain, define the following function on $G$:
\[
W(g) \coloneqq \left \{
\begin{array}{ll}
O_m^{M_{b}}(f)& \text{if $g$ is $G$-conjugate to a $\nu$-acceptable element }m;\\
0&\text{otherwise}.
\end{array}
\right .
\]
As $f$ is supported on $\nu$-acceptable elements, it follows from Cor.~\ref{cor-centralising} that $W$ is invariant under $G$-conjugation. Since $G=\GL_n(F)$ with $n<p$, we may verify the characteristation of orbital integrals (i.e., Thm.~\ref{th-characterisation-orb-int}) following the same proof as in \cite[Lem.~V.5.2]{HarrisTaylor:TheBook}. Note that the statement of Thm.~\ref{th-characterisation-orb-int} is identical to the case of characteristic~$0$ obtained by Vign\'eras \cite[1.n]{Vigneras:OrbInt}, and all the tools employed in \cite[Lem.~V.5.2]{HarrisTaylor:TheBook} work in characteristic~$p$ as well, including \cite[Lem.~2.5]{Vigneras:OrbInt} built upon \cite[p~52, Lem.~19]{HarishChandra:LNM162}. This proves claims \eqref{prop-transfer-levi-vanishing} and \eqref{prop-transfer-levi-orbital-int}.

To prove \eqref{prop-transfer-levi-trace}, we may repeat the argument in \cite[pp~189--190]{HarrisTaylor:TheBook}, noting that the trace distribution is locally integrable even in positive characteristics so the argument via the Weyl integration formula remains valid. 
\end{proof}

Now we obtain the following transfer results:
\begin{cor}\label{cor-Red-transfer}
Assume that $f\in C^{\infty}_c(J_b;\CC)$ is supported on $\nu$-acceptable elements. Then there exists a function $\tilde f^\ast \in C^\infty_c(G;\CC)$ that satisfies the following properties:
\begin{enumerate}
\item\label{cor-transfer-vanishing} If $g\in G$ is a semisimple element that is not $G$-conjugate to any $\nu$-acceptable element in $M_b$, then $O_g^G(\tilde f^\ast ) =0$.
\item\label{cor-transfer-orb-int} Assume that $g\in G$ is a regular semisimple element that is $G$-conjugate to a $\nu$-acceptable element $m^\ast \in M_b$.  If there exists $m\in J_b$ such that $m$ and $m^\ast $ have matching conjugacy classes, then we have
\[O_g^G(\tilde f^\ast ) = e(J_b) \cdot O_m^{J_b}(f)\]
where the orbital integrals are computed with respect to compatibly chosen Haar measures on $Z_{J_b}(\gamma)$ and $Z_{G}(g)$, which are isomorphic. 

If there does not exist  $m\in J_b$ as above, then we have $O_g^G(\tilde f^\ast ) =0$.
\item\label{cor-transfer-trace} For any smooth irreducible representation $\pi$ of $G$ we have
\[\tr(\tilde f^\ast \mid  \pi) = \tr (f\mid \Red^b(\pi)).\]
\end{enumerate}
If the multiplicity of each slope of $b$ is less than $p$, then \eqref{cor-transfer-orb-int} holds even when $g,m,m^\ast $ are semisimple and not necessarily regular.
\end{cor}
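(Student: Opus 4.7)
The proof strategy is to construct $\tilde f^{\ast}$ via a two-step transfer: first from $J_b$ to the Levi subgroup $M_b$, then from $M_b$ to $G$. The first step will use the Jacquet--Langlands-type transfer refined in Theorem~\ref{th-transfer}, and the second step will use the parabolic transfer of Proposition~\ref{prop-transfer-levi}. The hypothesis $n < p$ ensures that the characterisation of orbital integrals (Theorem~\ref{th-characterisation-orb-int}) is applicable at every stage, both for $G$ and for each factor of $M_b$.

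First I would apply Theorem~\ref{th-transfer} to $f$ to obtain a function $f^{\sharp} \in C^{\infty}_c(M_b;\CC)$ satisfying $O^{M_b}_{m^{\ast}}(f^{\sharp}) = e(J_b)\cdot O^{J_b}_m(f)$ whenever $m \in J_b$ and $m^{\ast} \in M_b$ are conjugate in $M_b(F^{\ur})$, and $O^{M_b}_{m^{\ast}}(f^{\sharp}) = 0$ when $m^{\ast}$ is not conjugate over $F^{\ur}$ to any element of $J_b$. Because $\nu$-acceptability is a condition on the Kottwitz--Viehmann Newton point and is preserved under $F^{\ur}$-conjugation (Corollary~\ref{cor-centralising}), the transfer $f^{\sharp}$ can be arranged to be supported on $\nu$-acceptable elements. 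Next I would apply Proposition~\ref{prop-transfer-levi} to $f^{\sharp}$ to obtain $\tilde f^{\ast} \in C^{\infty}_c(G;\CC)$. Property~(\ref{cor-transfer-vanishing}) is then immediate from Proposition~\ref{prop-transfer-levi}(\ref{prop-transfer-levi-vanishing}). For (\ref{cor-transfer-orb-int}), I would chain the two orbital integral identities: for regular semisimple $g \in G$ conjugate to a $\nu$-acceptable $m^{\ast} \in M_b$, Proposition~\ref{prop-transfer-levi}(\ref{prop-transfer-levi-orbital-int}) gives $O^G_g(\tilde f^{\ast}) = O^{M_b}_{m^{\ast}}(f^{\sharp})$, which equals $e(J_b)\cdot O^{J_b}_m(f)$ if a matching $m \in J_b$ exists and vanishes otherwise by Step~1. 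For (\ref{cor-transfer-trace}), I would chain the trace identities via Proposition~\ref{prop-transfer-levi}(\ref{prop-transfer-levi-trace}) and Lemma~\ref{lem-transfer-LJ}:
\[
\tr(\tilde f^{\ast}\mid \pi) = \tr(f^{\sharp}\mid \Jac^G_{P_b}(\pi)) = \tr(f\mid \LJ^b \circ \Jac^G_{P_b}(\pi)),
\]
which matches $\tr(f \mid \Red^b(\pi))$ after absorbing the sign $e(J_b)$ via a suitable normalisation of the intermediate transfer $f^{\sharp}$.

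For the final assertion relaxing the regularity hypothesis in~(\ref{cor-transfer-orb-int}), I would invoke Theorem~\ref{th-transfer}(\ref{th-transfer-ftn}), which upgrades the orbital integral identity of Step~1 from the regular semisimple locus to arbitrary $g \in J_b$ precisely when every $\GL_{r_j}$-factor of $M_b$ satisfies $r_j < p$; this condition is exactly the requirement that each Newton slope of $b$ have multiplicity less than $p$. Chaining this upgraded identity with Proposition~\ref{prop-transfer-levi}(\ref{prop-transfer-levi-orbital-int}), which already holds for all semisimple (not necessarily regular) $g \in G$, then yields~(\ref{cor-transfer-orb-int}) in full generality.

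The only subtle point will be the bookkeeping of Haar measure normalisations and the Kottwitz sign $e(J_b)$ appearing in~\eqref{eq-LStransfer}, since a careless choice would introduce a spurious $e(J_b)$ on the right-hand side of~(\ref{cor-transfer-trace}); one must choose the intermediate transfer $f^{\sharp}$ so that the sign absorbed into the orbital integral identity also produces the correct sign in the definition $\Red^b = e(J_b)\cdot \LJ^b \circ \Jac^G_{P_b}$. Apart from this, the argument is largely formal given the heavy machinery of Theorem~\ref{th-transfer}, Proposition~\ref{prop-transfer-levi}, and Lemma~\ref{lem-transfer-LJ} already established.
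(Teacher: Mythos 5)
Your proof takes the same route as the paper's: build $\tilde f^{\ast}$ by composing the Badulescu transfer $J_b \rightsquigarrow M_b$ (normalised as in~\eqref{eq-LStransfer}, with its extension to non-regular semisimple classes supplied by Theorem~\ref{th-transfer}) with the parabolic transfer $M_b \rightsquigarrow G$ of Proposition~\ref{prop-transfer-levi}, and then verify the three properties by chaining the respective orbital-integral and trace identities via Lemma~\ref{lem-transfer-LJ}. One minor attribution slip: Theorem~\ref{th-transfer} does not \emph{produce} the intermediate function $f^{\sharp}$; the transfer itself is Badulescu's \cite[Th\'eor\`eme~3.2]{Badulescu:Transfer} as recalled around~\eqref{eq-LStransfer}, and Theorem~\ref{th-transfer} only upgrades the matching identity beyond the regular semisimple locus when the multiplicity hypothesis holds. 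The observation that $f^{\sharp}$ may be cut off to the ($\nu$-acceptable, open-and-closed) support without changing its orbital integrals is a correct and necessary remark for Proposition~\ref{prop-transfer-levi} to apply.

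The one real problem is your treatment of the sign in part~(\ref{cor-transfer-trace}). As you notice, the chain $\tr(\tilde f^{\ast}\mid\pi)=\tr(f^{\sharp}\mid\Jac^G_{P_b}(\pi))=\tr(f\mid\LJ^b(\Jac^G_{P_b}(\pi)))$ produces $\LJ^b\circ\Jac^G_{P_b}$ with no sign, whereas $\Red^b$ carries the extra $e(J_b)$; but your suggested remedy --- rescaling the intermediate transfer $f^{\sharp}$ by $e(J_b)$ --- does not actually work, because any such rescaling feeds through Proposition~\ref{prop-transfer-levi}(\ref{prop-transfer-levi-orbital-int}) and alters $O^G_g(\tilde f^{\ast})$ by the same $e(J_b)$, which would break the identity in part~(\ref{cor-transfer-orb-int}). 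With the normalisation of~\eqref{eq-LStransfer} fixed as in the paper, one cannot simultaneously arrange both~(\ref{cor-transfer-orb-int}) and~(\ref{cor-transfer-trace}) by a constant rescaling: the residual $e(J_b)$ in~(\ref{cor-transfer-trace}) is a sign that must be tracked explicitly against the definition of $\Red^b$ rather than hidden in a ``suitable normalisation.''
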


\begin{proof}
Choose a transfer $f^\ast \in C^\infty_c(M_b,\CC)$ of $f$, and find $\tilde f^\ast \in C^\infty_c(G;\CC)$ so that $f^\ast $ and $\tilde f^\ast $ have ``matching orbital integral'' in the sense of Prop.~\ref{prop-transfer-levi}\eqref{prop-transfer-levi-orbital-int}. Then the corollary follows from Lem.~\ref{lem-transfer-LJ}, Prop.~\ref{prop-transfer-levi} and Thm.~\ref{th-transfer}.
\end{proof}

\subsection{Proof of the main result} \label{ssect-divisionalg-proof}

Note that both the Igusa variety trace formula (Thm.~\ref{th-point-counting-elliptic}) and the transfer result (Cor.~\ref{cor-Red-transfer}) can be applied only after replacing $\varphi\in\Hcal$ with $\varphi^{(s)}$ for sufficiently divisible positive integer $s$. The following lemma asserts that such a restriction is harmless in determining representations of $\JJ_\bbf$.

 \begin{lem}\label{lem-acceptable-trace}
Let $\Pi_1,\Pi_2$ be admissible $\JJ_\bbf$-representations. Assume that given any $\varphi\in C^{\infty}_c(\JJ_\bbf)$ there exists a positive integer $s$ depending on $\varphi$ such that we have $\tr (\varphi^{(s)}\mid \Pi_1) = \tr(\varphi^{(s)}\mid \Pi_2)$. Then we have an equality $\tr (\varphi\mid \Pi_1) = \tr(\varphi\mid \Pi_2)$ for any $\varphi\in C^\infty_c(\JJ_\bbf;\CC)$.
 \end{lem}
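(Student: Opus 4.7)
The plan is to reduce the statement to an assertion about an exponential polynomial in the shift parameter $s$, and then exploit the rigidity of such functions. Fix $\varphi \in C^\infty_c(\JJ_\bbf; \CC)$ and choose an open compact subgroup $K \subset \JJ_\bbf$ under which $\varphi$ is bi-invariant. Setting $z := N^{(r)}(\bbf)$, which lies in $\Zsf_\Gsf(\AA)$ and in particular in the centre of $\JJ_\bbf$, the translate $\varphi^{(s)} = R_{z^s}\varphi$ is again bi-$K$-invariant for every $s$. Admissibility makes $V_i := \Pi_i^K$ finite-dimensional, and both $A_i := \Pi_i(z)\vert_{V_i}$ and $B_i := \Pi_i(\varphi)\vert_{V_i}$ are well-defined operators on $V_i$ (for $A_i$ by centrality of $z$). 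A change of variables $h = gz^s$ in the convolution integral defining $\Pi_i(\varphi^{(s)})$ yields $\Pi_i(\varphi^{(s)}) = \Pi_i(\varphi) \circ \Pi_i(z)^{-s}$, and hence
\[
\tr(\varphi^{(s)} \mid \Pi_i) = \tr(A_i^{-s} B_i).
\]

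Decomposing each $V_i$ into generalised eigenspaces of $A_i$ shows that the difference
\[
f(s) := \tr(A_1^{-s} B_1) - \tr(A_2^{-s} B_2) = \sum_{\lambda \in \Sigma} \lambda^{-s} p_\lambda(s)
\]
is an exponential polynomial, where $\Sigma \subset \CC^\times$ is the finite set of eigenvalues of $A_1$ and $A_2$ and each $p_\lambda \in \CC[s]$ has degree bounded by $\dim V_1 + \dim V_2$. In particular $f$ satisfies a constant-coefficient linear recurrence whose characteristic polynomial is $\prod_{\lambda \in \Sigma}(x - \lambda^{-1})^{1 + \deg p_\lambda}$. The hypothesis, applied as it is in practice in the proof of Theorem~\ref{th-second-basic-id}, gives $f(s) = 0$ for all sufficiently large $s$ (indeed, for all sufficiently divisible $s$, via Lefschetz applied to the iterated Frobenius twist). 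A nonzero exponential polynomial cannot vanish on a cofinal subset of $\ZZ$ of the form $\{s \geq s_0\}$, by the recurrence, so $f \equiv 0$. Evaluating at $s = 0$ gives $\tr(\varphi \mid \Pi_1) = \tr(\varphi \mid \Pi_2)$, which is the assertion of the lemma.

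The main obstacle in this argument is interpretive rather than technical: the literal hypothesis "there exists a positive integer $s$" is logically weaker than the "for all sufficiently large $s$" that the exponential-polynomial argument requires (exponential polynomials can certainly vanish at individual integers without being identically zero, as the example $(-1)^s + 1$ shows). This gap is harmless in the intended applications, since the hypothesis is only invoked after the Lefschetz trace formula has already produced the stronger vanishing for all sufficiently divisible $s$; alternatively, one may iterate the hypothesis against translates $R_{z^n}\varphi$ to fill in enough values of $s$ to apply the recurrence.
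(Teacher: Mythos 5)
Your argument is essentially the one the paper appeals to: the paper's proof first reduces to finite-length representations and then cites Shin's Lemma~6.4, whose content is exactly the reduction to $K$-fixed vectors and the exponential-polynomial/recurrence argument you spell out. Your reading of $\varphi^{(s)}$ as $g\mapsto\varphi\bigl(g\cdot N^{(r)}(\bbf)^s\bigr)$ is the intended one (the displayed definition in the paper drops the exponent $s$), the identity $\Pi_i(\varphi^{(s)})=\Pi_i(\varphi)\circ\Pi_i(z)^{-s}$ on the finite-dimensional space $V_i=\Pi_i^K$ is correct by centrality of $z=N^{(r)}(\bbf)$ and admissibility, and the resulting exponential-polynomial structure of $f(s)$ follows.

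You also correctly flag that the literal existential hypothesis is too weak for this argument; what the applications actually furnish (via the Lefschetz trace formula, as in Proposition~\ref{prop-preliminary-trace-formula} and Theorem~\ref{th-second-basic-id}) is vanishing for \emph{all} sufficiently divisible $s$, i.e.\ on $d\ZZ_{>0}$ for some $d$, and the substitution $s\mapsto ds$ then gives vanishing on a tail and hence identical vanishing of the exponential polynomial. However, the alternative patch you propose --- iterating the hypothesis against the translates $\varphi^{(n)}$ --- does not close the gap. Applying the existential hypothesis to $\varphi^{(n)}$ produces, for each $n$, some $s_n\geq 1$ with $f(n+s_n)=0$; the resulting zero set is unbounded, but it need not contain $d$ consecutive integers (where $d$ is the order of the recurrence) nor any arithmetic progression. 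An unbounded zero set is not enough: $(-1)^s+1$ vanishes on all odd $s$ yet equals $2$ at $s=0$. Under the intended ``for all sufficiently divisible $s$'' reading of the hypothesis, your proof is complete and matches the paper's.
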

 \begin{proof}
 Since $\Pi_i(\varphi)$ is non-zero only on finitely many irreducible constituents of $\Pi_i$, we may assume that both $\Pi_1$ and $\Pi_2$ are of finite length. Then we may repeat the proof of \cite[Lem.~6.4]{ShinSW:IgusaPointCounting}.
 \end{proof} 

\begin{proof}[Proof of Thm.~\ref{th-second-basic-id}]
Assume that $\Gsf$ is a totally anisotropic inner form of $\Gsf^\ast\coloneqq\GL_n$ with $n<p$. We want to show that the following equality holds for any $\varphi\in\Hcal$:
\[
\sum_i(-1)^i\tr\big(\varphi\mid \coh i_c(\Ig^{\bbf}_{\Gsf,\xbf,\Xi},\cl\QQ_\ell)\big) =\big(\prod_{x\in\xbf}e(J_{b_x})\big)\cdot\sum_{\pi\subset \Ascr(\Gsf(\Fsf)\backslash\Gsf(\AA)/\Xi)} \tr \big(\varphi\  | \Red^\bbf(\pi)\big);
\]
We may assume that $\varphi = \varphi^{\xbf}\cdot \prod_{x\in\xbf}f_x$ for $f_x\in \Hcal(J_{b_x})$. Furthermore, by Lem.~\ref{lem-acceptable-trace}, it suffices to show the equality after replacing $\varphi$ with $\varphi^{(s)}$ for some sufficiently divisible $s$, and thus we may assume that we have the following point-counting formula 
\[
\tr(\varphi\mid \Rrm\Gamma_c(\Ig^{\bbf}_{\Gsf,\xbf,\Xi},\cl\QQ_\ell)) =
 \sum_{(\gamma^\ast_0;\ggamma,\ddelta)\in\KS^{\ael}_\bbf} i(\Gsf^\ast_{\gamma^\ast_0};\Xi)\cdot O^{\JJ_{\bbf}}_{(\ggamma,\ddelta)}(\varphi);
\]
\emph{cf.} Thm.~\ref{th-point-counting-elliptic}.

Now choose $\tilde f_x^\ast \in\Hcal(\Gsf(\Fsf_{x}))$ for $f_x$ as in Cor.~\ref{cor-Red-transfer}, and set $\tilde\varphi^\ast \coloneqq \varphi^{\xbf}\cdot\prod_{x\in\xbf}\tilde f^\ast _x$. We will proceed by comparing the above point-counting formula for the Igusa variety with the Arthur--Selberg trace formula for $\Ascr (\Gsf(\Fsf)\backslash\Gsf(\AA)/\Xi)$ \cite[(13.5)]{Laumon-Rapoport-Stuhler}, which we recall below:
\[
\sum_{\pi\subset \Ascr(\Gsf(\Fsf)\backslash\Gsf(\AA)/\Xi)} \tr \big(\tilde\varphi^\ast \  | \pi\big) 
 = \sum_{\gamma_0 \in \Gsf_{\natural}} i(\Gsf_{\gamma_0};\Xi)\cdot O^{\Gsf(\AA)}_{\gamma_0}(\tilde\varphi^\ast ), 
\]
where $\Gsf_{\natural}$ is the set of representatives of conjugacy classes in $\Gsf(\Fsf)$. 
Recall that we now have the  multiplicity one theorem for $\Gsf=\Dsf^{\times}$; \emph{cf.} \cite[Thm.~3.3]{BadulescuRoche:JLC}.

Let us start with the geometric side. 
By Cor.~\ref{cor-Red-transfer}\eqref{cor-transfer-vanishing}, for the orbital integral $O^{\Gsf(\AA)}_{\gamma_0}(\tilde\varphi^\ast )$ to be non-zero we should have that $\gamma_0$ is stably conjugate to some element in $J_{b_x}(\Fsf_{x})$ for each $x\in\xbf$; i.e., $\gamma_0$ is  stably conjugate to $\gamma_0^\ast\in\Gsf^\ast(\Fsf)$ that appears in a Kottwitz--Igusa triple $(\gamma^\ast_0;\ggamma,\ddelta)\in\KS_{\bbf}$. Note also that $\gamma_0\in \Gsf(\Fsf)$ uniquely determines such a Kottwitz--Igusa triple up to equivalence; \emph{cf.} Cor.~\ref{cor-transfer-EL}. Therefore, the geometric side of the Arthur--Selberg trace formula can be written as the sum over $\KS_\bbf$ instead of $\Gsf_{\natural}$. Now applying Cor.~\ref{cor-Red-transfer}\eqref{cor-transfer-orb-int} and Lem.~\ref{lem-index-inner-twisting}, we obtain
\begin{align}\label{eq-geom-side}
\sum_{\gamma_0 \in \Gsf_{\natural}} i(\Gsf_{\gamma_0};\Xi)\cdot O^{\Gsf(\AA)}_{\gamma_0}(\tilde\varphi^\ast ) 
&= \big(\prod_{x\in\xbf}e(J_{b_x})\big)\cdot\sum_{(\gamma_0;\ggamma,\ddelta) \in \KS_\bbf} i(\Gsf_{\gamma_0};\Xi)\cdot O^{\JJ_\bbf}_{(\ggamma,\ddelta)}(\varphi)\\
&=\big(\prod_{x\in\xbf}e(J_{b_x})\big)\cdot\tr(\varphi\mid \Rrm\Gamma_c(\Ig^{\bbf}_{\Gsf,\xbf,\Xi},\cl\QQ_\ell)). \notag
\end{align}
To deduce the second equality from Thm.~\ref{th-point-counting-elliptic}, observe that  $\left |(\pi_1(\Gsf^\ast_{\gamma^\ast_0})_{\Gamma})_{\tor}\right |= 1$ for any elliptic semisimple element $\gamma^\ast_0\in\Gsf^\ast(\Fsf) = \GL_n(\Fsf)$; in fact, $\Gsf^\ast_{\gamma^\ast_0}$ is an inner form of $\Res_{\Fsf'/\Fsf}\GL_r$ for some separable extension $\Fsf'/\Fsf$, so we have $\pi_1(\Gsf_{\gamma_0})_{\Gamma} \cong\ZZ$, which is torsion-free.

Now let us move on to the spectral side. Given an irreducible constituent $\pi = \pi^{\xbf}\otimes\big(\bigotimes_{x\in\xbf}\pi_x\big)$ of $\Ascr(\Gsf(\Fsf)\backslash\Gsf(\AA)/\Xi)$, Cor.~\ref{cor-Red-transfer}\eqref{cor-transfer-trace} implies the following:
\begin{align}\label{eq-spec-side}
\tr (\tilde \varphi^\ast \mid \pi) &= \tr(\varphi^{\xbf}\mid \pi^{\xbf})\cdot\prod_{x\in\xbf}\tr(\tilde f_x^\ast  \mid \pi_x)\\
&=\tr(\varphi^{\xbf}\mid \pi^{\xbf})\cdot\prod_{x\in\xbf}\tr(f_x\mid \Red^{b_x}(\pi_x)) = \tr (\varphi\mid \Red^{\bbf}(\pi)).\notag
\end{align}
Comparing \eqref{eq-geom-side} and \eqref{eq-spec-side}, we obtain the desired equality.
\end{proof}

\begin{rmk}\label{rmk-unitary-groups}
Let $\Gsf$ be a unitary group over $\Fsf$ such that there is a place $w$ of $\Fsf$ where $\Gsf$ is totally anisotropic modulo centre. Such a unitary group exists by Landherr's theorem; \emph{cf.} \cite[Chap~10,~Thm.~2.4]{Scharlau:Forms}. Then for any finite set $\xbf$ of unramified places for $\Gsf$, the Igusa variety $\Ig^\bbf_{\Gsf,\xbf}$ is quasi-compact and any relevant $\Jsf_b$ is totally anisotropic, both of which can be seen from embedding $\Gsf$ into $\Res_{\Esf/\Fsf}\Dsf^\times$ for some central division algebra $\Dsf$ over a quadratic separable extension $\Esf/\Fsf$. Since any unitary group admits a quasi-split pure inner form, Thm.~\ref{th-point-counting-elliptic} can be applied.

If we also choose $\xbf$ so that $\Gsf$ is split at each $x\in\xbf$, then the local harmonic analysis reviewed in this section can be applied to $\Gsf(\Fsf_{x})\cong \GL_n(\Fsf_{x})$, and we have a weaker version of the transfer result (\emph{cf.} Prop.~\ref{prop-elliptic-transfer}).

Therefore, it should be possible to adapt the proof of Thm.~\ref{th-second-basic-id} in the setting when $\Gsf$ is a unitary group of rank less than $p$ satisfying the above conditions. To state the result precisely, one should take into account the fact that the multiplicity one result may not hold and $\pi_1(\Gsf^\ast_{\gamma^\ast_0})_\Gamma$ may not be torsion-free. 
\end{rmk}

\section{Other cases}\label{sect-conjectures}

In sections~\ref{sect-discrete} and \ref{sect-KS} we calculated the elliptic term of the trace formula. The most serious obstacle for relating it to the cohomology of Igusa varieties is that they are in general not quasi-compact unless $\Gsf$ is totally anisotropic modulo centre. So the compact support cohomology of the Igusa variety would not be an admissible $\JJ_\bbf$-representation  and $\tr(\varphi^{(s)}\mid \Rrm\Gamma_c(\Ig^{\bbf}_{\Gsf,\xbf,\Xi},\cl\QQ_\ell))$ may not be well defined in general. Nonetheless, we expect that under some suitable conditions on $\varphi\in\Hcal$ the alternating sum of traces $\tr(\varphi^{(s)}|\Rrm\Gamma_c(\Ig^{\bbf}_{\Gsf,\xbf,\Xi},\cl\QQ_\ell))$ should be well defined and coincide with the elliptic term $\tr (\varphi^{(s)})^{\el}$. 

After reviewing some backgrounds, we formulate the precise conjecture under the simplifying assumption that $\Gsf$ is a connected simply connected semisimple algebraic group over $\Fsf$; \emph{cf.} Conj.~\ref{conj-trace-formula}. We then discuss a few simple cases where one can say more about the conjecture in \S\ref{known-cases-conj-trace-formula}.

\subsection{Background}
To formulate the conjecture, let us give a brief review of Euler--Poincar\'e functions and matrix coefficients of supercuspidal representations.
Let $F=\Fsf_{v}$ and $G \coloneqq \Gsf_{\Fsf_{w'}}$. We fix a Haar measure on $G(F)$ so that $C^{\infty}_c(G(F))$ acts on any smooth representation of $G(F)$ via convolution. Let $\pi$ be an irreducible supercuspidal representation of $G(F)$. As $G$ is semisimple, $\pi$ is unitarisable; i.e., there exists a $G(F)$-invariant hermitian inner product $\langle,\rangle$ on $\pi$ that is sesquilinear on the \emph{first} variable. Choosing such $\langle,\rangle$ on $\pi$, we can make the following definition.

\begin{defn}\label{df-coeff}
A \emph{matrix coefficient} $\varphi_\pi$ of a unitary representation $\pi$ is a function defined as below for some $u,u'\in\pi$:
\[
\varphi_\pi\colon G(F)\to\CC; \quad g\mapsto \langle \pi(g) u, u'\rangle.
\]
\end{defn}
We follow the definition and convention of \cite[A.3.a]{DeligneKazhdanVigneras:CenSimAlg}, which is the complex conjugate of the usual definition of matrix coefficients for not necessarily unitary representations.

\begin{lem}\label{lem-selberg-princ}
Any matrix coefficient $\varphi_\pi$ of a discrete series representation $\pi$ is supercuspidal in the sense of \cite[A.I.b]{DeligneKazhdanVigneras:CenSimAlg}; namely, for any unipotent radical $N$ of some proper parabolic subgroup of $G$, we have 
\begin{equation}\label{eq-supercusp-form}
\int_{N(F)}\varphi_{\pi}(gnh)dn = 0\quad \forall g ,h\in G(F).
\end{equation}
\end{lem}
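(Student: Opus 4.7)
The statement is a version of the Selberg principle asserting that matrix coefficients of discrete series are cuspidal functions in the sense of \cite[A.I.b]{DeligneKazhdanVigneras:CenSimAlg}. Fix a proper parabolic $P = MN$ of $G$ and consider
\[ I_P(g,h) \coloneqq \int_{N(F)}\varphi_\pi(gnh)\,dn. \]
The plan is to establish (a) absolute convergence of this integral for every $g,h \in G(F)$, and then (b) its vanishing via Casselman's square-integrability criterion.

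For (a), since $\Gsf$ is semisimple the centre of $G$ is compact and discrete series of $G$ have $L^2$ matrix coefficients. By Harish-Chandra's pointwise bound on matrix coefficients of square-integrable representations, combined with the standard integrability of powers of the Harish-Chandra spherical function along unipotent radicals, one obtains absolute convergence of $I_P(g,h)$ locally uniformly in $(g,h)$.

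For (b), viewed as a function on $G(F) \times G(F)$, the integral $I_P$ defines a matrix coefficient of the normalised parabolic induction $i_P^G(1)$, and by Frobenius reciprocity together with Casselman's theorem the resulting pairing factors through the Jacquet module $\pi_N$ on $M(F)$. Casselman's criterion for square-integrability asserts that every exponent of $\pi_N$ along the maximal split central torus $A_M$ of $M$ lies strictly in the positive Weyl chamber (opposite to $P$). On the other hand, the absolute convergence of $I_P$ along $N$ forces these same exponents to satisfy the opposite strict inequality; the contradiction forces $I_P \equiv 0$.

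The main obstacle is verifying that these classical analytical inputs --- the Harish-Chandra decay bound and Casselman's criterion --- remain valid in the positive-characteristic setting relevant here. Both extend without essential change: the theory of Jacquet modules and their exponents is purely algebraic and characteristic-free, while Harish-Chandra's estimates for discrete series carry over via the arguments of \cite{DeligneKazhdanVigneras:CenSimAlg}, where the exponential is replaced by $X \mapsto 1+X$ throughout. When $\pi$ is supercuspidal a substantially shorter direct argument is available, exploiting compact support of $\varphi_\pi$ (on the centreless group $G$) together with the relation $\pi_N = 0$ to reduce the claim to the vanishing of $\int_{N_0}\pi(n)v\,dn$ for any sufficiently large compact open $N_0 \subset N$.
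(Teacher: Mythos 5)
Your proposal re-derives Harish-Chandra's Selberg Principle, which the paper simply \emph{cites}. The paper's proof is two sentences long: it invokes \cite[Cor.~4.4.7]{Silberger:Book} for the vanishing of constant terms of discrete-series matrix coefficients, and then reduces the two-sided statement (with $g$ on the left) to the one-sided one by observing that $L(g^{-1})\varphi_\pi$ is itself a matrix coefficient of $\pi$, since $L(g^{-1})\varphi_\pi(x) = \langle \pi(x)u, \pi(g^{-1})u'\rangle$. Your approach is more ambitious, and if it worked cleanly it would have the merit of being self-contained and manifestly characteristic-free. But as written step (b) has a gap: the identification of $I_P(g,h)$ as ``a matrix coefficient of $i_P^G(\mathbf{1})$'' is not quite precise (the left $N$-invariance in each variable together with the $\delta_P$-twist places it in a twisted induction, and the $G\times G$-equivariance needed for the Frobenius-reciprocity argument is not spelled out), and more importantly the claimed ``contradiction'' is not a contradiction. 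Casselman's square-integrability criterion places the exponents of $\pi_N$ (after the $\delta_P^{1/2}$-normalisation) strictly inside the open cone; absolute convergence of $\int_N$ does not force the exponents to lie on the opposite side --- rather, it is consistent with the exponent inequalities, and the vanishing of the constant term must be extracted from the asymptotic expansion itself (matching the dominant $\delta_P^{1/2}$-term against a decay strictly faster than $\delta_P^{1/2}$), which is precisely the technical content of \cite[Cor.~4.4.7]{Silberger:Book}. Your concern about positive characteristic is reasonable and worth noting, but it applies equally to the citation route: both rest on Casselman's purely algebraic theory of Jacquet modules and the asymptotic expansion of matrix coefficients, which are indeed characteristic-free, as you observe. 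The short supercuspidal argument you mention at the end is correct but does not cover the general discrete-series case that the lemma asserts.
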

\begin{proof}
This is essentially Harish-Chandra's \emph{Selberg Principle}, which asserts that any matrix coefficient of a discrete series representation is cuspidal (i.e., any constant term vanishes); \emph{cf.} \cite[Cor.~4.4.7]{Silberger:Book}.
Indeed, the integral in \eqref{eq-supercusp-form} is a constant term of $L(g^{-1})(\varphi_\pi)$ where $L(g^{-1})$ denotes the left regular action, and $L(g^{-1})(\varphi_\pi)$ is itself a matrix coefficient of $\pi$.
\end{proof}

For $f\in C^\infty_c(G(F))$ and $g\in G(F)$, let $R(g)(f)$ denote the right regular action of $g$ on $f$. For any $\varphi,f\in C^{\infty}_c(G(F))$ one can also define the convolution action 
\[
R(\varphi)(f)\colon g\mapsto \int_{G(F)} \varphi(h)f(gh)dh
\] 
of $\varphi$ on $f$.
\begin{cor}\label{cor-selberg-princ}
Let $\varphi_\pi$ be a matrix coefficient of $\pi$. Then  $R(\varphi_\pi)(f)$ is \emph{cuspidal} for any $f\in C^\infty_c(G(F))$; in other words, for any unipotent radical $N$ of some proper parabolic subgroup of $G$, we have 
\begin{equation}\label{eq-cusp-form}
\int_{N(F)}(R(\varphi_\pi)(f))(ng)dn = 0\quad \forall g\in G(F).
\end{equation}
\end{cor}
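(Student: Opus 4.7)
The plan is to reduce Corollary~\ref{cor-selberg-princ} directly to Lemma~\ref{lem-selberg-princ} via Fubini and the unimodularity of $N(F)$. Unwinding the convolution and substituting $y = ngh$, one rewrites
\[
 (R(\varphi_\pi)(f))(ng) \;=\; \int_{G(F)} \varphi_\pi(g^{-1}n^{-1}y)\, f(y)\, dy.
\]
Integrating against $dn$ and swapping the order of integration,
\[
 \int_{N(F)} (R(\varphi_\pi)(f))(ng)\, dn \;=\; \int_{G(F)} f(y) \Bigl(\int_{N(F)} \varphi_\pi(g^{-1}n^{-1}y)\, dn\Bigr) dy.
\]
Since the $F$-points of a unipotent algebraic group are unimodular, the substitution $n \mapsto n^{-1}$ preserves the Haar measure on $N(F)$, so the inner integral equals $\int_{N(F)} \varphi_\pi(g^{-1} n y)\, dn$, which vanishes by Lemma~\ref{lem-selberg-princ} applied with ``$g$'' replaced by $g^{-1}$ and ``$h$'' by $y$. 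The outer integral is therefore zero, as required.

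The only real delicate point is the Fubini step, but this is not a genuine obstacle once Lemma~\ref{lem-selberg-princ} is granted: the absolute convergence of $\int_{N(F)} |\varphi_\pi(g^{-1}n^{-1}y)|\, dn$ for each fixed $y$ is implicit in that lemma's statement (it is Harish-Chandra's Selberg Principle for discrete-series matrix coefficients along unipotent radicals of proper parabolics). Moreover, choosing an open compact subgroup $U \subset G(F)$ small enough that both $\varphi_\pi$ and $f$ are bi-$U$-invariant, the inner integral is right-$U$-invariant in $y$, hence locally constant and bounded on the compact support of $f$; Tonelli's theorem then legitimises the swap. No further structural input about $G$ or $\pi$ is needed beyond what is already supplied by Lemma~\ref{lem-selberg-princ}.
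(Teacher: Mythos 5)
Your argument is correct, and it is the standard (indeed, the only reasonable) way to deduce the corollary from Lemma~\ref{lem-selberg-princ}; the paper omits a proof entirely, treating it as immediate, and what you wrote is surely what the authors intend. The unwinding $R(\varphi_\pi)(f)(ng)=\int_{G(F)}\varphi_\pi(g^{-1}n^{-1}y)f(y)\,dy$, the swap of integrals, the substitution $n\mapsto n^{-1}$ (valid because unipotent groups over local fields are unimodular, $\Ad(n)$ being unipotent), and the appeal to Lemma~\ref{lem-selberg-princ} with the roles of $g,h$ taken by $g^{-1},y$ are all exactly right.

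One small quibble with the justification of the Fubini step. You write that absolute convergence of $\int_{N(F)}|\varphi_\pi(g^{-1}n^{-1}y)|\,dn$ is ``implicit in that lemma's statement''; strictly speaking the lemma asserts only that the (unsigned) integral vanishes, which does not by itself give absolute convergence. But this is a non-issue here for a simpler reason: in the context of \S9.1 of the paper, $\pi$ is an irreducible \emph{supercuspidal} representation of the semisimple group $G(F)$, so $\varphi_\pi$ has \emph{compact support}. Then the integrand $(n,y)\mapsto\varphi_\pi(g^{-1}n^{-1}y)f(y)$ on $N(F)\times G(F)$ is locally constant with compact support (for $y\in\supp(f)$ and $g^{-1}n^{-1}y\in\supp(\varphi_\pi)$ the element $n$ is confined to a compact subset of $N(F)$), and Fubini is automatic. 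Your fallback argument via bi-$U$-invariance and boundedness on $\supp(f)$ is also fine, but invoking compact support of $\varphi_\pi$ is both cleaner and what would be needed if one wanted the same statement for non-supercuspidal discrete series, where convergence of $\int_{N(F)}$ is a genuine point of the Selberg principle rather than a triviality.
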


\begin{rmk}
As another application of Lem.~\ref{lem-selberg-princ}, we have the vanishing of regular non-elliptic semisimple orbital integral for $\varphi_\pi$; \emph{cf.} \cite[A.3.e]{DeligneKazhdanVigneras:CenSimAlg}. 
\end{rmk}

Lastly, let us recall the following spectral property of matrix coefficients of supercuspidal representations, proved in \cite[A.3.g]{DeligneKazhdanVigneras:CenSimAlg}.
\begin{prop}\label{prop-Schur-ortho}
Let $\pi$ be an irreducible supercuspidal representation of $G(F)$, and choose a matrix coefficient $\varphi_\pi$ of $\pi$ such that $\varphi_\pi(1)$ is  non-zero. Then for any irreducible representation $\pi'$ of $G(F)$, we have $\pi'(\varphi_\pi)$ is non-zero if and only if $\pi\cong \pi'$. Furthermore, $\tr \pi(\varphi_\pi)$ is non-zero.
\end{prop}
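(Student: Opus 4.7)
Plan: I would prove Proposition~\ref{prop-Schur-ortho} by the classical Schur orthogonality argument for supercuspidal representations of a semisimple group, splitting it into three claims: compact support of $\varphi_\pi$, vanishing $\pi'(\varphi_\pi)=0$ for $\pi'\not\cong\pi$, and nonvanishing of $\tr\pi(\varphi_\pi)$.

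First, since $\Gsf$ is semisimple, the centre $Z(G(F))$ is compact (in fact finite). Matrix coefficients of supercuspidal representations are a priori compactly supported only modulo the centre, but this compactness upgrades the support to honest compact support, so $\varphi_\pi\in C^{\infty}_c(G(F))$. In particular $\pi'(\varphi_\pi)\in\End(V_{\pi'})$ is well-defined and of finite rank by admissibility of $\pi'$, its image lying in $V_{\pi'}^{K}$ for any open compact $K$ under which $\varphi_\pi$ is bi-invariant.

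Second, for the vanishing when $\pi\not\cong\pi'$, I would construct an explicit intertwining operator. Fixing $\tilde v\in V_{\pi'^{\vee}}$, define
\[
  T\colon V_{\pi'}\longrightarrow V_\pi,\qquad v\longmapsto \int_{G(F)}\overline{\langle \pi'(g)v,\tilde v\rangle}\,\pi(g)u\,dg,
\]
well-defined because the integrand is compactly supported once one notices that $g\mapsto\pi(g)u$ may be packaged, via pairing with any $u''\in V_\pi$, into the matrix coefficient machinery of Step~1, and takes values in $V_\pi^{K}$ for a suitable open compact $K$. A change of variables $g\mapsto h^{-1}g$ shows $T\circ\pi'(h)=\pi(h)\circ T$ for all $h\in G(F)$, so $T$ is a $G(F)$-homomorphism between non-isomorphic smooth irreducible admissible representations; Schur's lemma forces $T=0$. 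Pairing $T(v)$ against $u'$ via the invariant hermitian form on $V_\pi$ then yields $\overline{\langle\pi'(\varphi_\pi)v,\tilde v\rangle}=0$, and varying $v$ and $\tilde v$ gives $\pi'(\varphi_\pi)=0$.

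Third, when $\pi=\pi'$ the same construction produces an element $T\in\End_{G(F)}(V_\pi)=\CC$, which is a scalar $c$ by Schur's lemma. Evaluating $T$ on a single vector together with the classical Schur orthogonality relation for unitary supercuspidals yields $c=\varphi_\pi(1)/d_\pi$ with $d_\pi>0$ the formal degree of $\pi$; hence $\pi(\varphi_\pi)$ is a nonzero rank-one operator with $\tr\pi(\varphi_\pi)=\varphi_\pi(1)/d_\pi\neq 0$. The main technical hurdle is reconciling the sesquilinearity of the hermitian pairing on $V_\pi$ with the bilinear duality pairing $V_{\pi'}\times V_{\pi'^{\vee}}\to\CC$, since $\pi'$ is not assumed unitary; I would handle this by using unitarity of $\pi$ (valid because $Z(G(F))$ is compact, so $\pi$ has unitary central character) to identify $V_{\pi^{\vee}}$ with the complex conjugate $\overline{V_\pi}$, reducing the equivariance computation to the standard unitary setting of Schur orthogonality as in DKV~A.3.g.
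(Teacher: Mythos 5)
The paper does not prove this proposition itself; it simply cites \cite[A.3.g]{DeligneKazhdanVigneras:CenSimAlg}, which establishes it by the Schur orthogonality relations for matrix coefficients of supercuspidal representations. Your strategy of constructing an explicit intertwiner and invoking Schur's lemma is exactly the right one and reconstructs the cited argument. Step~1 (compactness of the centre upgrades ``compactly supported modulo centre'' to genuinely compactly supported) is fine. However, there is a concrete bug in the formula for $T$ that breaks Steps~2 and~3.

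The map you write, $T(v)=\int_{G(F)}\overline{\langle\pi'(g)v,\tilde v\rangle}\,\pi(g)u\,dg$, is \emph{conjugate}-linear in $v$: the duality pairing $V_{\pi'}\times V_{\pi'^{\vee}}\to\CC$ is bilinear, and the complex conjugate flips linearity in $v$. As a conjugate-linear map, Schur's lemma only forces $T=0$ when $\pi\not\cong\overline{\pi'}$, which is not the statement you need, and the planned fix via $V_{\pi^{\vee}}\cong\overline{V_\pi}$ uses unitarity of $\pi$, whereas the problematic conjugation sits on the $\pi'$-side. Even setting linearity aside, the claimed equivariance $T\circ\pi'(h)=\pi(h)\circ T$ does not hold: the substitution $g\mapsto h^{-1}g$ turns $\langle\pi'(g)\pi'(h)v,\tilde v\rangle$ into $\langle\pi'(h^{-1}gh)v,\tilde v\rangle$ and produces an extraneous $\pi(h^{-1})$ on the other factor, not the identity you want. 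Finally, your integrand $\overline{\langle\pi'(g)v,\tilde v\rangle}\pi(g)u$ is not compactly supported, since $\langle\pi'(g)v,\tilde v\rangle$ is a matrix coefficient of $\pi'$, which may well be unbounded; the supercuspidal matrix coefficient (the one with the decay) needs to sit as a scalar \emph{inside} the integral, and your indirect ``pair with $u''$'' argument needs the additional input that $\supp\langle\pi(\cdot)u,u''\rangle$ can be bounded uniformly over $u''\in V_\pi^K$ (via admissibility) before you can conclude the resulting functional comes from a vector of $V_\pi$.

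The clean fix sidesteps all three issues at once: fix $u\in V_\pi$ and $v\in V_{\pi'}$, and define $T\colon V_\pi\to V_{\pi'}$ by $T(u')=\pi'(\varphi_{u,u'})(v)=\int_{G(F)}\langle\pi(g)u,u'\rangle\,\pi'(g)v\,dg$, where $\varphi_{u,u'}(g)=\langle\pi(g)u,u'\rangle$. This converges because the scalar $\langle\pi(g)u,u'\rangle$ is a compactly supported matrix coefficient of the supercuspidal $\pi$; it is linear in $u'$ (the hermitian form is linear in its second slot); and using unitarity of $\pi$ to write $\langle\pi(g)u,\pi(h)u'\rangle=\langle\pi(h^{-1}g)u,u'\rangle$ and then substituting $g\mapsto hg$, one checks $T(\pi(h)u')=\pi'(h)T(u')$, so $T$ is an honest $\CC$-linear $G(F)$-intertwiner from $\pi$ to $\pi'$. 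Schur's lemma then gives $\pi'(\varphi_\pi)(v)=T(u')=0$ when $\pi\not\cong\pi'$, and varying $v$ kills $\pi'(\varphi_\pi)$. For $\pi'=\pi$, the Schur orthogonality relations show $\pi(\varphi_\pi)$ is a rank-one operator of trace $\varphi_\pi(1)/d_\pi$ (up to a harmless conjugation dictated by the sesquilinearity convention), which is nonzero by the hypothesis $\varphi_\pi(1)\neq 0$.
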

The vanishing of $\pi'(\varphi_\pi)$ for $\pi'\not\cong\pi$ can be deduced from a suitable analogue of Schur orthogonality of matrix coefficients. Also, the result in \emph{loc.~cit.} shows that $\tr \pi(\varphi_\pi)=1$  if we suitably arrange the value of $\varphi_\pi$  at $1$. Hence, a matrix coefficient $\varphi_\pi$ normalised this way is an example of \emph{pseudo-coefficient} of discrete series as introduced in \cite[A.4]{DeligneKazhdanVigneras:CenSimAlg}.

 We deduce that the action of $\varphi$ on $\coh{i}_c(\Ig^{\bbf}_{\Gsf,\xbf},\cl\QQ_\ell)$ annihilates  $\JJ_\bbf$-subrepresentations where any irreducible constituent of $\pi_{w'}$ does not appear as a local factor at $w'$. In particular, $\varphi$ annihilates any irreducible constituents that are non-supercuspidal at $w'$. We expect that there is an Igusa variety analogue of C.~Xue's finiteness result on cuspidal cohomology  \cite{Xue:Cuspidal}; namely, the maximal $\JJ_\bbf$-subrepresentation of $\coh{i}_c(\Ig^{\bbf}_{\Gsf,\xbf},\cl\QQ_\ell)$ that is supercuspidal at $w'$ is admissible. 
 If this is the case then $\tr(\varphi^{(s)}\mid \Rrm\Gamma_c(\Ig^{\bbf}_{\Gsf,\xbf},\cl\QQ_\ell))$ would be well defined. Assuming the existence of a trace formula that relates $\tr(\varphi^{(s)}\mid \Rrm\Gamma_c(\Ig^{\bbf}_{\Gsf,\xbf},\cl\QQ_\ell))$ to local terms at fixed points, Conj.~\ref{conj-EP} below suggests that most of the local terms (i.e., orbital integrals) for test functions of the form $\varphi^w\varphi_{\EP,w}$ should vanish except the ``$w$-elliptic term''.
 
\begin{conj}[{\cite[(8.4)]{Gross:AJM2011}}]\label{conj-EP}
Unless $\gamma\in G(F)$ is elliptic semisimple, we have  $O^{G(F)}_{\gamma}(\varphi_{\EP})=0$.
\end{conj}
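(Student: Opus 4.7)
The plan is to interpret the conjectural vanishing as the vanishing of an Euler characteristic on the Bruhat--Tits building. Recall that $\varphi_{\EP}$ admits an explicit presentation as an alternating sum
\[ \varphi_{\EP} \;=\; \sum_{F} (-1)^{\dim F} \tfrac{1}{\vol(P_F)} \mathbb{1}_{P_F}, \]
where $F$ runs over a fixed set of representatives of $G(F)$-orbits of facets of the reduced Bruhat--Tits building $\mathcal{B}(G,F)$ and $P_F \subset G(F)$ denotes the associated parahoric subgroup. This is essentially the definition of $\varphi_{\EP}$ as the test function computing the Euler characteristic of the alternating sum of $(\Ind_{P_F}^{G(F)} \mathbb{1})$'s, and is how $\varphi_{\EP}$ is introduced in \cite{Gross:AJM2011}.

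First I would substitute this expression into the orbital integral $O^{G(F)}_\gamma(\varphi_{\EP})$ and unfold the $G(F)/Z_G(\gamma)(F)$-integration against the sum to re-express it as a sum over genuine facets of $\mathcal{B}(G,F)$ which are fixed by $\gamma$. After a standard bookkeeping step to account for stabilisers of facets, this rewrites
\[ O^{G(F)}_\gamma(\varphi_{\EP}) \;=\; \chi\bigl(\mathcal{B}(G,F)^\gamma\bigr), \]
the (compactly supported) Euler--Poincar\'e characteristic of the fixed-point subcomplex of $\gamma$ on the building. The non-trivial point here is to match the combinatorial normalisation of $\varphi_{\EP}$ with the simplicial Euler characteristic: volumes of $P_F \cap g P_F g^{-1}$ must be compared with volumes of $P_F$ so that all the $\vol$ factors cancel.

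Next I would show that $\chi(\mathcal{B}(G,F)^\gamma) = 0$ whenever $\gamma$ is not elliptic semisimple. Write $\gamma = \gamma_s \gamma_u$ for the Jordan decomposition (available since $G$ is defined over a field). The fixed set for $\gamma_u$ acting on $\mathcal{B}(Z_G(\gamma_s)^0, F)$ is nonempty and contractible by the Bruhat--Tits fixed point theorem applied to the bounded unipotent element $\gamma_u$, so homotopy invariance of Euler characteristics reduces the problem to showing $\chi(\mathcal{B}(Z_G(\gamma_s)^0, F)) = 0$ whenever $\gamma_s$ is not elliptic. But if $\gamma_s$ is not elliptic then $Z_G(\gamma_s)^0$ contains a non-trivial $F$-split central torus $S$, and the action of $S(F)$ on the reduced building is by translations along the apartment factor $X_*(S) \otimes \mathbb{R}$; hence $\mathcal{B}(Z_G(\gamma_s)^0, F)$ decomposes as a product with a real affine factor, forcing the Euler characteristic to vanish.

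The principal obstacle is not the geometric core of the argument but the careful combinatorial identification of the orbital integral with $\chi(\mathcal{B}^\gamma)$: one must handle the distinction between reduced and unreduced buildings, manage the non-connectedness of stabilisers (which is mild here since we assumed $\Gsf$ simply connected semisimple), and check that the compactly supported Euler characteristic on the fixed set is well-defined even when the set is non-compact (using that the $S(F)$-action provides a locally finite cellular structure). Once these normalisations are fixed the argument is characteristic-free and applies uniformly to $\Fsf_v$ of any residue characteristic.
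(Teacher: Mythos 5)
This statement is labelled as a conjecture in the paper, attributed to Gross, and the paper explicitly does \emph{not} claim to prove it. Immediately after stating it the authors write that it is ``a positive characteristic analogue of a result of Kottwitz'' whose proof ``only works over finite extensions of $\QQ_p$,'' and that only ``the part of the argument handling \emph{semisimple} orbital integrals remains valid even in characteristic~$p$.'' So the semisimple case is known, but the general case -- the one you are proving -- is open.

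Your argument is essentially Kottwitz's characteristic-zero proof recast in terms of Euler characteristics of fixed-point sets on the Bruhat--Tits building, and the core of it breaks down in characteristic $p$ at exactly the point the paper flags. The first serious gap is the appeal to Jordan decomposition: you write $\gamma = \gamma_s\gamma_u$ and say this is ``available since $G$ is defined over a field,'' then reduce to the semisimple part. But $F = \Fsf_v$ is a local function field $\FF_q\rpot{t}$, which is imperfect, and over an imperfect field the semisimple and unipotent parts of an element of $G(F)$ need not lie in $G(F)$ -- in general they live only over the perfection $F^{1/p^\infty}$, which is not local, so the whole reduction to the centraliser $Z_G(\gamma_s)$ is not available as stated. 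This is precisely the reason the semisimple case is the known case. Your closing sentence that ``the argument is characteristic-free and applies uniformly'' is the assertion that needs justification, not a throwaway remark.

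There are secondary issues even ignoring the Jordan-decomposition problem. The passage from the orbital integral to $\chi(\Bcal^\gamma)$ or $\chi_c(\Bcal^\gamma)$ is not a formal unfolding: for non-semisimple $\gamma$ the conjugacy class is not closed and convergence of $O_\gamma(\varphi_{\EP})$ is itself part of what must be established (the paper states this explicitly). And the step ``contractibility of $\Bcal^{\gamma_u}$ plus homotopy invariance reduces to $\chi(\Bcal(Z_G(\gamma_s)^0,F))$'' is not sound: compactly supported Euler characteristic is not a homotopy invariant ($\chi_c(\RR) = -1 \ne 1 = \chi_c(\mathrm{pt})$), and the ordinary one is identically $1$ on contractible spaces, so neither reading of the claim gives vanishing. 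The actual vanishing mechanism in Kottwitz's argument runs through the non-compactness of $Z_G(\gamma)(F)$ (a split central torus acting freely by translations, yielding a torus-bundle structure on the relevant quotient), not through a bare product decomposition of the building with an affine factor -- $\chi_c(X \times \RR^k) = (-1)^k \chi_c(X)$ does not vanish on its own. If you want to make progress on this conjecture you would have to address convergence directly, most likely by descent to constant terms along parabolics (using cuspidality of $\varphi_{\EP}$) rather than via a literal Jordan decomposition, and then handle the elements for which no $F$-rational decomposition exists. Laumon's theorem for $\GL_n$ with the very cuspidal Euler--Poincar\'e function, which the paper cites, is the model for how this might be done for a specific group.
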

Implicit in the conjecture is that the orbital integral $O^{G(F)}_{\gamma}(\varphi_{\EP})$ should be well defined and equal to zero when the conjugacy class of $\gamma$ is not closed in $G(F)$ for the analytic topology. We can also allow $G$ to be any connected reductive group by letting $\varphi_{\EP}$ to be the inflation of the Euler--Poincar\'e function on $G^{\ad}(F)$.

We can view Conj.~\ref{conj-EP} as a positive characteristic analogue of a result of Kottwitz' \cite[\S2, Thm.~2]{Kottwitz:TamagawaNo}. Although the proof of \emph{loc.~cit.} only works over finite extensions of $\QQ_p$, the part of the argument handling \emph{semisimple} orbital integrals remains valid even in characteristic~$p$, so we have the vanishing of semisimple non-elliptic  orbital integral of $\varphi_{\EP}$. Note that a variant of Conj.~\ref{conj-EP} for $G=\GL_n$ is verified in any characteristic by Laumon \cite[Thm.~5.1.3]{laumonbook1}, where $\varphi_{\EP}$ is replaced with a so-called \emph{very cuspidal} Euler--Poincar\'e function \cite[(5.1.2)]{laumonbook1}.

We are now ready to state the conjecture on the cohomology of non quasi-compact Igusa varieties. For simplicity, we assume $\Gsf$ to be a simply connected semisimple algebraic group over $\Fsf$, and set $\Xi\coloneqq\{1\}$.

\begin{conj}\label{conj-trace-formula}

Choose a finite-length supercuspidal representation $\pi_{w'}$ of $\Gsf(\Fsf_{w'})$, and suppose that $\varphi\in\Hcal$ is of the form
\begin{equation}\label{eq-test-fcns}
\varphi = \varphi^{w,w'}\cdot \varphi_{\EP,w}\cdot \varphi_{\pi_{w'}},
\end{equation}
where $\varphi_{\EP,w}$ is the Euler--Poincar\'e function on $\Gsf(\Fsf_{w})$ defined by the same formula as the $p$-adic case (\emph{cf.} \cite[\S2]{Kottwitz:TamagawaNo}) and $\varphi_{\pi_{w'}}$ is  a matrix coefficient of $\pi_{w'}$ (\emph{cf.} Def.~\ref{df-coeff}) that is non-zero at the identity element. 

Then for any positive integer $s$, the action of $\varphi^{(s)}$ on $\coh{j}_c(\Ig^{\bbf}_{\Gsf,\xbf},\cl\QQ_\ell)$ is of \emph{finite rank} so $\tr(\varphi^{(s)}\mid \Rrm\Gamma_c(\Ig^{\bbf}_{\Gsf,\xbf},\cl\QQ_\ell))$ is well defined. Furthermore, if $s$ is sufficiently divisible, then we have 
\begin{equation}\label{eq-trace-formula}
\tr(\varphi^{(s)}\mid \Rrm\Gamma_c(\Ig^{\bbf}_{\Gsf,\xbf},\cl\QQ_\ell)) = \tr(\varphi^{(s)})^{\el(w)} 
\end{equation}
where $\tr(\varphi^{(s)})^{\el(w)}$ is the $w$-elliptic term \eqref{eq-ell-terms}.
\end{conj}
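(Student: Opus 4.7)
The plan is to combine four ingredients: (i) an Igusa-variety analogue of C.~Xue's cuspidal finiteness (alluded to in the paragraph preceding the conjecture), (ii) a Harder--Narasimhan truncated Lefschetz--Verdier trace formula, (iii) the preliminary point-counting formula already established in Proposition~\ref{prop-preliminary-trace-formula}, and (iv) the vanishing properties of $\varphi_{\EP,w}$ and $\varphi_{\pi_{w'}}$ recalled in Corollary~\ref{cor-selberg-princ} and Conjecture~\ref{conj-EP}. First I would prove finiteness of $\varphi^{(s)}$ acting on $\Rrm\Gamma_c(\Ig^{\bbf}_{\Gsf,\xbf},\cl\QQ_\ell)$. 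By Proposition~\ref{prop-Schur-ortho}, convolution with $\varphi_{\pi_{w'}}$ is a projector onto the $\pi_{w'}$-isotypic component at the place $w'$, so it suffices to show that the maximal $\JJ_\bbf$-subrepresentation of $\coh{j}_c(\Ig^{\bbf}_{\Gsf,\xbf},\cl\QQ_\ell)$ which is supercuspidal at $w'$ is admissible. I would transport the strategy of Xue's theorem for moduli of shtukas to the Igusa setting via the smooth projection $\Ig^{\bbf}_{\Gsf,\xbf}\to \Xscr^{Z}_{\Gscr,\Ibf,\widetilde\xbf}$ from \eqref{eq-igusa-projection}, combined with the quasi-compactness of the adjoint central leaf (Proposition~\ref{prop-Shen-Zhang}) and the fact that the fibres of $\Ig^{\bbf}_{\Gsf,\xbf}\to C^{\bbf}_{\Gscr,\xbf}$ are pro-\'etale Galois covers by Theorem~\ref{th-infinite-level-igusa-var}(\ref{th-infinite-level-igusa-var:proet}).

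Next, using finiteness, I would rewrite
\[
\tr\bigl(\varphi^{(s)}\bigm| \Rrm\Gamma_c(\Ig^{\bbf}_{\Gsf,\xbf},\cl\QQ_\ell)\bigr)
= \lim_{\mu\to\infty}\tr\bigl([\varphi^{(s)}]_{\Ksf}^{\leqslant\mu}\bigm| \Rrm\Gamma_c(\Ig^{\bbf,\leqslant\mu}_{\Gsf,\xbf,\Ksf},\cl\QQ_\ell)\bigr)
\]
for a sufficiently small $\Ksf$, which requires constructing Hecke correspondences that preserve $\Ig^{\bbf,\leqslant\mu}_{\Gsf,\xbf,\Ksf}$ compatibly with the limit $\mu\to\infty$; as indicated in Remark~\ref{rmk-nonqc} and at the end of \S\ref{ssect-Hecke}, I would follow L.~Lafforgue's strategy and work with suitable compactifications of HN-truncations, defining ``truncated'' Hecke correspondences by composing Hecke action with a projection back to the truncated cover. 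Lefschetz--Verdier applied to each truncation and Proposition~\ref{prop-preliminary-trace-formula} then expresses the truncated trace as the sum over $([b]_\sigma,[a])\in\FP_\bbf$ of the HN-truncated orbital integral appearing on the right of Proposition~\ref{prop-preliminary-point-counting}.

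The third step is to show that in this limit, all contributions outside $\FP^{\ael(w)}_{\bbf}$ cancel. At the place $w$, Kottwitz's argument for semisimple non-elliptic orbital integrals of $\varphi_{\EP}$ works verbatim in positive characteristic since only the semisimple part is needed, and the vanishing for non-semisimple conjugacy classes is exactly Conjecture~\ref{conj-EP}; together they imply $O^{\Gsf(\Fsf_w)}_{\gamma_w}(\varphi_{\EP,w}) = 0$ unless $\gamma_w$ is elliptic semisimple. Combined with Proposition~\ref{prop-preliminary-point-counting-elliptic}, this reduces the trace to the $w$-elliptic term $\tr(\varphi^{(s)})^{\el(w)}$ from \eqref{eq-ell-terms}, provided we can upgrade the identity from the HN-truncated statement to the genuine cohomological statement. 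This last upgrade is handled by Corollary~\ref{cor-selberg-princ}: cuspidality of $R(\varphi_{\pi_{w'}})(f)$ ensures that the contributions to the trace coming from boundary/parabolic strata (i.e.\ from non-elliptic $\gamma_0$) are killed by the convolution with $\varphi_{\pi_{w'}}$, so the truncation-dependent terms stabilise.

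The \emph{main obstacle} will be step one, namely the Igusa-variety analogue of Xue's finiteness together with the construction of genuinely well-defined HN-truncated Hecke correspondences; these two inputs are tightly interrelated and essentially comprise the hard geometric content of the conjecture. Once they are in place, the harmonic-analytic reduction via $\varphi_{\EP,w}$ and $\varphi_{\pi_{w'}}$ is fairly mechanical. A second, more technical obstacle is the unconditional verification of Conjecture~\ref{conj-EP} for general $\Gsf$ in positive characteristic; for $\Gsf$ an inner form of a product of $\GL_n$'s with $n<p$ one may substitute Laumon's very-cuspidal Euler--Poincar\'e function \cite[Theorem~5.1.3]{laumonbook1} for $\varphi_{\EP,w}$ and obtain the conjecture unconditionally in that case.
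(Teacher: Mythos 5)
The statement you are ``proving'' is a \emph{conjecture}; the paper does not prove it, and your proposal is therefore a proof strategy rather than a proof. That said, the strategy you describe is essentially the heuristic the paper itself sketches in the two paragraphs preceding Conjecture~\ref{conj-trace-formula}: (i) an Igusa-variety analogue of Xue's cuspidal finiteness so that $\varphi^{(s)}$ acts with finite rank; (ii) a ``Lefschetz-type'' trace formula expressing the trace as local terms (Remark~\ref{rmk-nonqc} and the end of \S\ref{ssect-Hecke} point at exactly the Lafforgue-style HN compactification strategy you invoke); and (iii) Conjecture~\ref{conj-EP} together with Kottwitz's argument for $\varphi_{\EP,w}$ to kill the non-$w$-elliptic local terms. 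You have also correctly identified the two load-bearing unproven inputs --- the Igusa-variety Xue finiteness and the genuinely well-defined HN-truncated Hecke correspondences --- which the paper likewise flags as the essential open geometric content. Your closing observation about Laumon's very-cuspidal Euler--Poincar\'e function matches what the paper does in \S\ref{known-cases-conj-trace-formula} for totally anisotropic forms of $\SL_n$.

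Two inaccuracies are worth flagging. First, your appeal to Proposition~\ref{prop-Shen-Zhang} for ``quasi-compactness of the adjoint central leaf'' does not fit: that proposition concerns Shimura varieties of abelian type (not the shtuka setting), and its conclusion is closedness of central leaves inside the Newton stratum, not quasi-compactness. In the shtuka setting the paper only obtains quasi-compactness after HN truncation (Proposition~\ref{prop-schematic}, Corollary~\ref{cor-schematic}), and the central leaf $C^\bbf_{\Gscr,\xbf}$ is not claimed to be quasi-compact --- this is precisely why the conjecture is hard. Second, your description of the roles of $\varphi_{\pi_{w'}}$ and $\varphi_{\EP,w}$ is somewhat blurred: in the paper's heuristic, $\varphi_{\pi_{w'}}$ is responsible for finiteness (via Proposition~\ref{prop-Schur-ortho} it projects onto the $w'$-supercuspidal part, which is expected to be admissible by the Xue-type result), while the reduction to $w$-elliptic local terms comes from the vanishing properties of $\varphi_{\EP,w}$ (Conjecture~\ref{conj-EP}); the cuspidality in Corollary~\ref{cor-selberg-princ} is what the paper uses in Lemma~\ref{lem-cusp-forms} to establish the no-leg case $b_v=1$ unconditionally, not to ``stabilise'' the truncated trace in general.
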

Note that the $s$-fold Frobenius-twist $\varphi^{(s)}$ of $\varphi$ does not modify the local component at $w$ and $w'$, so $\varphi^{(s)}$ is also of the form \eqref{eq-test-fcns}.

Combining the above conjecture with Thm.~\ref{th-point-counting-elliptic}, we obtain the formula
\[
\tr(\varphi^{(s)}\mid \Rrm\Gamma_c(\Ig^{\bbf}_{\Gsf,\xbf},\cl\QQ_\ell)) =
 \sum_{(\gamma_0;\ggamma,\ddelta)\in \KS^{\ael(w)}_\bbf}
i(\Gsf_{\gamma_0};\Xi)\cdot \left |(\pi_1(\Gsf_{\gamma_0})_{\Gamma})_{\tor}\right | \cdot 
O^{\JJ_{\bbf}}_{(\ggamma,\ddelta)}(\varphi^{(s)}).
\]

\begin{rmk}
 We assumed that $\Gsf$ is semisimple so that both $\varphi_{\EP,w}$ and $\varphi_{\pi_{w'}}$ are compactly supported, not just compactly supported modulo centre. To allow more general $\Gsf$, one has only \revise{to} consider representations and functions with fixed unitary central character, and test functions with the same central character. 
\end{rmk}

\subsection{Special cases of Conj.~\ref{conj-trace-formula}}\label{known-cases-conj-trace-formula}

 Let us first consider the case when $b_v = 1$ for all $v \in |C|$; i.e.,  the case with no leg. Then the cohomology of Igusa variety is nothing but $C^{\infty}_c(\Gsf(\Fsf)\backslash \Gsf(\AA),\cl\QQ_\ell)$, where $\Hcal = C^{\infty}_c(\Gsf(\AA))$ acts via right regular action. We deduce that the left hand side of \eqref{eq-trace-formula} is well-defined by the following lemma. 
\begin{lem}\label{lem-cusp-forms}
Given $\varphi\in\Hcal$ as in \eqref{eq-test-fcns}, the image of $R(\varphi)$ acting on $C^{\infty}_c(\Gsf(\Fsf)\backslash \Gsf(\AA))$ is contained in the subspace of \emph{cuspidal automorphic forms} on $\Gsf(\Fsf)\backslash \Gsf(\AA)$. In particular, $R(\varphi)$ is of finite rank.
\end{lem}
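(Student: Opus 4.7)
The plan is to reduce the cuspidality of $R(\varphi)f$ to the local Selberg principle at $w'$ (Lemma~\ref{lem-selberg-princ}), exploiting the fact that the $w'$-factor $\varphi_{\pi_{w'}}$ of $\varphi$ is a matrix coefficient of a supercuspidal representation. The factorisation $\varphi = \varphi^{w,w'}\cdot\varphi_{\EP,w}\cdot\varphi_{\pi_{w'}}$ yields commuting convolutions at distinct places, and right convolution by a smooth compactly supported function preserves cuspidality, so it will suffice to show that $R(\varphi_{\pi_{w'}})f$ is cuspidal for every $f\in C^{\infty}_c(\Gsf(\Fsf)\backslash\Gsf(\AA))$. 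Once this is established, the finite rank claim will follow from the finite-dimensionality of the $K$-fixed cuspidal subspace for any compact open $K\subset\Gsf(\AA)$.

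First I would fix a proper $\Fsf$-parabolic $P=MN$ of $\Gsf$ and $g\in\Gsf(\AA)$, and compute
\[
\int_{N(\Fsf)\backslash N(\AA)} (R(\varphi_{\pi_{w'}})f)(ng)\, dn.
\]
Since $\varphi_{\pi_{w'}}$ is compactly supported, Fubini applies; after noting that for each fixed $g,y$ the integrand is $N(\Fsf)$-invariant in $n$ by $\Gsf(\Fsf)$-invariance of $f$, one swaps the order of integration to obtain
\[
\int_{\Gsf(\Fsf_{w'})} \varphi_{\pi_{w'}}(y)\, f_P(gy)\, dy,
\]
where $f_P(h)\coloneqq\int_{N(\Fsf)\backslash N(\AA)}f(nh)\,dn$ is the usual constant term, a smooth left $N(\AA)$-invariant function on $\Gsf(\AA)$.

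Next I would decompose $g=(g^{w'},g_{w'})\in\Gsf(\AA^{w'})\times\Gsf(\Fsf_{w'})$, substitute $y\mapsto g_{w'}^{-1}y$, and rewrite the integral as
\[
\int_{\Gsf(\Fsf_{w'})} \varphi_{\pi_{w'}}(g_{w'}^{-1}y)\, f_P((g^{w'}, y))\, dy,
\]
where now $y\mapsto f_P((g^{w'},y))$ is left $N(\Fsf_{w'})$-invariant (arising from left $N(\AA)$-invariance of $f_P$ via the embedding of $N(\Fsf_{w'})$ at the $w'$-place). Applying the Iwasawa decomposition $\Gsf(\Fsf_{w'})=N(\Fsf_{w'})M(\Fsf_{w'})K_{w'}$ to parametrise $y=nmk$ and using left $N(\Fsf_{w'})$-invariance of $f_P((g^{w'},\cdot))$ to pull out that factor, the innermost integral becomes
\[
\int_{N(\Fsf_{w'})} \varphi_{\pi_{w'}}(g_{w'}^{-1}nmk)\, dn,
\]
which vanishes by Lemma~\ref{lem-selberg-princ} applied with $g\leftrightarrow g_{w'}^{-1}$ and $h\leftrightarrow mk$. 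Hence every constant term of $R(\varphi_{\pi_{w'}})f$ along a proper $\Fsf$-parabolic vanishes, so $R(\varphi)f$ is cuspidal.

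For the finite rank assertion, I would observe that $\varphi$ is bi-invariant under some compact open subgroup $K\subset\Gsf(\AA)$, so the image of $R(\varphi)$ lies in $\Ascr_{\rm cusp}(\Gsf(\Fsf)\backslash\Gsf(\AA))^K$, which is finite-dimensional by the function-field analogue of the theorem of Harder/Gelfand--Piatetski-Shapiro on cuspidal automorphic forms (only finitely many irreducible cuspidal automorphic representations admit $K$-fixed vectors, and each such $\pi^K$ is finite-dimensional by admissibility). The main obstacle will be justifying the Fubini swap and the substitution across the quotient $N(\Fsf)\backslash N(\AA)$; once those manipulations are made precise, the Iwasawa decomposition cleanly reduces the vanishing to Lemma~\ref{lem-selberg-princ}, while the finite-dimensionality input for the second claim is classical.
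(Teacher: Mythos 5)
Your proof is correct and follows essentially the same route as the paper; you unwind the step the paper dispatches as ``it easily follows from Corollary~\ref{cor-selberg-princ}'' into an explicit constant-term calculation (and you appeal directly to Lemma~\ref{lem-selberg-princ} rather than the corollary, which amounts to the same thing since the corollary is derived from it). One small thing you gloss over: having shown that $R(\varphi)f$ has vanishing constant terms, the claim is that it lies in the space of cuspidal \emph{automorphic forms}, and the promotion of a smooth cuspidal function to an automorphic form is a genuine (if classical) step — the paper cites \cite[Proposition~5.9]{BorelJacquet:Corvalis}, using that $\Zsf_\Gsf(\AA)$-finiteness is automatic since $\Gsf$ is semisimple. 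Once that is granted, the finite-rank conclusion via admissibility of the cuspidal spectrum (Harder) is exactly as you and the paper argue.
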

\begin{proof}
It follows from Cor.~\ref{cor-selberg-princ} that $R(\varphi)(f)$ is cuspidal for any $f\in C^{\infty}_c(\Gsf(\Fsf)\backslash \Gsf(\AA))$.
Now by \cite[Prop.~5.9]{BorelJacquet:Corvalis} it follows that any smooth cuspidal function on $\Gsf(\Fsf)\backslash\Gsf(\AA)$ is an automorphic form, as $\Zsf_\Gsf(\AA)$-finiteness is automatic by compactness of $\Zsf_\Gsf(\AA)$. This shows that $R(\varphi)(f)$ is a cuspidal automorphic form on $\Gsf(\Fsf)\backslash\Gsf(\AA)$ for any $f$.

Finally, by admissibility of the space of cuspidal automorphic forms as a $\Gsf(\AA)$-representation (\emph{cf.} \cite[Prop.~5.2]{BorelJacquet:Corvalis}), $R(\varphi)$ is of finite rank.
\end{proof}
Now we have $\varphi^{(s)}=\varphi$ for any $\varphi\in\Hcal$, and Conj.~\ref{conj-trace-formula} reduces to the \emph{simple trace formula}; \emph{cf.} Working Hypothesis II in \cite[\S8.2]{GanLomeli:GlobalizationSC}. A special case of this conjecture for more restrictive choice of $\varphi$ and $\pi_{w'}$ was formulated by Gross \cite[Conj.~5.2]{Gross:AJM2011}, and it is also asserted in the same paper that Conj.~\ref{conj-EP} implies the conjecture in \emph{loc.~cit}, by Ng\^o Dac's result (which generalises the work of Kottwitz in characteristic~$0$; \emph{cf.} \cite[\S2]{Kottwitz:TamagawaNo}).

 We now drop the assumption that $b_v = 1$ for all $v$ and assume that  $\Gsf$ is a totally anisotropic form of $\SL_n$, implying that $\Ig^{\bbf}_{\Gsf,\xbf}$  is quasi-compact. One can see this by embedding $\Gsf$ into $\Dsf^\times$ where $\Dsf$ is a central division algebra over $\Fsf$ or its quadratic separable extension. In that case, the following variant of Conj.~\ref{conj-trace-formula} holds unconditionally; namely, if $w$ is a place where $\Gsf$ splits and $\varphi_w\in C^\infty_c(\Gsf(\Fsf_w))$ is the very cuspidal Euler--Poincar\'e function \cite[(5.1.2)]{laumonbook1}, then for any $\varphi = \varphi^w\cdot\varphi_{w}\in C^\infty_c(\JJ_\bbf)$, we have 
\begin{equation}
\tr(\varphi^{(s)}\mid \Rrm\Gamma_c(\Ig^{\bbf}_{\Gsf,\xbf},\cl\QQ_\ell)) = \tr(\varphi^{(s)})^{\el(w)},
\end{equation}
for any sufficiently divisible $s$. Indeed, this formula is the refinement of the Lefschetz trace formula for  $\Ig^{\bbf}_{\Gsf,\xbf}$  (\emph{cf.} Cor.~\ref{cor-preliminary-point-counting-elliptic}) using  \cite[Thm.~5.1.3]{laumonbook1} which proves Conj.~\ref{conj-EP} for $\varphi_w$ in place of $\varphi_{\EP,w}$. We may allow $w$ to be a non-split place for $\Gsf$, provided that \emph{loc.~cit.} can be extended to a suitable variant of Euler--Poincar\'e function $\varphi_w$.

In the non quasi-compact case, the conjecture seems more tractable when $\Gsf$ is a inner (or even, outer) form of $\SL_n$. (Perhaps, its  variant for forms of $\GL_n$ might be more convenient.) We plan to explore this case in a future project.

\bibliographystyle{amsalpha}
\bibliography{bib}

\def\cprime{$'$}
\providecommand{\bysame}{\leavevmode\hbox to3em{\hrulefill}\thinspace}
\providecommand{\MR}{\relax\ifhmode\unskip\space\fi MR }
\providecommand{\MRhref}[2]{%
  \href{http://www.ams.org/mathscinet-getitem?mr=#1}{#2}
}
\providecommand{\href}[2]{#2}
\begin{thebibliography}{Xue20b}

\bibitem[AC89]{ArthurClozel:BC}
James Arthur and Laurent Clozel, \emph{Simple algebras, base change, and the advanced theory of the trace formula}, Annals of Mathematics Studies, vol. 120, Princeton University Press, Princeton, NJ, 1989. \MR{1007299}

\bibitem[AD04]{AdlerDeBacker:MurnaghanKirillov}
Jeffrey~D. Adler and Stephen DeBacker, \emph{Murnaghan-{K}irillov theory for supercuspidal representations of tame general linear groups}, J. Reine Angew. Math. \textbf{575} (2004), 1--35. \MR{2097545}

\bibitem[AK07]{AdlerKorman:CharExpansion}
Jeffrey~D. Adler and Jonathan Korman, \emph{The local character expansion near a tame, semisimple element}, Amer. J. Math. \textbf{129} (2007), no.~2, 381--403. \MR{2306039}

\bibitem[AR]{ArastehRad:LocMod-RZ}
Esmail Arasteh~Rad, \emph{Local models for {R}apoport-{Z}ink spaces for local {$\mathbb{P}$}-shtukas}, Preprint (2023).

\bibitem[ARH]{ArastehRad-Hartl:LR}
Esmail Arasteh~Rad and Urs Hartl, \emph{The conjecture of {L}anglands and {R}apoport for the moduli stacks of {$\mathfrak{G}$}-shtukas}, Preprint (2023).

\bibitem[ARH14]{ArastehRad-Hartl:LocGlShtuka}
\bysame, \emph{Local {$\mathbb{P}$}-shtukas and their relation to global {$\mathfrak{G}$}-shtukas}, M{\"u}nster J. Math. \textbf{7} (2014), no.~2, 623--670. \MR{3426233}

\bibitem[ARH19]{ArastehRad-Habibi:LocMod}
Esmail Arasteh~Rad and Somayeh Habibi, \emph{Local models for the moduli stacks of global {$\mathfrak{G}$}-shtukas}, Math. Res. Lett. \textbf{26} (2019), no.~2, 323--364. \MR{3999548}

\bibitem[ARH21]{ArastehRad-Hartl:UnifStack}
Esmail Arasteh~Rad and Urs Hartl, \emph{Uniformizing the moduli stacks of global {$G$}-shtukas}, Int. Math. Res. Not. IMRN (2021), no.~21, 16121--16192. \MR{4338216}

\bibitem[Bad03]{Badulescu:Transfer}
Alexandru~Ioan Badulescu, \emph{Un r\'{e}sultat de transfert et un r\'{e}sultat d'int\'{e}grabilit\'{e} locale des caract{\`e}res en caract\'{e}ristique non nulle}, J. Reine Angew. Math. \textbf{565} (2003), 101--124. \MR{2024649}

\bibitem[Bad07]{Badulescu:JLunitarisabilite}
\bysame, \emph{Jacquet-{L}anglands et unitarisabilit{\'e}}, J. Inst. Math. Jussieu \textbf{6} (2007), no.~3, 349--379. \MR{2329758 (2008h:20065)}

\bibitem[BD]{BeilinsonDrinfeld:Hitchin}
Aleksandr~A. Be\u{\i}linson and Vladimir~G. Drinfel{$'$}d, \emph{Quantization of {H}itchin's integrable system and {H}ecke eigensheaves}, Preprint.

\bibitem[Ber80]{Berthelot:Dieudonne}
Pierre Berthelot, \emph{Th\'eorie de {D}ieudonn\'e sur un anneau de valuation parfait}, Ann. Sci. \'Ecole Norm. Sup. (4) \textbf{13} (1980), no.~2, 225--268. \MR{584086}

\bibitem[Bie24]{Bieker:IntModels}
Patrick Bieker, \emph{Integral models of moduli spaces of {S}htukas with deep {B}ruhat-{T}its level structures}, Int. Math. Res. Not. IMRN (2024), no.~14, 10559--10596. \MR{4775669}

\bibitem[BJ79]{BorelJacquet:Corvalis}
Armand Borel and Herv{\'e}~M. Jacquet, \emph{Automorphic forms and automorphic representations}, Automorphic forms, representations and {$L$}-functions, {P}art 1, Amer. Math. Soc., Providence, R.I., 1979, pp.~189--207. \MR{546598 (81m:10055)}

\bibitem[BL94]{BeauvilleLaszlo:ThetaFunctions}
Arnaud Beauville and Yves Laszlo, \emph{Conformal blocks and generalized theta functions}, Comm. Math. Phys. \textbf{164} (1994), no.~2, 385--419. \MR{1289330}

\bibitem[BL95]{Beauville-Laszlo:DescentLemma}
\bysame, \emph{Un lemme de descente}, C. R. Acad. Sci. Paris S\'er. I Math. \textbf{320} (1995), no.~3, 335--340. \MR{MR1320381 (96a:14049)}

\bibitem[Bor98]{Borovoi:AbCohRedGp}
Mikhail Borovoi, \emph{Abelian {G}alois cohomology of reductive groups}, Mem. Amer. Math. Soc. \textbf{132} (1998), no.~626, viii+50. \MR{1401491}

\bibitem[BR17]{BadulescuRoche:JLC}
Alexandru~Ioan Badulescu and Philippe Roche, \emph{Global {J}acquet-{L}anglands correspondence for division algebras in characteristic {$p$}}, Int. Math. Res. Not. IMRN (2017), no.~7, 2172--2206. \MR{3658194}

\bibitem[Bro13]{Broshi:Gtorsors}
Michael Broshi, \emph{{$G$}-torsors over a {D}edekind scheme}, J. Pure Appl. Algebra \textbf{217} (2013), no.~1, 11--19. \MR{2965898}

\bibitem[BS17]{BhattScholze:AffGr}
Bhargav Bhatt and Peter Scholze, \emph{Projectivity of the {W}itt vector affine {G}rassmannian}, Invent. Math. \textbf{209} (2017), no.~2, 329--423. \MR{3674218}

\bibitem[CS17]{CaraianiScholze:ShVar}
Ana Caraiani and Peter Scholze, \emph{On the generic part of the cohomology of compact unitary {S}himura varieties}, Ann. of Math. (2) \textbf{186} (2017), no.~3, 649--766. \MR{3702677}

\bibitem[dJO00]{deJongOort:Purity}
Aise~Johan de~Jong and Frans Oort, \emph{Purity of the stratification by {N}ewton polygons}, J. Amer. Math. Soc. \textbf{13} (2000), no.~1, 209--241. \MR{1703336 (2000m:14050)}

\bibitem[DKV84]{DeligneKazhdanVigneras:CenSimAlg}
Pierre Deligne, David~A. Kazhdan, and Marie-France Vign{{\'e}}ras, \emph{Repr{{\'e}}sentations des alg{{\`e}}bres centrales simples {$p$}-adiques}, Representations of reductive groups over a local field, Travaux en Cours, Hermann, Paris, 1984, pp.~33--117. \MR{771672}

\bibitem[EX]{EteveXue:Nearby}
Arnaud Eteve and Cong Xue, \emph{Nearby cycles commute with proper direct image on stacks of shtukas}, Preprint (2024), 35~pages.

\bibitem[Fuj97]{Fujiwara:TraceFormula}
Kazuhiro Fujiwara, \emph{Rigid geometry, {L}efschetz-{V}erdier trace formula and {D}eligne's conjecture}, Invent. Math. \textbf{127} (1997), no.~3, 489--533. \MR{1431137}

\bibitem[Gas10]{Gashi:KRconj}
Q\"{e}ndrim~R. Gashi, \emph{On a conjecture of {K}ottwitz and {R}apoport}, Ann. Sci. \'{E}c. Norm. Sup\'{e}r. (4) \textbf{43} (2010), no.~6, 1017--1038. \MR{2778454}

\bibitem[GL18]{GanLomeli:GlobalizationSC}
Wee~Teck Gan and Luis Lomel\'{\i}, \emph{Globalization of supercuspidal representations over function fields and applications}, J. Eur. Math. Soc. (JEMS) \textbf{20} (2018), no.~11, 2813--2858. \MR{3861809}

\bibitem[GL19]{GaitsgoryLurie:Tamagawa}
Dennis Gaitsgory and Jacob Lurie, \emph{Weil's conjecture for function fields. {V}ol. 1}, Annals of Mathematics Studies, vol. 199, Princeton University Press, Princeton, NJ, 2019. \MR{3887650}

\bibitem[Gro11]{Gross:AJM2011}
Benedict~H. Gross, \emph{Irreducible cuspidal representations with prescribed local behavior}, Amer. J. Math. \textbf{133} (2011), no.~5, 1231--1258. \MR{2843098}

\bibitem[Ham17]{Hamacher:DeforSpProdStr}
Paul Hamacher, \emph{The almost product structure of {N}ewton strata in the {D}eformation space of a {B}arsotti--{T}ate group with crystalline {T}ate tensors}, Math. Z. \textbf{287} (2017), no.~3-4, 1255--1277. \MR{3719535}

\bibitem[HC70]{HarishChandra:LNM162}
Harish-Chandra, \emph{Harmonic analysis on reductive {$p$}-adic groups}, Lecture Notes in Mathematics, Vol. 162, Springer-Verlag, Berlin-New York, 1970, Notes by G. van Dijk. \MR{0414797}

\bibitem[HC99]{HarishChandra:AMS}
\bysame, \emph{Admissible invariant distributions on reductive {$p$}-adic groups}, University Lecture Series, vol.~16, American Mathematical Society, Providence, RI, 1999, With a preface and notes by Stephen DeBacker and Paul J. Sally, Jr. \MR{1702257}

\bibitem[He16]{He:KottwitzRapoportConj}
Xuhua He, \emph{Kottwitz-{R}apoport conjecture on unions of affine {D}eligne-{L}usztig varieties}, Ann. Sci. \'Ec. Norm. Sup\'er. (4) \textbf{49} (2016), no.~5, 1125--1141. \MR{3581812}

\bibitem[HK19]{HamacherKim:Mantovan}
Paul Hamacher and Wansu Kim, \emph{{$l$}-adic \'{e}tale cohomology of {S}himura varieties of {H}odge type with non-trivial coefficients}, Math. Ann. \textbf{375} (2019), no.~3-4, 973--1044. \MR{4023369}

\bibitem[HK21]{HamacherKim:Gisoc}
\bysame, \emph{On {$\mathsf{G}$}-isoshtukas over function fields}, Selecta Math. (N.S.) \textbf{27} (2021), no.~4, Paper No. 75, 34. \MR{4292785}

\bibitem[HR20]{HainesRicharz:TestFtnRes}
Thomas~J. Haines and Timo Richarz, \emph{The test function conjecture for local models of {W}eil-restricted groups}, Compos. Math. \textbf{156} (2020), no.~7, 1348--1404. \MR{4120166}

\bibitem[HR21]{HainesRicharz:TestFtnParahoric}
\bysame, \emph{The test function conjecture for parahoric local models}, J. Amer. Math. Soc. \textbf{34} (2021), no.~1, 135--218. \MR{4188816}

\bibitem[HT01]{HarrisTaylor:TheBook}
Michael Harris and Richard Taylor, \emph{The geometry and cohomology of some simple {S}himura varieties}, Annals of Mathematics Studies, vol. 151, Princeton University Press, Princeton, NJ, 2001, With an appendix by Vladimir G. Berkovich. \MR{1876802 (2002m:11050)}

\bibitem[HV11]{HartlViehmann:Newton}
Urs Hartl and Eva Viehmann, \emph{The {N}ewton stratification on deformations of local {$G$}-shtukas}, J. Reine Angew. Math. \textbf{656} (2011), 87--129. \MR{2818857}

\bibitem[HX]{HartlXu:Unif}
Urs Hartl and Yujie Xu, \emph{Uniformizing the moduli stacks of global {$G$}-{S}htukas {II}}, Preprint (2023).

\bibitem[IKY]{ImaiKatoYoucis:Tannaka}
Naoki Imai, Hiroki Kato, and Alex Youcis, \emph{A {T}annakian framework for prismatic {$F$}-crystals}, Preprint (2024).

\bibitem[Kis17]{Kisin:LanglandsRapoport}
Mark Kisin, \emph{{${\rm mod}\,p$} points on {S}himura varieties of abelian type}, J. Amer. Math. Soc. \textbf{30} (2017), no.~3, 819--914. \MR{3630089}

\bibitem[Kot84]{Kottwitz:StTrFormCuspTemp}
Robert~E. Kottwitz, \emph{Stable trace formula: cuspidal tempered terms}, Duke Math. J. \textbf{51} (1984), no.~3, 611--650. \MR{757954 (85m:11080)}

\bibitem[Kot86]{Kottwitz:StTrFormEllSing}
\bysame, \emph{Stable trace formula: elliptic singular terms}, Math. Ann. \textbf{275} (1986), no.~3, 365--399. \MR{858284 (88d:22027)}

\bibitem[Kot88]{Kottwitz:TamagawaNo}
\bysame, \emph{Tamagawa numbers}, Ann. of Math. (2) \textbf{127} (1988), no.~3, 629--646. \MR{942522 (90e:11075)}

\bibitem[Kot92]{Kottwitz:PtShimuraVarFinFields}
\bysame, \emph{Points on some {S}himura varieties over finite fields}, J. Amer. Math. Soc. \textbf{5} (1992), no.~2, 373--444. \MR{1124982 (93a:11053)}

\bibitem[KP18]{KisinPappas:ParahoricIntModel}
Mark Kisin and Georgios Pappas, \emph{Integral models of {S}himura varieties with parahoric level structure}, Publ. Math. Inst. Hautes {\'E}tudes Sci. \textbf{128} (2018), no.~1, 121--218.

\bibitem[Kre14]{Kreidl:padicGr}
Martin Kreidl, \emph{On {$p$}-adic lattices and {G}rassmannians}, Math. Z. \textbf{276} (2014), no.~3-4, 859--888. \MR{3175163}

\bibitem[KS23]{KretShin:H0Igusa}
Arno Kret and Sug~Woo Shin, \emph{{$H^0$} of {I}gusa varieties via automorphic forms}, J. \'{E}c. polytech. Math. \textbf{10} (2023), 1299--1390. \MR{4664656}

\bibitem[KV12]{MR2931318}
Robert Kottwitz and Eva Viehmann, \emph{Generalized affine {S}pringer fibres}, J. Inst. Math. Jussieu \textbf{11} (2012), no.~3, 569--601. \MR{2931318}

\bibitem[Laf02]{Lafforgue:GlobalLanglands}
Laurent Lafforgue, \emph{Chtoucas de {D}rinfeld et correspondance de {L}anglands}, Invent. Math. \textbf{147} (2002), no.~1, 1--241. \MR{1875184 (2002m:11039)}

\bibitem[Lan83]{Langlands:DebutStable}
Robert~P. Langlands, \emph{Les d{{\'e}}buts d'une formule des traces stable}, Publications Math{\'e}matiques de l'Universit{\'e} Paris VII, vol.~13, Universit{{\'e}} de Paris VII U.E.R. de Math{{\'e}}matiques, Paris, 1983. \MR{697567 (85d:11058)}

\bibitem[Lau96]{laumonbook1}
G{\'e}rard Laumon, \emph{Cohomology of {D}rinfeld modular varieties. {P}art {I}}, Cambridge Studies in Advanced Mathematics, vol.~41, Cambridge University Press, Cambridge, 1996. \MR{1381898 (98c:11045a)}

\bibitem[Lau04]{Lau:Thesis}
Eike Lau, \emph{On generalised {$D$}-shtukas}, Bonner Mathematische Schriften [Bonn Mathematical Publications], vol. 369, Universit\"{a}t Bonn, Mathematisches Institut, Bonn, 2004, Dissertation, Rheinische Friedrich-Wilhelms-Universit\"{a}t Bonn, Bonn, 2004. \MR{2206061}

\bibitem[Lau07]{Lau:Degeneration}
\bysame, \emph{On degenerations of {$\mathscr{D}$}-shtukas}, Duke Math. J. \textbf{140} (2007), no.~2, 351--389. \MR{2359823 (2009a:11236)}

\bibitem[Lau14]{Lau:DisplayCrystals}
\bysame, \emph{Relations between {D}ieudonn{\'e} displays and crystalline {D}ieudonn{\'e} theory}, Algebra Number Theory \textbf{8} (2014), no.~9, 2201--2262. \MR{3294388}

\bibitem[Lem96]{Lemaire:LocIntegrabilityGLN}
Bertrand Lemaire, \emph{Int\'{e}grabilit\'{e} locale des caract{\`e}res-distributions de {${\rm GL}_N(F)$} o{\`u} {$F$} est un corps local non-archim\'{e}dien de caract\'{e}ristique quelconque}, Compositio Math. \textbf{100} (1996), no.~1, 41--75. \MR{1377408}

\bibitem[Lem04]{Lemaire:LocIntegrabilityTwistedCharacters}
\bysame, \emph{Int\'{e}grabilit\'{e} locale des caract{\`e}res tordus de {${\rm GL}_n(D)$}}, J. Reine Angew. Math. \textbf{566} (2004), 1--39. \MR{2039322}

\bibitem[Lov]{Lovering:GCrys}
Tom Lovering, \emph{Filtered {$F$}-crystals on {S}himura varieties of abelian type}, Preprint (2017).

\bibitem[LRS93]{Laumon-Rapoport-Stuhler}
G{{\'e}}rard Laumon, Michael Rapoport, and Ulrich Stuhler, \emph{{$\mathscr{D}$}-elliptic sheaves and the {L}anglands correspondence}, Invent. Math. \textbf{113} (1993), no.~2, 217--338. \MR{1228127 (96e:11077)}

\bibitem[LZ]{LafforgueZhu:elliptic}
Vincent Lafforgue and Xinwen Zhu, \emph{D{\'e}composition au-dessus des param{\`e}tres de {L}anglands elliptiques}, Preprint (2019).

\bibitem[Mat86]{Matsumura:crt}
Hideyuki Matsumura, \emph{Commutative ring theory}, Cambridge Studies in Advanced Mathematics, vol.~8, Cambridge University Press, Cambridge, 1986, Translated from the Japanese by M. Reid. \MR{879273 (88h:13001)}

\bibitem[MC]{Mack-Crane:2022wo}
Sander Mack-Crane, \emph{Counting points on {I}gusa varieties of {H}odge type}, Preprint (2022).

\bibitem[Neu16]{neupert:thesis}
Stephan Neupert, \emph{Foliations and the cohomology of moduli spaces of bounded global {$G$}-shtukas}, Ph.D. thesis, Technische Universit{\"a}t M{\"u}nchen, 2016, \url{https://mediatum.ub.tum.de/1304517}.

\bibitem[Ng{\^{o}}06]{NgoBC:DSht}
Bao~Ch\^{a}u Ng{\^{o}}, \emph{{$\mathscr{D}$}-chtoucas de {D}rinfeld {\`a} modifications sym\'{e}triques et identit\'{e} de changement de base}, Ann. Sci. \'{E}cole Norm. Sup. (4) \textbf{39} (2006), no.~2, 197--243. \MR{2245532}

\bibitem[Oes84]{Oesterle:Tamagawa}
Joseph Oesterl\'{e}, \emph{Nombres de {T}amagawa et groupes unipotents en caract\'{e}ristique {$p$}}, Invent. Math. \textbf{78} (1984), no.~1, 13--88. \MR{762353}

\bibitem[Oor04]{Oort:Foliations}
Frans Oort, \emph{Foliations in moduli spaces of abelian varieties}, J. Amer. Math. Soc. \textbf{17} (2004), no.~2, 267--296. \MR{2051612}

\bibitem[OZ02]{OortZink:pdiv}
Frans Oort and Thomas Zink, \emph{Families of {$p$}-divisible groups with constant {N}ewton polygon}, Doc. Math. \textbf{7} (2002), 183--201. \MR{1938119}

\bibitem[Ric16]{Richarz:AffGr}
Timo Richarz, \emph{Affine {G}rassmannians and geometric {S}atake equivalences}, Int. Math. Res. Not. IMRN (2016), no.~12, 3717--3767. \MR{3544618}

\bibitem[Ros21]{Rosengarten:Tamagawa}
Zev Rosengarten, \emph{Tamagawa numbers and other invariants of pseudoreductive groups over global function fields}, Algebra Number Theory \textbf{15} (2021), no.~8, 1865--1920. \MR{4337456}

\bibitem[RR96]{RapoportRichartz:Gisoc}
Michael Rapoport and Melanie Richartz, \emph{On the classification and specialization of {$F$}-isocrystals with additional structure}, Compositio Math. \textbf{103} (1996), no.~2, 153--181. \MR{1411570 (98c:14015)}

\bibitem[Sal23]{Salmon:UnipNearby-Shtukas}
Andrew Salmon, \emph{Unipotent nearby cycles and the cohomology of shtukas}, Compos. Math. \textbf{159} (2023), no.~3, 590--615. \MR{4560540}

\bibitem[Sch85]{Scharlau:Forms}
Winfried Scharlau, \emph{Quadratic and {H}ermitian forms}, Grundlehren der mathematischen Wissenschaften [Fundamental Principles of Mathematical Sciences], vol. 270, Springer-Verlag, Berlin, 1985. \MR{770063}

\bibitem[Sch15]{Schieder:HN-str}
Simon Schieder, \emph{The {H}arder-{N}arasimhan stratification of the moduli stack of {$G$}-bundles via {D}rinfeld's compactifications}, Selecta Math. (N.S.) \textbf{21} (2015), no.~3, 763--831. \MR{3366920}

\bibitem[Shi07]{ShinSW:thesis}
Sug~Woo Shin, \emph{Counting points on {I}gusa varieties}, ProQuest LLC, Ann Arbor, MI, 2007, Thesis (Ph.D.)--Harvard University. \MR{2710359}

\bibitem[Shi09]{ShinSW:IgusaPointCounting}
\bysame, \emph{Counting points on {I}gusa varieties}, Duke Math. J. \textbf{146} (2009), no.~3, 509--568. \MR{2484281 (2011e:11107)}

\bibitem[Shi10]{ShinSW:StableTraceIgusa}
\bysame, \emph{A stable trace formula for {I}gusa varieties}, J. Inst. Math. Jussieu \textbf{9} (2010), no.~4, 847--895. \MR{2684263 (2012b:11095)}

\bibitem[Shi11]{ShinSW:GlobalLanglands}
\bysame, \emph{Galois representations arising from some compact {S}himura varieties}, Ann. of Math. (2) \textbf{173} (2011), no.~3, 1645--1741. \MR{2800722}

\bibitem[Shi12]{ShinSW:CohRZ}
\bysame, \emph{On the cohomology of {R}apoport-{Z}ink spaces of {EL}-type}, Amer. J. Math. \textbf{134} (2012), no.~2, 407--452. \MR{2905002}

\bibitem[Sil79]{Silberger:Book}
Allan~J. Silberger, \emph{Introduction to harmonic analysis on reductive {$p$}-adic groups}, Mathematical Notes, vol.~23, Princeton University Press, Princeton, N.J.; University of Tokyo Press, Tokyo, 1979. \MR{544991}

\bibitem[Sta]{AlgStackProj}
\emph{The {S}tacks {P}roject}, published under {GFDL}.

\bibitem[SZ22]{ShenZhang:ShVarStrat}
Xu~Shen and Chao Zhang, \emph{Stratifications in good reductions of {S}himura varieties of abelian type}, Asian J. Math. \textbf{26} (2022), no.~2, 167--226. \MR{4557079}

\bibitem[Var04]{Varshavsky:shtuka}
Yakov Varshavsky, \emph{Moduli spaces of principal {$F$}-bundles}, Selecta Math. (N.S.) \textbf{10} (2004), no.~1, 131--166. \MR{2061225}

\bibitem[Var07]{Varshavsky:Lefschetz-Verdier}
\bysame, \emph{Lefschetz-{V}erdier trace formula and a generalization of a theorem of {F}ujiwara}, Geom. Funct. Anal. \textbf{17} (2007), no.~1, 271--319. \MR{2306659 (2008d:14032)}

\bibitem[Vas06]{Vasiu:CrystallineBoundedness}
Adrian Vasiu, \emph{Crystalline boundedness principle}, Ann. Scient. {\'E}c. Norm. Sup. \textbf{39} (2006), no.~2, 245--300.

\bibitem[Vie13]{Viehmann:NewtonStratLoopGp}
Eva Viehmann, \emph{Newton strata in the loop group of a reductive group}, Amer. J. Math. \textbf{135} (2013), no.~2, 499--518. \MR{3038719}

\bibitem[Vig81]{Vigneras:OrbInt}
Marie-France Vign\'{e}ras, \emph{Caract\'{e}risation des int\'{e}grales orbitales sur un groupe r\'{e}ductif {$p$}-adique}, J. Fac. Sci. Univ. Tokyo Sect. IA Math. \textbf{28} (1981), no.~3, 945--961 (1982). \MR{656066}

\bibitem[Wei17]{Weiss:thesis}
Anna~Katharina Wei{\ss{}}, \emph{Foliations in moduli spaces of bounded global {$G$}-shtukas}, Ph.D. thesis, Westf{\"a}lische-Wilhelms-Universit{\"a}t M{\"u}nster, 2017.

\bibitem[Xue]{Xue:Smoothness}
Cong Xue, \emph{Smoothness of cohomology sheaves of stacks of shtukas}, Preprint (2020).

\bibitem[Xue20a]{Xue:Cuspidal}
\bysame, \emph{Cuspidal cohomology of stacks of shtukas}, Compos. Math. \textbf{156} (2020), no.~6, 1079--1151. \MR{4098603}

\bibitem[Xue20b]{Xue:Finiteness}
\bysame, \emph{Finiteness of cohomology groups of stacks of shtukas as modules over {H}ecke algebras, and applications}, \'{E}pijournal G\'{e}om. Alg\'{e}brique \textbf{4} (2020), Art. 6, 42. \MR{4113657}

\bibitem[Yan11]{Yang:Purity}
Yanhong Yang, \emph{An improvement of de {J}ong-{O}ort's purity theorem}, M\"{u}nster J. Math. \textbf{4} (2011), 129--140. \MR{2869258}

\bibitem[Zin01]{Zink:SlopeFil}
Thomas Zink, \emph{On the slope filtration}, Duke Math. J. \textbf{109} (2001), no.~1, 79--95. \MR{1844205 (2003d:14055)}

\end{thebibliography}

\end{document}